\newtheorem{thm}{Theorem}[section]
\newtheorem{prop}[thm]{Proposition}
\newtheorem{lem}[thm]{Lemma}
\newtheorem{cor}[thm]{Corollary}
\newtheorem{definition}[thm]{Definition}
\newtheorem{notation}[thm]{Notation}
\numberwithin{equation}{section}
\newcommand{\Apc}{\pazocal{A}} 
\newcommand{\Cpc}{\pazocal{C}} 
\newcommand{\Epc}{\pazocal{E}} 
\newcommand{\Fpc}{\pazocal{F}} 
\newcommand{\Gpc}{\pazocal{G}} 
\newcommand{\Ipc}{\pazocal{I}}
\newcommand{\Lpc}{\pazocal{L}} 
\newcommand{\Mpc}{\pazocal{M}}
\newcommand{\Ppc}{\pazocal{P}} 
\newcommand{\Qpc}{\pazocal{Q}} 
\newcommand{\Spc}{\pazocal{S}} 
\newcommand{\Tpc}{\pazocal{T}} 
\newcommand{\Upc}{\pazocal{U}}
\newcommand{\Pmc}{\mathcal{P}}
\newcommand{\Umc}{\mathcal{U}} 
\newcommand{\Vmc}{\mathcal{V}}
\newcommand{\Hbf}{\mathbf{H}}
\newcommand{\pbf}{\mathbf{p}}
\newcommand{\ybf}{\mathbf{y}}
\newcommand{\Hbbb}{\mathbb{H}}
\newcommand{\Pbbb}{\mathbb{P}} 
\newcommand{\Rbbb}{\mathbb{R}}
\newcommand{\Zbbb}{\mathbb{Z}}
\newcommand{\Amf}{\mathfrak{A}} 
\newcommand{\Bmf}{\mathfrak{B}} 
\newcommand{\Cmf}{\mathfrak{C}}
\newcommand{\Mmf}{\mathfrak{M}}
\newcommand{\Rmf}{\mathfrak{R}} 
\newcommand{\Tmf}{\mathfrak{T}}
\newcommand{\Xmf}{\mathfrak{X}} 
\newcommand{\Ymf}{\mathfrak{Y}} 
\newcommand{\Zmf}{\mathfrak{Z}}
\newcommand{\Mtd}{\widetilde{M}}
\newcommand{\Ntd}{\widetilde{N}}
\newcommand{\dtd}{\widetilde{d}}
\newcommand{\ltd}{\widetilde{l}}
\newcommand{\ptd}{\widetilde{p}}
\newcommand{\qtd}{\widetilde{q}}
\newcommand{\xtd}{\widetilde{x}}
\newcommand{\uora}{\overrightarrow{u}}
\newcommand{\vora}{\overrightarrow{v}}
\DeclareMathAlphabet{\pazocal}{OMS}{zplm}{m}{n}
\DeclareMathOperator{\suc}{suc}
\DeclareMathOperator{\Span}{Span}
\DeclareMathOperator{\Mod}{Mod}
\DeclareMathOperator{\id}{id}
\title[The degeneration of convex $\Rbbb\Pbbb^2$ structures.]%
{The degeneration of convex $\Rbbb\Pbbb^2$ structures on surfaces.}
\author{Tengren Zhang}
\begin{document}
\maketitle

\begin{abstract}
Let $M$ be a compact surface of negative Euler characteristic and let $\Cmf(M)$ be the deformation space of convex real projective structures on $M$. For every choice of pants decomposition for $M$, there is a well known parameterization of $\Cmf(M)$ known as the Goldman parameterization. In this paper, we study how some geometric properties of the real projective structure on $M$ degenerate as we deform it so that the internal parameters of the Goldman parameterization leave every compact set while the boundary invariants remain bounded away from zero and infinity.
\end{abstract}

\section{Introduction} 
Let $M$ be a closed orientable surface of genus $g\geq 2$, and consider the deformation space of convex $\Rbbb\Pbbb^2$ structures $\Cmf(M)$ on $M$. Topologically, we know that $\Cmf(M)$ is a cell of dimension $16g-16$ by Goldman \cite{Go1}. Moreover, in \cite{Go1}, Goldman proved that, just as in the case of hyperbolic structures on $M$, convex $\Rbbb\Pbbb^2$ structures on $M$ are given by specifying convex $\Rbbb\Pbbb^2$ structures on the pairs of pants in a pants decomposition of $M$ and then assembling them together. This allowed him to parameterize $\Cmf(M)$ in a way similar to the Fenchel-Nielsen parameterization of Teichm\"uller space $\Tmf(M)$. 

To define the Goldman parameterization of $\Cmf(M)$, we first need to choose a pants decomposition $\Pmc$ for $M$. There are then three kinds of parameters in the Goldman parameterization; the twist-bulge parameters, the boundary invariants and the internal parameters. The twist-bulge parameters describe how to glue pairs of pants together, and are the analog of the Fenchel-Nielsen twist coordinates. The boundary invariants contain the eigenvalue information of the holonomy about each simple closed curve in $\Pmc$, and correspond to the Fenchel-Nielsen length coordinates. However, unlike the hyperbolic case, specifying only the boundary invariants and the twist-bulge parameters is insufficient to nail down a point in $\Cmf(M)$. The internal parameters are thus designed to describe this ``residual deformation". For every pair of pants in the pants decomposition of $M$, we have two internal parameters, which take values in a $2$-cell $\Rmf$ in $\Rbbb^2$.

Choose a Goldman parameterization of $\Cmf(M)$, and consider a sequence in $\Cmf(M)$ so that all the boundary invariants vary within a compact set, but every pair of internal parameters eventually leave every compact set in $\Rmf$. We call such a sequence a \emph{Goldman sequence}. One should think of a Goldman sequence as a deformation of the convex $\Rbbb\Pbbb^2$ structure on $M$ ``away" from the image of the natural embedding $\Tmf(M)\subset\Cmf(M)$, also known as the \emph{Fuchsian locus}. The following is the main theorem of this paper.

\begin{thm*} 
Let $\Pmc$ be a pants decomposition on $M$ and choose a Goldman parameterization of $\Cmf(M)$ that is compatible with $\Pmc$. Let $\{\Mpc_j\}_{j=1}^\infty$ be a Goldman sequence in $\Cmf(M)$. Then the following hold:
\begin{enumerate}[(1)]
\item the length of the shortest (in the Hilbert metric) homotopically non-trivial closed curve in $\Mpc_j$ that is not homotopic to a multiple of a curve in $\Pmc$ grows arbitrarily large as $j$ approaches $\infty$.
\item the topological entropy of $\Mpc_j$ converges to $0$ as $j$ approaches $\infty$.
\end{enumerate}
\end{thm*}

Here, the \emph{Hilbert metric} is a canonical Finsler metric that one can define on any convex $\Rbbb\Pbbb^2$ surface, and the \emph{topological entropy} is a quantification of the ``complexity" of the geodesic flow on the unit tangent bundle of the convex $\Rbbb\Pbbb^2$ surface. These are discussed more carefully in Section \ref{thehilbertmetric} and Section \ref{entropygeodesicflow} respectively.  A slightly more general version of the above theorem (allowing for surfaces with boundary) is stated as Theorem \ref{mainthm1}.

This theorem has several interesting implications, which we list here. The general theme of these corollaries is to highlight the differences between $\Tmf(M)$ and $\Cmf(M)$.

There is a natural action of the mapping class group of $M$, denoted $\Mod(M)$, on any deformation space of $(X,G)$-structures on $M$ by pre-composition with the marking. In the case when this deformation space is $\Tmf(M)$, there are two well-known results. The first is known as \emph{Mumford compactness} \cite{Mu1}, which states that for any $\epsilon>0$, the quotient of
\[\Tmf(M)_\epsilon:=\{\Mpc\in\Tmf(M):\text{any closed curve in } \Mpc\text{ has length at least }\epsilon\}\]
by $\Mod(M)$ is compact. A corollary of (1) of the main theorem (via an easy diagonalization argument) is that there exists a sequence $\{\Mpc_j\}_{j=1}^\infty$ in $\Cmf(M)$ such that the length of the shortest closed curve in $\Mpc_j$ grows arbitrarily large. Since the length of the shortest closed curve is a continuous function on $\Cmf(M)$ that is invariant under the action of $\Mod(M)$, this implies that in contrast to the case of $\Tmf(M)$, Mumford compactness fails when the deformation space is $\Cmf(M)$. 

\begin{cor*}
For any $\epsilon>0$, the quotient of
\[\Cmf(M)_\epsilon:=\{\Mpc\in\Cmf(M):\text{any closed curve in }\Mpc\text{ has length at least }\epsilon\}\]
by $\Mod(M)$ is not compact. 
\end{cor*}
This corollary can also be obtained by applying Proposition 3.4 of Benoist-Hulin \cite{BeHu1} to Loftin's work in \cite{Lo2}.

The second is a result by Abikoff \cite{Ab1}, who proved that the action of $\Mod(M)$ on $\Tmf(M)$ extends continuously to the augmented Teichm\"uller space $\widehat{\Tmf(M)}$, and the quotient $\widehat{\Tmf(M)}/\Mod(M)$ is a compactification of $\Tmf(M)/\Mod(M)$. By considering an algebraic construction of $\widehat{\Tmf(M)}$, one can define a natural analog of this augmentation for $\Cmf(M)$, on which the natural action of $\Mod(M)$ on $\Cmf(M)$ extends continuously. (Canary-Storm \cite{CaSt1} constructed such an analog for the deformation space of Kleinian surface groups, which can also be done in the convex $\Rbbb\Pbbb^2$ structures setting.) A consequence of (1) of our theorem is that the quotient of this analogous augmentation by $\Mod(M)$ is not compact.

Also, Crampon \cite{Cr1} proved that the topological entropy of the convex $\Rbbb\Pbbb^2$ structures in $\Cmf(M)$ are bounded above by $1$, and that the value $1$ is achieved if and only if the structure lies in the Fuchsian locus. Later, Nie \cite{Ni1} showed that one can find a sequence in $\Cmf(M)$ so that the topological entropy converges to $0$. (In fact, their results hold for convex real projective structures on some manifolds of higher dimensions as well.) The next corollary of our theorem is then the natural next step in this progression of questions, and gives a positive answer to a question asked by Crampon and Marquis. (Questions 13 of \cite{Ques}.)

\begin{cor*}
For any $\alpha\in[0,1]$, there is a diverging sequence of convex projective structures so that the topological entropy of the structures along this sequence converges to $\alpha$.
\end{cor*}

Finally, our theorem also implies how several other geometric properties, such as the area of the convex $\Rbbb\Pbbb^2$ surface, degenerate along Goldman sequences.

\begin{cor*}
Let $\{\Mpc_j\}_{j=1}^\infty$ be a Goldman sequence in $\Cmf(M)$, then the area of $\Mpc_j$ grows to $\infty$ as $j$ approaches $\infty$.
\end{cor*}

Now, we will give a brief description of the strategy to prove the main theorem. Let $\Mpc$ be any convex $\Rbbb\Pbbb^2$ structure in $\Cmf(M)$, i.e. a marked convex $\Rbbb\Pbbb^2$ surface. The key behind the proof is the observation that we can write the Hilbert distance between certain pairs of points in $\Mpc$ in terms of the Goldman parameters for $\Mpc$. Moreover, the formulas for these distances are relatively simple, so we can understand how they change as we vary the Goldman parameters. With this in mind, we find a way to decompose each closed curve in $\Mpc$ into geodesic segments whose lengths we can bound from below by the distances between these special pairs of points. Fortuitously, the lower bound that we obtain this way goes to infinity as we deform the convex projective structure along Goldman sequences. This gives us (1). 

To obtain (2), we use the fact that the geodesic flow corresponding to the convex $\Rbbb\Pbbb^2$ structures on $M$ is Anosov, and that the non-wandering set for this flow is all of $T^1M$. The work of Bowen \cite{Bo1} then allows us to compute the topological entropy of the geodesic flow by the asymptotic exponential growth rate of orbits. It turns out that the lower bound in (1) is strong enough to yield an upper bound on the topological entropy, which vanishes as we deform along Goldman sequences.

The rest of this paper is divided into two sections. In Section \ref{preliminaries}, we give a brief introduction to the main objects of study in this paper and explain the tools used to prove the main theorem. In Section \ref{maincontent}, we will give the proof of the main theorem, as well as state and prove a couple of corollaries.

\begin{acknowledgements}
This work has benefitted from conversations with Gye-Seon Lee while he and the author were at the Center for Quantum Geometry of Moduli Spaces during August of 2013. The author also especially wishes to thank Richard Canary for introducing him to this subject, and the many fruitful discussions they have had in the course of writing this paper. Finally, the author is very grateful for the referee's careful reading of this paper and his/her many helpful comments. 
\end{acknowledgements}

\section{Preliminaries}\label{preliminaries}

\subsection{Convex $\Rbbb\Pbbb^2$ structures}\label{convexprojectivestructuresonsurfaces}

In this subsection, we will define the main objects of study in this paper, and introduce some of their well-known properties. Before we do so, we will make some comments on terminology and notation. For the rest of this paper, we will only consider orientable surfaces that admit a pants decomposition. Hence, we will use the word ``surface" to mean ``compact smooth orientable surface with negative Euler characteristic". Also, we will always use $M$ to denote a surface (without any projective structure) and $P$ to denote a smooth pair of pants. By an oriented curve in $M$, we will mean an equivalence class of continuous injective maps $\eta:[0,1]\to M$ (or $\eta:S^1\to M$ if $\eta$ is closed), where $\eta$ is equivalent to $\eta'$ if they have the same image and $\eta$ is homotopic to $\eta'$. A (unoriented) curve in $M$ is then an equivalence class of continuous injective maps $\eta:[0,1]\to M$ (or $\eta:S^1\to M$ if $\eta$ is closed), where $\eta$ is equivalent to $\eta'$ if they have the same image and $\eta$ is homotopic to either $\eta'$ or $\eta'$ with its parameterization reversed. We will denote both oriented and unoriented curves by a choice of a representative. However, we will also abuse notation by denoting the image of $\eta$ by $\eta$. This ambiguity is introduced so as to simplify notation; moreover, it should be clear from context which $\eta$ we are referring to. 

The deformation spaces that we will be considering in this paper are of surfaces modeled locally on $\Rbbb\Pbbb^2$, with an additional convexity condition. First, recall that the group of projective transformations on $\Rbbb\Pbbb^2$ is the group $PGL(3,\Rbbb):=GL(3,\Rbbb)/\Rbbb^*$, which can be identified with $SL(3,\Rbbb)$ by choosing the unique representative in each equivalence class in $PGL(3,\Rbbb)$ which has determinant $1$. 

\begin{definition}\label{projectivestructure}
Let $M$ be a surface (possible with boundary). An \emph{$\Rbbb\Pbbb^2$ structure} $\Mpc$ on $M$ is a maximal atlas of smooth charts $\{\psi_\alpha:U_\alpha\to\Rbbb\Pbbb^2\}$ such that 
\begin{enumerate}[(1)]
\item If $U_\alpha\cap U_\beta$ is nonempty, then the transition maps $\psi_\alpha\circ\psi_\beta^{-1}:\psi_\beta(U_\alpha\cap U_\beta)\to\psi_\alpha(U_\alpha\cap U_\beta)$ are restrictions of projective transformations on $\Rbbb\Pbbb^2$ to each component of $U_\alpha\cap U_\beta$. 
\item $\psi_\alpha$ maps each connected component of $\partial M\cap U_\alpha$ to a line segment in $\Rbbb\Pbbb^2$.
\end{enumerate}
The smooth surface $M$ equipped with an $\Rbbb\Pbbb^2$ structure is called an \emph{$\Rbbb\Pbbb^2$ surface}. Two $\Rbbb\Pbbb^2$ structures $\Mpc$ and $\Mpc'$ are \emph{isotopic} if there is a diffeomorphism $f:M\to M$ that is isotopic to the identity, so that $\Mpc'=\{\psi_\alpha\circ f:\psi_\alpha\in\Mpc\}$. \end{definition} 

Choose an $\Rbbb\Pbbb^2$ structure $\Mpc$ on $M$, fix any point $x\in M$ and choose $\xtd\in\Mtd$ so that $\Pi(\xtd)=x$, where $\Pi:\Mtd\to M$ is a universal covering. Given any initial germ $\psi$ of a chart in $\Mpc$ at the point $x$, we can construct a unique local diffeomorphism $d_\Mpc:\Mtd\to\Rbbb\Pbbb^2$ and a unique group homomorphism $h_\Mpc:\pi_1(M)\to SL(3,\Rbbb)$ such that the germ of $d_\Mpc$ at $\xtd$ is $\psi\circ\Pi$ and $d_\Mpc$ is $h_\Mpc$-equivariant. Here, $d_\Mpc$ is called the \emph{developing map}, $h_\Mpc$ is called the \emph{holonomy representation}, and the pair $(d_\Mpc,h_\Mpc)$ is known as the \emph{developing pair}. 

If we do not make a choice of the initial germ $\psi$, then the developing pair $(d_\Mpc,h_\Mpc)$ is only well-defined up to the action of $SL(3,\Rbbb)$, where $SL(3,\Rbbb)$ acts on the developing map by post composition and on the holonomy representation by conjugation. Furthermore, if $\Mpc$ and $\Mpc'$ are isotopic, then there is some $X\in SL(3,\Rbbb)$ so that $h_\Mpc=c_X\circ h_{\Mpc'}$, where $c_X:SL(3,\Rbbb)\to SL(3,\Rbbb)$ is conjugation by $X$, and $d_\Mpc$ is isotopic to $X\circ d_{\Mpc'}$ via $h_\Mpc$-equivariant diffeomorphisms. For more details, refer to Sections 2 and 3 of Goldman \cite{Go2} or Section 4 of Goldman \cite{Go3}. Since we will only be considering $\Rbbb\Pbbb^2$ structures up to isotopy, our developing pairs will only be well-defined up to $SL(3,\Rbbb)$ action and isotopy of the developing map. We will henceforth simplify terminology by referring to an isotopy class of $\Rbbb\Pbbb^2$ structures as an $\Rbbb\Pbbb^2$ structure.

Next, we want to define what it means for an $\Rbbb\Pbbb^2$ structure on a surface to be convex. Before we do so, we need to introduce a couple more definitions.

\begin{definition}
A subset $\Omega$ in $\Rbbb\Pbbb^2$ is \emph{properly convex} if 
\begin{enumerate}[(1)]
\item For any pair of distinct points $x,y$ in $\Omega$, there is a line segment between $x$ and $y$ that is entirely contained in $\Omega$.
\item There is a line $L$ in $\Rbbb\Pbbb^2$ that does not intersect the closure $\overline{\Omega}$ of $\Omega$.
\end{enumerate}
A properly convex subset $\Omega$ is \emph{strictly convex} if $\partial\Omega$ does not contain any line segments.
\end{definition}

\begin{definition}
We say that $X\in SL(3,\Rbbb)$ is \emph{positive hyperbolic} if $X$ is diagonalizable with positive, pairwise distinct eigenvalues. We will denote by $\Hbf\ybf\pbf^+$ the set of positive hyperbolic elements in $SL(3,\Rbbb)$. For any $X\in\Hbf\ybf\pbf^+$, we define its \emph{attracting fixed point} and \emph{repelling fixed point} to be the points in $\Rbbb\Pbbb^2$ corresponding to the eigenvectors in $\Rbbb^3$ of $X$ with the largest and smallest eigenvalues respectively. Any line segment in $\Rbbb\Pbbb^2$ connecting the attracting and repelling fixed points of $X$ is called an \emph{axis} of $X$.
\end{definition}

For any positive hyperbolic $X\in SL(3,\Rbbb)$, denote the smallest eigenvalue of $X$ by $\lambda_X$ and the sum of the other two eigenvalues by $\tau_X$. Then define
\[\Rmf:=\{(\lambda_X,\tau_X):X\in SL(3,\Rbbb) \text{ is positive hyperbolic }\}\]
and observe that this set can also be written as the $2$-cell 
\[\bigg\{(\lambda,\tau)\in\Rbbb^2:0<\lambda<1,\, \frac{2}{\sqrt{\lambda}}<\tau<\frac{1}{\lambda^2}+\lambda\bigg\}.\]
With these definitions, we can state the convexity condition mentioned above.

\begin{definition}
\begin{enumerate}[1]
\item An $\Rbbb\Pbbb^2$ structure $\Mpc$ is called \emph{convex} if $d_\Mpc$ is a diffeomorphism onto a properly convex subset of $\Rbbb\Pbbb^2$, and every non-identity element in $\pi_1(M)$ is mapped by $h_\Mpc$ into $\Hbf\ybf\pbf^+$. 
\item The \emph{deformation space of convex $\Rbbb\Pbbb^2$ structures} on $M$, denoted $\Cmf(M)$, is the set of (isotopy classes) of convex $\Rbbb\Pbbb^2$ structures on $M$. 
\item A \emph{convex $\Rbbb\Pbbb^2$ surface} is the quotient of a properly convex subset $\Omega$ by a group of projective transformations that acts freely, properly discontinuously and cocompactly on $\Omega$.
\end{enumerate}
\end{definition} 

Intuitively, one can think of $\Cmf(M)$ as the set of ``essentially different" ways to put a convex $\Rbbb\Pbbb^2$ structure on $M$, subject to some ``hyperbolicity" conditions on the holonomy about the boundary components of $M$. In the case when $M$ is a closed surface and $d_\Mpc$ is a diffeomorphism onto a properly convex subset of $\Rbbb\Pbbb^2$, then the image of $d_\Mpc$ is strictly convex with $C^1$ boundary (Kuiper \cite{Ku1} and Benzecri \cite{Bz1}; also see Theorem 1.1 of Benoist \cite{Be1} for a more general statement), and every non-identity element in $\pi_1(M)$ is automatically mapped by $h_\Mpc$ into $\Hbf\ybf\pbf^+$. (Theorem 3.2 of Goldman \cite{Go1}.) 

The Klein model of hyperbolic space allows one to realize every hyperbolic structure on a surface as a convex projective structure. This thus induces a natural embedding of Teichm\"uller space $\Tmf(M)$ into $\Cmf(M)$, whose image is called the \emph{Fuchsian locus}.

For any convex $\Rbbb\Pbbb^2$ structure $\Mpc$, we will denote the image of $d_\Mpc$ by $\Omega_\Mpc$. In this case, $h_\Mpc$ is also injective, and $d_\Mpc$ descends to a diffeomorphism $\widehat{d_\Mpc}:M\to\Omega_\Mpc/h_\Mpc(\pi_1(M))=:\Mmf$, which is also known as a \emph{marking}. Since $\Mmf$ is a convex $\Rbbb\Pbbb^2$ surface, we can think of a convex $\Rbbb\Pbbb^2$ structure $\Mpc$ on $M$ as a \emph{marked convex $\Rbbb\Pbbb^2$ surface}, i.e. $\Mpc=[f,\Mmf]$, where $\Mmf$ is a convex $\Rbbb\Pbbb^2$ surface and $f:M\to\Mmf$ is the marking (which is well-defined up to isotopy). We will switch freely between these two different ways of thinking about $\Mpc$.

Let $\Mmf$ be a convex $\Rbbb\Pbbb^2$ surface and $\Pi:\widetilde{\Mmf}\to \Mmf$ a covering map. A \emph{closed line} $\eta$ in $\Mmf$ is a closed curve that lifts to a straight line in $\widetilde{\Mmf}\subset\Rbbb\Pbbb^2$. It is easy to check that the following proposition holds, so we omit the proof.

\begin{prop}\label{properties}
Let $\Mpc=[f,\Mmf]$ be a convex $\Rbbb\Pbbb^2$ structure on $M$. Then:
\begin{enumerate}[(1)]
\item For any $X\in\pi_1(M)\setminus\{Id\}$, the attracting and repelling fixed points of $h_\Mpc(X)$ lie on $\partial\Omega_\Mpc$.
\item Every homotopically  non-trivial closed curve in $\Mmf$ is freely homotopic to a unique closed line in $\Mmf$.
\item Every path in $\Mmf$ is homotopic (relative to its end points) to a unique line segment in $\Mmf$.
\end{enumerate}
\end{prop}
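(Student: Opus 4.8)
The plan is to prove the three assertions of Proposition~\ref{properties} in sequence, each following from the properly convex structure of $\Omega_\Mpc$ together with the positive hyperbolicity of the holonomy. Throughout, let $\Omega := \Omega_\Mpc \subset \Rbbb\Pbbb^2$ and $\Gamma := h_\Mpc(\pi_1(M))$, so that $\Mmf = \Omega/\Gamma$ and $\Omega$ is properly convex with $\Gamma$ acting freely, properly discontinuously and cocompactly.

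First I would establish (1). Fix $X \in \pi_1(M)\setminus\{Id\}$ and write $g = h_\Mpc(X) \in \Hbf\ybf\pbf^+$, with attracting and repelling fixed points $p^+, p^-$. Since $g$ acts on $\Omega$ and $\Omega$ is open, pick any $x \in \Omega$; then $g^n x \to p^+$ and $g^{-n} x \to p^-$ as $n \to \infty$ (because $p^+$, $p^-$ are the eigenlines of largest and smallest eigenvalue, and the middle eigendirection is not approached from a generic point). Hence $p^+, p^- \in \overline{\Omega}$. To see they lie on the boundary and not in the interior, note that if $p^+$ were an interior point then $g$ would fix an interior point of $\Omega$; but $\Gamma$ acts freely and properly discontinuously on $\Omega$, so no nontrivial element of $\Gamma$ fixes a point of $\Omega$ — contradiction. (Alternatively: properly discontinuous action forces the orbit of an interior point to be discrete, incompatible with $g^n x \to p^+$ if $p^+ \in \Omega$.) The same argument applies to $p^-$, giving $p^+, p^- \in \partial\Omega$.

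Next, for (2), let $\eta$ be a homotopically nontrivial closed curve in $\Mmf$, representing a conjugacy class $[X]$ in $\pi_1(M)$, and set $g = h_\Mpc(X)$. By part (1), $p^+$ and $p^-$ lie on $\partial\Omega$, and by proper convexity the (unique, since $\Omega$ contains no full projective line) open line segment $\ell$ joining $p^-$ to $p^+$ through $\Omega$ is contained in $\Omega$ and is $g$-invariant. Then $\ell/\langle g\rangle$ is a closed line in $\Mmf$, and I claim it is freely homotopic to $\eta$: lift $\eta$ to a path in $\Omega$ from a point $x$ to $g x$, and lift $\ell$ to a path from $x' \in \ell$ to $g x'$; convexity of $\Omega$ lets one linearly interpolate between these lifts equivariantly, descending to a free homotopy in $\Mmf$. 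For uniqueness, if $\ell'$ were another closed line freely homotopic to $\eta$, its lift to $\widetilde{\Mmf}$ would be a $g$-invariant straight line in $\Omega$; a $g$-invariant line must have its two endpoints fixed by $g$ in $\partial\Omega$, and the only such pair giving a segment meeting $\Omega$ is $\{p^+, p^-\}$ (the third eigenline is excluded since $\Omega$ is properly convex and lies on one side of a supporting structure there), so $\ell' = \ell$.

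Finally, for (3), let $c$ be a path in $\Mmf$ from $a$ to $b$; lift it to a path $\widetilde c$ in $\Omega$ from $\widetilde a$ to $\widetilde b$. Since $\Omega$ is properly convex, the straight segment $[\widetilde a, \widetilde b]$ lies in $\Omega$, and straight-line interpolation gives a homotopy rel endpoints from $\widetilde c$ to $[\widetilde a, \widetilde b]$ inside $\Omega$, which descends to a homotopy rel endpoints in $\Mmf$ from $c$ to the line segment $[\widetilde a,\widetilde b]/\!\!\sim$. Uniqueness follows because two line segments in $\Mmf$ homotopic rel endpoints lift (after fixing the same lift $\widetilde a$ of the initial point) to segments in $\Omega$ with the same endpoints $\widetilde a, \widetilde b$, and in a properly convex domain there is only one projective segment between two points. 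I expect the only genuinely delicate point to be handling the fact that a projective ``segment'' between two points of $\partial\Omega$ is well-defined — i.e. ruling out the complementary arc — which is exactly what proper convexity (the existence of a line $L$ missing $\overline\Omega$) guarantees; everything else is a routine lift-and-interpolate argument, which is why the authors omit the proof.
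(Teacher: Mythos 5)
The paper omits the proof of this proposition, so there is nothing to compare against; the question is only whether your argument actually works. Parts (1) and (3) are fine. In (2), however, there are two places where the argument as written does not quite close up, and both are worth flagging because they are precisely the kind of ``easy to check'' points that one is being asked to check.

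First, ``by proper convexity the open line segment $\ell$ joining $p^-$ to $p^+$ through $\Omega$ is contained in $\Omega$'' is not a consequence of proper convexity alone. Proper convexity does give you a single well-defined closed segment $[p^-,p^+]\subset\overline\Omega$ (pass to an affine chart missing $\overline\Omega$), but it does not force the open segment into the interior: for a properly convex triangle the open edge between two vertices lies in $\partial\Omega$, not in $\Omega$. What you actually need here is strict convexity of $\Omega$, which for a closed surface is Kuiper--Benzecri (cited in the paper). For a surface with boundary one must also account for the case where $X$ is peripheral, in which case the ``closed line'' is the boundary geodesic itself and the axis lies on $\partial\Omega$ rather than in its interior. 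Similarly, ruling out $p^0$ as an endpoint of a $g$-invariant line meeting $\Omega$ (your uniqueness step) is not just ``$\Omega$ lies on one side of a supporting structure''; the clean route is to observe that if $p^0\in\partial\Omega$, then strict convexity puts the open segments $(p^0,p^+)$ and $(p^-,p^+)$ both in $\Omega$, so the unique ($C^1$) supporting line at $p^+$, which must be $g$-invariant, would have to be one of the two fixed lines through $p^+$ — yet both of these meet $\Omega$, a contradiction.

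Second, the linear interpolation you propose to produce the free homotopy does not descend as claimed. If $\widetilde\eta$ runs from $x$ to $gx$ and $\widetilde\ell'$ from $x'$ to $gx'$, the straight-line homotopy $\sigma_s(t)=(1-s)\widetilde\eta(t)+s\widetilde\ell'(t)$ satisfies $\sigma_s(1)=(1-s)gx+s\,gx'$, whereas $g\sigma_s(0)=g\bigl((1-s)x+s x'\bigr)$; these disagree because $g$ is projective, not affine in your chart, so $\sigma_s$ is not a loop downstairs for $0<s<1$. The fix is standard: either invoke that $\Omega$ is contractible, so free homotopy classes of loops in $\Mmf$ are exactly conjugacy classes in $\pi_1(\Mmf)$, and both $\eta$ and $\ell/\langle g\rangle$ represent $[g]$; or do the interpolation in two equivariant steps — first straighten $\widetilde\eta$ rel its endpoints to the segment $[x,gx]$ (this descends, since it is a homotopy rel endpoints), then slide the basepoint along a path $\gamma$ from $x$ to $x'$, replacing the loop by the projection of $[\gamma(s),g\gamma(s)]$ at time $s$, which is visibly $g$-equivariant. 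With these two repairs your proof of (2) is complete; the rest of the proposal is correct.
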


\subsection{Cross ratio}\label{Cross ratio}

In this subsection, we will set up notation and establish some properties of a projective invariant which is commonly known as the cross ratio.

\begin{definition}\label{cross ratio def}
Let $L_1,L_2,L_3,L_4$ be four lines in $\Rbbb\Pbbb^2$ that intersect at a common point $o\in\Rbbb\Pbbb^2$, so that no three of the four $L_i$ agree. Choose vectors $v_o,v_1,\dots,v_4\in\Rbbb^3$ so that $o$ is the projectivization of $\Span_\Rbbb(v_o)$, and for all $i=1,\dots,4$, $L_i$ is the projectivization of $\Span_\Rbbb(v_o,v_i)$. Then define the \emph{cross ratio} 
\[(L_1,L_2,L_3,L_4):=\frac{v_o\wedge v_1\wedge v_3}{v_o\wedge v_1\wedge v_2}\cdot\frac{v_o\wedge v_4\wedge v_2}{v_o\wedge v_4\wedge v_3},\]
where the right hand side is evaluated as a number in the one point compactification $\Rbbb\cup\{\infty\}$ of $\Rbbb$ via a choice of linear identification between $\displaystyle \bigwedge^3_{i=1}\Rbbb^3$ and $\Rbbb$.
\end{definition}

One can easily verify that the cross ratio depends neither on the choice of $v_o, v_1,\dots,v_4$, nor on the linear identification between $\displaystyle \bigwedge^3_{i=1}\Rbbb^3$ and $\Rbbb$. Also, the condition that no three of the four $L_i$ agree ensures that if any of the two terms in the numerator on the right hand side evaluate to $0$, then both terms in the denominator do not evaluate to $0$, and vice versa. Hence, the cross ratio is well-defined. It is also clear from definition that 
\[(X\cdot L_1,X\cdot L_2,X\cdot L_3,X\cdot L_4)=(L_1,L_2,L_3,L_4)\] 
for all $X\in SL(3,\Rbbb)$, so the cross ratio is a projective invariant. If $a_i\neq o$ are points in $\Rbbb\Pbbb^2$ that lie in $L_i$, then we also use the notation
\[(a_1,a_2,a_3,a_4)_o:=(L_1,L_2,L_3,L_4)\]
when convenient. 

In the case where the four points $a_1,a_2,a_3,a_4$ lie on a single line, we have the following interpretation of the cross ratio.

\begin{prop}\label{crossratioline}
Let $a_1,a_2,a_3,a_4$ be collinear points in $\Rbbb\Pbbb^2$ so that no three of the four of them agree, and let $o\in\Rbbb\Pbbb^2$ be a point that is not collinear with $a_1,a_2,a_3,a_4$. Let $U$ be any affine chart of $\Rbbb\Pbbb^2$ containing $o$ and $a_i$ for $i=1,\dots,4$. Then 
\[(a_1,a_2,a_3,a_4)_o=\frac{|a_1-a_3||a_2-a_4|}{|a_1-a_2||a_3-a_4|},\] 
where $|p_1-p_2|$ is the Euclidean distance between $p_1,p_2\in U$ for any choice of Euclidean metric on $U$ that is compatible with the affine structure.
\end{prop}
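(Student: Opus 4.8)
The plan is to reduce the projective cross ratio of four collinear points (viewed from an external point $o$) to the classical affine cross ratio, and then to a ratio of Euclidean distances. I would work in the affine chart $U$ and choose a concrete linear lift: fix a linear functional $\phi$ cutting out the hyperplane at infinity of $U$, and lift each point $p\in U$ to the unique vector $\hat p\in\Rbbb^3$ with $\phi(\hat p)=1$, so that the affine coordinates of $p$ record the remaining two components of $\hat p$. In particular $\hat a_i$ is lifted this way, and I take $v_o=\hat o$, $v_i=\hat a_i$ in Definition \ref{cross ratio def}; these satisfy $L_i=\Span_\Rbbb(v_o,v_i)$ projectivized, as required.

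The key computational step is to evaluate each triple wedge $v_o\wedge v_i\wedge v_j=\hat o\wedge\hat a_i\wedge\hat a_j$. Using the identification $\bigwedge^3\Rbbb^3\cong\Rbbb$ given by the standard determinant, this is $\det[\hat o,\hat a_i,\hat a_j]$. I would then perform column operations: subtract $\hat o$ from each of the other two columns, turning the entries into $\hat a_i-\hat o$ and $\hat a_j-\hat o$, whose $\phi$-component is $0$; expanding along that row shows $\det[\hat o,\hat a_i,\hat a_j]$ equals (up to a fixed nonzero constant depending only on $\hat o$ and the chart, not on $i,j$) the $2\times2$ determinant of the affine displacement vectors $a_i-o$ and $a_j-o$ in $U$. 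Since all four $a_i$ are collinear on a line $\ell\subset U$, each displacement $a_i-o$ decomposes as a component along a fixed direction of $\ell$ plus a fixed transverse component pointing from $o$ to $\ell$; the $2\times2$ determinant of $a_i-o$ and $a_j-o$ is therefore (a fixed nonzero constant) times the signed length difference between the feet, i.e. proportional to the signed scalar $(a_i-a_j)$ measured along $\ell$. Substituting into the definition, the common constants cancel in the ratio of ratios, leaving exactly
\[
(a_1,a_2,a_3,a_4)_o=\frac{(a_1-a_3)(a_2-a_4)}{(a_1-a_2)(a_3-a_4)}
\]
as a ratio of signed lengths along $\ell$, and taking absolute values (or noting the problem statement uses $|\cdot|$) gives the claimed formula with Euclidean distances. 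I should also note that the hypothesis that no three of the $a_i$ coincide guarantees we never divide by zero and the expression lands in $\Rbbb\cup\{\infty\}$ consistently with the earlier well-definedness discussion.

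The main obstacle, such as it is, is bookkeeping rather than conceptual: one must track that the nonzero proportionality constants arising from (a) the choice of linear identification $\bigwedge^3\Rbbb^3\cong\Rbbb$, (b) the functional $\phi$ defining the chart, (c) the transverse component of $o$ relative to $\ell$, and (d) the choice of unit vector along $\ell$, are genuinely independent of the index $i$, so that they all cancel in the fourfold ratio. A clean way to organize this is to first prove the identity for one convenient normalization — e.g. $o$ the origin is not allowed since $o\notin\ell$, so instead place $\ell$ as the horizontal line $\{y=1\}$ and $o$ the origin, lift via $\phi(x,y,z)=z$ — verify the formula there by direct $2\times2$ determinant computation, and then invoke projective invariance of the cross ratio together with the fact that $PGL(3,\Rbbb)$ acts transitively on configurations of a point together with an external line to transport the identity to the general case; alternatively, since both sides transform the same way under affine maps of $U$, a direct coordinate computation in the normalized position suffices. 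Either route avoids grinding through the general determinant and isolates the only subtlety, which is the cancellation of constants.
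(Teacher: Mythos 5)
Your computation is correct and is essentially the standard one the paper has in mind; the paper explicitly omits this proof as ``another easy computation,'' so there is no official version to compare against, but your reduction of the projective wedge formula to a $2\times 2$ determinant of affine displacement vectors, followed by the decomposition $a_i - o = s_i\,\mathbf{u} + \mathbf{w}$ along $\ell$ and transverse to it, is exactly the right mechanism: the $2\times 2$ determinant collapses to $(s_i - s_j)\det[\mathbf{u},\mathbf{w}]$, and all the chart-dependent, $o$-dependent, and identification-dependent constants cancel in the fourfold ratio. Your suggested normalization ($\ell = \{y=1\}$, $o$ the origin) together with transitivity of $PGL(3,\Rbbb)$ on (external point, line) pairs is a clean alternative way to dispose of the bookkeeping.

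The only step that needs tightening is the last one, ``taking absolute values.'' What your argument actually establishes is $(a_1,a_2,a_3,a_4)_o = \dfrac{(s_1-s_3)(s_2-s_4)}{(s_1-s_2)(s_3-s_4)}$ as a \emph{signed} quantity, and from an equality $A = B$ you may conclude $|A| = |B|$ but not $A = |B|$. The signed cross ratio here can be negative: for instance if the points lie on $\ell$ in the order $a_2, a_1, a_3, a_4$, then $(s_1-s_3)(s_2-s_4) > 0$ while $(s_1-s_2)(s_3-s_4) < 0$, so the left-hand side is negative whereas the right-hand side of the proposition, being a ratio of Euclidean distances, is strictly positive. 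So the identity as literally stated fails for such orderings. This looseness is already present in the paper's own statement of Proposition \ref{crossratioline}; in the paper's actual applications (Proposition \ref{crossratioinequality}, where the points sit on $\partial\Omega$ in convex position and $o$ is chosen so that the projected order on the line is $a_1,a_2,a_3,a_4$ or $a_1,a_3,a_2,a_4$), the signed cross ratio is automatically positive and the unsigned formula is valid. It would be cleaner to either state the conclusion as a signed ratio, or add the hypothesis that the signed cross ratio is positive (equivalently, that the pair $\{a_1,a_2\}$ does not separate $\{a_3,a_4\}$ on the line), and note that this holds in the configurations actually used downstream.
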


The proof of the above proposition is another easy computation which we will omit. In particular, under the hypothesis of Proposition \ref{crossratioline}, the cross ratio $(a_1,a_2,a_3,a_4)_o$ is independent of $o$. In this case, we will use the notation $(a_1,a_2,a_3,a_4)$ in place of $(a_1,a_2,a_3,a_4)_o$. 
 
For our purposes, we will mainly be evaluating the cross ratio of points in the boundary of a properly convex domain in $\Rbbb\Pbbb^2$. Some properties of the cross ratio used in this setting are listed as the next proposition. These are simple consequences of Proposition \ref{crossratioline}, but will be very useful in the proof of our main theorem.

\begin{prop}\label{crossratioinequality}
Let $\Omega$ be a properly convex domain in $\Rbbb\Pbbb^2$, and let $o,a_1,a_2,a_3,a_4$ be pairwise distinct points that lie on $\partial \Omega$ in that order, so that the lines $L_i$ through $o$ and $a_i$ for $i=1,\dots,4$ are pairwise distinct. Also, let $I_{a_1,a_2}$ and $I_{a_3,a_4}$ be the closed subintervals of $\partial \Omega$ with endpoints $a_1$, $a_2$ and $a_3$, $a_4$ respectively, so that $o$ does not lie in $I_{a_1,a_2}$ and $I_{a_3,a_4}$. Let $x\in I_{a_1,a_2}$ and $y\in I_{a_3,a_4}$. (See Figure \ref{convexposition}.) Then the following hold:
\begin{enumerate}[(1)] 
\item $(a_1,a_2,a_3,a_4)_o\cdot (a_1,a_3,y,a_4)_o=(a_1,a_2,y,a_4)_o.$
\item $1<(a_1,a_2,a_3,a_4)_o<\infty$. 
\item $(a_1,a_2,a_3,a_4)_o\leq (a_1,x,a_3,a_4)_o$, and equality holds if and only if the line through $o$ and $x$ agrees with the line through $o$ and $a_2$.
\item $(a_1,a_2,a_3,a_4)_o\leq (a_1,a_2,y,a_4)_o$, and equality holds if and only if the line through $o$ and $y$ agrees with the line through $o$ and $a_3$.
\item $(a_1,a_2,a_3,a_4)_o\leq (x,a_2,a_3,a_4)_o$, and equality holds if and only if the line through $o$ and $x$ agrees with the line through $o$ and $a_1$.
\item $(a_1,a_2,a_3,a_4)_o\leq (a_1,a_2,a_3,y)_o$, and equality holds if and only if the line through $o$ and $y$ agrees with the line through $o$ and $a_4$.
\end{enumerate} 
\end{prop}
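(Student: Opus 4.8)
The plan is to reduce everything to Proposition \ref{crossratioline}, by projecting the five points $o, a_1, a_2, a_3, a_4$ (and the extra points $x$, $y$) from $o$ onto an auxiliary line. Concretely, choose any line $\ell$ in $\Rbbb\Pbbb^2$ not passing through $o$ and meeting each $L_i$; since $\Omega$ is properly convex we may also choose an affine chart $U$ containing $o$, all the $a_i$, and all of $\overline{\Omega}$, with a compatible Euclidean metric. Let $b_i$ be the point where $L_i$ meets $\ell$, and let $b_x$, $b_y$ be the images of $x$, $y$ under projection from $o$ onto $\ell$. Because projection from a point is a projective map and the cross ratio is a projective invariant (and by definition only depends on the four lines $L_i$), we have $(a_1,a_2,a_3,a_4)_o = (b_1,b_2,b_3,b_4)$, and similarly for every cross ratio appearing in the statement with the $a_i$'s replaced by the corresponding $b$'s. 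So each item becomes an assertion about four or five points on a single line $\ell$, where the cross ratio is given by the explicit distance ratio in Proposition \ref{crossratioline}.

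The key geometric input is to pin down the cyclic/linear order of $b_1, b_2, b_3, b_4, b_x, b_y$ on $\ell$. Here I would use convexity: since $o, a_1, a_2, a_3, a_4$ lie on $\partial\Omega$ in that cyclic order and the $L_i$ are distinct, the rays from $o$ through $a_1, a_2, a_3, a_4$ occur in this angular order within the "cone" subtended by $\overline{\Omega}$ from $o$; hence their intersections $b_1, b_2, b_3, b_4$ with $\ell$ occur in this order along $\ell$ (choosing the orientation of $\ell$ appropriately). Moreover $x \in I_{a_1,a_2}$ means the ray from $o$ through $x$ lies angularly between the rays through $a_1$ and $a_2$ — this is exactly where proper convexity of $\Omega$ is used, to guarantee the subinterval $I_{a_1,a_2}$ not containing $o$ is "seen" by $o$ inside that angular wedge — so $b_x$ lies between $b_1$ and $b_2$ on $\ell$; similarly $b_y$ lies between $b_3$ and $b_4$. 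Once the orders are fixed, items (2)--(6) are one-line consequences: writing the cross ratios as products of ratios of signed lengths along $\ell$, each inequality reduces to a monotonicity statement of the form "as a point slides monotonically along an interval, a certain ratio of lengths increases", with equality precisely at the endpoint of the interval, which gives the stated equality conditions (e.g. the line through $o$ and $x$ equals the line through $o$ and $a_2$, i.e. $b_x = b_2$). Item (1) is a pure identity: substituting the distance formula, $(b_1,b_2,b_3,b_4)(b_1,b_3,b_y,b_4) = \frac{|b_1-b_3||b_2-b_4|}{|b_1-b_2||b_3-b_4|}\cdot\frac{|b_1-b_y||b_3-b_4|}{|b_1-b_3||b_y-b_4|} = \frac{|b_1-b_y||b_2-b_4|}{|b_1-b_2||b_y-b_4|} = (b_1,b_2,b_y,b_4)$, after cancellation; this is independent of ordering.

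For (2) specifically, with $b_1 < b_2 < b_3 < b_4$ on $\ell$, one has $(b_1,b_2,b_3,b_4) = \frac{|b_1-b_3|}{|b_1-b_2|}\cdot\frac{|b_2-b_4|}{|b_3-b_4|}$, and since $|b_1 - b_3| > |b_1 - b_2| > 0$ and $|b_2 - b_4| > |b_3 - b_4| > 0$, both factors exceed $1$, so the product is strictly greater than $1$; finiteness follows because no three of the $L_i$ coincide, so all four $b_i$ are distinct and no denominator vanishes. For the monotonicity items, take (3) as the model: $(b_1, b_x, b_3, b_4) = \frac{|b_1 - b_3|}{|b_1 - b_x|}\cdot\frac{|b_x - b_4|}{|b_3 - b_4|}$ as a function of $b_x$ ranging over the interval $[b_1, b_2]$; as $b_x$ moves from $b_2$ toward $b_1$, $|b_1 - b_x|$ decreases and $|b_x - b_4|$ increases, so the expression increases, and it attains its minimum over that interval exactly at $b_x = b_2$, giving $(a_1,a_2,a_3,a_4)_o \le (a_1,x,a_3,a_4)_o$ with equality iff $b_x = b_2$, i.e. iff the line through $o$ and $x$ equals the line through $o$ and $a_2$. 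Items (4), (5), (6) are the same argument with the varying point in a different slot (and with the appropriate interval $I_{a_3,a_4}$ or $I_{a_1,a_2}$ and endpoint $a_3$, $a_1$, $a_4$ respectively). The main obstacle — and the only place where a little care is genuinely needed — is the order/convexity bookkeeping in the previous paragraph: making precise that proper convexity forces $b_x \in [b_1,b_2]$ and $b_y \in [b_3,b_4]$, and checking that the chosen affine chart and orientation of $\ell$ are consistent with all seven points; everything after that is the routine distance-ratio computation, which I would present compactly rather than in full.
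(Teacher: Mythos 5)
Your proof is correct and follows precisely the route the paper indicates — the paper asserts the proposition is a ``simple consequence of Proposition \ref{crossratioline}'' and omits the details, which you supply by projecting from $o$ onto a transversal and reducing everything to elementary distance-ratio monotonicity. The one point worth making precise (which you flag yourself) is that ``any line $\ell$ not through $o$'' is a bit too loose: $\ell$ must be chosen so that, in the affine chart containing $\overline\Omega$, it meets the actual rays from $o$ through $a_1,x,a_2,a_3,y,a_4$ (a chord of $\Omega$ works), since it is this choice that converts the cyclic order on $\partial\Omega$ into the linear order $b_1\le b_x\le b_2<b_3\le b_y\le b_4$ on $\ell$ that the monotonicity argument uses.
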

 
\begin{figure}
\includegraphics[scale=0.6]{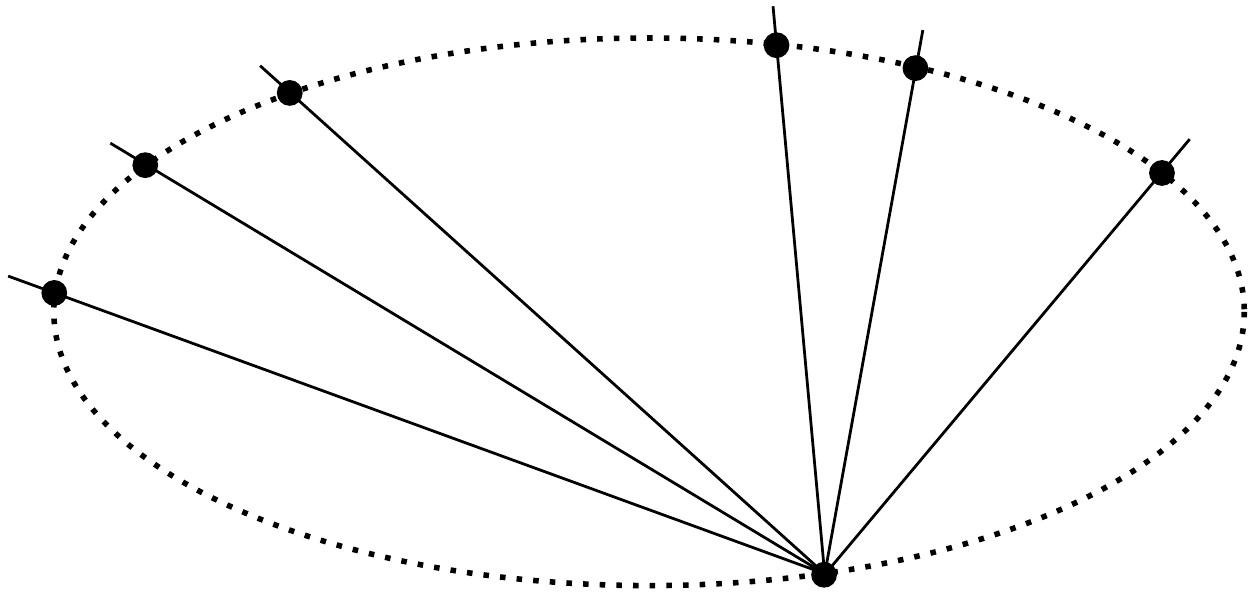}
\put (-352, 0){\makebox[0.7\textwidth][r]{$o$ }}
\put (-473, 68){\makebox[0.7\textwidth][r]{$x$ }}
\put (-336, 85){\makebox[0.7\textwidth][r]{$y$ }}
\put (-288, 72){\makebox[0.7\textwidth][r]{$a_4$ }}
\put (-369, 90){\makebox[0.7\textwidth][r]{$a_3$ }}
\put (-448, 80){\makebox[0.7\textwidth][r]{$a_2$ }}
\put (-492, 47){\makebox[0.7\textwidth][r]{$a_1$ }}
\caption{Cross ratio of points along the boundary of a properly convex domain in $\Rbbb\Pbbb^2$.}
\label{convexposition}
\end{figure}

\subsection{The Hilbert metric}\label{thehilbertmetric} 

Using the cross ratio, we can define a metric, called the Hilbert metric, on the interior of any properly convex subset of $\Rbbb\Pbbb^2$. 

\begin{definition}
Let $\Omega\subset\Rbbb\Pbbb^2$ be a properly convex domain. For any two points $b$, $c$ in $\Omega$, let $L$ be the line in $\Rbbb\Pbbb^2$ through $b$ and $c$, and let $a$ and $d$ be the two points where $L$ intersects $\partial \Omega$, such that $a,b,c,d$ lie on $L$ in that order. The \emph{Hilbert distance} between $b$ and $c$ is 
\[Hd_\Omega(b,c):=\frac{1}{2}\log(a,b,c,d).\]
For any rectifiable path $\gamma$ in $\Omega$, we will denote the length of $\gamma$ by $l_\Omega(\gamma)$.
\end{definition}

Since the Hilbert metric is defined using the cross ratio, it is invariant under projective transformations that preserve $\Omega$. Moreover, the Hilbert metric is a Finsler metric, i.e. it is given by a norm $||\cdot||_x$ on the tangent space at every $x\in \Omega$, which varies smoothly with $x$. 

To obtain an explicit formula for this norm, choose an affine chart $U$ of $\Rbbb\Pbbb^2$ that contains $\overline \Omega$, and equip $U$ with an Euclidean metric. This induces a norm $|\cdot|_x$ on the tangent space of every $x\in \Omega$. For any tangent vector $v$ at a point $x\in \Omega$, let $\gamma$ be the oriented line through $x$ so that $v$ is tangential to $\gamma$ at $x$, and let $x^+$ and $x^-$ be the two points where $\gamma$ intersects $\partial \Omega$. Then define
\[||v||_x:=\frac{|v|_x}{2}(\frac{1}{|x-x^+|}+\frac{1}{|x-x^-|}).\]
where $|x-x^+|$, $|x-x^-|$ are the Euclidean distances between $x$ and $x^+$, $x$ and $x^-$ respectively. One can verify that $||v||_x$ depends neither on the choice of the affine chart $U$ nor on the choice of Euclidean metric on $U$, and that this norm gives rise to the Hilbert metric on $\Omega$.

The next proposition gives several properties of the Hilbert metric. We will omit the proof as they follow from the properties of the cross ratio discussed in Section \ref{Cross ratio}. 

\begin{prop}\label{hilbertmetricprop}
Let $\Omega',\Omega$ be open properly convex domains in $\Rbbb\Pbbb^2$ such that $\Omega'\subset \Omega$ and $\Omega$ is strictly convex. Let $b,c$ be a pair of distinct points in $\Omega'$, let $L$ be the line in $\Rbbb\Pbbb^2$ through $b$ and $c$, and let $\gamma$ be the line segment in $\Omega'$ between $b$ and $c$. The following hold:
\begin{enumerate}[(1)]
\item The line segment $\gamma$ is rectifiable, and $l_{\Omega'}(\gamma)=Hd_{\Omega'}(b,c)$.
\item $Hd_{\Omega'}(b,c)\geq Hd_\Omega(b,c)$, and equality holds if and only if $L\cap\partial \Omega'=L\cap\partial \Omega$.
\item If $\eta$ is a rectifiable path in $\Omega$ between $b$ and $c$, then $l_\Omega(\eta)\geq l_\Omega(\gamma)$ and equality holds if and only if $\eta=\gamma$.
\item Let $L_1$ and $L_2$ be line segments in $\Omega$ with endpoints in $\partial \Omega$. If $\partial \Omega$ has regularity $C^1$, then either $Hd_\Omega(L_1,L_2)=0$ or there is a unique pair of points $p_1\in L_1$ and $p_2\in L_2$ so that $Hd_\Omega(L_1,L_2)=Hd_\Omega(p_1,p_2)$.
\end{enumerate}
\end{prop}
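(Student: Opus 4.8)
The plan is to deduce all four items directly from the cross-ratio machinery of Section \ref{Cross ratio} together with the explicit Finsler formula for $\|\cdot\|_x$. For item (1), I would parameterize the line segment $\gamma$ from $b$ to $c$, say $\gamma(t)$ with $\gamma(0)=b$, $\gamma(1)=c$, and let $a,d$ be the endpoints of $L\cap\partial\Omega'$ with $a,b,c,d$ in order. Using the Finsler formula $\|v\|_x=\frac{|v|_x}{2}\bigl(\frac{1}{|x-x^+|}+\frac{1}{|x-x^-|}\bigr)$ along the straight line (where $x^\pm$ are exactly $a$ and $d$ for every $x$ on $\gamma$), the length integral $\int_0^1\|\gamma'(t)\|_{\gamma(t)}\,dt$ becomes a one-variable integral in an affine chart; choosing a coordinate so that $a,b,c,d$ have coordinates $0,\beta,\xi,\delta$, the integrand is $\frac{1}{2}\bigl(\frac{1}{s}+\frac{1}{\delta-s}\bigr)$ and integrating from $\beta$ to $\xi$ gives $\frac{1}{2}\log\frac{\xi(\delta-\beta)}{\beta(\delta-\xi)}$, which by Proposition \ref{crossratioline} is $\frac{1}{2}\log(a,b,c,d)=Hd_{\Omega'}(b,c)$. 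Rectifiability is immediate from finiteness of this integral.

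For item (2), the line $L$ meets $\partial\Omega'$ at points $a',d'$ and $\partial\Omega$ at points $a,d$, with $a,a',b,c,d',d$ in order along $L$ since $\Omega'\subset\Omega$. Then $(a,b,c,d)\le (a',b,c,d')$ follows from two applications of Proposition \ref{crossratioinequality} (monotonicity of the cross ratio under moving an endpoint outward along the boundary), taking the logarithm gives $Hd_\Omega(b,c)\le Hd_{\Omega'}(b,c)$, and the equality clause in that proposition gives equality iff $a=a'$ and $d=d'$, i.e. $L\cap\partial\Omega'=L\cap\partial\Omega$. For item (3), since $\Omega$ is strictly convex its boundary contains no segments, so for any point $x$ off the line $L$ the chords through $x$ are ``strictly shorter'' in the Finsler sense; the cleanest route is to project: the nearest-point argument in a Hilbert geometry shows the straight segment is the unique geodesic. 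Concretely, I would use that in a strictly convex Hilbert geometry geodesics are unique — this follows because the triangle inequality $Hd_\Omega(b,c)\le Hd_\Omega(b,x)+Hd_\Omega(x,c)$ is strict whenever $x$ is not on the segment $[b,c]$, which in turn follows from Proposition \ref{crossratioinequality}(1) together with the strict inequalities (3)–(6) there applied to the two lines from an auxiliary boundary point $o$ through the relevant points. Then $l_\Omega(\eta)\ge Hd_\Omega(b,c)=l_\Omega(\gamma)$ by item (1) applied to $\Omega$, with equality iff $\eta$ is a reparameterization of $\gamma$, i.e. $\eta=\gamma$ as a set.

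For item (4), given segments $L_1,L_2$ with endpoints on $\partial\Omega$, consider the continuous function $(p_1,p_2)\mapsto Hd_\Omega(p_1,p_2)$ on $L_1\times L_2$. If the infimum is $0$ we are done; otherwise the two segments are disjoint, and since $L_1$ and $L_2$ are compact the infimum is attained. Uniqueness is where the $C^1$ hypothesis enters: if there were two distinct realizing pairs, by strict convexity of the distance function along the segments (again from the cross-ratio inequalities, using that $\partial\Omega$ is $C^1$ so that the one-sided behaviour of chords is controlled and no ``flat'' coincidences of the supporting lines occur) one would produce a strictly shorter pair, a contradiction. The main obstacle is item (4): making the convexity/uniqueness argument precise requires showing that $t\mapsto Hd_\Omega(p_1(t),p_2(t))$ is strictly convex (or at least strictly quasi-convex) along affine parameterizations of $L_1,L_2$, and it is exactly here that $C^1$ regularity of $\partial\Omega$ is used to rule out a segment of minimizers. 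I expect items (1)–(3) to be short computations once the chart is fixed, and item (4) to carry essentially all the content; since the proposition is stated without proof in the paper, a reader who wants full rigour should fill in the strict convexity estimate for (4), which is standard in Hilbert geometry (cf. the discussion of projective geodesics in the references).
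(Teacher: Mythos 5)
The paper omits this proof (it is stated to follow from the cross-ratio facts of Section~\ref{Cross ratio}), so I assess your argument on its own. Items (1) and (2) are correct. For (2), note that Proposition~\ref{crossratioinequality} has the vertex $o$ on $\partial\Omega$ with four lines concurrent at $o$, whereas the five points you need are collinear; the monotonicity is better read off directly from the affine formula in Proposition~\ref{crossratioline}, which is effectively what your computation does.

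The real gaps are in (3) and (4). For (3), the assertion that the strict triangle inequality ``follows from Proposition~\ref{crossratioinequality}(1) together with (3)--(6) applied to the two lines from an auxiliary boundary point $o$'' does not go through as written: the three chords through $b,x,c$ are not concurrent at any point of $\partial\Omega$, so there is no boundary vertex at which to apply those inequalities. The missing ingredient is a specific auxiliary projection. Writing $a_1,d_1$ (resp.\ $a_2,d_2$; resp.\ $a,d$) for the endpoints in $\partial\Omega$ of the chords through $b,x$ (resp.\ $x,c$; resp.\ $b,c$), set $P$ to be the intersection of the line through $a_1,a_2$ with the line through $d_1,d_2$, and project from $P$ onto $L$. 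This projection preserves cross ratios, sends $a_1,a_2$ to a common point $u$ and $d_1,d_2$ to a common point $v$, and fixes $b$ and $c$, so the cocycle yields $Hd_\Omega(b,x)+Hd_\Omega(x,c)=\tfrac12\log(u,b,c,v)$; and since $u\in[a,b]$ and $v\in[c,d]$, one has $(u,b,c,v)\ge(a,b,c,d)$ with equality forcing $a,a_1,a_2$ to be collinear points of $\partial\Omega$, which strict convexity excludes once $x\notin L$. Without this projection the cited cross-ratio facts do not give the strict inequality you need. For (4) you have flagged the gap yourself, and I agree that is where the content lies; note also that the existence step needs a word (the segments $L_i$ are not compact inside $\Omega$, so one must argue that $Hd_\Omega(p_1,p_2)\to\infty$ as either $p_i$ escapes to an endpoint of its segment not shared with the other, so that a positive infimum is attained on a compact subset), and the uniqueness assertion --- which is exactly where the $C^1$ hypothesis enters --- is left unproved.
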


For any convex $\Rbbb\Pbbb^2$ surface $\Mmf=\Omega/\Gamma$, we can define the Hilbert metric $Hd_{\Omega}$ on $\Omega$. The $SL(3,\Rbbb)$ invariance of the cross ratio implies that this descends to a metric on $\Mmf$, also called the Hilbert metric, which we denote by $Hd_\Mmf$. If $\gamma$ is a rectifiable path in $\Mmf$, we also denote the length of $\gamma$ by $l_\Mmf(\gamma)$. In the case when $\Mpc$ is a marked convex $\Rbbb\Pbbb^2$ surface, we also denote the Hilbert metric on $\Mpc$ by $Hd_\Mpc$ and the length of $\gamma$ by $l_\Mpc(\gamma)$. When $M$ is a closed surface, the Hilbert metrics for convex $\Rbbb\Pbbb^2$ structures in the Fuchsian locus $\Tmf(M)\subset\Cmf(M)$ agree with the corresponding hyperbolic metrics.


For the rest of this paper, let $M_{g,n}$ denote a closed genus $g$ surface with $n$ open discs removed such that $2g-2+n>0$. Let $i:M_{g,n}\to M_{g',n'}$ be a smooth embedding that is $\pi_1$-injective. For any $\Mpc_{g',n'}\in\Cmf(M_{g',n'})$, let $i':M_{g,n}\to M_{g',n'}$ be a diffeomorphism homotopic to $i$ so that it maps the boundary components of $M_{g,n}$ to closed lines in $M_{g',n'}$ equipped with the convex $\Rbbb\Pbbb^2$ structure $\Mpc_{g',n'}$. Then let $\Mpc_{g,n}$ be the convex $\Rbbb\Pbbb^2$ structure on $M_{g,n}$ obtained by precomposing the charts in $\Mpc_{g',n'}$ by $i'$. In fact, as a consequence of Section 5 of Goldman \cite{Go1}, we know that every convex $\Rbbb\Pbbb^2$ structure in $\Cmf(M_{g,n})$ can be obtained this way. Thus, $i$ induces a surjection $i^\#:\Cmf(M_{g',n'})\to\Cmf(M_{g,n})$. 


If $(g,n)\neq (g',n')$, then $\partial\Omega_{\Mpc_{g,n}}\neq\partial\Omega_{\Mpc_{g',n'}}$, so we know that $Hd_{\Mpc_{g,n}}\neq Hd_{\Mpc_{g',n'}}|_{M_{g,n}}$ by (2) of Proposition \ref{hilbertmetricprop}. On the other hand, we have the following proposition, which is an easy consequence of Proposition \ref{hilbertmetricprop}.

\begin{prop}\label{uniquegeodesic}
Let $i:M_{g,n}\to M_{g',0}$ be any $\pi_1$-injective embedding, let $\Mpc_{g',0}=[f',\Mmf_{g',0}]\in\Cmf(M_{g',0})$ and let $\Mpc_{g,n}=[f,\Mmf_{g,n}]:=i^\#(\Mpc_{g',0})\in\Cmf(M_{g,n})$. Also, let $\eta$ be a closed line in $\Mmf_{g,n}\subset \Mmf_{g',0}$.
\begin{enumerate}[(1)]
\item $l_{\Mmf_{g,n}}(\eta)=l_{\Mmf_{g',0}}(\eta)$.
\item Let $\gamma$ be any rectifiable closed curve homotopic to $\eta$. Then $l_{\Mmf_{g,n}}(\eta)\leq l_{\Mmf_{g,n}}(\gamma)$, with equality if and only if $\eta=\gamma$. 
\end{enumerate}
\end{prop}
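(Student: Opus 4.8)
The strategy is to transfer everything to the universal covers and to exploit that $M_{g',0}$ is closed, so $\Omega:=\Omega_{\Mpc_{g',0}}$ is strictly convex with $C^1$ boundary. Since $\Mpc_{g,n}=i^\#(\Mpc_{g',0})$, the construction preceding the proposition lets us pick a lift $\widetilde{i'}$ of $i'$ with $d_{\Mpc_{g,n}}=d_{\Mpc_{g',0}}\circ\widetilde{i'}$ and $h_{\Mpc_{g,n}}=h_{\Mpc_{g',0}}\circ i_\ast$; hence $\Omega':=\Omega_{\Mpc_{g,n}}\subseteq\Omega$, and for $X:=h_{\Mpc_{g,n}}([\eta])=h_{\Mpc_{g',0}}(i_\ast[\eta])\in\Hbf\ybf\pbf^+$ the holonomy of $\eta$ is literally the same matrix in both surfaces. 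Let $x^-,x^+$ be the repelling and attracting fixed points of $X$; by Proposition \ref{properties}(1), applied once to $\Mpc_{g,n}$ and once to $\Mpc_{g',0}$, they lie on both $\partial\Omega'$ and $\partial\Omega$. I will treat the case where $\eta$ is not homotopic to a boundary component of $\Mmf_{g,n}$; then the lift of $\eta$ is the open axis segment $\alpha:=(x^-,x^+)$, which meets the interiors of $\Omega'$ and of $\Omega$, and $X$ acts on $\alpha$ by translation towards $x^+$. (If $\eta$ is boundary-parallel, $\alpha$ lies in $\partial\Omega'$ but still in the interior of $\Omega$, and the arguments below go through using $\Omega$ throughout.)

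For (1) the key observation is that the line $L$ through $x^-$ and $x^+$ meets each of $\partial\Omega$ and $\partial\Omega'$ in exactly the two points $x^\pm$. Indeed $L$ is $X$-invariant and meets the interior of $\Omega$, so $L\cap\overline\Omega$ is a nondegenerate $X$-invariant segment; its two endpoints lie on $\partial\Omega$ and are permuted, hence (as $X$ has positive eigenvalues) fixed, by $X$; and the only fixed points of $X$ on $\partial\Omega$ are $x^\pm$, since the third eigendirection of $X$ is not in $\overline\Omega$ (if it were in $\Omega$, $X$ would fix an interior point, contradicting that the translation length is positive; if it were on $\partial\Omega$, strict convexity together with uniqueness of the axis of a hyperbolic isometry would be contradicted---equivalently this is the north--south dynamics of $X$ on the strictly convex $C^1$ circle $\partial\Omega$). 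The same reasoning applies to $\Omega'$. Now Proposition \ref{hilbertmetricprop}(2), applied to $\Omega'\subseteq\Omega$ with a pair of points $b,Xb$ of $\alpha$, together with $L\cap\partial\Omega'=L\cap\partial\Omega$, gives $Hd_{\Omega'}(b,Xb)=Hd_\Omega(b,Xb)$; since a fundamental domain for $\langle X\rangle$ on $\alpha$ is the segment $[b,Xb]$, Proposition \ref{hilbertmetricprop}(1) turns this into $l_{\Mmf_{g,n}}(\eta)=l_{\Mmf_{g',0}}(\eta)=:T$.

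For (2), let $\gamma$ be a rectifiable closed curve freely homotopic to $\eta$, lift it to $\widetilde\gamma\colon\Rbbb\to\Omega'$ with $\widetilde\gamma(t+1)=X\widetilde\gamma(t)$, and set $q:=\widetilde\gamma(0)$. Because $\Omega'\subseteq\Omega$, the Hilbert metric of $\Omega'$ has Finsler norm at each point at least that of $\Omega$ (Proposition \ref{hilbertmetricprop}(2)), so $l_{\Mmf_{g,n}}(\gamma)=l_{\Omega'}(\widetilde\gamma|_{[0,1]})\ge l_\Omega(\widetilde\gamma|_{[0,1]})\ge Hd_\Omega(q,Xq)$, the last inequality by Proposition \ref{hilbertmetricprop}(3) and strict unless $\widetilde\gamma|_{[0,1]}$ is the segment $[q,Xq]$. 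It remains to show $Hd_\Omega(q,Xq)\ge T$ with equality only when $q\in\alpha$. Put $f(n):=Hd_\Omega(q,X^nq)$; it is subadditive (triangle inequality and $X$-invariance of $Hd_\Omega$), and for $p\in\alpha$ one has $Hd_\Omega(p,X^np)=nT$ (the iterates $p,Xp,\dots,X^np$ lie in order on $\alpha$, and Hilbert distance adds along a segment by Proposition \ref{hilbertmetricprop}(1)) together with $|f(n)-Hd_\Omega(p,X^np)|\le 2Hd_\Omega(p,q)$. Hence $f(n)/n\to T$, and by Fekete's lemma $\inf_n f(n)/n=T$, so $f(1)=Hd_\Omega(q,Xq)\ge T$. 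If $f(1)=T$, subadditivity forces $f(n)=nT$ for all $n$ (otherwise $f(kn_0)/(kn_0)\le f(n_0)/n_0<T$ for some $n_0$ and all $k$, contradicting the limit), so $Hd_\Omega(q,X^2q)=Hd_\Omega(q,Xq)+Hd_\Omega(Xq,X^2q)$; uniqueness of geodesics (Proposition \ref{hilbertmetricprop}(3)) then places $Xq$ on the segment $[q,X^2q]$, so $q,Xq,X^2q$ are collinear, the line through them is $X$-invariant, and by the argument in (1) it equals $L$; hence $q\in\alpha$. Assembling the inequalities, $l_{\Mmf_{g,n}}(\gamma)\ge T=l_{\Mmf_{g,n}}(\eta)$; in case of equality all intermediate inequalities are equalities, so $\widetilde\gamma|_{[0,1]}=[q,Xq]\subseteq\alpha$, whence $\widetilde\gamma(\Rbbb)=\alpha$ by $X$-invariance and $\gamma=\eta$.

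Most of this is routine unwinding of definitions and of Proposition \ref{hilbertmetricprop}; the one substantive point, which I expect to be the main obstacle, is the final step of (2)---that the translation length of $X$ on $\Omega$ is attained only along the axis. This is precisely where strict convexity of $\Omega$, i.e. the hypothesis that the target surface is closed, is used, and it is handled by the subadditivity argument above combined with the rigidity of the action of $X$ on the strictly convex boundary $\partial\Omega$.
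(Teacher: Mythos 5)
Your proof is correct. The paper gives no argument for this proposition; the surrounding text simply calls it ``an easy consequence of Proposition \ref{hilbertmetricprop},'' so there is no official proof to compare against. Your proof is in the intended spirit: pass to the universal covers, identify the holonomy $X$ of $\eta$ in both structures, observe (via Proposition \ref{properties}(1) and strict convexity of $\Omega$) that the chord $L$ through $x^\pm$ meets $\partial\Omega'$ and $\partial\Omega$ in the same two points, and invoke Proposition \ref{hilbertmetricprop}(2) for (1) and Proposition \ref{hilbertmetricprop}(3) for the inequality in (2). The one substantive point, which you correctly flag, is the equality case of (2)---that $\inf_{q\in\Omega}Hd_\Omega(q,Xq)$ equals the translation length and is attained only on the axis. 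Your subadditivity/Fekete argument, combined with uniqueness of Hilbert geodesics in the strictly convex $\Omega$ and the fact that the axis is the unique $X$-invariant line meeting $\Omega$, is a clean and complete way to establish this; the paper evidently regards it as standard. One remark: in the boundary-parallel case, which you dispose of parenthetically, it is worth being explicit that $l_{\Mmf_{g,n}}(\eta)$ is understood as the translation length of the holonomy (cf.\ equation (2.1) of the paper), since the Hilbert metric of $\Omega'$ does not see a curve lying in $\partial\Omega'$; with that reading your instruction to ``use $\Omega$ throughout'' does close the gap. All told this is a thorough and correct filling-in of an omitted proof.
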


Part (2) of Proposition \ref{uniquegeodesic} tells us that even though a compact convex $\Rbbb\Pbbb^2$ surface $\Mmf$ equipped with $Hd_\Mmf$ is not a unique geodesic space, the closed curves in $\Mmf$ have unique length minimizing representatives in their free homotopy classes, namely the closed lines. Thus, from now on, we will refer to the closed lines as closed geodesics.

Since we have a Hilbert metric on any convex $\Rbbb\Pbbb^2$ surface $\Mmf$, we can define a canonical (up to scaling) measure on $\Mmf$.

\begin{definition}
Let $\Mmf$ be a convex $\Rbbb\Pbbb^2$ surface. The \emph{Busemann area} $\nu_\Mmf$ is the 2-dimensional Hausdorff measure of the Hilbert metric $Hd_\Mmf$, rescaled so that in the case when $\Mmf$ is a hyperbolic surface, $\nu_\Mmf$ agrees with the hyperbolic area on $\Mmf$. 
\end{definition}

For background on the Busemann area, one can refer to Chapter 3, Part 1 of Bao \cite{Ba1}. The Busemann area is a Borel measure, and so gives us a notion of area for measurable subsets of $\Mmf$. It lifts to a measure $\nu_{\widetilde{\Mmf}}$ on $\widetilde{\Mmf}$ that is $\pi_1(\Mmf)$-invariant. If we choose an affine chart $U$ of $\Rbbb\Pbbb^2$ containing $\widetilde{\Mmf}$ and choose an Euclidean metric on $U$, then we have the usual Lebesgue measure $\mu$ on $U$ and hence on $\widetilde{\Mmf}$. Busemann showed that the Busemann area on $\widetilde{\Mmf}$ is absolutely continuous with respect to the Lebesgue measure. In fact, the Radon-Nikodym derivative of the Busemann area with respect to the Lebesgue measure at some $x\in\Mmf$ is $\displaystyle\frac{C}{\mu(B_1(\xtd))}$, where $B_1(\xtd)$ is the unit ball in the Hilbert metric centered at a lift $\xtd\in\widetilde{\Mmf}$ of $x$ and $C$ is some constant. For more details, see Busemann \cite{Bu1}.

\subsection{The Goldman parameters}\label{goldmanparameters}

In his paper \cite{Go1}, Goldman gave an explicit parameterization of $\Cmf(M_{g,n})$. Roughly, he did this by first parameterizing the deformation space of convex $\Rbbb\Pbbb^2$ structures on a pair of pants, and then extending this parameterization to all compact surfaces by specifying how to assemble the pairs of pants together. In this subsection, we will explain how to obtain this parameterization for a pair of pants, and briefly describe how to extend this parameterization to compact surfaces. 

On a smooth pair of pants $P$, choose $A_0,B_0,C_0$ in $\pi_1(P)$ corresponding to the three boundary components of $P$, such that $C_0B_0A_0=I$. This choice induces a lamination on every $\Ppc\in\Cmf(P)$ in the following way. Let $a_0$, $b_0$, $c_0$ be the repelling fixed points of $A_0$, $B_0$, $C_0$ in the Gromov boundary $\partial_\infty\pi_1(P)$ of $\pi_1(P)$ respectively. For any $\Ppc\in\Cmf(P)$, $d_\Ppc$ induces a $h_\Ppc$-equivariant injection $\xi:\partial_\infty\pi_1(P)\to\partial\Omega_\Ppc$. This then gives a lamination of $\Omega_\Ppc$ by the $\pi_1(P)$ orbits of the three lines in $\Omega_\Ppc$ between $\xi(a_0)$ and $\xi(b_0)$, $\xi(b_0)$ and $\xi(c_0)$, $\xi(c_0)$ and $\xi(a_0)$. Since the orbits of these three lines are disjoint, the lamination descends to a lamination on $\Ppc$ with three leaves. 

\begin{definition} \label{idealtriangulation}
The lamination on $\Ppc$ constructed above is called the \emph{ideal triangulation} of $\Ppc$ corresponding to $A_0$, $B_0$, $C_0$.
\end{definition}

Goldman's parameterization of $\Cmf(P)$ is given by the following theorem.

\begin{thm}\label{Goldman}
The deformation space $\Cmf(P)$ is an open $8$-dimensional cell. Furthermore, the map
\[\Theta:\Cmf(P)\to\Rmf^3\]
obtained by associating to a convex $\Rbbb\Pbbb^2$ structure the boundary invariants 
\[((\lambda_A,\tau_A),(\lambda_B,\tau_B),(\lambda_C,\tau_C))\]
is a fibration over an open $6$-cell with fiber a $2$-dimensional open cell.
\end{thm}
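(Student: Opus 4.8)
The plan is to construct, for every pair of pants $P$, an explicit normal form for the developing pair of a convex $\Rbbb\Pbbb^2$ structure that makes the $8$ coordinates visible, and then show the two "internal" coordinates parametrize the fiber of $\Theta$ over a fixed choice of boundary invariants. First I would fix the generators $A_0,B_0,C_0$ of $\pi_1(P)$ with $C_0B_0A_0=I$ and recall that $\pi_1(P)$ is free of rank $2$, say on $A_0,B_0$. A convex $\Rbbb\Pbbb^2$ structure is determined up to the data above by its holonomy $h_\Ppc\in\mathrm{Hom}(\pi_1(P),SL(3,\Rbbb))$ up to conjugation (together with the choice of convex developing image, but for a pair of pants Goldman shows the representation determines the structure), so I want to count the dimension of the relevant character variety. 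Since $\pi_1(P)$ is free on two generators and we require each of $h(A_0),h(B_0),h(C_0)=h(A_0)^{-1}h(B_0)^{-1}$ to lie in $\Hbf\ybf\pbf^+$ (an open condition), the representation space has dimension $2\cdot 8=16$, and quotienting by the $8$-dimensional conjugation action (the $SL(3,\Rbbb)$ action is generically free here because the image is irreducible) gives $16-8=8$. This already forces $\Cmf(P)$ to be $8$-dimensional once we check it is an open subset of the character variety and is a cell; the cell structure is inherited from Goldman's explicit coordinates below.

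Next I would build the fibration $\Theta$ concretely. Fix boundary invariants $((\lambda_A,\tau_A),(\lambda_B,\tau_B),(\lambda_C,\tau_C))\in\Rmf^3$. Each element $(\lambda_X,\tau_X)\in\Rmf$ determines the conjugacy class of a positive hyperbolic element of $SL(3,\Rbbb)$ (its three eigenvalues are the roots of $t^3-\tau_X t^2+\lambda_X^{-1}\tau_X'\cdots$ — in any case the characteristic polynomial is determined by $(\lambda_X,\tau_X)$ since $\det=1$), so conjugacy classes of boundary holonomy are exactly $\Rmf^3$. To produce a point of the fiber I place $h(A_0)$ in diagonal form, then $h(B_0)$ is an arbitrary positive hyperbolic element with the prescribed characteristic polynomial whose product $h(A_0)^{-1}h(B_0)^{-1}$ also has the prescribed characteristic polynomial and is positive hyperbolic; after using the residual conjugation (the diagonal torus normalizing $h(A_0)$, which is $2$-dimensional) the solution set is $2$-dimensional. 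Goldman identifies explicit coordinates on this $2$-cell: one can read them off from the triangle invariant and edge invariants of the ideal triangulation of Definition \ref{idealtriangulation} — the developing map sends the ideal triangle with vertices $\xi(a_0),\xi(b_0),\xi(c_0)$ to a standard triangle in $\Rbbb\Pbbb^2$, and the two internal parameters record the remaining projective freedom (the "shape" of the osculating conic / the pair of eigenvalue-ratio parameters of the holonomy not captured by $\lambda,\tau$). I would then verify that these coordinates give a homeomorphism of the fiber onto $\Rmf$ (or onto $\Rbbb^2$ — the paper's statement only needs "a $2$-dimensional open cell"), and that they vary continuously with the base point, which gives the local triviality of the fibration.

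The main obstacle, and the step I would spend the most care on, is showing that the fiber over every point of $\Rmf^3$ is nonempty and is exactly a $2$-cell — i.e. that the "internal" deformations can be performed independently of the boundary data and never degenerate the convexity. Nonemptiness amounts to: given three prescribed positive-hyperbolic conjugacy classes, one can realize them as $h(A_0),h(B_0),(h(A_0)h(B_0))^{-1}$ with the holonomy discrete and the developing image properly (indeed strictly, with $C^1$ boundary) convex; this is the heart of Goldman's construction and I would invoke his gluing/ideal-triangle analysis rather than redo it, checking that the convexity locus is open and connected in the relevant slice so that the fiber is a cell. Dimension-counting alone (the $8=16-8$ computation) gives the right number but not the cell structure or the fibration statement, so the real content is organizing Goldman's normal form so that the projection to boundary invariants is manifestly a locally trivial fibration with contractible fiber; once that is in place, the total space being an $8$-cell follows since it is the total space of a fiber bundle over a $6$-cell with $2$-cell fibers.
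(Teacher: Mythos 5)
The paper itself does not prove this statement; it is Goldman's theorem, cited to \cite{Go1}, and the paper only summarizes the two steps of his argument: first a bijection $\Xi:\Cmf(P)\to\Qpc$ onto the configuration space of a properly convex hexagon cut into four triangles $\Delta_0,\dots,\Delta_3$ together with $A,B,C\in\Hbf\ybf\pbf^+$ satisfying $CBA=I$ and permuting the triangles, and second an explicit parameterization of $\Qpc$ by $\Rmf^3\times(\Rbbb^+)^2$ obtained by solving the polynomial system coming from the configuration data. Your proposal starts from a genuinely different place: the character variety of $\pi_1(P)\cong F_2$ and a normal form for the holonomy obtained by diagonalizing $h(A_0)$. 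The naive dimension count $2\cdot 8-8=8$ is correct, and so is the fiber count (eight parameters for $h(B_0)$, minus four for the two characteristic-polynomial constraints, minus two for the residual torus conjugation gives $2$). This is an attractive way to see where the numbers come from.

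But you are right to flag that this does not establish the theorem. Everything the statement actually asserts — that $\Cmf(P)$ is a \emph{cell}, that $\Theta$ is a locally trivial fibration, that its fiber over \emph{every} point of $\Rmf^3$ is nonempty and is an open $2$-cell rather than some possibly-disconnected real-algebraic set — is exactly what a dimension count cannot see, and you defer all of it to Goldman. In practice that deferral forces your argument to converge with his: the way one shows the fiber is a $2$-cell is to exhibit the $\Qpc$ normal form for the developing hexagon and read off the two free positive parameters $(s,t)$ from the resulting equations, which is the fundamental-domain picture, not a character-variety one. A second point worth naming: your aside that ``for a pair of pants Goldman shows the representation determines the structure'' is doing real work. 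A given holonomy preserves many convex domains; one must restrict to the convex core, equivalently impose that the boundary components develop to the axes of the boundary holonomy (this is encoded in condition (2) of Definition \ref{projectivestructure}), to make the developing image unique. That normalization is automatic in Goldman's hexagon construction but has to be argued separately in a purely representation-theoretic account, and without it the injectivity of $\Cmf(P)\to\mathrm{Hom}(\pi_1(P),SL(3,\Rbbb))/SL(3,\Rbbb)$ that your dimension count silently relies on is not obvious.
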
 

We will give a summary of parts of Goldman's proof as some of these will be used later. There are two main steps in the proof. In the first step, one argues that specifying a marked convex $\Rbbb\Pbbb^2$ structure on a pair of pants is equivalent to specifying the following data (see Figure \ref{goldman}), up to equivalence under the action of $SL(3,\Rbbb)$:
\begin{enumerate}[(1)]
\item Four closed triangles $\Delta_0$, $\Delta_1$, $\Delta_2$, $\Delta_3$ in $\Rbbb\Pbbb^2$ such that 
\begin{enumerate}[(1)]
\item $\Delta_0$ and $\Delta_i$ intersect exactly along an edge for $i=1,2,3$,
\item For any $i,j\in\{1,2,3\}$ such that $i\neq j$, $\Delta_i$ and $\Delta_j$ intersect exactly at a point,
\item $\Delta_0\cup\Delta_1\cup\Delta_2\cup\Delta_3$ is a properly convex hexagon.
\end{enumerate}
\item Three elements $A,B,C\in \Hbf\ybf\pbf^+$ such that 
\begin{enumerate}[(1)]
\item $A$ has $\Delta_2\cap\Delta_3$ as its repelling fixed point and $A\cdot\Delta_2=\Delta_3$,  
\item $B$ has $\Delta_3\cap\Delta_1$ as its repelling fixed point and $B\cdot\Delta_3=\Delta_1$, 
\item $C$ has $\Delta_1\cap\Delta_2$ as its repelling fixed point and $C\cdot\Delta_1=\Delta_2$, 
\item CBA=I.
\end{enumerate}
\end{enumerate}
More specifically, we have a bijection 
\[\Xi:\Cmf(P)\to\Qpc:=\{(\Delta_0,\Delta_1,\Delta_2,\Delta_3, A, B, C): (1), (2)\text{ hold}\}/SL(3,\Rbbb)\] 
where $SL(3,\Rbbb)$ acts on $\{(\Delta_0,\Delta_1,\Delta_2,\Delta_3, A, B, C): (1), (2)\text{ hold}\}$ coordinate wise, by the usual left action on the $\Delta_i$'s and conjugation on $A,B,C$. In fact, we can explicitly describe the map $\Xi$; for any $\Ppc\in\Cmf(P)$, we have $\Xi(\Ppc)=[\Delta_0,\Delta_1,\Delta_2,\Delta_3, A, B, C]$, where 
\begin{itemize}
\item $A$, $B$, $C$, are the images of $A_0$, $B_0$, $C_0$ respectively under $h_\Ppc$,
\item $\Delta_0$ is the unique triangle whose vertices are the repelling fixed points of $A$, $B$, $C$, which we denote by $a, b, c$ respectively, and whose interior lies in $\Omega_\Ppc$,
\item $\Delta_1$ is the unique triangle whose vertices are $b$, $c$, $d:=B\cdot a$ and whose interior lies in $\Omega_\Ppc$, 
\item $\Delta_2$ is the unique triangle whose vertices are $c$, $a$, $e:=C\cdot b$ and whose interior lies in $\Omega_\Ppc$, 
\item $\Delta_3$ is the unique triangle whose vertices are $a$, $b$, $f:=A\cdot c$ and whose interior lies in $\Omega_\Ppc$.
\end{itemize}
It is easy to see that $\Delta_0\cup\Delta_1$ is in fact a fundamental domain of the action of $\pi_1(P)$ on $\Omega_\Ppc$. Moreover, the $\pi_1(P)$ orbits of the edges of $\Delta_0$ give a lamination on $\Omega_\Ppc$, which descends to the ideal triangulation of $\Ppc$ corresponding to $A_0$, $B_0$ and $C_0$.

The first step thus reduces the problem to parameterizing $\Qpc$, which is the second step of the proof. One can parameterize $\Qpc$ by $\Rmf^3\times(\Rbbb^+)^2$ ($\Rmf$ was defined in Section \ref{convexprojectivestructuresonsurfaces}) so that the map $\Theta$ in the statement of Theorem \ref{Goldman}, when described in this parameterization, is just projection to the first six parameters. The formal proof that $\Rmf^3\times(\Rbbb^+)^2$ actually parameterizes $\Qpc$ involves solving a system of equations that one obtains from the data of the configuration of the $\Delta_i$'s along with their interaction with $A$, $B$, $C$. Rather than do that, we will simply describe a geometric way to interpret the eight parameters, and refer the reader to Section 4 of Goldman \cite{Go1} for the proof.

\begin{figure}
\includegraphics[scale=0.4]{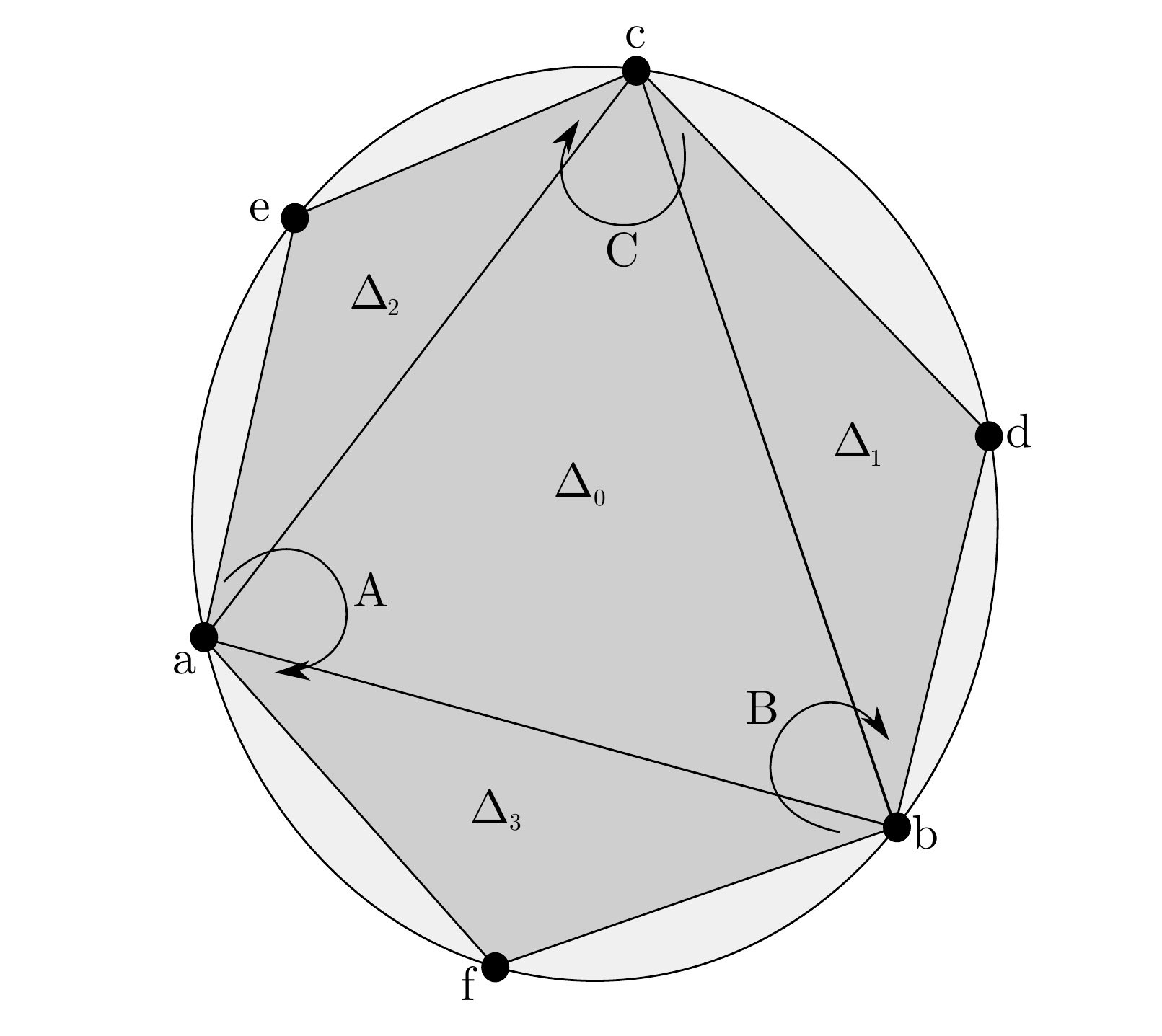}
\caption{$H_\Ppc$ (see Notation \ref{hexagonnotation}) contained in $\Omega_\Ppc$.}
\label{goldman}
\end{figure}

Any point $\Xi(\Ppc)=[\Delta_0,\Delta_1,\Delta_2,\Delta_3, A, B, C]$ in $\Qpc$ is parameterized by the parameters $((\lambda_A,\tau_A),(\lambda_B,\tau_B),(\lambda_C,\tau_C),s,t)\in\Rmf^3\times(\Rbbb^+)^2$. Here, for all $X=A,B,C$, $\lambda_X$ is the smallest eigenvalue of $X$ and $\tau_X$ is the sum of the other two eigenvalues of $X$. The first six parameters thus determine the eigenvalue data of the holonomy about each boundary component of the marked projective pair of pants $\Ppc$. In particular, the lengths (in the Hilbert metric $Hd_\Ppc$) of the boundary components of $\Ppc$ can be obtained explicitly from these six parameters. Indeed, if $\alpha$ is the boundary component of $\Ppc$ corresponding to $X=A,B,C$, one can easily compute that the three eigenvalues for $X$ are 
\[\lambda_X, \frac{\tau_X-\sqrt{\tau_X^2-\frac{4}{\lambda_X}}}{2}, \frac{\tau_X+\sqrt{\tau_X^2-\frac{4}{\lambda_X}}}{2}\]
listed in increasing order, so
\begin{equation}\label{lengthofc}
l_\Ppc(\alpha)=\log\Bigg(\frac{\tau_X+\sqrt{\tau_X^2-\frac{4}{\lambda_X}}}{2\lambda_X}\Bigg).
\end{equation}

Describing the geometrical significance of the last two parameters $s$ and $t$ is less straightforward. Before we do that, we shall introduce some notation that will be used in the rest of the paper.

\begin{notation} \label{hexagonnotation}
For any $\Xi(\Ppc)=[\Delta_0,\Delta_1,\Delta_2,\Delta_3, A, B, C]\in\Qpc$, let $H_\Ppc$ be the hexagon $\displaystyle\bigcup_{i=0}^3\Delta_i$ (this is well-defined up to translation by a projective transformation). Denote its vertices by $a,f,b,d,c,e$ in that order, where $a,b,c$ are the repelling fixed points of $A$, $B$, $C$ respectively, and $d:=B\cdot a$, $e:=C\cdot b$, $f:=A\cdot c$. 
\end{notation}

Goldman computed at the end of Section 4 of \cite{Go1} the following cross ratios in terms of the parameters of his parameterization:
\begin{equation}\label{importantcrossratio}
\begin{array}{r}
\displaystyle(e,c,b,f)_a=1+\sqrt{\frac{\lambda_C\lambda_A}{\lambda_B}}\tau_As+\frac{\lambda_C}{\lambda_B}s^2,\\
\displaystyle(f,a,c,d)_b=1+\sqrt{\frac{\lambda_A\lambda_B}{\lambda_C}}\tau_Bs+\frac{\lambda_A}{\lambda_C}s^2,\\
\displaystyle(d,b,a,e)_c=1+\sqrt{\frac{\lambda_B\lambda_C}{\lambda_A}}\tau_Cs+\frac{\lambda_B}{\lambda_A}s^2.
\end{array}\hspace{1cm}
\end{equation}
Notice that if we fix the first six parameters, then these cross ratios depend only on $s$ (and not $t$), and are strictly increasing with $s$. Moreover, all three of them converge to $1$ as $s$ converges to $0$ and grow arbitrarily large as $s$ converges to $\infty$. Thus, we can think of $s$ as the parameter ``controlling" these three cross ratios.

Observe that if we pick any two sets of four pairwise distinct points $\{x_1,x_2,x_3,x_4\}$ and $\{x_1',x_2',x_3',x_4'\}$ in $\Rbbb\Pbbb^2$ such that no three of $\{x_1,x_2,x_3,x_4\}$ lie on the same line and no three of $\{x_1',x_2',x_3',x_4'\}$ lie on the same line, then there exists a unique projective transformation $X\in SL(3,\Rbbb)$ such that $X\cdot x_i=x_i'$ for all $i$. This implies that any equivalence class $[\Delta_0,\Delta_1,\Delta_2,\Delta_3, A, B, C]\in\Qpc$ has a representative such that $a=[1:0:0]^T$, $b=[0:1:0]^T$, $c=[0:0:1]^T$ and $f=[2:2:-1]^T$. For this representative, we can then compute that 
\begin{eqnarray*}
d&=&[-1:\frac{(d,b,a,e)_c}{t}:\frac{(f,a,c,d)_b}{2}]^T,\\
e&=&[t:-1:\frac{(e,c,b,f)_a}{2}]^T,
\end{eqnarray*} 
In fact, this is how the $t$ parameter in the Goldman parameterization is defined. (See Section 4 of Goldman \cite{Go1} for the computation.) Moreover, the ray with source $a$ through $e$ and the ray with source $b$ through $d$ are determined entirely by $s$ because $(e,c,b,f)_a$ and $(f,a,c,d)_b$ depend only on $s$ and the points $a,b,c,f$ are fixed. We can then think of $t$ as the parameter that determines where $e$ lies along the ray with source $a$ through $e$, and this determines where $d$ is because we know the cross ratio $(d,b,a,e)_c$.

By Goldman's proof of Theorem \ref{Goldman} we now have an identification between the three spaces $\Cmf(P)$, $\Qpc$ and $\Rmf^3\times(\Rbbb^+)^2$, so we will blur the distinction between them in the rest of this paper. The next definition gives names to the parameters described above.

\begin{definition}
In the above coordinate system for $\Cmf(P)$, the first six parameters  $((\lambda_A,\tau_A), (\lambda_B,\tau_B), (\lambda_C,\tau_C))$ are called the \emph{boundary invariants} and the last two parameters $(s,t)$ are called the \emph{internal parameters}.
\end{definition}

Goldman showed that every convex projective structure on $M_{g,n}$ is obtained by gluing $2g-2+n$ pairs of convex projective pairs of pants along their boundaries, and that there are $2$ dimensions worth of ways to glue any two such boundaries together. In fact, he gives an explicit parameterization of the possible ways to do such a gluing by the parameters $(u,v)\in\Rbbb^2$. We will call these parameters the \emph{twist-bulge parameters}. Since these parameters do not feature much in our paper, we will not say more about them.

Choose a pants decomposition $\Pmc$ for $M_{g,n}$, i.e. a system of $3g-3+2n$ pairwise non-intersecting, homotopically non-trivial, simple closed curves in $M_{g,n}$. This system of curves decomposes $M_{g,n}$ into $2g-2+n$ pairs of pants. Hence, to parameterize $\Cmf(M_{g,n})$, we need $3g-3+2n$ pairs of boundary invariants $(\lambda_i,\tau_i)_{i=1}^{3g-3+2n}\in\Rmf^{3g-3+2n}$ (one pair for each simple closed curve in $\Pmc$), $3g-3+n$ pairs of twist-bulge parameters $(u_i,v_i)_{i=1}^{3g-3+n}\in\Rbbb^{6g-6+2n}$ (one pair for each simple closed curve in $\Pmc$ that is not a boundary component) and $2g-2+n$ pairs of internal parameters $(s_i,t_i)_{i=1}^{2g-2+n}\in(\Rbbb^+)^{4g-4+2n}$ (one pair for each pair of pants). This implies that $\Cmf(M_{g,n})$ is a $(16g-16+8n)$-dimensional cell.

\subsection{A reparameterization}\label{areparameterization}

Next, we will describe an order $3$ rotational symmetry of the hexagon $H_\Ppc$ for any $\Ppc$ in $\Cmf(P)$ (see Notation \ref{hexagonnotation}). Since the Goldman parameters do not behave very well under this rotational symmetry, we will also give a slightly different parameterization of the $2$-dimensional open cell fiber in Theorem \ref{Goldman} in order to exploit this symmetry to simplify the proof of our result.

Consider any properly convex hexagon $H$ in $\Rbbb\Pbbb^2$ with vertices $a,f,b,d,c,e$ in that order. (See Figure \ref{goldman}.) For any $x,y\in\{a,b,c,d,e,f\}$ such that $x\neq y$, let $Cr_{x,y}(H):=(p,q,r,s)_x$, where 
\begin{enumerate}[(1)]
\item $\{p,q,r,s\}=\{a,b,c,d,e,f\}\setminus\{x,y\}$,
\item $x,p,q,r,s$ lie along $\partial H$ in that order.
\end{enumerate}
There are thirty such cross ratios. Also, for any $\Ppc\in\Cmf(P)$, define $Cr_{x,y}(\Ppc):=Cr_{x,y}(H_\Ppc)$.  

We will be using some of these thirty cross ratios to give lower bounds for the lengths of closed curves. The main reason we chose these thirty cross ratios is that they have relatively simple closed form expressions in terms of the Goldman coordinates (and in the new coordinates as well, which we will see later). 

For any $R=((\lambda_A,\tau_A), (\lambda_B,\tau_B), (\lambda_C,\tau_C))\in\Rmf^3$ define the real valued functions 
\[\rho_1^R(x):=1+\sqrt{\frac{\lambda_C\lambda_A}{\lambda_B}}\tau_Ax+\frac{\lambda_C}{\lambda_B}x^2,\]
\[\rho_2^R(x):=1+\sqrt{\frac{\lambda_A\lambda_B}{\lambda_C}}\tau_Bx+\frac{\lambda_A}{\lambda_C}x^2,\]
\[\rho_3^R(x):=1+\sqrt{\frac{\lambda_B\lambda_C}{\lambda_A}}\tau_Cx+\frac{\lambda_B}{\lambda_A}x^2\]
with domain $\Rbbb^+$. Note that for any $R\in\Rmf^3$, each $\rho_i^R$ is strictly increasing with image $(1,\infty)$. Moreover, for any $\Ppc=(R,s,t)\in\Rmf^3\times(\Rbbb^+)^2$, we have that $\rho_1^R(s)=Cr_{a,d}(\Ppc)$, $\rho_2^R(s)=Cr_{b,e}(\Ppc)$, $\rho_3^R(s)=Cr_{c,f}(\Ppc)$ by (\ref{importantcrossratio}). This implies the following easy consequence, which we record as a lemma.

\begin{lem}\label{symmetrys}
For any $\Ppc=(R,s,t)\in\Rmf^3\times(\Rbbb^+)^2$, we have 
\[(\rho_1^R)^{-1}\big(Cr_{a,d}(\Ppc)\big)=(\rho_2^R)^{-1}\big(Cr_{b,e}(\Ppc)\big)=(\rho_3^R)^{-1}\big(Cr_{c,f}(\Ppc)\big)=s.\]
\end{lem}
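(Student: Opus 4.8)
The statement is an immediate consequence of the relations recorded just before it, namely that for any $\Ppc = (R,s,t) \in \Rmf^3 \times (\Rbbb^+)^2$ one has $\rho_1^R(s) = Cr_{a,d}(\Ppc)$, $\rho_2^R(s) = Cr_{b,e}(\Ppc)$ and $\rho_3^R(s) = Cr_{c,f}(\Ppc)$, together with the fact that each $\rho_i^R$ is a strictly increasing function on $\Rbbb^+$ with image $(1,\infty)$, hence a bijection onto $(1,\infty)$ with a well-defined inverse. So the plan is simply: first invoke the computation $\rho_i^R(s) = Cr_{\bullet,\bullet}(\Ppc)$ for $i=1,2,3$, which in turn comes from Goldman's cross-ratio formulas (\ref{importantcrossratio}) and the definition of the $\rho_i^R$; then apply $(\rho_i^R)^{-1}$ to both sides of each identity. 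Applying $(\rho_1^R)^{-1}$ to $\rho_1^R(s) = Cr_{a,d}(\Ppc)$ gives $(\rho_1^R)^{-1}(Cr_{a,d}(\Ppc)) = s$, and likewise for the other two indices, which is exactly the claimed chain of equalities.

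The one point that deserves a sentence of care is that $(\rho_i^R)^{-1}$ is genuinely defined at the value $Cr_{\bullet,\bullet}(\Ppc)$: this holds because $s > 0$, so $\rho_i^R(s) \in (1,\infty)$, which is precisely the range of $\rho_i^R$ and hence the domain of $(\rho_i^R)^{-1}$. After that observation the argument is a one-line substitution, so there is no real obstacle; the substantive content (solving the configuration equations and deriving (\ref{importantcrossratio})) has already been carried out by Goldman and is quoted above.

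Concretely I would write: by (\ref{importantcrossratio}) and the definitions of $\rho_1^R, \rho_2^R, \rho_3^R$, we have $Cr_{a,d}(\Ppc) = \rho_1^R(s)$, $Cr_{b,e}(\Ppc) = \rho_2^R(s)$, and $Cr_{c,f}(\Ppc) = \rho_3^R(s)$. Since $s \in \Rbbb^+$ and each $\rho_i^R \colon \Rbbb^+ \to (1,\infty)$ is a strictly increasing bijection, each $(\rho_i^R)^{-1}$ is defined on $(1,\infty)$ and $(\rho_i^R)^{-1}(\rho_i^R(s)) = s$. Combining these gives
\[
(\rho_1^R)^{-1}\big(Cr_{a,d}(\Ppc)\big) = (\rho_2^R)^{-1}\big(Cr_{b,e}(\Ppc)\big) = (\rho_3^R)^{-1}\big(Cr_{c,f}(\Ppc)\big) = s,
\]
as claimed.
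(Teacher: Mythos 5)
Your proof is correct and matches the paper's reasoning exactly: the paper records the identities $\rho_i^R(s)=Cr_{\bullet,\bullet}(\Ppc)$ via~(\ref{importantcrossratio}) and the strict monotonicity of each $\rho_i^R$ immediately before stating the lemma, and calls it an ``easy consequence.'' Your write-up just spells out those same two ingredients and applies $(\rho_i^R)^{-1}$, so there is nothing to add.
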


This gives a coordinate free and symmetric description of the parameter $s$. (The sense in which this description is symmetric will be justified later.) Unfortunately, we do not have such a symmetric description for $t$, so we need to replace $t$ with a new parameter. This motivates the next lemma.

\begin{lem}\label{symmetryr}
For any $\Ppc\in\Cmf(P)$, we have 
\[(Cr_{a,e}(\Ppc)-1)Cr_{c,f}(\Ppc)=(Cr_{c,d}(\Ppc)-1)Cr_{b,e}(\Ppc)=(Cr_{b,f}(\Ppc)-1)Cr_{a,d}(\Ppc).\]
\end{lem}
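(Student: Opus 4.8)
The plan is to reduce the identity to an explicit computation in the normalized coordinate representative of a point in $\Qpc$. Recall from Goldman's setup that every $\Xi(\Ppc)=[\Delta_0,\Delta_1,\Delta_2,\Delta_3,A,B,C]\in\Qpc$ has a representative with $a=[1:0:0]^T$, $b=[0:1:0]^T$, $c=[0:0:1]^T$, $f=[2:2:-1]^T$, and then
\[
d=\Bigl[-1:\tfrac{(d,b,a,e)_c}{t}:\tfrac{(f,a,c,d)_b}{2}\Bigr]^T,\qquad
e=\Bigl[t:-1:\tfrac{(e,c,b,f)_a}{2}\Bigr]^T.
\]
Since the cross ratios $Cr_{x,y}(\Ppc)$ are projective invariants of the hexagon $H_\Ppc$, it suffices to verify the three claimed equalities for this one normalized configuration of six points. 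First I would write down each of the six cross ratios appearing in the statement — $Cr_{a,e}$, $Cr_{c,f}$, $Cr_{c,d}$, $Cr_{b,e}$, $Cr_{b,f}$, $Cr_{a,d}$ — using Definition \ref{cross ratio def} directly: each is a ratio of $3\times 3$ determinants $v_x\wedge v_p\wedge v_r$ built from the homogeneous coordinates of the vertices, where the ordering of $p,q,r,s$ along $\partial H$ is read off from the cyclic order $a,f,b,d,c,e$.

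The key structural observation to exploit is the order-$3$ rotational symmetry of $H_\Ppc$ alluded to in the surrounding text: the cyclic relabeling $(a,f,b,d,c,e)\mapsto(b,d,c,e,a,f)$, i.e. $a\to b\to c\to a$ and $f\to d\to e\to f$, is realized by a projective transformation (this is implicit in the $CBA=I$ relation and the symmetry of Goldman's construction under cyclic permutation of $A,B,C$). Under this symmetry, $Cr_{a,e}\leftrightarrow Cr_{c,d}\leftrightarrow Cr_{b,f}$ and $Cr_{c,f}\leftrightarrow Cr_{b,e}\leftrightarrow Cr_{a,d}$ are permuted cyclically. Therefore it is enough to prove the single identity
\[
(Cr_{a,e}(\Ppc)-1)\,Cr_{c,f}(\Ppc)=(Cr_{c,d}(\Ppc)-1)\,Cr_{b,e}(\Ppc),
\]
since applying the symmetry once more yields the remaining equality. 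So the second step is to establish this one equality by direct determinant computation in the normalized coordinates, substituting the expressions for $d$ and $e$ above; the parameter $t$ should cancel, as it must, since both sides are intrinsic to $H_\Ppc$.

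The main obstacle I expect is purely bookkeeping: correctly reading off, for each pair $(x,y)$, which four vertices remain and in what cyclic order they appear on $\partial H$ starting from $x$, and then keeping the wedge-product determinants organized so the cancellation is visible. A cleaner alternative — which I would use to sidestep the determinant bash — is to apply Proposition \ref{crossratioline}: since all vertices of $H_\Ppc$ lie on $\partial\Omega_\Ppc$, one can sometimes express these hexagon cross ratios as products of cross ratios of collinear points on the boundary (using the edges of the hexagon as the relevant lines through the vertex $x$), and then the identity follows from the multiplicativity relation (1) of Proposition \ref{crossratioinequality}. Concretely, I would try to show each side equals the same product of two "elementary" cross ratios along $\partial\Omega_\Ppc$ after inserting an auxiliary collinearity; the ``$-1$'' factor strongly suggests using relation (1), $(a_1,a_2,a_3,a_4)_o\cdot(a_1,a_3,y,a_4)_o=(a_1,a_2,y,a_4)_o$, rearranged as $(a_1,a_3,y,a_4)_o-1$ times something. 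Whichever route is taken, the proof is a finite verification and contains no real difficulty beyond the combinatorics of the six-point configuration, so I would present it tersely, likely deferring the full determinant expansion to the reader as the excerpt does for its other cross-ratio lemmas.
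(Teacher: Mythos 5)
Your plan takes a different route from the paper, and it has a concrete flaw in the reduction step. You assert that the cyclic relabeling $a\to b\to c\to a$, $d\to e\to f\to d$ of $H_\Ppc$ ``is realized by a projective transformation.'' For a generic $\Ppc$ this is false: if such a transformation existed, projective invariance of the cross ratio would immediately give $Cr_{a,d}(\Ppc)=Cr_{b,e}(\Ppc)=Cr_{c,f}(\Ppc)$, i.e.\ $\rho_1^R(s)=\rho_2^R(s)=\rho_3^R(s)$, which fails whenever the three boundary invariants are distinct (six points in general position in $\Rbbb\Pbbb^2$ admit no nontrivial projective automorphism). What is true — and is what you tacitly invoke when you say ``applying the symmetry once more'' — is that the relabeling defines the $\Zbbb_3$ action $g$ on $\Cmf(P)$, under which $Cr_{a,e}(g\cdot\Ppc)=Cr_{b,f}(\Ppc)$, $Cr_{c,f}(g\cdot\Ppc)=Cr_{a,d}(\Ppc)$, $Cr_{c,d}(g\cdot\Ppc)=Cr_{a,e}(\Ppc)$, $Cr_{b,e}(g\cdot\Ppc)=Cr_{c,f}(\Ppc)$: these identities compare cross ratios of $\Ppc$ with those of the \emph{different} structure $g\cdot\Ppc$. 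Proving the single identity for every $\Ppc$ and then substituting $g\cdot\Ppc$ for $\Ppc$ yields the third equality; substituting $g^2\cdot\Ppc$ yields the second. If you rephrase the symmetry step this way (there is no circularity — these permutation facts are independent of the lemma, and are indeed the same facts the paper later uses in its proof of Proposition \ref{symmetrysr}), the plan is sound.

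That said, the paper proves the lemma without any symmetry argument: it substitutes the explicit formulas from Appendix \ref{crossratiocomputationsandformulas} and observes that each of the three expressions equals $t\rho_2^R(s)$ (the quantity subsequently adopted as the new parameter $r$), using $Cr_{a,e}-1=t\rho_2/\rho_3$, $Cr_{c,d}-1=t$, $Cr_{b,f}-1=t\rho_2/\rho_1$ together with $Cr_{c,f}=\rho_3$, $Cr_{b,e}=\rho_2$, $Cr_{a,d}=\rho_1$. Since your approach also requires computing essentially these same cross ratios in the normalized coordinates, the symmetry reduction buys you very little over the direct verification. I would also not lean on your proposed shortcut via Proposition \ref{crossratioinequality}~(1): the ``$-1$'' here comes from the additive form $Cr_{a,e}=1+t\rho_2/\rho_3$, and it is not apparent that it factors through that multiplicative relation without a separate computation.
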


\begin{proof}
Let $\Ppc=(R,s,t)\in\Rmf^3\times(\Rbbb^+)^2$. We can compute (see Section \ref{crossratiocomputationsandformulas}) that 
\begin{align*}
(Cr_{a,e}(\Ppc)-1)\rho_3^R(s)&=t\rho_2^R(s),\\
(Cr_{c,d}(\Ppc)-1)\rho_2^R(s)&=t\rho_2^R(s),\\
(Cr_{b,f}(\Ppc)-1)\rho_1^R(s)&=t\rho_2^R(s).\\
\end{align*}
This, together with Lemma \ref{symmetrys}, proves the lemma.
\end{proof}

Taking the three equal expressions in Lemma \ref{symmetryr} as our new eighth parameter, which we denote as $r$, we get our reparameterization of $\Cmf(P)$.

\begin{prop}\label{newparameterization}
The map $\Psi:\Rmf^3\times(\Rbbb^+)^2\to\Rmf^3\times(\Rbbb^+)^2$ given by $\Psi:(R,s,t)\mapsto(R,s,r)$, where 
\[r=t\rho_2^R(s)\]
is a diffeomorphism.
\end{prop}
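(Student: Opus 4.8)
The plan is to show that $\Psi$ is a smooth bijection with smooth inverse by exhibiting both $\Psi$ and $\Psi^{-1}$ explicitly; since the only nontrivial coordinate is the eighth one, everything reduces to understanding the map $t \mapsto r = t\rho_2^R(s)$ for fixed $(R,s) \in \Rmf^3 \times \Rbbb^+$.

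First I would observe that $\Psi$ fixes the first seven coordinates $(R,s)$, so it suffices to work fibrewise: for each fixed $(R,s) \in \Rmf^3\times\Rbbb^+$ we have the map $\phi_{R,s}:\Rbbb^+\to\Rbbb$, $\phi_{R,s}(t) = t\,\rho_2^R(s)$. Since $\rho_2^R$ is strictly increasing with image $(1,\infty)$ (as noted in the excerpt just before Lemma \ref{symmetrys}), we have $\rho_2^R(s) > 1 > 0$ for every $s \in \Rbbb^+$; hence $\phi_{R,s}$ is multiplication by a strictly positive constant, so it is a linear isomorphism of $\Rbbb$ carrying $\Rbbb^+$ bijectively onto $\Rbbb^+$. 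Its inverse is $r \mapsto r/\rho_2^R(s)$, again multiplication by a positive constant. This shows $\Psi$ is a bijection of $\Rmf^3\times(\Rbbb^+)^2$ onto itself with inverse $\Psi^{-1}:(R,s,r)\mapsto\big(R,\,s,\,r/\rho_2^R(s)\big)$.

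Next I would check smoothness of both maps. The function $(R,s)\mapsto\rho_2^R(s)$ is a polynomial in $s$ with coefficients that are smooth (indeed real-analytic) functions of the six boundary invariants on the open set $\Rmf^3$ — the only operations involved are arithmetic and the square root $\sqrt{\lambda_A\lambda_B/\lambda_C}$, which is smooth since $\lambda_A,\lambda_B,\lambda_C > 0$ on $\Rmf$. Therefore $(R,s,t)\mapsto t\rho_2^R(s)$ is smooth, so $\Psi$ is smooth. Because $\rho_2^R(s)$ never vanishes, the quotient $r/\rho_2^R(s)$ is likewise smooth in $(R,s,r)$, so $\Psi^{-1}$ is smooth. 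Hence $\Psi$ is a diffeomorphism.

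There is no real obstacle here: the statement is essentially bookkeeping, and the only point requiring a moment's care is confirming that the multiplier $\rho_2^R(s)$ is strictly positive (equivalently, never zero) uniformly over the whole parameter region, which is immediate from the already-recorded fact that $\rho_2^R$ has image $(1,\infty)$. One could alternatively invoke the inverse function theorem after computing that the Jacobian of $\Psi$ is block triangular with diagonal blocks the identity (on the $(R,s)$ coordinates) and $\partial r/\partial t = \rho_2^R(s) \neq 0$, but since the global inverse is available in closed form it is cleaner to just write it down.
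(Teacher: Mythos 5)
Your proof is correct and follows exactly the paper's approach: the paper simply exhibits the inverse map $\Psi^{-1}:(R,s,r)\mapsto\bigl(R,s,r/\rho_2^R(s)\bigr)$, and you do the same, spelling out the smoothness and positivity of $\rho_2^R(s)$ that the paper leaves implicit.
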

\begin{proof}
The inverse map is given by 
\[\Psi^{-1}:(R,s,r)\mapsto(R,s,\frac{r}{\rho_2^R(s)}).\]
\end{proof}
For the rest of the paper, the parameterization we use for $\Cmf(P)=\Qpc=\Rmf^3\times(\Rbbb^+)^2$ will be the one where the last coordinate is $r$ given in Proposition \ref{newparameterization}.

Next, we will carefully describe the symmetric property of $s$ and $r$ that we mentioned above. There is a natural $\Zbbb_3$ action on $\Qpc$ which cyclically permutes $\Delta_1,\Delta_2,\Delta_3$ and $A,B,C$, i.e. if $\Zbbb_3=\{e,g,g^{-1}\}$, then 
\[g\cdot [\Delta_0,\Delta_1,\Delta_2,\Delta_3, A, B, C] = [\Delta_0,\Delta_2,\Delta_3,\Delta_1, B, C, A].\] 
We can interpret this action in the following way. Consider the marked convex $\Rbbb\Pbbb^2$ structure $\Ppc$ and the hexagon $H_\Ppc\subset\overline{\Omega_\Ppc}$. The marking endows the vertices of $H_\Ppc$ with a labeling as described in Notation \ref{hexagonnotation}. Then the action of $g$ is simply a cyclic relabeling of the vertices of $H_\Spc$. (See Figure \ref{relabel}.) The next proposition computes this action in terms of our parameterization.

\begin{figure}
\includegraphics[scale=0.4]{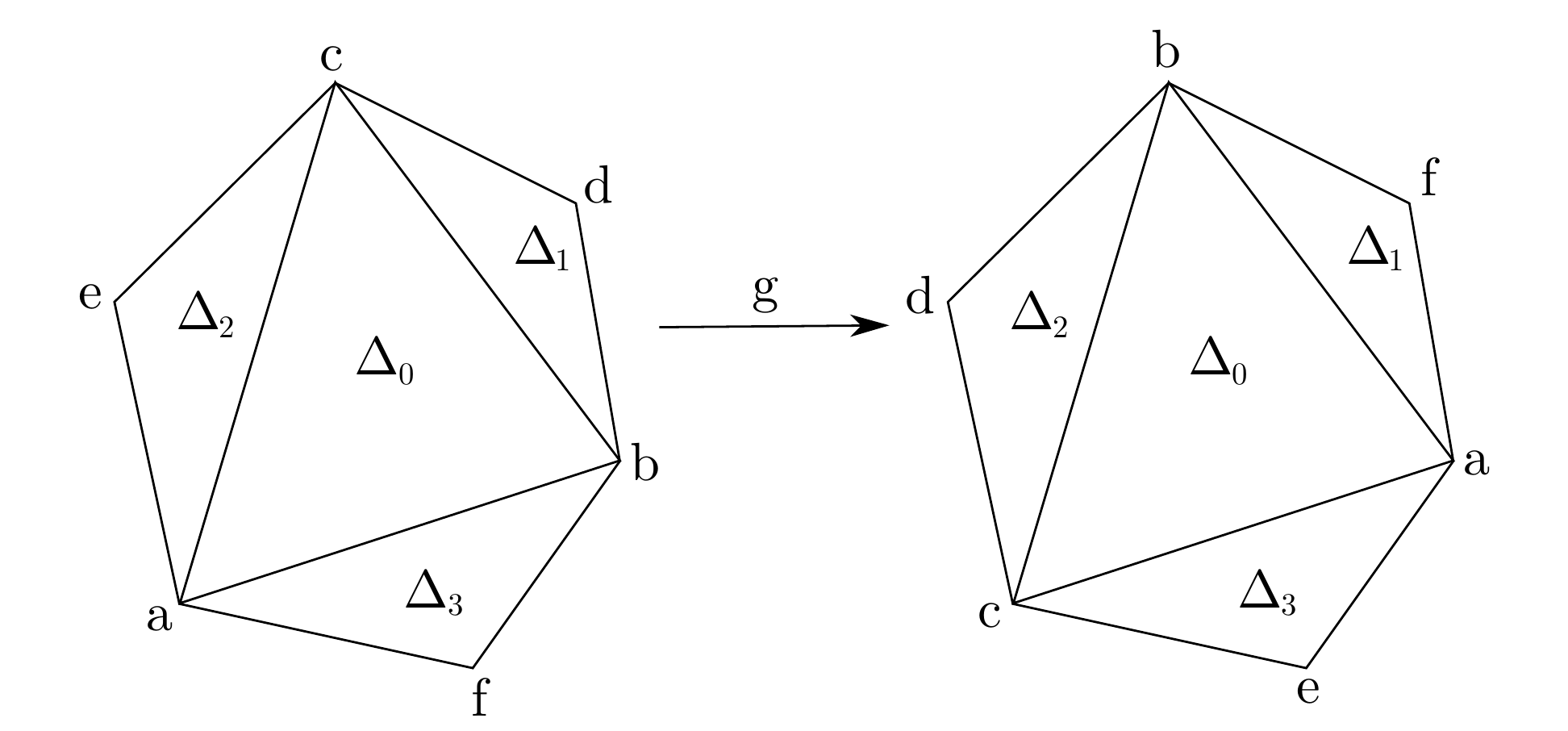}
\caption{$g: [\Delta_0,\Delta_1,\Delta_2,\Delta_3, A, B, C] \mapsto [\Delta_0,\Delta_2,\Delta_3,\Delta_1, B, C, A].$}
\label{relabel}
\end{figure}

\begin{prop}\label{symmetrysr}
Let $g\in\Zbbb_3$ be such that 
\[g\cdot [\Delta_0,\Delta_1,\Delta_2,\Delta_3, A, B, C] = [\Delta_0,\Delta_2,\Delta_3,\Delta_1, B, C, A].\] 
Then in the new parameterization of $\Qpc$, we have 
\[g\cdot \big((\lambda_A,\tau_A), (\lambda_B,\tau_B), (\lambda_C,\tau_C), s, r\big)=\big((\lambda_B,\tau_B), (\lambda_C,\tau_C), (\lambda_A,\tau_A), s, r\big).\]
\end{prop}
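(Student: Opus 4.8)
The plan is to verify the claimed identity by showing that both sides, after applying the definitions, reduce to the same six boundary invariants in cyclically permuted order together with the same values of $s$ and $r$. The action of $g$ on $\Qpc$ replaces the tuple $[\Delta_0,\Delta_1,\Delta_2,\Delta_3,A,B,C]$ by $[\Delta_0,\Delta_2,\Delta_3,\Delta_1,B,C,A]$, so the first task is to see what this does to the boundary invariants. Since $(\lambda_X,\tau_X)$ is determined by the eigenvalues of $X$ alone, and the new first, second, third holonomy generators are $B$, $C$, $A$, the boundary invariants are carried to $((\lambda_B,\tau_B),(\lambda_C,\tau_C),(\lambda_A,\tau_A))$; this is immediate and requires no computation.

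The substantive point is that the internal parameters $s$ and $r$ are unchanged. For $s$, I would invoke Lemma~\ref{symmetrys}: the parameter $s$ equals $(\rho_1^R)^{-1}(Cr_{a,d}(\Ppc)) = (\rho_2^R)^{-1}(Cr_{b,e}(\Ppc)) = (\rho_3^R)^{-1}(Cr_{c,f}(\Ppc))$, and under $g$ the roles of the vertex pairs $(a,d)$, $(b,e)$, $(c,f)$ are cyclically permuted (since $d=B\cdot a$, $e=C\cdot b$, $f=A\cdot c$ and $g$ cyclically permutes $A,B,C$ and correspondingly the repelling fixed points $a,b,c$), while simultaneously $\rho_1^R, \rho_2^R, \rho_3^R$ get cyclically permuted because $R$ does. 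Thus $g\cdot\Ppc$ has its $s$-coordinate equal to $(\rho_2^{R'})^{-1}(Cr_{b,e}(g\cdot\Ppc))$ where $R'$ is the permuted invariant tuple, and by the cross-ratio relabeling this is literally $(\rho_2^R)^{-1}(Cr_{b,e}(\Ppc)) = s$. The same argument, using Lemma~\ref{symmetryr} and the coordinate-free description $r=(Cr_{a,e}(\Ppc)-1)Cr_{c,f}(\Ppc)$ and its two cyclic siblings, shows the $r$-coordinate is preserved: the three expressions defining $r$ are cyclically permuted by $g$, and since they are all equal, $r(g\cdot\Ppc)=r(\Ppc)$.

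The only genuine bookkeeping is confirming that the vertex labels of $H_{g\cdot\Ppc}$ really are obtained from those of $H_\Ppc$ by the cyclic relabeling $a\mapsto b\mapsto c\mapsto a$, $d\mapsto e\mapsto f\mapsto d$ (equivalently $f\mapsto d\mapsto e\mapsto f$), matching the order in which they appear along $\partial H$. This follows directly from Notation~\ref{hexagonnotation} applied to the tuple $[\Delta_0,\Delta_2,\Delta_3,\Delta_1,B,C,A]$: the new repelling fixed points of the new generators $B,C,A$ are $b,c,a$ respectively, and the new ``primed'' vertices are $C\cdot b = e$, $A\cdot c=f$, $B\cdot a=d$, which together with the fact that $\Delta_0$ is fixed and $\Delta_1,\Delta_2,\Delta_3$ are cyclically permuted shows the hexagon $H_{g\cdot\Ppc}$ is projectively equivalent to $H_\Ppc$ with exactly this relabeling. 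Given the symmetric, label-covariant descriptions of $s$ and $r$ provided by Lemmas~\ref{symmetrys} and \ref{symmetryr}, the conclusion then drops out. I expect the main (minor) obstacle to be stating the relabeling cleanly enough that the cyclic covariance of both the $\rho_i^R$ and the relevant cross ratios is manifestly simultaneous; there is no hard analysis, only a careful alignment of indices.
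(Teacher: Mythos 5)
Your proposal is correct and follows essentially the same route as the paper: identify how the cyclic permutation of $(A,B,C)$ induces a cyclic relabeling of the hexagon vertices (hence of the relevant cross ratios and the $\rho_i^R$), and then invoke the symmetric characterizations of $s$ and $r$ from Lemma~\ref{symmetrys} and Lemma~\ref{symmetryr} to conclude both are fixed. One small slip in wording: $(\rho_2^{R'})^{-1}(Cr_{b,e}(g\cdot\Ppc))$ is not ``literally'' $(\rho_2^R)^{-1}(Cr_{b,e}(\Ppc))$ under the relabeling — it is $(\rho_3^R)^{-1}(Cr_{c,f}(\Ppc))$, which only equals $s$ by another appeal to Lemma~\ref{symmetrys} — but the conclusion and the argument's structure are sound.
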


\begin{proof}
Since $g$ sends $A$ to $B$, $B$ to $C$ and $C$ to $A$, it is clear that 
\[g\cdot \big((\lambda_A,\tau_A), (\lambda_B,\tau_B), (\lambda_C,\tau_C), s, r\big)=\big((\lambda_B,\tau_B), (\lambda_C,\tau_C), (\lambda_A,\tau_A), s', r'\big)\] for some $s',r'\in\Rbbb^+$. First, we will show that $s'=s$. Let 
\[\Ppc=[\Delta_0,\Delta_1,\Delta_2,\Delta_3, A, B, C]=(R,s,r),\]
\[\Ppc'=[\Delta_0,\Delta_2,\Delta_3,\Delta_1, B, C, A]=(R',s',r'),\]
and note that $Cr_{a,d}(\Ppc)=Cr_{c,f}(\Ppc')$. This means that $\rho_1^R(s)=\rho_3^{R'}(s')$, so $s'=s$ because $\rho_1^R\equiv\rho_3^{R'}$ as functions, and they are both injective.

Next, we show that $r=r'$. Observe:
\[Cr_{a,e}(\Ppc)=Cr_{c,d}(\Ppc'),\]
\[Cr_{c,f}(\Ppc)=Cr_{b,e}(\Ppc').\] 
Thus, 
\[r=(Cr_{a,e}(\Ppc)-1)Cr_{c,f}(\Ppc)=(Cr_{c,d}(\Ppc')-1)Cr_{b,e}(\Ppc')=r'.\]
\end{proof}
This shows that with our choice of parameterization of the $2$-cell fibers of $\Theta$ in Theorem \ref{Goldman}, $\Zbbb_3$ acts as the identity on the the fibers of $\Theta$, and only permutes the boundary invariants of $\Cmf(P)$. It is in this sense that the parameters $s$ and $r$ are symmetric.

\subsection{Topological entropy of the geodesic flow}\label{entropygeodesicflow} 

In this subsection, we will give a brief description of some dynamics that naturally occurs in our set up. Suppose first that $\Mmf$ is a closed convex $\Rbbb\Pbbb^2$ surface. Since the Hilbert metric is a Finsler metric, $\Mmf$ induces a geodesic flow $\phi=\{\phi_t\}_{t\in\Rbbb}$ on $T^1\Mmf$, the unit tangent bundle of $\Mmf$. We will now define the topological entropy of this flow. 

\begin{definition}
Let $\phi$ be a flow on a compact manifold $X$. Choose a metric $d$ on $X$, and for each $t\in\Rbbb$, define $d_t:X\times X\to\Rbbb_{\geq 0}$ by 
\[d_t(a,b)=\sup\{d\big(\phi_s(a),\phi_s(b)\big):0\leq s\leq t\}.\] 
One can verify that $d_t$ is in fact a metric on $X$. For any $\epsilon>0$ and any $t>0$, consider the set of all open covers of $X$ satisfying the following property: every open set in the open cover has a $d_t$ diameter of at most $\epsilon$. Then define $D(\epsilon,t)$ to be the size of such an open cover with the fewest number of open sets. The \emph{topological entropy} of the flow $\phi$ is the quantity
\[h_{top}(\phi):=\limsup_{\epsilon\to 0}\lim_{t\to\infty}\frac{1}{t}\log\big(D(\epsilon,t)\big).\]
\end{definition}

It is known that the topological entropy is in fact independent of the choice of the metric $d$, and thus depends only on the flow and the topology of $X$. For more details, one may refer to Chapter 3.1 of Hasselblatt-Katok \cite{HaKa1}.

In the case when $X=T^1\Mmf$ and $\phi$ is the geodesic flow of the Hilbert metric on $\Mmf$, this quantity is interesting because we can think of it as a measure of how different $\Mmf$ is from a hyperbolic surface. By Crampon \cite{Cr1}, we know that in the case when $M$ is a closed surface, $h_{top}(\phi)\in(0,1]$ for any $\Mpc=[f,\Mmf]\in\Cmf(M)$, and $h_{top}(\phi)=1$ if and only if $\Mpc$ lies in the Fuchsian locus of $\Cmf(M)$. Thus, if we have a sequence in $\Cmf(M)$ on which the topological entropy converges to $0$, then the dynamics of the geodesic flow is becoming less and less like that of a hyperbolic surface as we move along this sequence.

By Theorem 1.1 of Benoist \cite{Be1}, we know that $\phi$ is Anosov. Moreover, the topological transitivity of the action of $\pi_1(M)$ on the set of pairs of distinct points on $\partial\Omega_\Mpc$ implies that the periodic points for $\phi$ are dense in $T^1\Mmf$. Theorem B of Bowen \cite{Bo1} then allows us to compute the topological entropy of $\phi$ by the formula 

\begin{equation}\label{Bowen}
h_{top}(\phi)=\lim_{t\to\infty}\frac{1}{t}\log\big(R(t)\big)
\end{equation}
where $R(t)$ is the number of closed orbits of $\phi$ with period at most $t$.

In the more general case when $\Mmf=\Omega/\Gamma$ possibly has boundary, we use the following generalization of the geodesic flow of the Hilbert metric. Define $U\Mmf$ to be the set of points $(p,v)$ in $T^1\Mmf$ such that the geodesic through $p$ tangential to $v$ has endpoints in the limit set of $\Gamma$ in $\partial\Omega$. This is also known as the non-wandering set of the geodesic flow. It is easy to see that $U\Mmf$ is compact.

\begin{definition}
For any compact projective surface $\Mmf$, define the \emph{geodesic flow} of $\Mmf$ to be the geodesic flow $\phi$ of the Hilbert metric on $\Mmf$ restricted to the subset $U\Mmf$ of $T^1\Mmf$. Denote the topological entropy of the geodesic flow of $\Mmf$ by $h_{top}(\Mmf)$. For any convex $\Rbbb\Pbbb^2$ structure $\Mpc=[f,\Mmf]$ on $M$, we define $h_{top}(\Mpc):=h_{top}(\Mmf)$.
\end{definition}

When $\Mmf$ is closed, $U\Mmf=T^1\Mmf$, so $h_{top}(\Mmf)=h_{top}(\phi)$. Moreover, even in the case when $\Mmf$ is not closed, $U\Mmf$ is a hyperbolic set for the geodesic flow $\phi$ of $\Mmf$. To see this, consider two copies of $\Mmf$ and the obvious pairing of the boundary components of these two copies. Choose twist-bulge parameters for each of these pair of boundary components to glue the two copies of $\Mmf$ together. Let $\Mmf'$ be the closed convex $\Rbbb\Pbbb^2$ surface obtained this way and let $\psi$ be the geodesic flow of $\Mmf'$ (acting on $T^1\Mmf'$). Then note that $U\Mmf$ is a $\psi$-invariant subset of $T^1\Mmf'$, and $\psi|_{U\Mmf}$ is exactly $\phi$. The hyperbolicity of $U\Mmf$ then follows immediately from the Anosovness of $\psi$. This allows us to use a result by Pollicott (see Theorem 8 of \cite{Po1}) to compute the geodesic flow of $\Mmf$ by the formula
\begin{equation}
h_{top}(\Mmf)=\lim_{t\to\infty}\frac{1}{t}\log(R_\Mmf(t))
\end{equation}
where $R_\Mmf(t)$ is the number of closed geodesics in $\Mmf$ with length at most $t$. 

\section{Main result and proofs}\label{maincontent}

\subsection{Statement of main theorem and its consequences}\label{statements}

For the rest of the paper, we will use the set up we now describe. Choose once and for all the following:
\begin{enumerate}[(1)]
\item A pants decomposition $\Pmc=\{\gamma_1,\dots,\gamma_{3g-3+2n}\}$ for $M=M_{g,n}$, 
\item A set of generators $A_0$, $B_0$ and $C_0$ for $\pi_1(P)$ such that $C_0B_0A_0=I$,
\item A diffeomorphism $f_i:P\to P_i$, where $\{P_1,\dots,P_{2g-2+n}\}$ are the closures of the connected components of 
\[M_{g,n}\setminus\Pmc.\] 
\end{enumerate}

The description of the Goldman parameterization for a pair of pants in Section \ref{goldmanparameters} tells us that this data gives us a parameterization of $\Cmf(M)$. We will now list a couple of definitions to simplify the statement of our theorem.

\begin{definition}
A closed curve $\eta$ in $M$ is \emph{typical} if $\eta$ is not homotopic to a multiple of any $\gamma\in\Pmc$. The set of homotopy classes of typical oriented closed curves in $M$ is denoted by $\Tpc_M$.
\end{definition}

\begin{definition}\label{goldmandef}
A sequence $\{\Ppc_j=(R_j,s_j,r_j)\}_{j=1}^\infty$ in $\Cmf(P)$ is a \emph{Goldman sequence} if 
\begin{enumerate}[(1)]
\item there are constants $C_1$ and $C_2$, $0<C_1<C_2<\infty$, such that for all $j$, the lengths of the boundary components of $\Ppc_j$ are bounded between $C_1$ and $C_2$.
\item for any compact set $K\subset(\Rbbb^+)^2$, there is a positive number $H$ such that if $j>H$, then $(s_j,r_j)\notin K$.
\end{enumerate}
A sequence $\{\Mpc_j\}_{j=1}^\infty$ in $\Cmf(M)$ is a \emph{Goldman sequence} if for all $i\in\{1,\dots,2g-2+n\}$, the sequence $\{\Ppc^{(i)}_j:=f_i^\#(\Mpc_j)\}_{j=1}^\infty$ (see Section \ref{thehilbertmetric}) in $\Cmf(P)$ is a Goldman sequence. 
\end{definition}

Let $\Lpc:\Cmf(M)\to\Rbbb^+$ be the function that sends every $\Mpc\in\Cmf(M)$ to the length of the shortest typical closed curve in $\Mpc$. The main theorem of this paper is the following.

\begin{thm}\label{mainthm1}
Let $\{\Mpc_j\}_{j=1}^{\infty}$ be a Goldman sequence in $\Cmf(M)$. Then 
\begin{enumerate}[(1)]
\item $\displaystyle\lim_{j\to\infty}\Lpc(\Mpc_j)=\infty$
\item $\displaystyle\lim_{j\to\infty}h_{top}(\Mpc_j)=0$
\end{enumerate}
\end{thm}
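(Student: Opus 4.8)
The plan is to prove (1) first and then deduce (2) from it using Pollicott's / Bowen's formula. For (1), the key is to exploit the symmetric reparameterization of $\Cmf(P)$ together with the cross-ratio inequalities in Proposition \ref{crossratioinequality} to bound Hilbert lengths of typical curves from below in terms of the internal parameters. Concretely, I would first fix attention on a single pair of pants $P_i$ with convex $\Rbbb\Pbbb^2$ structure $\Ppc^{(i)}_j = (R_j^{(i)}, s_j^{(i)}, r_j^{(i)})$, and observe that along a Goldman sequence, for each $i$ either $s_j^{(i)}\to\infty$ or $s_j^{(i)}\to 0$ or $r_j^{(i)}\to\infty$ or $r_j^{(i)}\to 0$ (after passing to a subsequence, some one of these happens, since $(s_j^{(i)},r_j^{(i)})$ leaves every compact set in $(\Rbbb^+)^2$; the boundary invariants stay in a compact subset of $\Rmf^3$ by Definition \ref{goldmandef}(1) and equation (\ref{lengthofc})). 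In each of these cases at least one of the thirty hexagon cross ratios $Cr_{x,y}(\Ppc^{(i)}_j)$ tends to $\infty$ or to $1$: from (\ref{importantcrossratio}) and the $\rho_k^R$ functions, $Cr_{a,d},Cr_{b,e},Cr_{c,f}\to\infty$ (resp.\ $\to 1$) as $s\to\infty$ (resp.\ $s\to 0$), uniformly over the compact range of $R$; and from Lemma \ref{symmetryr}, $Cr_{a,e}-1 = r/\rho_3^R(s)$, etc., so the ``$r$''-type cross ratios blow up or collapse as $r\to\infty$ or $r\to 0$. The upshot is a quantitative statement: along a Goldman sequence, $\min_i \big(\text{some large-or-small cross ratio in } P_i\big)$ degenerates.

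The heart of the argument is then a \emph{decomposition lemma}: any typical closed curve $\eta$ in $\Mpc_j$, being not homotopic to a multiple of a curve in $\Pmc$, must cross some $\gamma_k\in\Pmc$ essentially, hence its geodesic representative enters some pair of pants $P_i$ and crosses it from one boundary/leaf of the ideal triangulation to another. Lifting to $\widetilde\Mmf\subset\Rbbb\Pbbb^2$ and using that $\Delta_0\cup\Delta_1$ is a fundamental domain for $\pi_1(P)$ acting on $\Omega_\Ppc$ (from Goldman's construction), I would cut the lift of $\eta$ into geodesic segments, each of which joins two sides of a translate of the hexagon $H_{\Ppc^{(i)}_j}$. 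By Proposition \ref{hilbertmetricprop}(4) and projective invariance of the cross ratio, the Hilbert length of each such segment is bounded below by $\tfrac12\log(\text{cross ratio})$ for an appropriate quadruple of hexagon vertices, which is exactly one of the $Cr_{x,y}$; and since $\eta$ is typical it contributes at least one such ``transverse'' segment. Summing over the segments and using Proposition \ref{uniquegeodesic} (the closed geodesic minimizes length in its free homotopy class, and the sub-pants-surface inclusion doesn't change lengths of closed lines) gives $l_{\Mpc_j}(\eta) \ge c\log\big(\max_i \max_{x,y} Cr_{x,y}(\Ppc^{(i)}_j)\big) - C$ for constants depending only on the topology of $M$; combined with the degeneration of the cross ratios this yields $\Lpc(\Mpc_j)\to\infty$. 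The subtle point is that one of the degenerate behaviours is a cross ratio tending to $1$ (when $s\to 0$ or $r\to 0$), which gives a \emph{lower} bound tending to $0$, not $\infty$ — so I must check that whenever some internal parameter goes to a ``collapsing'' end, some \emph{other} cross ratio in the same hexagon (forced by the constraint equations / the shape of $\Rmf$ and the definition of $\Rmf^3\times(\Rbbb^+)^2$, e.g.\ the relation $(Cr_{a,e}-1)Cr_{c,f}=(Cr_{c,d}-1)Cr_{b,e}=(Cr_{b,f}-1)Cr_{a,d}$ of Lemma \ref{symmetryr}, or the interaction with the $t\leftrightarrow r$ change of variables in Proposition \ref{newparameterization}) must simultaneously blow up. This case analysis — showing that \emph{every} way the pair $(s_j,r_j)$ can escape to infinity forces \emph{some} hexagon cross ratio to $\infty$ — is where I expect the main work and the main obstacle to lie.

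For (2), I would combine (1) with the orbit-counting formula $h_{top}(\Mpc_j) = \lim_{t\to\infty}\tfrac1t\log R_{\Mmf_j}(t)$, valid because the geodesic flow is Anosov on the (compact) non-wandering set $U\Mmf_j$ with dense periodic orbits (Theorem 1.1 of Benoist \cite{Be1}, Bowen \cite{Bo1}, Pollicott \cite{Po1}). The idea is that a lower bound on the length of typical closed geodesics controls $R_{\Mmf_j}(t)$ for $t$ of moderate size: closed geodesics of length $\le t$ are either ``short'' ones trapped near the pants curves $\gamma_k$ (these live in a controlled collar and their count grows at most polynomially, or at least with exponential rate $\to 0$, by a collar-type estimate for the Hilbert metric, using that the boundary invariants stay bounded) or typical ones, of which there are none of length below $\Lpc(\Mpc_j)\to\infty$. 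A standard subadditivity/counting argument — bounding the number of closed geodesics of length $\le t$ by counting how they decompose into arcs in pants, each arc of length $\ge \Lpc(\Mpc_j)$ or lying in a thin collar — gives $R_{\Mmf_j}(t) \le (\text{const})^{\,t/\Lpc(\Mpc_j)}\cdot(\text{polynomial in }t)$ roughly, whence $h_{top}(\Mpc_j) \le C/\Lpc(\Mpc_j) \to 0$. Since $h_{top}\ge 0$ always, this gives (2). The only real content beyond (1) here is the uniform (in $j$) comparison of Hilbert geometry in the collars of $\Pmc$ with the bounded boundary invariants, which should follow from equation (\ref{lengthofc}) and compactness of the boundary-invariant range, together with the general inequality $Hd_{\Omega'}\ge Hd_\Omega$ of Proposition \ref{hilbertmetricprop}(2).
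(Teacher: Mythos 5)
Your overall roadmap is the same as the paper's: decompose typical closed geodesics using the ideal triangulation of each pair of pants, bound segment lengths below by logarithms of the hexagon cross ratios $Cr_{x,y}$, show the relevant cross-ratio quantity blows up along every Goldman sequence, and for (2) count closed geodesics of bounded length combinatorially and apply the Bowen/Pollicott orbit-count formula. You have also correctly identified where the difficulty lies. But the specific inequality you propose for the key step does not hold, and the fix you gesture at is not enough.

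The displayed bound $l_{\Mpc_j}(\eta) \geq c\log\bigl(\max_i\max_{x,y}Cr_{x,y}(\Ppc_j^{(i)})\bigr) - C$ has the inequality in the wrong direction. When you cut a lifted geodesic against the triangulation, each ``crossing'' segment is bounded below by the log of one \emph{specific} cross ratio (or the max of two, depending on where the opposite endpoint lands), not by the max over all thirty, and you cannot control which one appears. In fact roughly a third of the $Cr_{x,y}$ tend to $1$ when $s\to 0$ or $r\to 0$, so a single crossing segment can carry a vanishing lower bound. Knowing that ``some other cross ratio blows up'' is therefore not a valid patch: you would be bounding a particular segment by a cross ratio you have no control over. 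The paper's resolution is structural: crossing segments are classified as S-type or Z-type (Section 3.4), adjacent crossings in a looping pair necessarily alternate S/Z, and Proposition \ref{alphalowerbound}(1) bounds the \emph{sum} of the lengths of two adjacent crossings from below by $\tfrac12\log(X^{(i)}Y^{(i)})$, where $X^{(i)}$ and $Y^{(i)}$ are the minima over two disjoint lists of six cross-ratio \emph{products}. The computational core (Lemma \ref{Kgoestoinfty}) is that $X^{(i)}Y^{(i)}$ tends to $\infty$ along every Goldman sequence even though neither factor alone does so in general. The pairing and the particular choice of products are the missing ideas; they are precisely what turns ``some cross ratio degenerates'' into a usable lower bound, and they cannot be deduced from the max-of-cross-ratios heuristic. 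Proposition \ref{alphalowerbound}(2)--(3) then handle geodesics that change pants, by reducing to the looping case.

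For (2), your outline is qualitatively in the right spirit, but the counting needs more than a naive $R(t)\leq C^{t/\Lpc}\cdot\text{poly}(t)$. Looping and pants-changing segments can individually be arbitrarily short, so you cannot charge each arc $\Lpc(\Mpc_j)$; the winding around collars has to be tracked separately. The paper introduces a refined length lower bound $l_\Mpc(\eta)\geq m\,K(\Mpc)+\sum_\beta\#(\beta)\,L(\Mpc)$ (Theorem \ref{mainestimate}), where $m$ is the number of crossing points and $\#(\beta)$ counts windings, and pairs it with two combinatorial invariants $\phi(\eta)$ and $\psi(\eta)$: $\phi$ is injective on typical geodesics (Proposition \ref{phietaetaequivalence}), $\psi$ has fibers of size at most $18^m$ (Proposition \ref{phitopsi}), and the number of $\psi$'s of weight $\leq T$ is a binomial coefficient. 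The resulting entropy bound has a term $\log(432g-432+216n)/K(\Mpc)$ (which matches your $C/\Lpc$) plus a $\limsup$ of a normalized binomial, and showing that this second term goes to $0$ as $K\to\infty$ requires the Stirling-formula estimate of Appendix \ref{proof of proposition technical}. So (2) is correct in spirit, but the combinatorial bookkeeping that makes it rigorous is a nontrivial additional layer beyond what your sketch supplies.
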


Before we begin the proof of this theorem, we will state and prove some of its corollaries. The first of these is a generalization of (1) of Theorem \ref{mainthm1}. 

\begin{cor}\label{firstcor}
Let $\{\Mpc_j\}_{j=1}^\infty$ be a sequence in $\Cmf(M)$ such that $\{f_1^\#(\Mpc_j)=:\Ppc_j\}_{j=1}^\infty$ in $\Cmf(P)$ is a Goldman sequence. Let $\eta$ be a closed curve in $M$ that cannot be homotoped to be disjoint from $P_1$ and let $\eta_j$ be the geodesic representative of $\eta$ in $\Mpc_j$. Then $\displaystyle\lim_{j\to\infty}l_{\Mpc_j}(\eta_j)=\infty$.
\end{cor}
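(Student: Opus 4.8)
The plan is to reduce Corollary~\ref{firstcor} to part~(1) of Theorem~\ref{mainthm1} by an averaging argument over the pairs of pants of a suitable pants decomposition, so the real content is to control how the geodesic $\eta_j$ interacts with the specific pair of pants $P_1$ on which the internal parameters degenerate. First I would fix the hexagon $H_{\Ppc_j}\subset\overline{\Omega_{\Ppc_j}}$ associated to $\Ppc_j = f_1^\#(\Mpc_j)$, together with its $\pi_1(P)$-translates, which tile $\Omega_{\Ppc_j}$ and project to the ideal triangulation of $\Ppc_j$. Lifting $\eta_j$ to a geodesic line $\widetilde\eta_j$ in $\widetilde{\Mpc_j}\subset\Rbbb\Pbbb^2$, the hypothesis that $\eta$ cannot be homotoped off $P_1$ guarantees that (up to choosing the right lift and the right conjugate) $\widetilde\eta_j$ crosses the lift of $P_1$, i.e.\ passes through the interior of at least one of the translated hexagons, entering and exiting through its edges. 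Cutting $\widetilde\eta_j$ at the points where it meets the edges of the triangulation gives a decomposition of a fundamental segment of $\eta_j$ into geodesic sub-segments, at least one of which is a chord of the hexagon $H_{\Ppc_j}$ joining two of its (thirty) edge-pairs.

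Next I would bound the Hilbert length of such a chord from below. Here is where the cross-ratio machinery of Sections~\ref{Cross ratio}--\ref{areparameterization} comes in: the endpoints of the chord lie on two edges of $H_{\Ppc_j}$, and by (2) and (3) of Proposition~\ref{hilbertmetricprop} together with Proposition~\ref{uniquegeodesic}, the Hilbert length of the chord inside $\Omega_{\Ppc_j}$ is at least the Hilbert distance in $\Omega_{\Ppc_j}$ between the two edges of the hexagon, which in turn (again by Proposition~\ref{hilbertmetricprop}, monotonicity in nested domains, and the $C^1$/strict convexity of $\Omega_{\Mpc_j}$) is at least half the logarithm of one of the cross ratios $Cr_{x,y}(\Ppc_j)$ controlled by $\rho_i^{R_j}(s_j)$ or by $r_j$. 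Since $\{\Ppc_j\}$ is a Goldman sequence, the boundary invariants $R_j$ stay in a compact subset of $\Rmf^3$ while $(s_j,r_j)$ leaves every compact set of $(\Rbbb^+)^2$; using the explicit formulas for $\rho_i^R$ and the symmetry of Lemma~\ref{symmetrys}--\ref{symmetryr}, one checks that at least one of the relevant cross ratios along the chord tends to $\infty$. (One must also handle the bounded-cross-ratio directions: a chord can be ``short'' if it runs nearly parallel to a leaf of the triangulation, but then $\widetilde\eta_j$ must cross additional translated hexagons in a controlled way, forcing a ``long'' chord elsewhere; quantifying this is the crux.)

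The main obstacle I anticipate is precisely the bookkeeping in the previous sentence: ruling out the possibility that, as $j\to\infty$, the geodesic $\widetilde\eta_j$ always slips through the degenerating hexagons via sub-segments whose lengths stay bounded. The clean way around it is to invoke Theorem~\ref{mainthm1}(1) as a black box through a covering trick rather than re-prove it: choose a pants decomposition $\Pmc'$ of a (possibly larger) surface, or pass to a finite cover, in which $\eta$ (or a power of it) becomes a curve that is typical and whose geodesic representative is forced to traverse $P_1$; apply part~(1) of the theorem to a Goldman sequence obtained by pulling back $\Mpc_j$ via $f_1^\#$ and extending arbitrarily off $P_1$, and use Proposition~\ref{uniquegeodesic}(1) to transfer the length lower bound back to $\Mpc_j$. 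Concretely, I would proceed as follows: (i) show that $l_{\Mpc_j}(\eta_j)$ is bounded below by the length of a chord of $H_{\Ppc_j}$ of the type above; (ii) estimate that chord length below by $\tfrac12\log$ of a cross ratio that, by the Goldman-sequence hypothesis and the formulas in~(\ref{importantcrossratio}), diverges; (iii) conclude $l_{\Mpc_j}(\eta_j)\to\infty$. If step~(i) turns out to require the delicate ``short chord forces long chord'' analysis, I would instead package it via the auxiliary-cover reduction to Theorem~\ref{mainthm1}(1), whose proof (coming later in the paper) presumably already contains exactly this combinatorial estimate; citing it keeps the corollary short.
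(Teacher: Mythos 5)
Your instinct to reduce to Theorem~\ref{mainthm1}(1) applied to $\Ppc_j\in\Cmf(P)$ (the pair of pants itself) is right, but the mechanism you sketch for doing so has a genuine gap, and your primary route (direct chord estimates) is really an attempted re-proof of Theorem~\ref{mainthm1}(1) whose hardest step you yourself flag as unresolved. The paper does neither a covering trick nor any chord/cross-ratio estimate here; it uses a short concatenation argument that you are missing.

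Concretely, the paper splits into two cases depending on whether the geodesic $\eta_j$ is contained in $\Ppc_j$. If it is, then $\eta$ is a typical closed curve \emph{in $P$}, so Proposition~\ref{uniquegeodesic}(1) gives $l_{\Mpc_j}(\eta_j)=l_{\Ppc_j}(\eta_j)\geq\Lpc(\Ppc_j)\to\infty$ by Theorem~\ref{mainthm1}(1) applied in $\Cmf(P)$. If not, one extracts a subsegment $\mu_j\subset\eta_j\cap\Ppc_j$ with endpoints on $\partial\Ppc_j$ that is not boundary-parallel, and the crucial step — the one your proposal never articulates — is to \emph{close $\mu_j$ up} by concatenating with the two boundary components $\alpha_j,\beta_j$ of $\Ppc_j$ that contain its endpoints: one of $\alpha_j\cdot\mu_j\cdot\beta_j\cdot\mu_j^{-1}$ or $\alpha_j\cdot\mu_j\cdot\beta_j^{-1}\cdot\mu_j^{-1}$ is a typical closed curve $\nu_j$ in $\Ppc_j$. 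Then $l_{\Mpc_j}(\alpha_j)+l_{\Mpc_j}(\beta_j)+2l_{\Mpc_j}(\mu_j)\geq\Lpc(\Ppc_j)$, and because the Goldman-sequence hypothesis keeps $l_{\Mpc_j}(\alpha_j),l_{\Mpc_j}(\beta_j)$ bounded while $\Lpc(\Ppc_j)\to\infty$, one gets $l_{\Mpc_j}(\mu_j)\to\infty$, hence $l_{\Mpc_j}(\eta_j)\to\infty$.

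Your ``auxiliary-cover reduction'' does not substitute for this: extending $\Ppc_j$ ``arbitrarily off $P_1$'' to a Goldman sequence in $\Cmf(M)$ and applying Theorem~\ref{mainthm1}(1) there would bound the shortest typical curve in the \emph{new} structure, but the geodesic representative of $\eta$ in that new structure is a different curve from $\eta_j$ (the metrics differ off $P_1$), and a lower bound for its length does not transfer back. The fix is exactly the concatenation: it lets you stay inside $\Cmf(P)$, where the Goldman-sequence hypothesis actually lives, and replaces the arbitrary closed curve by a curve that is genuinely typical in $P$, at the cost of two boundary loops whose lengths the hypothesis controls.
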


\begin{proof}
Since $\eta$ cannot be homotoped to be disjoint from $P_1$, we know that $\eta$ is typical and one of the following must hold:
\begin{itemize}
\item for all $j$, $\eta_j$ is contained in $\Ppc_j\subset \Mpc_j$, 
\item for all $j$, there is a closed subsegment $\mu_j$ of $\eta_j$ such that $\mu_j\subset \Ppc_j$, the endpoints of $\mu_j$ lie in $\partial \Ppc_j$, and $\mu_j$ is not homotopic relative endpoints to a subset of $\partial \Ppc_j$.
\end{itemize}
If the former holds then (1) of Proposition \ref{uniquegeodesic} implies that $l_{\Mpc_j}(\eta_j)=l_{\Ppc_j}(\eta_j)$. By (1) of Theorem \ref{mainthm1}, $\displaystyle\lim_{j\to\infty}l_{\Ppc_j}(\eta_j)=\infty$.

Suppose instead that the latter holds. Let $\alpha_j$ and $\beta_j$ be the two boundary components of $\Ppc_j$ that contain the endpoints of $\mu_j$ (possibly $\alpha_j=\beta_j$) and let the endpoints of $\mu_j$ in $\alpha_j$ and $\beta_j$ be $p_j$ and $q_j$ respectively. Parameterize $\mu_j$, $\alpha_j$ and $\beta_j$ by the unit interval $[0,1]$ so that $\mu_j(0)=p_j$, $\mu_j(1)=q_j$, $\alpha_j(0)=\alpha_j(1)=p_j$, $\beta_j(0)=\beta_j(1)=q_j$, and observe that either $\alpha_j\cdot\mu_j\cdot\beta_j\cdot\mu_j^{-1}$ or $\alpha_j\cdot\mu_j\cdot\beta_j^{-1}\cdot\mu_j^{-1}$ is a typical closed curve in $\Ppc_j$. Here, $\cdot$ is concatenation and the inverse is reversing the parameterization. 

Assume without loss of generality that $\nu_j:=\alpha_j\cdot\mu_j\cdot\beta_j\cdot\mu_j^{-1}$ is typical.  Then
\begin{equation*}
l_{\Mpc_j}(\alpha_j)+l_{\Mpc_j}(\beta_j)+2l_{\Mpc_j}(\mu_j)=l_{\Mpc_j}(\nu_j)\geq\Lpc(\Ppc_j),
\end{equation*}
i.e.
\begin{equation*}
l_{\Mpc_j}(\mu_j)\geq\frac{1}{2}(\Lpc(\Ppc_j)-l_{\Mpc_j}(\alpha_j)-l_{\Mpc_j}(\beta_j)).
\end{equation*}
Thus (1) of Theorem \ref{mainthm1} implies that 
\begin{equation*}
\lim_{j\to\infty}l_{\Mpc_j}(\eta_j)\geq\lim_{j\to\infty}l_{\Mpc_j}(\mu_j)=\infty
\end{equation*}
\end{proof}

As another consequence of (1) of Theorem \ref{mainthm1}, we can also say how the maximum injectivity radius and the Busemann area of a convex $\Rbbb\Pbbb^2$ structure on a closed surface degenerate along a Goldman sequence. These results are listed as Corollary \ref{finalcor}. To prove these results, we need the following lemma.

\begin{lem}\label{growingball}
Let $\Mmf$ be a closed convex $\Rbbb\Pbbb^2$ surface and let $p$ be a point in $\Mmf$ where the injectivity radius is maximized. Let $r_p$ be the injectivity radius at $p$. Then there are pairwise distinct line segments $l_1$, $l_1'$, $l_2$, $l_2'$ in $\Mmf$ of length $r_p$ such that 
\begin{enumerate}[(1)]
\item $l_1$ and $l_1'$ have the same endpoints
\item $l_2$ and $l_2'$ have the same endpoints 
\item $p$ is a common endpoint for all four segments
\item the closed curves $l_1\cup l_1'$ and $l_2\cup l_2'$ are not homotopic relative to $p$, and neither of them are homotopically trivial.
\end{enumerate}
\end{lem}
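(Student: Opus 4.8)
The plan is to produce the four segments by a standard ``shortest geodesic loop'' argument adapted to the Hilbert metric, using the compactness of $\Mmf$, the fact that closed lines are the unique length minimizers in their free homotopy classes (Proposition \ref{uniquegeodesic}), and the existence of unique geodesic segments between points (Proposition \ref{properties}(3)). First I would fix the point $p$ of maximal injectivity radius $r_p$ and recall that, since $\Mmf$ is a closed convex $\Rbbb\Pbbb^2$ surface, the injectivity radius function is continuous and positive, hence $r_p$ is attained and finite. By definition of injectivity radius, the closed metric ball $\overline{B_{r_p}(p)}$ fails to be embedded, so there is a homotopically non-trivial geodesic loop based at $p$ of length $2r_p$; cutting it at the point diametrically opposite $p$ and lifting to the universal cover $\widetilde{\Mmf}\subset\Rbbb\Pbbb^2$ yields two distinct geodesic segments $l_1,l_1'$ of length $r_p$ from $p$ to a common point $x_1$, whose concatenation $l_1\cup l_1'$ is not null-homotopic. (Here I would be a little careful: in a Finsler metric that is not reversible one does not automatically get the ``two halves'' symmetry, but the Hilbert metric \emph{is} reversible, so the midpoint argument goes through verbatim.)

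Next I would extract the second pair. The idea is that if $l_2\cup l_2'$ were forced to be homotopic to $l_1\cup l_1'$ (rel $p$) for \emph{every} choice of second geodesic loop through $p$ of length $2r_p$, one could enlarge the embedded ball at $p$ slightly, contradicting maximality of $r_p$ over $\Mmf$ — or, more precisely, contradicting maximality once we move the basepoint a little. Concretely: let $\eta_1 = l_1\cup l_1'$ be the geodesic loop just found, representing a primitive class $\gamma_1\in\pi_1(\Mmf,p)$, and let $c$ be the closed geodesic freely homotopic to $\gamma_1$, of length $\ell(c)$; since $\eta_1$ is a loop in that free homotopy class, Proposition \ref{uniquegeodesic}(2) gives $\ell(c)\le 2r_p$. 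Now I would argue that there must be a \emph{second}, homotopically independent, obstruction to embeddedness at $p$: if the only geodesic loops of length $2r_p$ based at $p$ all lay in $\langle\gamma_1\rangle$, then a neighborhood of $p$ in $\Mmf$ would be isometric to a neighborhood of the core of an embedded metric annulus (a collar of $c$), and one could slide $p$ transversally off the short geodesic $c$ to increase the injectivity radius, contradicting the choice of $p$. This produces a geodesic loop $\eta_2$ based at $p$ of length $2r_p$ whose class is not a power of $\gamma_1$; splitting it at its midpoint gives $l_2,l_2'$, with $l_2\cup l_2'$ not homotopic to $l_1\cup l_1'$ rel $p$ and not null-homotopic. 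Pairwise distinctness of $l_1,l_1',l_2,l_2'$ follows: $l_1\ne l_1'$ and $l_2\ne l_2'$ by construction, while any coincidence across the two pairs would force $\eta_1$ and $\eta_2$ to share a sub-arc and hence (by uniqueness of geodesic segments, Proposition \ref{properties}(3)) be homotopic rel $p$, contradicting their independence.

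The main obstacle I expect is making the second-pair argument rigorous: one needs to rule out the degenerate possibility that \emph{all} length-$2r_p$ geodesic loops at $p$ are iterates (or lie in the cyclic subgroup) of a single $\gamma_1$, and turn that into a genuine contradiction with the maximality of $r_p$. The cleanest route is via the universal cover: lift $p$ to $\widetilde p\in\widetilde{\Mmf}$ and note $r_p = \tfrac12\min_{g\neq \mathrm{id}} Hd_{\widetilde\Mmf}(\widetilde p, g\widetilde p)$, so the minimizing set $S_{\widetilde p}=\{g\neq\mathrm{id} : Hd(\widetilde p,g\widetilde p)=2r_p\}$ is finite and nonempty. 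If every $g\in S_{\widetilde p}$ generates the same maximal cyclic subgroup $\langle\gamma_1\rangle$, then near $\widetilde p$ the displacement function $g\mapsto Hd(\widetilde p, g\widetilde p)$ is uniquely minimized (in a neighborhood of the minimizing orbit segment) by the translation along the axis of $\gamma_1$, which is a strictly convex function of the transverse coordinate; hence moving $\widetilde p$ off that axis strictly increases $\min_{g} Hd(\widetilde p', g\widetilde p')$ restricted to $\langle\gamma_1\rangle$, and — since $S_{\widetilde p}$ is finite and all other $g\notin\langle\gamma_1\rangle$ have strictly larger displacement at $\widetilde p$, which persists for $\widetilde p'$ near $\widetilde p$ by continuity — this yields an interior point with strictly larger injectivity radius, contradicting the choice of $p$. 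Once this dichotomy is set up, the remaining verifications (lengths equal to $r_p$, the two midpoint arcs in each pair sharing endpoints, non-triviality, and pairwise distinctness) are immediate from the constructions and from Propositions \ref{properties} and \ref{uniquegeodesic}.
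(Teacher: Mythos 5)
Your overall strategy is the same as the paper's: produce a first geodesic loop of length $2r_p$ through $p$ (the failure of embeddedness of $\overline{B_{r_p}(p)}$), assume for contradiction that the minimizing deck transformation is essentially unique, and derive a contradiction with the maximality of $r_p$ by perturbing $p$. The construction of the four segments from midpoints, the use of Propositions \ref{properties} and \ref{uniquegeodesic} for distinctness and non-triviality, and the reduction to the universal cover are all as in the paper. Two small remarks before the main point: first, the paper only needs to rule out $g_2 = g_1^{\pm 1}$, not all of $\langle g_1\rangle$ (a loop representing $g_1^2$ is already non-homotopic to the $g_1$-loop rel $p$), so your insistence that the second class lie outside the cyclic subgroup is unnecessary overkill; second, phrases like ``slide $p$ transversally off the short geodesic $c$'' and ``moving $\widetilde p$ off that axis'' implicitly place $\widetilde p$ on the axis of $g_1$, which need not be the case — the correct move, as in the paper, is to push $\widetilde p$ \emph{further} from the axis.

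The genuine gap is your central claim that the displacement function $x\mapsto Hd(x,g_1 x)$ is ``a strictly convex function of the transverse coordinate,'' from which you deduce that perturbing $\widetilde p$ away from the axis strictly increases displacement. In a Hilbert metric this is not a free fact: Hilbert geometries on strictly convex domains are not CAT(0), geodesic convexity of displacement functions is a delicate issue, and you would have to prove this monotonicity/convexity from scratch. The paper does exactly this, but in a hands-on way that avoids any convexity claim: it normalizes so that $g_1$ is diagonal with fixed points the coordinate vertices and $\ptd = [1:1:1]$, perturbs to $\ptd' = [1:\beta:1]$ with $\beta>1$ (moving further from the axis $\{y=0\}$), shows by a determinant computation that the lines $\ptd\,g_1\ptd$ and $\ptd'\,g_1\ptd'$ meet only at a point $[\lambda-\mu:0:\nu-\mu]$ outside the invariant triangle $\Delta\supset\Omega$, and then reads off $Hd(\ptd,g_1\ptd) < Hd(\ptd',g_1\ptd')$ from the cross-ratio comparison results (Propositions \ref{crossratioline} and \ref{crossratioinequality}). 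Until you replace your convexity assertion with an actual argument of this kind, the contradiction is not established and the proof is incomplete.
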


\begin{proof}
For any point $q\in \Mmf=\Omega/\pi_1(\Mmf)$, let $r_q$ be the injectivity radius at $q$ and let $\qtd$ be any lift of $q$ in $\Omega$. Observe that there is some $g\in\pi_1(\Mmf)$ such that 
\[Hd_\Mmf(g\cdot\qtd,\qtd)=Hd_\Mmf(g^{-1}\cdot\qtd,\qtd)=2r_q.\] 
Also, for all $g\in\pi_1(\Mmf)\setminus\{\id\}$, we know that $Hd_\Mmf(g\cdot\qtd,\qtd)\geq 2r_q$. 

Note that $p$ exists because $\Mmf$ is closed. Choose a lift $\ptd$ of $p$ in $\Omega$, and suppose for contradiction that up to taking inverses, there is a unique $g_1\in\pi_1(\Mmf)$ such that $Hd_\Mmf(g_1\cdot \ptd,\ptd)=2r_p$. This is an open condition, so there is a neighborhood $U$ of $p$ with the following property: for all $p'\in U$ and up to taking inverses, $g_1\in\pi_1(\Mmf)$ is the unique element such that $Hd_\Mmf(g_1\cdot \ptd',\ptd')=2r_{p'}$. Let $g_1^+$, $g_1^-$ and $g_1^0$ be the three fixed points of $g_1$ in $\Rbbb\Pbbb^2$, where $g_1^+$ is the attracting fixed point and $g_1^-$ is the repelling fixed point. By renormalizing, we can assume that 
\[ g_1^+=\left[\begin{array}{c} 1\\ 0\\ 0 \end{array} \right], 
g_1^0=\left[ \begin{array}{c} 0\\ 1\\ 0 \end{array} \right], 
g_1^-=\left[ \begin{array}{c} 0\\ 0\\ 1 \end{array} \right],
\ptd=\left[ \begin{array}{c} 1\\ 1\\ 1 \end{array} \right].\]
This implies that
\[g_1=\left[\begin{array}{ccc} \lambda&0&0\\ 0&\mu&0\\ 0&0&\nu \end{array} \right]\]
for some $\lambda$, $\mu$, $\nu$ such that $\lambda>\mu>\nu>0$, $\lambda\mu\nu=1$. Define $\Delta$ to be the closed triangle
\[\Delta:=\Bigg\{\left[ \begin{array}{c} x\\ y\\ z \end{array} \right]:x,y,z\geq 0\Bigg\},\]
choose $\beta>1$ such that 
\[\ptd':=\left[ \begin{array}{c} 1\\ \beta\\ 1 \end{array} \right]\]
is a lift of some $p'\in U$, and note that $\ptd$ and $\ptd'$ both lie in $\Delta$. Let $L$ be the line in $\Rbbb\Pbbb^2$ that contains $\ptd$ and $g_1\cdot\ptd$, and let $L'$ to be the line that contains $\ptd'$ and $g_1\cdot\ptd'$. Then define $l$ and $l'$ to be the line segments of $L$ and $L'$ that lie in $\Omega$ and have endpoints in $\partial\Omega$. One can easily compute that 
\[L=\Bigg\{\left[\begin{array}{c} a+b\lambda\\ a+b\mu\\ a+b\nu \end{array} \right]:a,b\in\Rbbb\Bigg\}, L'=\Bigg\{\left[\begin{array}{c} a+b\lambda\\ (a+b\mu)\beta\\ a+b\nu \end{array} \right]:a,b\in\Rbbb\Bigg\}.\]
It is then easy to see that $L$ and $L'$ intersect at the point 
\[\left[\begin{array}{c} \lambda-\mu\\ 0 \\\nu-\mu \end{array} \right].\]
Since $\lambda-\mu>0$ and $\nu-\mu<0$, this point does not lie in $\Delta$ (and hence not in $\Omega$), so $l$ and $l'$ do not intersect. By applying Proposition \ref{crossratioline} and Proposition \ref{crossratioinequality}, it is clear that $2r_p=Hd_\Mmf(g_1\cdot \ptd,\ptd)<Hd_\Mmf(g_1\cdot \ptd',\ptd')=2r_{p'}$ (see Figure \ref{enlargingrp}). However, this contradicts the maximality of the injectivity radius at $p$.

\begin{figure}
\includegraphics[scale=0.7]{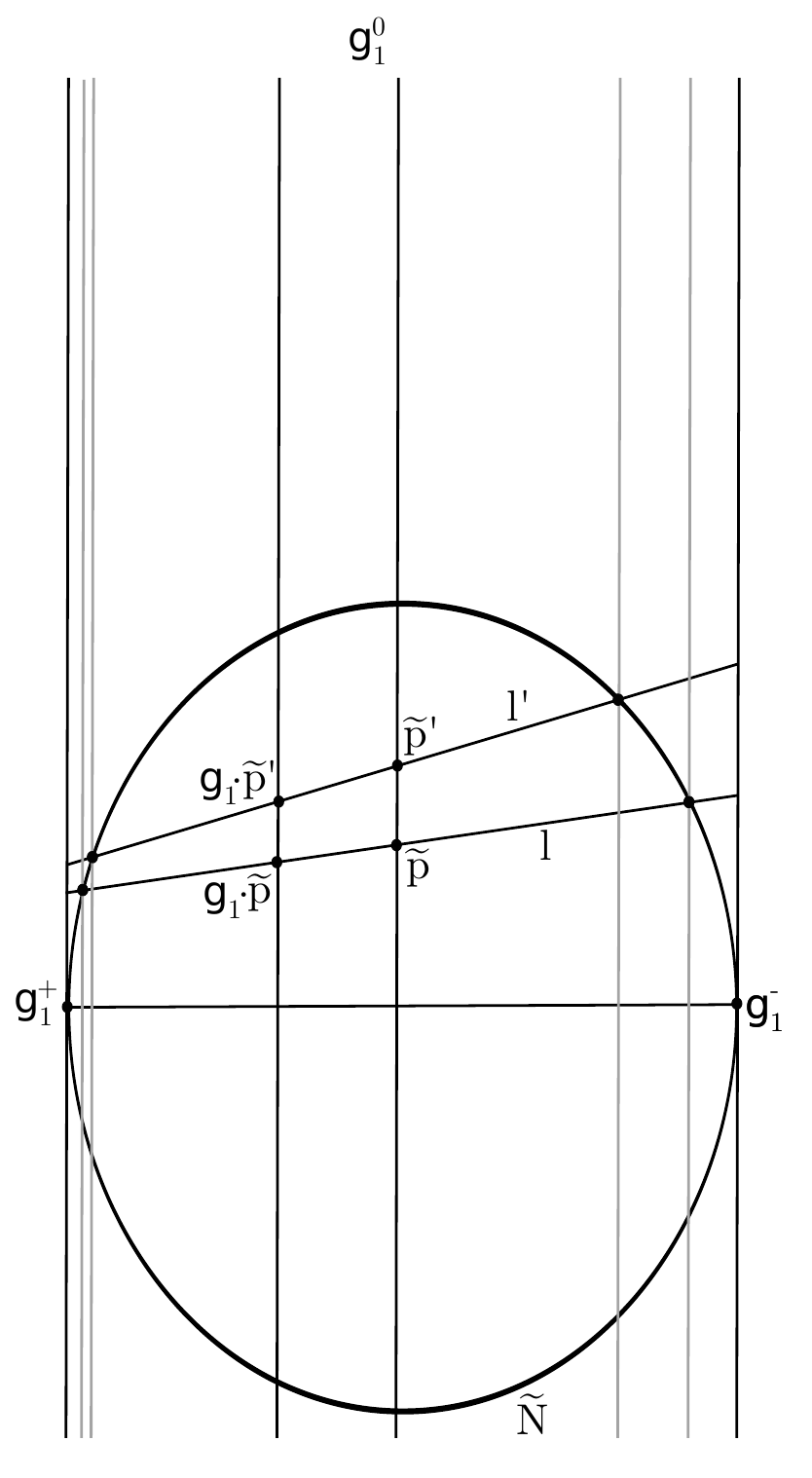}
\caption{$Hd_N(g_1\cdot \ptd,\ptd)<Hd_N(g_1\cdot \ptd',\ptd')$}
\label{enlargingrp}
\end{figure}

Thus, there are at least two group elements $g_1$, $g_2$ in $\pi_1(\Mmf)$ such that $g_1\neq g_2\neq g_1^{-1}$ and	
\[Hd_\Mmf(g_1\cdot\ptd,\ptd)=Hd_\Mmf(g_1^{-1}\cdot\ptd,\ptd)=Hd_\Mmf(g_2\cdot\ptd,\ptd)=Hd_\Mmf(g_2^{-1}\cdot\ptd,\ptd)=2r_p.\]
Let $\xtd_1$, $\xtd_1'$, $\xtd_2$, $\xtd_2'$ be the midpoints of the line segments in $\Omega$ between $\ptd$ and $g_1\cdot\ptd$, $g_1^{-1}\cdot\ptd$, $g_2\cdot\ptd$, $g_2^{-1}\cdot\ptd$ respectively. Then for $i=1,2$, let $\ltd_i$, $\ltd_i'$ be the line segments in $\Omega$ between $\ptd$ and $\xtd_i$, $\xtd_i'$ respectively. Define $x_i:=\Pi(\xtd_i)=\Pi(\xtd_i')$, $l_i:=\Pi(\ltd_i)$ and $l_i':=\Pi(\ltd_i')$ for $i=1,2$, where $\Pi:\Omega\to \Mmf$ is the covering map. It is clear that $l_1$, $l_1'$, $l_2$, $l_2'$ are pairwise distinct and that (1), (2) and (3) hold. To get (4), simply note that the closed curves $l_1\cup l_1'$ and $l_2\cup l_2'$ correspond to $g_1$ and $g_2$ in $\pi_1(\Mmf)$ respectively. Since $g_2^{-1}\neq g_1\neq g_2$ and $g_1\neq\id\neq g_2$, the two closed curves cannot be homotopic relative to $p$, and neither of them are homotopically trivial.
\end{proof}

In the proof of Corollary \ref{finalcor}, we will also make use of the following well-known result by Benz\'ecri \cite{Bz1}.

\begin{thm}\label{benzecri}
(Benz\'ecri) Define 
\[\Epc:=\{(x,\Omega):\Omega\subset\Rbbb\Pbbb^n\text{ is open and properly convex }, x\in\Omega\}\]
and equip $\Epc$ with the Hausdorff topology. Then the natural action of $PGL(n+1,\Rbbb)$ on $\Epc$ is cocompact.
\end{thm}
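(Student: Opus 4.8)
The statement to prove is Theorem~\ref{benzecri} (Benzécri's theorem), which asserts that $PGL(n+1,\Rbbb)$ acts cocompactly on the space $\Epc$ of pointed properly convex open subsets of $\Rbbb\Pbbb^n$ equipped with the Hausdorff topology.

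The plan is to reduce, via the compactness of an appropriate slice, to a statement about normalized convex bodies. First I would fix a convenient affine chart and show that any properly convex $\Omega$ can be moved by a projective transformation so that $\Omega$ lies inside a fixed large ball, contains a fixed small ball, and the marked point $x$ is sent to the origin. Concretely, given $(x,\Omega)\in\Epc$, since $\Omega$ is properly convex there is an affine chart $U\cong\Rbbb^n$ containing $\overline\Omega$ as a bounded convex body; applying an affine map (which is projective) we may assume $x=0$. Then, using John's ellipsoid theorem — or more elementarily, by picking the interior point $x$ and rescaling linearly along a suitable basis — we can arrange that $B(0,1)\subseteq\Omega\subseteq B(0,n)$ (or some fixed radius depending only on $n$). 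The key point is that all of these normalizations are realized by elements of $PGL(n+1,\Rbbb)$.

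The second step is to identify the set
\[
\Kpc:=\{(0,\Omega)\in\Epc: B(0,1)\subseteq\Omega\subseteq B(0,n)\}
\]
as a compact subset of $\Epc$ that surjects onto $\Epc/PGL(n+1,\Rbbb)$. Surjectivity onto the quotient is exactly the content of the first step. For compactness, one uses the Blaschke selection theorem: the collection of closed convex bodies trapped between $\overline{B(0,1)}$ and $\overline{B(0,n)}$ is compact in the Hausdorff metric, and the condition ``contains $B(0,1)$'' is closed while ``is open and properly convex'' is preserved in the limit because the limit body still contains the open ball $B(0,1)$ and is still contained in $B(0,n)$, hence still has nonempty interior and omits a projective hyperplane. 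The marked point is fixed at $0$, so there is no loss of compactness from that coordinate. Since the projection $\Epc\to\Epc/PGL(n+1,\Rbbb)$ is continuous and $\Kpc$ is compact with $PGL(n+1,\Rbbb)\cdot\Kpc=\Epc$, the quotient is compact, which is the definition of cocompactness.

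The main obstacle is carrying out the normalization in Step~1 cleanly and checking that the normalizing transformations genuinely lie in $PGL(n+1,\Rbbb)$ rather than merely in the affine group — in particular, handling the passage from ``properly convex subset of $\Rbbb\Pbbb^n$'' to ``bounded convex body in an affine chart'' requires choosing a hyperplane disjoint from $\overline\Omega$ (which exists by proper convexity) and sending it to the hyperplane at infinity, and then verifying uniform control. A secondary subtlety is ensuring the limiting object in Step~2 is still \emph{properly} convex (not just convex): this follows because any Hausdorff limit of bodies inside the fixed ball $B(0,n)$ still lies in $\overline{B(0,n)}$, so a hyperplane slightly outside that ball witnesses proper convexity of the limit. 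I would present these two points carefully and treat the rest (continuity of the action, the reduction of cocompactness to $PGL\cdot\Kpc=\Epc$ with $\Kpc$ compact) as routine.
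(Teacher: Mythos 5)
The paper does not prove this theorem: it is stated as a classical result and cited directly to Benz\'ecri's 1960 paper, so there is no in-paper argument to compare against. I will therefore evaluate your proposal on its own merits.

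There is a genuine gap in Step 1, and it is exactly where the real content of Benz\'ecri's theorem lives. You claim that after translating $x$ to the origin, a linear rescaling (or John's theorem) produces $B(0,1)\subseteq\Omega\subseteq B(0,n)$. This is false in general. The ratio between the radius of the largest ball centered at $x$ contained in $\Omega$ and the radius of the smallest ball centered at $x$ containing $\Omega$ is invariant under linear maps fixing $x$, and it blows up as $x$ approaches $\partial\Omega$ (take a long thin simplex with $x$ near a vertex). John's ellipsoid theorem does give a sandwich $E\subseteq\Omega\subseteq nE$, but $E$ is centered at its own John center, not at $x$; if you then normalize $E$ to be $B(0,1)$, the marked point moves off the origin and can sit arbitrarily close to $\partial B(0,n)$. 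Your candidate slice $\Kpc=\{(0,\Omega):B(0,1)\subseteq\Omega\subseteq B(0,n)\}$ is indeed compact, but the $PGL(n+1,\Rbbb)$-orbit of $\Kpc$ does not cover $\Epc$ under the normalization you describe, because the two normalizations (``$x$ at origin'' and ``John ellipsoid standard'') cannot in general be imposed simultaneously by an affine map.

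The missing idea is that you must exploit the genuinely projective (non-affine) freedom: the choice of hyperplane at infinity. Given $(x,\Omega)$, one shows there exists a hyperplane $H$ disjoint from $\overline\Omega$ such that, in the affine chart $\Rbbb\Pbbb^n\setminus H$, the point $x$ is the centroid (or Santal\'o point, or John-center) of $\Omega$. This existence statement is nontrivial and is typically proved by a continuity/degree or fixed-point argument on the dual convex set of admissible hyperplanes. Once $x$ is an affinely distinguished center of $\Omega$, the sandwiching estimate relative to that center (a centered version of John's theorem) does hold with a dimensional constant, and the rest of your argument -- Blaschke selection for compactness of the slice, closedness of proper convexity in the Hausdorff limit, and the reduction of cocompactness to surjectivity of a compact slice onto the quotient -- goes through as you wrote it. So Steps 2 and 3 are fine, but Step 1 as stated would fail, and the repair requires the projective centering lemma that you omitted.
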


\begin{cor}\label{finalcor}
Let $M$ be a closed surface. Let $\Ipc:\Cmf(M)\to\Rbbb^+$ be the function that maps each $\Mpc\in\Cmf(M)$ to the maximal injectivity radius over all points in $\Mpc$, and let $\Apc:\Cmf(M)\to\Rbbb^+$ be the function that maps each $\Mpc\in\Cmf(M)$ to the Busemann area of $\Mpc$. If $\{\Mpc_j\}_{j=1}^\infty$ is a Goldman sequence in $\Cmf(M)$, then 
\begin{enumerate}[(1)]
\item $\displaystyle\lim_{j\to\infty}\Ipc(\Mpc_j)=\infty$
\item $\displaystyle\lim_{j\to\infty}\Apc(\Mpc_j)=\infty$.
\end{enumerate}
\end{cor}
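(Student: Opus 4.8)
The plan is to deduce both statements from part (1) of Theorem~\ref{mainthm1} (the growth of the shortest typical closed curve) together with Lemma~\ref{growingball} and Benz\'ecri's compactness result, Theorem~\ref{benzecri}.

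For (1), let $p_j\in\Mpc_j$ be a point where the injectivity radius is maximized, with value $r_j:=\Ipc(\Mpc_j)$. By Lemma~\ref{growingball}, there are two homotopically nontrivial closed curves $l_1\cup l_1'$ and $l_2\cup l_2'$ based at $p_j$, not homotopic rel $p_j$, each of length $2r_j$. Since the pants curves $\gamma_1,\dots,\gamma_{3g-3+2n}$ generate a subgroup of $\pi_1(M)$ of infinite index whose nontrivial elements pairwise commute only within a cyclic group, at least one of these two loops, say $l_1\cup l_1'$, represents a conjugacy class that is \emph{not} a power of any $\gamma\in\Pmc$; i.e. it is typical. (If both were powers of pants curves, the two corresponding group elements $g_1,g_2$ would lie in cyclic subgroups associated to pants curves; since they are based at the same point and their associated loops are not homotopic rel $p_j$, while $g_1\neq g_2^{\pm1}$, one checks directly that they cannot both be conjugate into the cyclic subgroup of a single $\gamma$, and one of them can be taken typical after possibly replacing it by $g_1g_2$ as in the proof of Corollary~\ref{firstcor}.) Then $2r_j = l_{\Mpc_j}(l_1\cup l_1')\geq l_{\Mpc_j}(\text{geodesic rep.})\geq \Lpc(\Mpc_j)$, since the geodesic representative is length-minimizing in its free homotopy class by Proposition~\ref{uniquegeodesic}(2). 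Hence $\Ipc(\Mpc_j)=r_j\geq\frac12\Lpc(\Mpc_j)\to\infty$ by Theorem~\ref{mainthm1}(1).

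For (2), I will use the standard fact that a ball of radius $r$ in any complete Finsler metric has area bounded below by a function of $r$ that grows without bound, provided the local geometry is uniformly controlled; here the control comes from Benz\'ecri. Concretely: pick $p_j$ as above and a lift $\ptd_j\in\Omega_{\Mpc_j}$. The Hilbert ball $B_{r_j}(\ptd_j)$ of radius $r_j=\Ipc(\Mpc_j)$ injects into $\Mpc_j$ under the covering map, so $\Apc(\Mpc_j)\geq \nu_{\Omega_{\Mpc_j}}(B_{r_j}(\ptd_j))$. By Theorem~\ref{benzecri}, the pair $(\ptd_j,\Omega_{\Mpc_j})$ can be moved by an element of $PGL(3,\Rbbb)$ into a fixed compact set of pointed properly convex domains; since the Hilbert metric and the Busemann area are projectively invariant, $\nu_{\Omega_{\Mpc_j}}(B_{r_j}(\ptd_j))$ equals the Busemann area of a radius-$r_j$ Hilbert ball in a domain lying in this compact family. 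A compactness argument then gives a constant $c>0$ and $\rho_0>0$, depending only on the compact family, such that every radius-$1$ Hilbert ball in any such domain has area at least $c$; packing $\lfloor r_j/\rho_0\rfloor$ disjoint unit balls along a geodesic segment of length $r_j$ (using that the Hilbert metric is geodesic and that these balls embed) shows the area of $B_{r_j}(\ptd_j)$ is at least $c\lfloor r_j/\rho_0\rfloor$, which tends to $\infty$ with $r_j$. Combining with part (1) gives $\Apc(\Mpc_j)\to\infty$.

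The main obstacle is the second half: making rigorous the claim that Busemann area of large Hilbert balls grows without bound \emph{uniformly} over the Benz\'ecri-compact family of domains. One must be careful that the Radon--Nikodym density of the Busemann area with respect to Lebesgue measure (see the discussion after the definition of Busemann area, involving $1/\mu(B_1(\xtd))$) is bounded below on this compact family, and that balls of a definite radius have definite Lebesgue measure there; both follow from continuity of the Hilbert metric in the Hausdorff topology and compactness, but the packing estimate needs the triangle inequality and a lower bound on how far apart the centers of disjoint balls along a geodesic must be, which is where uniformity is genuinely used. Alternatively, one can bypass the packing argument by invoking that on the compact family the Busemann area of a radius-$r$ ball, as a continuous positive function of $(r,\ptd,\Omega)$, is bounded below by a positive continuous increasing function of $r$ alone; either route completes the proof.
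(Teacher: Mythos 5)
Your overall strategy -- derive (1) from Lemma~\ref{growingball} and Theorem~\ref{mainthm1}(1), then derive (2) from (1) together with Benz\'ecri's Theorem~\ref{benzecri} -- matches the paper's. However there are two issues worth flagging.

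For part (1), the key unsupported claim is that one of the two loops $l_1\cup l_1'$, $l_2\cup l_2'$ is itself typical. This need not hold: the elements $g_1,g_2$ from Lemma~\ref{growingball} are only known to satisfy $g_1\ne g_2^{\pm 1}$ and $g_1\ne\id\ne g_2$, and nothing prevents $g_1$ from being conjugate to a power of some $\gamma_k\in\Pmc$ while $g_2$ is conjugate to a power of a \emph{different} $\gamma_{k'}$. Your parenthetical argument only treats the case where both lie in the cyclic subgroup of a single $\gamma$ and does not rule this out; it then falls back to replacing the pair by $g_1g_2$, at which point the bound is $4r_j\ge\Lpc(\Mpc_j)$, not the $2r_j\ge\Lpc(\Mpc_j)$ you assert (the conclusion of course survives either constant). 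The paper goes directly to the product: it forms $\eta_1\cdot\eta_2$ and $\eta_1\cdot\eta_2^{-1}$, each of length $4r_p$, asserts that one of them is typical, and concludes $4r_p\ge\Lpc(\Mpc)$. Your proposal should be rewritten to take that route from the start, and the preceding remark about the pants curves "generating a subgroup of infinite index" should be dropped -- it is both unnecessary and not used correctly.

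For part (2), your packing argument is a legitimate alternative but is considerably heavier than what the paper does, and you honestly flag the technical debt it incurs (uniform lower bounds on unit-ball volumes over the Benz\'ecri-compact family, disjointness of balls along a geodesic, etc.). The paper's route is shorter: after a Benz\'ecri normalization $(\ptd_j,\Omega_{\Mpc_j})\to(x,\Omega)$, part (1) gives $r_{p_j}\to\infty$, hence the Hilbert balls $B_{r_{p_j}}(\ptd_j)$ also Hausdorff-converge to $\Omega$; since these balls embed in $\Mpc_j$ and $\nu_\Omega(\Omega)=\infty$, the areas $\nu_{\Mpc_j}(B_{r_{p_j}}(p_j))$ must diverge. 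If you keep the packing route, you should actually supply the uniform constants; otherwise the convergence argument is cleaner and is the one the paper records.
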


\begin{proof}
Proof of (1). Let $\Mpc$ be any marked closed convex $\Rbbb\Pbbb^2$ surface and let $p$, $l_i$ and $l_i'$ for $i=1,2$ be as defined in Lemma \ref{growingball}. Let $\eta_i:[0,1]\to l_i\cup l_i'$ be a parameterization such that $\eta_i(0)=\eta_i(1)=p$. Since $l_1\cup l_1'$ and $l_2\cup l_2'$ are not homotopic relative $p$ and neither of them are homotopically trivial, either $\eta_1\cdot\eta_2$ or $\eta_1\cdot\eta_2^{-1}$ is typical, so $4r_p\geq\Lpc(\Mpc)$. Since this is true for all $\Mpc\in\Cmf(M)$, (1) of Theorem \ref{mainthm1} implies that $\displaystyle\lim_{j\to\infty}\Ipc(\Mpc_j)=\infty$.

Proof of (2). Let $p_j$ be a point in $\Mpc_j$ where the injectivity radius is maximized. It is sufficient to show that $\displaystyle\lim_{j\to\infty}\nu_{N_j}(B_{r_{p_j}}(p_j))=\infty$. Choose a lift $\ptd_j$ of $p_j$ in $\Omega_{\Mpc_j}$. By Theorem \ref{benzecri}, we can assume that the sequence $(\ptd_j,\Omega_{\Mpc_j})$ converges to some $(x,\Omega)$ in $\Epc$. By (1), we know that $\displaystyle\lim_{j\to\infty}r_{p_j}=\infty$. Hence, the sequence  $(\ptd_j,B_{r_{p_j}}(\ptd_j))$ also converges to $(x,\Omega)$, so 
\[\lim_{j\to\infty}\nu_{N_j}(B_{r_{p_j}}(p_j))=\lim_{j\to\infty}\nu_{N_j}(B_{r_{p_j}}(\ptd_j))=\nu_{\Omega}(\Omega)=\infty.\]
\end{proof}

The next corollary gives a positive answer to a question posted by Crampon and Marquis (Question 13 of \cite{Ques}). Let $M$ be a closed surface. They asked if there is, for any number $\alpha\in[0,1]$, a diverging sequence $\{\Mpc_j\}_{j=1}^\infty$ in $\Cmf(M)$ such that $\displaystyle\lim_{j\to\infty}h_{top}(\Mpc_j)=\alpha$. Previously, Nie proved in \cite{Ni1} that there exists a diverging sequence $\{\Mpc_j\}_{j=1}^\infty$ in $\Cmf(M)$ such that $\displaystyle\lim_{j\to\infty}h_{top}(\Mpc_j)=0$. 

\begin{cor}\label{cramponquestion}
Suppose that $M$ is a closed surface. For any number $\alpha\in[0,1]$, there is a diverging sequence $\{\Mpc_j\}_{j=1}^\infty$ in $\Cmf(M)$ such that $\displaystyle\lim_{j\to\infty}h_{top}(\Mpc_j)=\alpha$.
\end{cor}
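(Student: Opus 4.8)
The plan is to interpolate between two regimes whose endpoint entropies are known: the Fuchsian locus, where the entropy equals $1$, and Goldman sequences, where by (2) of Theorem \ref{mainthm1} the entropy tends to $0$. The key analytic input is that $h_{top}$ is continuous on $\Cmf(M)$. First I would recall (or cite, e.g.\ from Crampon \cite{Cr1} together with the structural stability of Anosov flows, or directly from the variational description of entropy via \eqref{Bowen}) that $\Mpc\mapsto h_{top}(\Mpc)$ is a continuous function $\Cmf(M)\to(0,1]$. Granting this, the proof is a connectedness/intermediate-value argument on a path in $\Cmf(M)$.

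Concretely, fix a Fuchsian point $\Mpc_0\in\Tmf(M)\subset\Cmf(M)$, so that $h_{top}(\Mpc_0)=1$. Using the Goldman parameterization adapted to the chosen pants decomposition $\Pmc$, build a ray $\{\Mpc^{(s)}\}_{s\in[0,\infty)}$ with $\Mpc^{(0)}=\Mpc_0$ by keeping the boundary invariants and twist–bulge parameters fixed at their Fuchsian values and letting the internal parameter $s_i$ of each pair of pants grow, say $s_i(s)=s$ (and $r_i$ fixed), so that $\{\Mpc^{(j)}\}_{j=1}^\infty$ is a Goldman sequence; then by (2) of Theorem \ref{mainthm1} we have $h_{top}(\Mpc^{(s)})\to 0$ as $s\to\infty$. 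The map $s\mapsto h_{top}(\Mpc^{(s)})$ is continuous on $[0,\infty)$, equals $1$ at $s=0$, and has infimum $0$; by the intermediate value theorem its image contains $[0,1)$, and $1$ is attained at $s=0$, so the image is all of $[0,1]$. Hence for every $\alpha\in[0,1]$ there is $s_\alpha$ with $h_{top}(\Mpc^{(s_\alpha)})=\alpha$. To produce a \emph{diverging} sequence with limiting entropy exactly $\alpha$, I would do the following: if $\alpha>0$, pick any sequence $\sigma_k\to\infty$ and for each $k$ choose, using the same continuity argument applied on the sub-ray $[\sigma_k,\infty)$ (on which the entropy still decreases to $0$ and starts at a value $\geq\alpha$ for $k$ large, since $h_{top}(\Mpc^{(\sigma_k)})\to 0$ is false — rather we only know the infimum is $0$, so instead pick $\sigma_k$ with $h_{top}(\Mpc^{(\sigma_k)})>\alpha$ along a subsequence, which exists because the entropy does not decrease monotonically a priori), a parameter $\tau_k\geq\sigma_k$ with $h_{top}(\Mpc^{(\tau_k)})=\alpha$; then $\{\Mpc^{(\tau_k)}\}$ diverges (the internal parameters leave every compact set) and has constant entropy $\alpha$, in particular limiting entropy $\alpha$. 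If $\alpha=0$ the Goldman sequence $\{\Mpc^{(j)}\}$ itself works by (2) of Theorem \ref{mainthm1}.

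There is one subtlety to address cleanly: to guarantee that along the ray we can find $\tau_k\geq\sigma_k$ with entropy exactly $\alpha$ for a fixed $\alpha\in(0,1)$, I need, for arbitrarily large $\sigma$, some point on $[\sigma,\infty)$ with entropy $>\alpha$ and some point with entropy $<\alpha$; the latter follows from (2) of Theorem \ref{mainthm1}. For the former, I would instead not insist on the \emph{same} ray but reset the starting point: choose the divergence to happen in only one pair of pants $P_1$ while the rest of the surface stays Fuchsian is not allowed (a Goldman sequence requires every pair of internal parameters to diverge), but I can instead let the internal parameters diverge very slowly relative to a second continuous parameter. The cleanest fix is: for each $k$, let $\Mpc_k^{(s)}$ be a path that first moves the internal parameters out to ``distance'' $k$ from the origin (ending at a point $\mathbf{p}_k$ with all $s_i\approx k$), which by continuity and (2) of Theorem~\ref{mainthm1} can be arranged so that $h_{top}$ along this initial segment decreases from $1$ past $\alpha$; stop the first time the entropy hits $\alpha$, at a point $\mathbf{q}_k$. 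If that happens before reaching $\mathbf{p}_k$, good; if not, i.e.\ the entropy stayed $>\alpha$ all the way out to $\mathbf{p}_k$, then continue pushing the internal parameters further — they must eventually force the entropy below $\alpha$ by (2) of Theorem~\ref{mainthm1} — and stop at the first hitting time of $\alpha$. Either way we get $\mathbf{q}_k$ with $h_{top}(\mathbf{q}_k)=\alpha$ and with internal parameters of norm $\geq k-1$, hence $\{\mathbf{q}_k\}$ diverges and $h_{top}(\mathbf{q}_k)\equiv\alpha\to\alpha$.

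The main obstacle is establishing (or correctly invoking) the continuity of $h_{top}$ on $\Cmf(M)$, together with the easy-but-must-be-stated fact that a point with $h_{top}=1$ exists (the Fuchsian locus, via Crampon \cite{Cr1}); everything else is a soft connectedness argument using (2) of Theorem \ref{mainthm1}. Continuity of topological entropy for these geodesic flows follows from structural stability of Anosov flows (the flows vary continuously with $\Mpc$ in the appropriate topology, and topological entropy is continuous on the space of Anosov flows on a fixed manifold), but I would make sure the cited reference is stated carefully, since entropy is not continuous for general flows and this is the one place where a genuine external input is needed.
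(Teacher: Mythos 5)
Your overall strategy (continuity of $h_{top}$, a Fuchsian endpoint with entropy $1$, a Goldman endpoint with entropy $\to 0$, and an intermediate value argument) is the same as the paper's, and you correctly flag both the continuity input and the fact that the hard part is making the sequence \emph{diverge}. But the construction you propose for the divergence does not work, and you essentially notice this yourself and then try to patch it in a way that doesn't close the gap.

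Here is the problem concretely. If you fix a single Fuchsian point $\Mpc_0$ and move only the internal parameters outward, the set $\{s : h_{top}(\Mpc^{(s)})=\alpha\}$ could a priori be bounded: entropy could cross $\alpha$ once near the start of the ray and then stay strictly below $\alpha$ forever. In that case the ``first hitting time of $\alpha$'' is the same finite $s_\alpha$ for every $k$, and the sequence $\{\mathbf{q}_k\}$ you build is \emph{constant}, not diverging. Your patch does not resolve this: in your case analysis, when the entropy hits $\alpha$ before reaching $\mathbf{p}_k$ you declare it ``good,'' but that is precisely the bad case --- $\mathbf{q}_k$ then has internal parameters of small norm, not norm $\geq k-1$ as you assert at the end. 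You would need to know that entropy comes back up to $\alpha$ at arbitrarily large internal parameters, and nothing in Theorem~\ref{mainthm1} gives you that; it only gives you that entropy is eventually small, which is the wrong direction.

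The paper sidesteps this by making the divergence come from the \emph{boundary invariants}, not the internal parameters. It takes a sequence $\{\Mpc_j'\}$ in the Fuchsian locus obtained by pinching the pants curves of $\Pmc$ (so this sequence already diverges), and then, for each fixed $j$, applies the intermediate value theorem along a one-parameter family that fixes the boundary invariants and twist--bulge parameters of $\Mpc_j'$ and increases the internal $s$-parameters; this family starts at entropy $1$ and (by part (2) of Theorem~\ref{mainthm1}) has entropy tending to $0$, so by continuity there is $s_j\geq 1$ with $h_{top}(\Mpc_{j,s_j}')=\alpha$. The divergence of $\{\Mpc_{j,s_j}'\}$ is then automatic because the boundary invariants are pinched, independently of where the hitting value $s_j$ lands. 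That is the missing idea in your argument: decouple the source of divergence (pinching the boundary curves) from the parameter used for the intermediate value argument (the internal $s$-parameter).
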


\begin{proof}
Consider any diverging sequence $\{\Mpc_j'\}$ in $\Tmf(M)\subset\Cmf(M)$ corresponding to pinching the curves in the pants decomposition of $M$, i.e. the largest and smallest eigenvalues of the holonomy about the closed curves in the pants decomposition converge to $1$ as $j$ approaches $\infty$. Let $(s_j^{(i)},r_j^{(i)})$ be the internal parameters for $f_i^\#(\Mpc_j')$. By Section 4 of Goldman \cite{Go1}, we know that $s_j^{(i)}=1$ for all $i\in\{1,\dots,2g-2+n\}$ and for all $j$.

Now, for each $j$, consider the one parameter family $s\mapsto\Mpc_{j,s}'$ in $\Cmf(M)$, where all the boundary invariants and twist-bulge parameters for $\Mpc_{j,s}'$ are the same as that of $\Mpc_j'$, but the internal parameters of each $f_i^\#(\Mpc_{j,s}')$ is $(s,r_j^{(i)})$. Observe that $h_{top}(\Mpc_{j,1}')=1$. Also, Theorem \ref{mainthm1} implies that $\displaystyle\lim_{s\to\infty}h_{top}(\Mpc_{j,s}')=0$. Since $h_{top}$ is continuous (see Section 3 of Sambarino \cite{Sa1} or Proposition 8.3 of Crampon \cite{Cr2}), we know that there is some $s_j>1$ such that $h_{top}(\Mpc_{j,s_j}')=\alpha$. Let $\Mpc_j=\Mpc_{j,s_j}'$. Then the sequence $\{\Mpc_j\}_{j=1}^\infty$ is diverging because the curves in the pants decomposition of $M$ are getting pinched, and $\displaystyle\lim_{j\to\infty}h_{top}(\Mpc_j)=\alpha$.
\end{proof}

The rest of this paper will be devoted to proving Theorem \ref{mainthm1}.

\subsection{Decomposition of closed geodesics}\label{setup}

We want to find a way to decompose every closed geodesic on a marked convex $\Rbbb\Pbbb^2$ surface into line segments, so that we can bound the length of each line segment from below by a number that depends on the Goldman parameters. We will give a rough description of the idea behind this decomposition before formally describing it. Let $\Mpc\in\Cmf(M)$ be a marked convex $\Rbbb\Pbbb^2$ surface. Since we have chosen a pants decomposition $\Pmc$ on $M$, the marking induces a pants decomposition on $\Mpc$, which we also denote by $\Pmc$. Any closed geodesic $\eta$ on $\Mpc$ can be thought of as a union of two types of line segments. The first type are line segments that ``wind around" collar neighborhoods of the simple closed curves in $\Pmc$ (see Figure \ref{winding}) and the second type are line segments that ``go between" these collar neighborhoods (see Figure \ref{gobetween}). 

\begin{figure}
\includegraphics[scale=0.5]{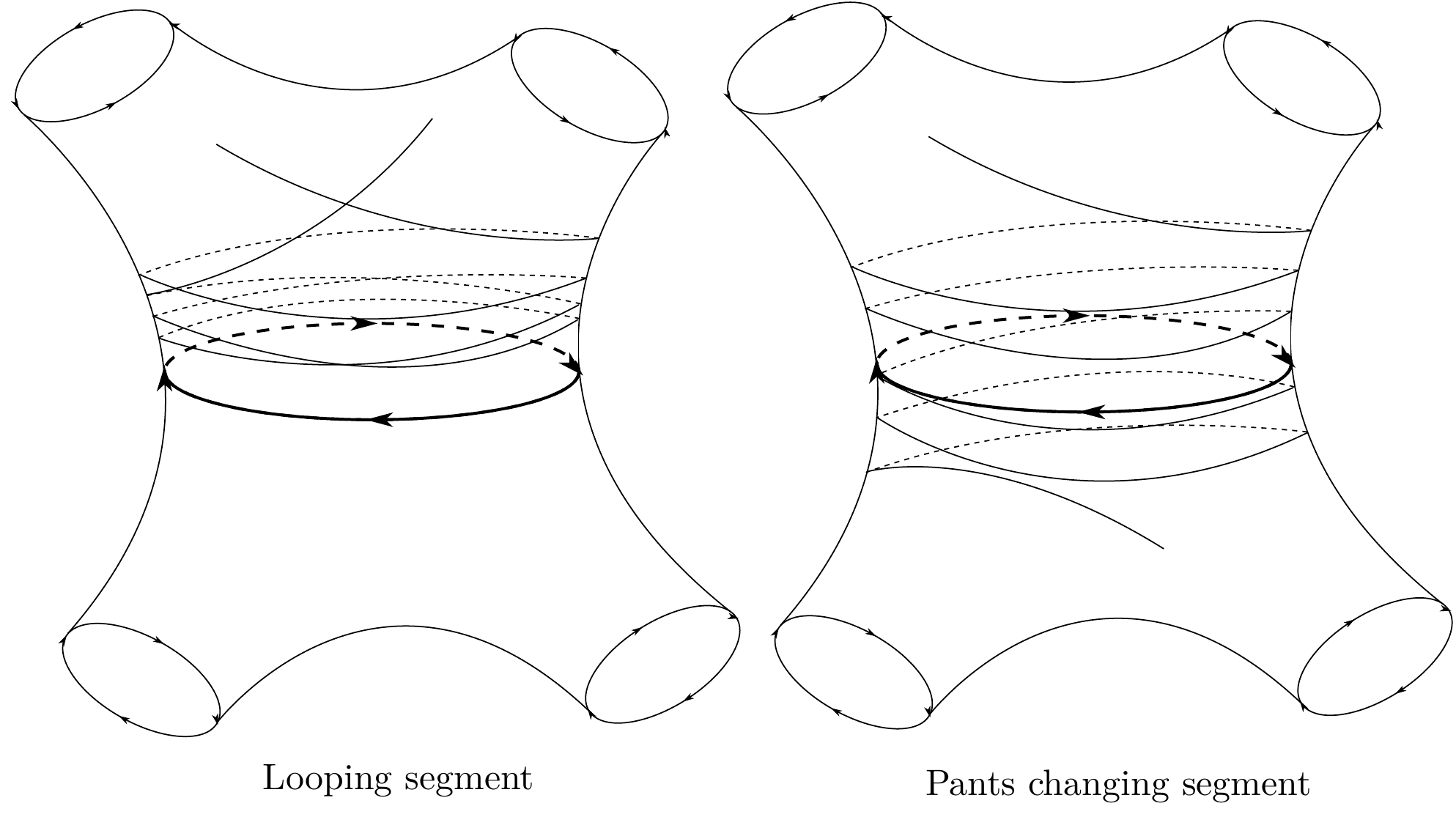}
\caption{Segments that ``wind around" collar neighborhoods}
\label{winding}
\end{figure}

\begin{figure}
\includegraphics[scale=0.6]{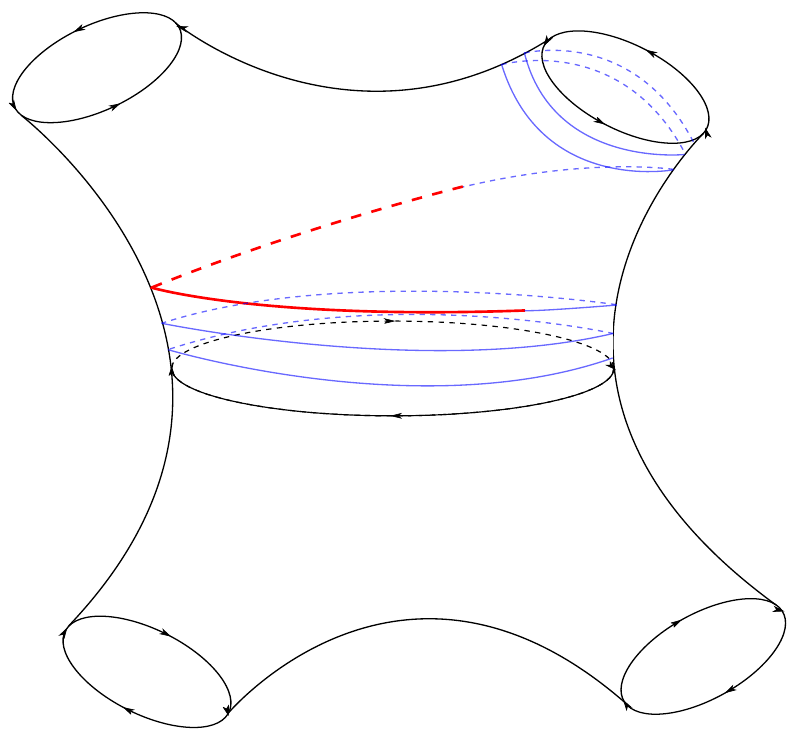}
\caption{Segment that ``goes between" collar neighborhoods}
\label{gobetween}
\end{figure}

The length of the first type of line segments can be bounded below by a multiple (depending on the number of times they ``wind around" the collar neighborhood) of the infimum of the lengths of the simple closed curves in $\Pmc$. On the other hand, we will later show that the length of the second type of line segments can be bounded below by some combination of the logarithm of the thirty cross ratios described in Section \ref{areparameterization}. 

To formally describe how to decompose a closed geodesic into these types of line segments, it is convenient to use an ideal triangulation $\Umc_\Mpc$, which we will now define. For each $i=1,\dots,2g-2+n$, let $\Ppc^{(i)}:=f_i^\#(\Mpc)\in\Cmf(P)$ (see Section \ref{thehilbertmetric}). We can then construct the ideal triangulation $\Vmc^{(i)}$ on $\Ppc^{(i)}$ corresponding to $A_0$, $B_0$, $C_0$ for each $i\in\{1,\dots,2g-2+n\}$. (See Definition \ref{idealtriangulation}.), which lifts to a lamination $\widetilde{\Vmc}^{(i)}$ on $\Omega_{\Ppc^{(i)}}$. (This is partially drawn in Figure \ref{goldman}.) Define the lamination 
\[\Umc_\Mpc:=\Pmc\cup\bigcup_{i=1}^{2g-2+n}\Vmc^{(i)}\]
on $\Mpc$, which lifts to a lamination $\widetilde{\Umc}_\Mpc$ on $\Omega_\Mpc$. We call every element in $\widetilde{\Umc}_\Mpc$ or $\Umc_\Mpc$ an \emph{edge} of $\widetilde{\Umc}_\Mpc$ or $\Umc_\Mpc$ respectively. Also, we say two edges in $\Umc_\Mpc$ \emph{share a common vertex} if they have lifts to $\widetilde{\Umc}_\Mpc$ that share a common vertex.

Before we proceed to describe the decomposition of closed geodesics into line segments, we will first take a closer look at the ideal triangulation and some related structure. Observe that the three line segments in $\displaystyle\Omega_{\Ppc^{(i)}}$ with endpoints $a$ and $b$, $b$ and $c$, $c$ and $a$ (see Figure \ref{goldman}) are lifts of the three leaves of $\Ppc^{(i)}$. Moreover, for each boundary component $\gamma$ of $\Ppc^{(i)}$, there are exactly two leaves in $\Vmc^{(i)}$ that accumulate to $\gamma$.

Using this, we can construct the following:
\begin{itemize}
\item ($\Gamma_q$ and $\Omega_q^{(i)}$) Let $q$ be any endpoint of any leaf in $\widetilde{\Vmc}^{(i)}$, and define $\Gamma_q$ to be the stabilizer of $q$ in $\pi_1(M)$. Let $\Delta_0'$ and $\Delta_1'$ be two open triangles in $\displaystyle\Omega_{\Ppc^{(i)}}\setminus\widetilde{\Vmc}^{(i)}$ that share an edge and have $q$ as a common vertex, and let $\Delta_j$ be the closure of $\Delta_j'$ in $\Omega_{\Ppc^{(i)}}$ for $j=0,1$. Then define $\Omega_q^{(i)}$ to be the interior of $\Gamma_q\cdot(\Delta_0\cup\Delta_1)$. (See Figure \ref{constructions}.)

\begin{figure}
\includegraphics[scale=0.7]{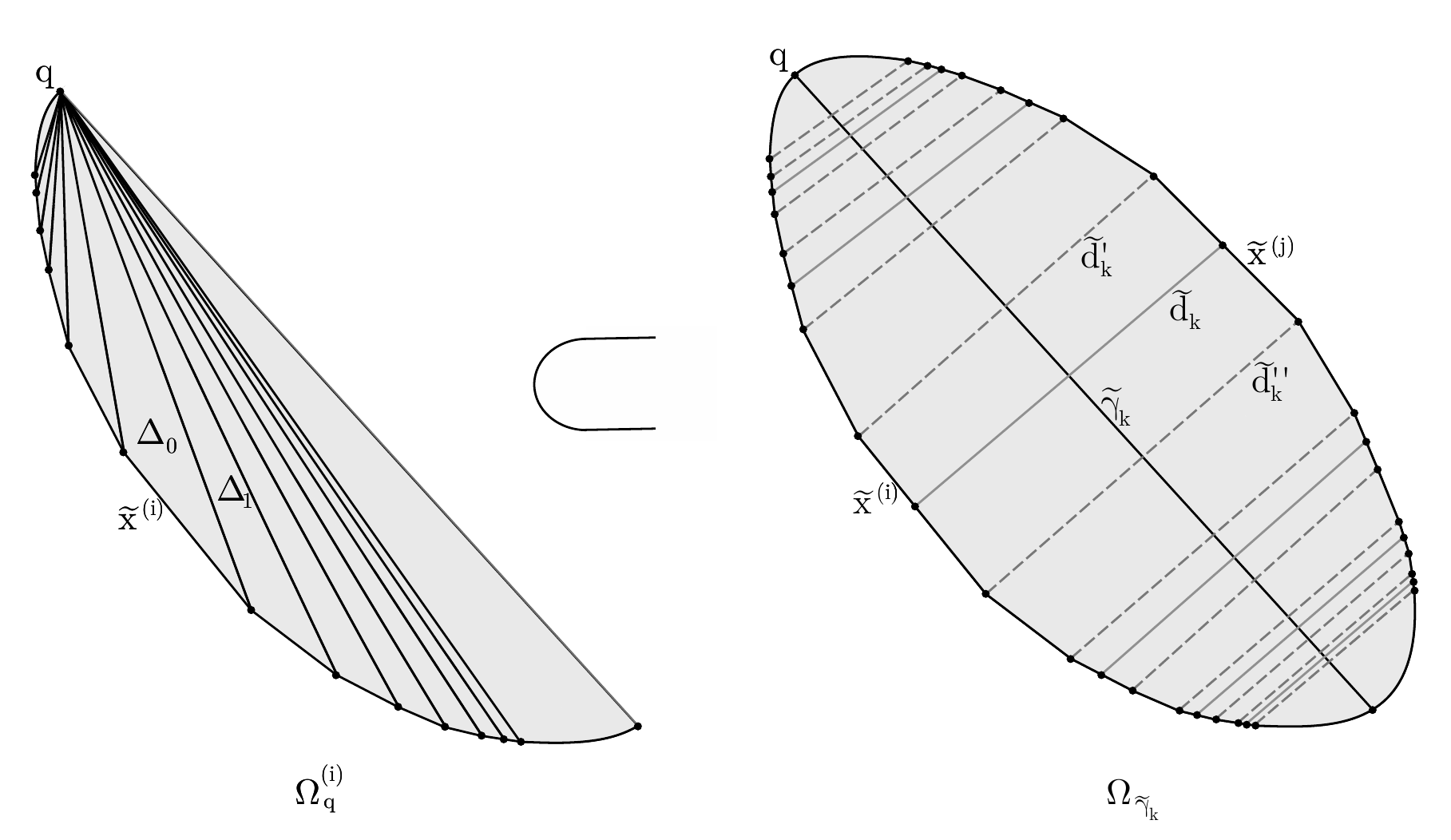}
\caption{Pictorial description of $\Omega_q^{(i)}$, $\Omega_{\widetilde{\gamma}_k}$, $\dtd_k$, $\dtd_k'$, $\dtd_k''$.}
\label{constructions}
\end{figure}

\item ($d_k$, $d_k'$ and $d_k''$) Choose $\gamma_k\in\Pmc$ that is not a boundary component of $\Mpc$, and let $\Ppc^{(i)},\Ppc^{(j)}\subset\Mpc$ be the two pairs of pants given by $\Pmc$ that contain $\gamma_k$. In $\Ppc^{(i)}$, exactly one of the three leaves of $\Vmc^{(i)}$, call it $x^{(i)}$, does not accumulate to $\gamma_k$. Similarly, let $x^{(j)}$ be the unique leaf of $\Vmc^{(j)}$ that does not accumulate to $\gamma_k$. Then, among the open line segments in $\Ppc^{(i)}\cup \Ppc^{(j)}$ that intersect $\gamma_k$ and have one endpoint in $x^{(i)}$ and one endpoint in  $x^{(j)}$, choose a length minimizing one and call it $d_k$ (see Figure \ref{dk}). Let $\dtd_k$ be a lift of $d_k$ to $\Omega_\Mpc$, and it is clear that the endpoints of $\dtd_k$ lie on lifts $\xtd^{(i)}$ and $\xtd^{(j)}$ of $x^{(i)}$ and $x^{(j)}$ respectively. Let $\dtd'_k$ and $\dtd''_k$ be the line segments in $\Omega_{\Ppc^{(i)}}\cup\Omega_{\Ppc^{(j)}}$ joining an endpoint of $\xtd^{(i)}$ to an endpoint of $\xtd^{(j)}$ such that $\dtd'_k\cap\dtd_k=\emptyset=\dtd''_k\cap\dtd_k$. (See Figure \ref{constructions}.) Then let $d'_k:=\Pi(\dtd_k')$ and $d''_k:=\Pi(\dtd''_k)$, where $\Pi:\Omega_\Mpc\to \Mpc$ is the covering map. (See Figure \ref{dkdk}.) 

\item ($\Gamma_{\widetilde{\gamma}_k}$ and $\Omega_{\widetilde{\gamma}_k}$) Choose $\gamma_k\in\Pmc$ that is not a boundary component of $\Mpc$ and let $\widetilde{\gamma}_k$ be a lift of $\gamma_k$ to $\Omega_\Mpc$. Let $\Gamma_{\widetilde{\gamma}_k}$ be the stabilizer in $\pi_1(M)$ of $\widetilde{\gamma}_k$, and let $x^{(i)}$, $x^{(j)}$ be the lines in $\Umc_\Mpc$ that contain the endpoints of $d_k$. Choose a lift $\dtd_k$ of $d_k$ that intersects $\widetilde{\gamma}_k$ and let $\xtd^{(i)}$, $\xtd^{(j)}$ be the lifts of $x^{(i)}$, $x^{(j)}$ that contain the endpoints of $\dtd_k$. Then define $\Omega_{\widetilde{\gamma}_k}$ (see Figure \ref{constructions}) to be the open convex subset of $\Omega_\Mpc$ bounded by the $\Gamma_{\widetilde{\gamma}_k}$ translates of $\xtd^{(i)}$ and $\xtd^{(j)}$. 
\end{itemize}

\begin{figure}
\includegraphics[scale=0.25]{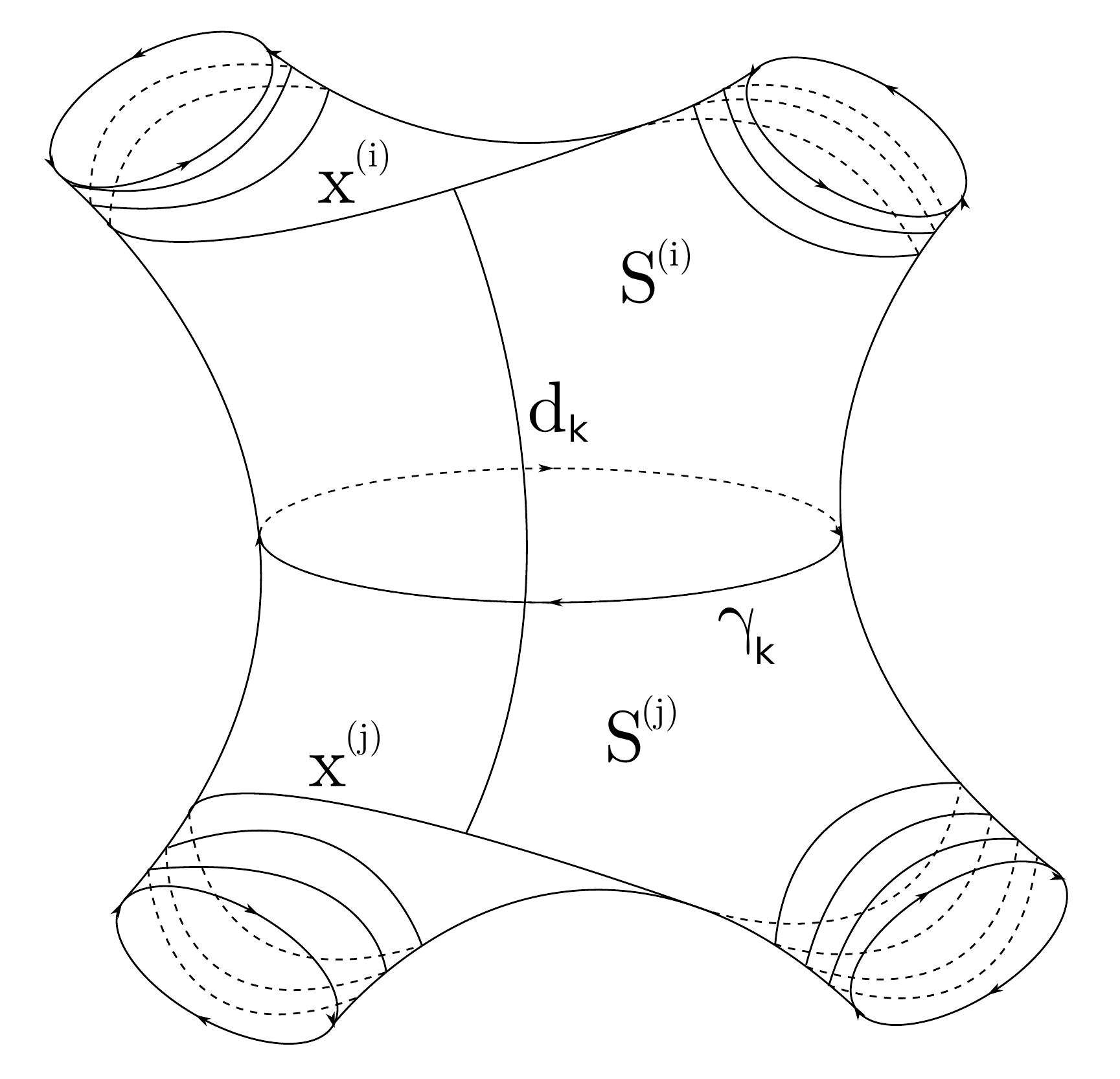}
\caption{Pictorial description of $d_k$.}
\label{dk}
\end{figure}

\begin{figure}
\includegraphics[scale=0.25]{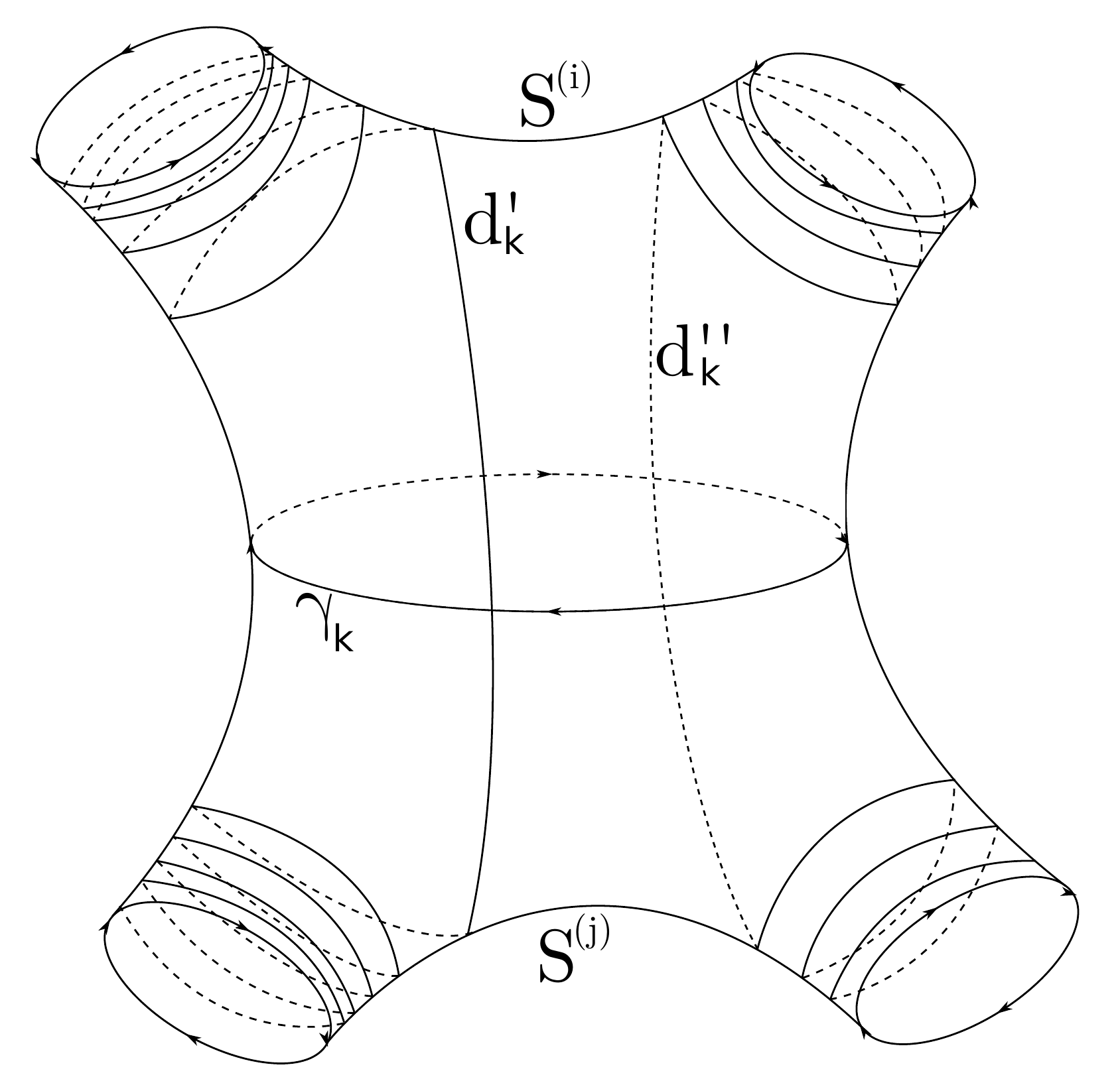}
\caption{Pictorial description of $d_k'$ and $d_k''$.}
\label{dkdk}
\end{figure}

\begin{remark}\label{thetwogammas}
Let $\Ppc^{(i)}$ and $\Ppc^{(j)}$ be the two pairs of pants given by $\Pmc$ that contain $\gamma_k$, and choose a lift $\widetilde{\gamma}_k$ of $\gamma_k$ in $\Omega_\Mpc$. By conjugating, we can assume that $\widetilde{\gamma}_k$ is the axis of both $h_{\Ppc^{(i)}}(X)$ and $h_{\Ppc^{(j)}}(Y)$ for some $X,Y\in\{A_0,B_0,C_0\}$. Denote by $q^{(i)}$ and $q^{(j)}$ the repelling fixed points of $h_{\Ppc^{(i)}}(X)$ and $h_{\Ppc^{(j)}}(Y)$ respectively. Then $\Gamma_{q^{(i)}}=\Gamma_{q^{(j)}}=\Gamma_{\widetilde{\gamma}_k}$ and $\Omega_{q^{(i)}}^{(i)}\cup\Omega_{q^{(j)}}^{(j)}\cup\widetilde{\gamma}_k=\Omega_{\widetilde{\gamma}_k}$.
\end{remark}

\begin{remark}\label{dkdk'dk''}
Observe that $\Omega_{\widetilde{\gamma}_k}$ contains $\widetilde{\gamma}_k$ and all the $\Gamma_{\widetilde{\gamma}_k}$ translates of $d_k$, $d_k'$ and $d_k''$. Moreover, if $X$ is a generator of $\Gamma_{\widetilde{\gamma}_k}$, then for any $j\in\Zbbb$, the subset of $\Omega_{\widetilde{\gamma}_k}$ that is bounded between $X^j\cdot d_k$ and $X^{j+1}\cdot d_k$ (resp. $X^j\cdot d'_k$ and $X^{j+1}\cdot d'_k$; $X^j\cdot d''_k$ and $X^{j+1}\cdot d''_k$) contains exactly one $\Gamma_{\widetilde{\gamma}_k}$ translate of $d'_k$ and one $\Gamma_{\widetilde{\gamma}_k}$ translate of $d''_k$ (resp. $d_k$ and $d''_k$; $d_k$ and $d'_k$).
\end{remark}

\begin{remark}\label{gammaonomega}
Observe that $\Delta_0\cup\Delta_1$ is a fundamental domain of the action of $\pi_1(\Ppc^{(i)})$ on $\Omega_{\Ppc^{(i)}}$. In particular, for any $X\in\pi_1(M)\setminus\Gamma_q$, we have $(X\cdot\Omega_q^{(i)})\cap\Omega_q^{(i)}=\emptyset$. Also we can easily describe the action of $\Gamma_q$ on $\Omega_q^{(i)}$. Let $\Delta_i'$ and $\Delta_i''$ be the two triangles adjacent to $\Delta_{1-i}$ that have $q$ as a vertex and let $\Delta_{1-i}'$ and $\Delta_{1-i}''$ be the other triangles that have $q$ as a vertex and are adjacent to $\Delta_i'$ and $\Delta_i''$ respectively. If $Y$, $Y^{-1}$ are the two possible generators of $\Gamma_q$, then $\{Y\cdot\Delta_{1-i},Y^{-1}\cdot\Delta_{1-i}\}=\{\Delta_{1-i}',\Delta_{1-i}''\}$.
\end{remark}

Next, we will describe how to decompose any typical oriented closed geodesic $\eta$ in $\Mpc$ using the ideal triangulation $\Umc_\Mpc$. Choose a parameterization for $\eta$ and define the following. (Here, we abuse notation by denoting the image of a curve by the curve itself.)
\begin{itemize}
\item Let $\displaystyle I'_\eta:=\eta^{-1}(\Umc_\Mpc\cap\eta)\subset S^1$. These are the points in $S^1$ that are mapped via $\eta$ to points of intersection of the curve $\eta$ with $\Umc_\Mpc$.
\item For any $q\in I'_\eta$, define $u_q$ to be the unique line in $\Umc_\Mpc$ that contains $\eta(q)$. Also, define $\uora_q$ to be the line $u_q$ equipped with the orientation so that $\eta$ passes from the left to the right of $\uora_q$ at $q$.
\item Let $\displaystyle D_\eta:=\eta^{-1}(\Pmc\cap\eta)\subset I'_\eta$. These are the points in $S^1$ that are mapped via $\eta$ to points where $\eta$ intersects the curves in $\Pmc$.
\end{itemize}

The orientation of $\eta$ induces a cyclic order on $I'_\eta$. Note that the only elements in $I'_\eta$ that do not have a successor or a predecessor in this cyclic order are the points in $D_\eta$. Thus, we can also define the following.
\begin{itemize}
\item Let $\suc=\suc_\eta:I'_\eta\setminus D_\eta\to I'_\eta\setminus D_\eta$  be the bijection which takes any point in $I'_\eta\setminus D_\eta$ to its successor. 

\item Define $I_\eta:=I'_\eta\setminus(D_\eta\cup E^+_\eta\cup E^-_\eta)$ of $S^1$, where 
\[\begin{array}{rcl}
E^+_\eta&:=&\{q\in I'_\eta:u_{\suc^k(q)}\text{ share a common vertex }\forall k\in\Zbbb_{\geq-2}\},\\
E^-_\eta&:=&\{q\in I'_\eta:u_{\suc^k(q)}\text{ share a common vertex }\forall k\in\Zbbb_{\leq2}\}.
\end{array}\]
\end{itemize}

The compactness of $\eta$ implies that $I_\eta$ is a finite set. Also, the cyclic order on $I_\eta'$ induces a cyclic order on $I_\eta$. 

\begin{notation}
For any pair of distinct points $p,q$ in $S^1$, define $(p,q)$ (resp. [p,q]) to be the open (resp. closed) subinterval of $S^1$ in the clockwise direction from $p$ to $q$.
\end{notation}

\begin{figure}
\includegraphics[scale=0.7]{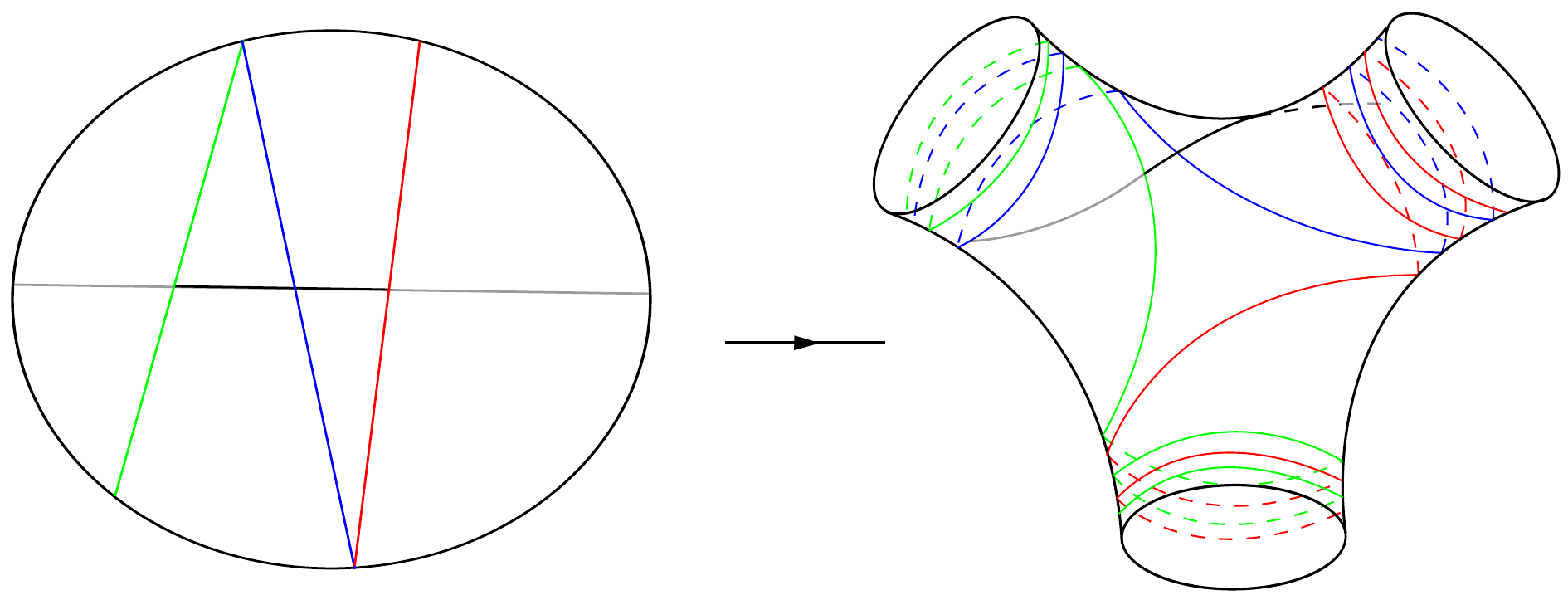}
\put (-377, 102){\makebox[0.7\textwidth][r]{$q$ }}
\put (-360, 110){\makebox[0.7\textwidth][r]{$\alpha$ }}
\put (-590, 78){\makebox[0.7\textwidth][r]{$\qtd$ }}
\put (-603, 67){\makebox[0.7\textwidth][r]{$\widetilde{\alpha}$ }}
\put (-467, 52){\makebox[0.7\textwidth][r]{$\Pi$ }}
\caption{$q$ is a crossing point and $\alpha$ is a crossing segment}
\label{crossing segment}
\end{figure}

\begin{definition}
A point $q$ in $I_\eta$ is called a \emph{crossing point} if the three edges $u_{\suc^{-1}(q)}$, $u_q$ and $u_{\suc(q)}$ do not share any common vertices. The closed subsegment $\alpha:=\eta|_{[\suc^{-1}(q),\suc(q)]}$ is called a \emph{crossing segment} corresponding to $u_q$ (see Figure \ref{crossing segment}), and the triple 
\[(\uora_{\suc^{-1}(q)},\uora_q,\uora_{\suc(q)})\]
is called a \emph{crossing triple} for $\Mpc$. 
\end{definition}

One should think of the crossing segments as line segments that ``go between" collar neighborhoods of the simple closed curves in $\Pmc$. Since $\Pmc$ decomposes $\Mpc$ into $2g-2+n$ pairs of pants, and each $\Ppc^{(i)}=f_i^\#(\Mpc)$ has twelve crossing triples, we have the following lemma.

\begin{lem}\label{crossingtriple} 
Let $M$ be a closed surface of genus $g$ with $n$ disjoint open discs removed. Then any $\Mpc\in\Cmf(M)$ has $24g-24+12n$ crossing triples. 
\end{lem}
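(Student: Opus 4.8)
The plan is to make precise the assertion in the last two sentences of the excerpt: that a convex $\Rbbb\Pbbb^2$ structure on a single pair of pants $P$ has exactly twelve crossing triples, and that these glue up compatibly across a pants decomposition so that the total count on $\Mpc\in\Cmf(M_{g,n})$ is $(2g-2+n)\cdot 12 = 24g-24+12n$. First I would fix a pair of pants $\Ppc^{(i)}=f_i^\#(\Mpc)$ and recall that its ideal triangulation $\Vmc^{(i)}$ (Definition \ref{idealtriangulation}) has three leaves, which lift in $\Omega_{\Ppc^{(i)}}$ to the $\pi_1(P)$-orbit of the three sides of the central triangle $\Delta_0$ with vertices $a,b,c$. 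A fundamental domain for the $\pi_1(P)$-action is $\Delta_0\cup\Delta_1$ (as noted in the proof of Theorem \ref{Goldman}), so the combinatorial picture of the triangulation near $\Omega_{\Ppc^{(i)}}$ is completely described by the hexagon $H_\Ppc$ together with its $\pi_1(P)$-translates, and every edge of $\widetilde\Vmc^{(i)}$ is a $\pi_1(P)$-translate of one of the three sides of $\Delta_0$.

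Next I would enumerate the crossing triples directly. A crossing triple $(\uora_{\suc^{-1}(q)},\uora_q,\uora_{\suc(q)})$ records an oriented edge $\uora_q$ of $\Umc_\Mpc$ together with the two oriented edges immediately before and after it along a geodesic segment that genuinely crosses $u_q$ (i.e.\ the three edges share no common vertex), and the data is taken up to the deck action, i.e.\ it is a combinatorial datum attached to the triangulated surface $\Ppc^{(i)}$. For a fixed unoriented leaf $\ell$ of $\Vmc^{(i)}$ there are two choices of transverse orientation for $\uora_q$; having fixed that orientation, the edge $u_{\suc^{-1}(q)}$ must be one of the two edges of the triangle lying "behind" $\ell$ (relative to the orientation) that do not share the relevant vertex with $\ell$, but in a pair of pants, each side of $\Delta_0$ borders $\Delta_0$ on one side and a single translate of $\Delta_1$ on the other, and the combinatorics of $\Delta_0\cup\Delta_1$ forces exactly one admissible incoming edge and one admissible outgoing edge for which the no-common-vertex condition holds. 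Carefully going through the finitely many cases — $3$ leaves, $2$ orientations each, and checking which neighbouring pairs of edges fail to share a vertex with $u_q$ — yields exactly $3\cdot 2\cdot 2 = 12$, or however the case count shakes out; this is the bookkeeping step, and it is where I would draw the picture of $\Delta_0$ and its three flanking triangles and simply list the triples. (If the natural count came out to $6$ before accounting for transverse orientations, doubling gives $12$; the point is that it is manifestly a finite explicit list.)

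Finally I would assemble: $\Pmc$ cuts $M$ into $2g-2+n$ pairs of pants $P_1,\dots,P_{2g-2+n}$, and every crossing triple for $\Mpc$ is a crossing triple of exactly one $\Ppc^{(i)}$ — this is because a crossing segment corresponding to $u_q$ has its defining combinatorics ($u_{\suc^{-1}(q)},u_q,u_{\suc(q)}$ not sharing a vertex) entirely determined by the triangulation $\Vmc^{(i)}$ of the pair of pants containing the relevant portion of $\Umc_\Mpc$, and the curves of $\Pmc$ themselves, while edges of $\Umc_\Mpc$, cannot be the middle edge $u_q$ of a crossing triple (by definition $q\in I_\eta$ excludes $D_\eta$, so $u_q$ is always a leaf of some $\Vmc^{(i)}$, hence lies in the interior of a pair of pants). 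Therefore the crossing triples of $\Mpc$ are in bijection with the disjoint union over $i$ of the crossing triples of $\Ppc^{(i)}$, giving $(2g-2+n)\cdot 12 = 24g-24+12n$.

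The main obstacle is the middle step: getting the case analysis exactly right so the count is genuinely $12$ and not, say, $6$ or $18$. The subtlety is the interaction between (a) the transverse orientation of $\uora_q$ that is built into the definition of a crossing triple, (b) the requirement that the three consecutive edges share \emph{no} common vertex (which rules out the configurations recorded by $E^+_\eta$ and $E^-_\eta$ and by the "share a common vertex" relation), and (c) the fact that in a pair of pants the two sides of any leaf see different local pictures ($\Delta_0$ on one side, a $\Delta_1$-translate on the other), so one cannot naively multiply independent choices. I would handle this by working in the hexagon $H_\Ppc$ with its labelled vertices $a,f,b,d,c,e$ (Notation \ref{hexagonnotation}) and listing, for each of the three leaves of $\Delta_0$ and each transverse orientation, precisely which incoming/outgoing edges of $\Delta_0\cup(\text{adjacent }\Delta_1\text{'s})$ are admissible — a short finite verification that I expect to occupy a paragraph with one figure.
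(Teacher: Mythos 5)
Your proposal is correct and follows the same strategy as the paper, which simply asserts the per-pants count of twelve and multiplies by $2g-2+n$. The subtlety you flag in (c) turns out not to bite: once the oriented middle edge $\uora_q$ is chosen ($3$ leaves $\times$ $2$ transverse orientations $= 6$), the two complementary triangles flanking $u_q$ are determined by the orientation, each has exactly two remaining edges meeting $u_q$ at the two distinct vertices of $u_q$, and the no-common-vertex condition pairs an incoming edge with an outgoing edge precisely when they meet $u_q$ at opposite vertices --- two such pairings --- so the naive $6\times 2 = 12$ is the count regardless of which of $\Delta_0$ or the $\Delta_1$-translate lies on which side.
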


Since the crossing points for a fixed parameterized closed curve $\eta$ lie in $S^1$, they have a natural cyclic order. So, by choosing a crossing point $p_1$ of $\eta$, we can enumerate the other crossing points of $\eta$ according to the cyclic order. Let $\{p_{m+1}=p_1,p_2,p_3,\dots,p_m\}$ be the set of crossing points of $\eta$ enumerated as described. 

\begin{definition}
The pair $(p_i,p_{i+1})$ is called a \emph{pants changing pair} if for all $k\in\Zbbb_{\geq 1}$, $suc^k(p_i)\neq p_{i+1}$. The \emph{pants changing segment} corresponding to $(p_i,p_{i+1})$ is the closed subsegment $\eta|_{[\suc^{-1}(p_i),\suc(p_{i+1})]}$. (See Figure \ref{pants changing segment}.)
\end{definition}

\begin{definition}
The pair $(p_i, p_{i+1})$ is called a \emph{looping pair} if there is some $k\in\Zbbb_{\geq 1}$ such that $\suc^k(p_i)=p_{i+1}$. The \emph{looping segment} corresponding to $(p_i, p_{i+1})$ is the closed subsegment $\eta|_{[\suc^{-1}(p_i),\suc(p_{i+1})]}$. (See Figure \ref{looping segment}.)
\end{definition}

One should think of the pants changing segments and looping segments as line segments that ``wind around" the collar neighborhoods of the simple closed curves in $\Pmc$. The pants changing segments wind around while moving between pairs of pants, while the looping segments wind around while staying in the same pair of pants.

\begin{figure}
\includegraphics[scale=0.65]{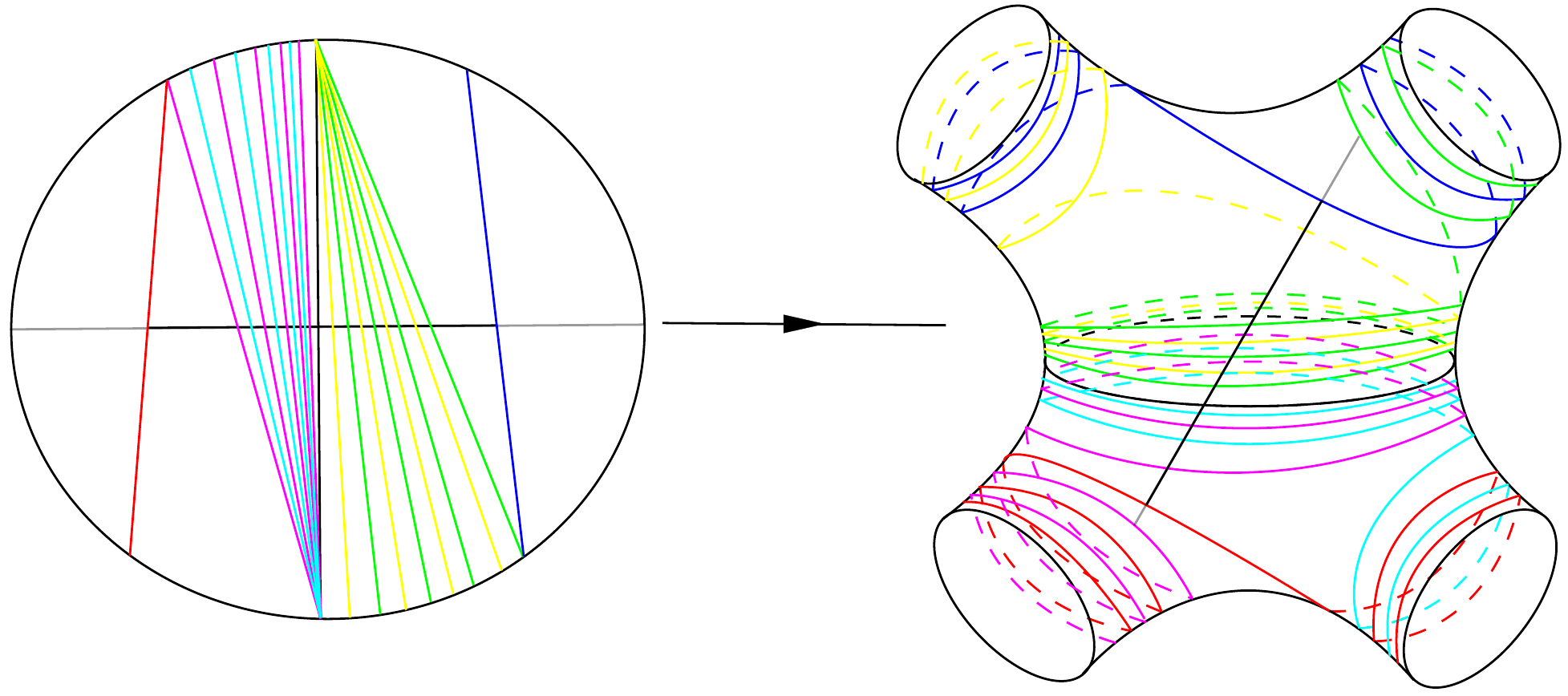}
\put (-371, 45){\makebox[0.7\textwidth][r]{$p_i$ }}
\put (-322, 117){\makebox[0.7\textwidth][r]{$p_{i+1}$ }}
\put (-351, 100){\makebox[0.7\textwidth][r]{$\beta$ }}
\put (-617, 89){\makebox[0.7\textwidth][r]{$\ptd_i$ }}
\put (-517, 90){\makebox[0.7\textwidth][r]{$\ptd_{i+1}$ }}
\put (-605, 77){\makebox[0.7\textwidth][r]{$\widetilde{\beta}$ }}
\put (-460, 77){\makebox[0.7\textwidth][r]{$\Pi$ }}
\caption{$(p_i,p_{i+1})$ is a pants changing pair, $\beta$ is a pants changing segment}
\label{pants changing segment}
\end{figure}

\begin{figure}
\includegraphics[scale=0.7]{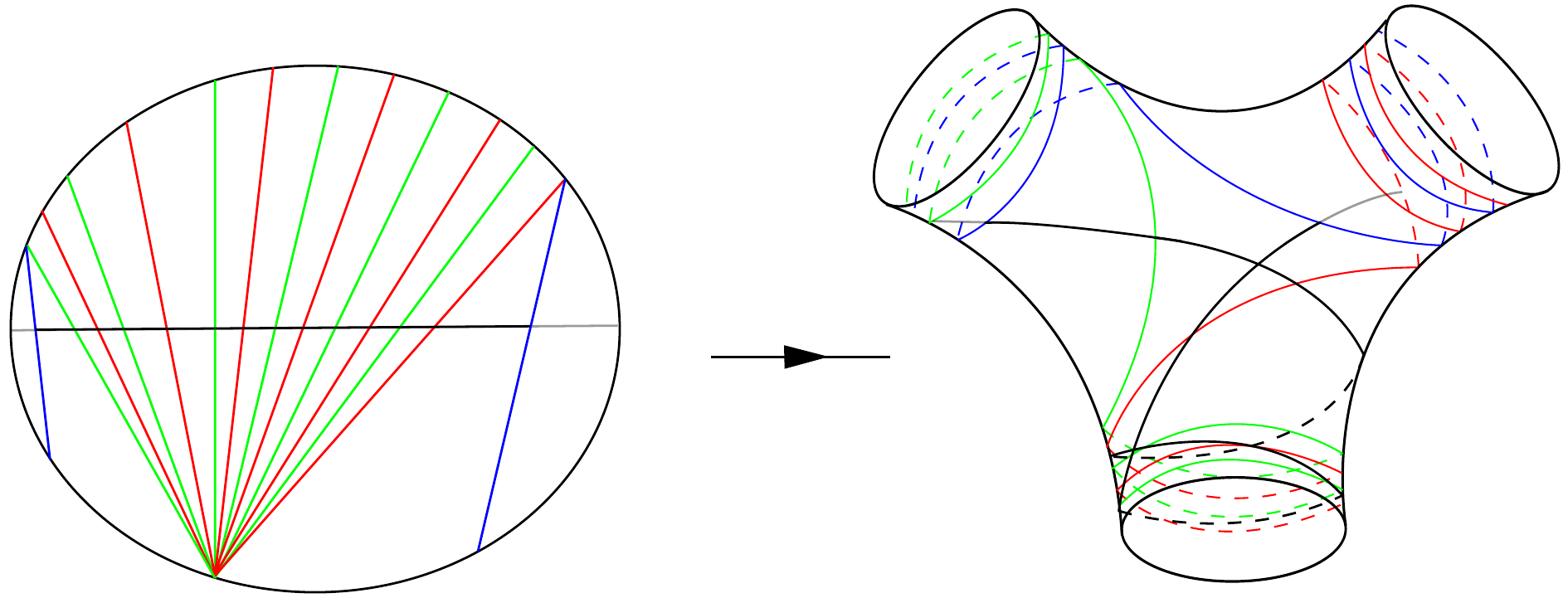}
\put (-420, 87){\makebox[0.7\textwidth][r]{$p_i$ }}
\put (-337, 97){\makebox[0.7\textwidth][r]{$p_{i+1}$ }}
\put (-340, 60){\makebox[0.7\textwidth][r]{$\beta$ }}
\put (-648, 58){\makebox[0.7\textwidth][r]{$\ptd_i$ }}
\put (-520, 58){\makebox[0.7\textwidth][r]{$\ptd_{i+1}$ }}
\put (-550, 56){\makebox[0.7\textwidth][r]{$\widetilde{\beta}$ }}
\put (-468, 49){\makebox[0.7\textwidth][r]{$\Pi$ }}
\caption{$(p_i,p_{i+1})$ is a looping pair, $\beta$ is a looping segment}
\label{looping segment}
\end{figure}

On $S^1$, one can also visualize the pre-images (under $\eta$) of the crossing segments as neighborhoods of the crossing points, while the pre-images of the pants changing segments and looping segments contain the intervals between subsequent pairs of crossing points. Note that even though we chose a parameterization of $\eta$ to make the above definitions, the pants changing, looping, crossing segments and the crossing triples are in fact independent of the choice of parameterization. 

If $\beta$ is a pants changing segment, then $\beta$ intersects exactly one $\gamma_k$ in $\Pmc$. Define
\begin{equation}\label{Fi1}
F_\beta:=\beta^{-1}(\beta\cap(d'_k\cup d''_k)).
\end{equation}
For any $q\in F_\beta$, let $v_q$ be the line in $\{d'_k,d''_k\}$ that contains $\eta(q)$ and let $\vora_q$ be the line $v_q$ equipped with the orientation so that $\eta$ (equipped with its orientation) passes from the left to the right of $\vora_q$ at $q$. The natural order on $F_\beta$ induced by the orientation of $\eta$ allows us to define the finite sequence $\overrightarrow{Y}_\beta=\{\vora_q:q\in F_\beta\}$.

If $\beta$ is a looping segment, define 
\begin{equation}\label{Fi2}
F_\beta:=\beta^{-1}((\beta\setminus\partial\beta)\cap\Upc_\Mpc).
\end{equation}
Note that the edges in $\{u_p:p\in F_\beta\}$ share a common vertex, or equivalently, accumulate to a common boundary component $\gamma$ of $S$. Moreover, the $\uora_p$ are oriented so that either all of them point towards $\gamma$ or all of them point away from $\gamma$. Since $F_\beta$ has a natural order induced by the orientation of $\eta$, we can define the finite sequence $\overrightarrow{Y}_\beta:=\{\uora_p:p\in F_\beta\}$.

In either case, $\overrightarrow{Y}_\beta$ is designed to keep track of the ``amount of twisting" $\eta$ does while moving between crossing points. Let $Y_\beta$ be the finite sequence that is obtained from $\overrightarrow{Y}_\beta$ by forgetting the orientation of the terms of $\overrightarrow{Y}_\beta$. Again, observe $\overrightarrow{Y}_\beta$ and $Y_\beta$ are independent of the choice of parameterization for $\eta$. 

\subsection{Combinatorial descriptions of closed geodesics.}\label{combinatorialdescriptionsclosedgeodesics}
The decomposition described in Section \ref{setup} begs the following question. If we are given the cyclic sequence of crossing triples for some $\eta\in\Tpc_\Mpc$ (recall that $\Tpc_\Mpc$ is the set of typical closed geodesics in $\Mpc$), together with the looping or pants changing segments between every pair of crossing points for $\eta$, can we recover $\eta$? The answer to this question, as we will see later, is yes. However, the way this question is currently posed is a little awkward because while the cyclic sequence of crossing triples for $\eta$ is a combinatorial object (there are finitely many possibilities for each entry of this cyclic sequence by Lemma \ref{crossingtriple}), the cyclic sequence of looping or pants changing segments between every pair of crossing points for $\eta$ is not. Hence, we want to replace the latter with something more combinatorial in nature. 

It turns out that there are two rather natural ways to do so. These give two different combinatorial descriptions of $\eta$, which we call $\phi(\eta)$ and $\psi(\eta)$ respectively. The goal of this subsection is thus to formally define $\phi(\eta)$ and $\psi(\eta)$, and answer the above question about $\phi(\eta)$ and $\psi(\eta)$, i.e. can we recover $\eta$ from $\phi(\eta)$ or $\psi(\eta)$, and if not, how much information do we lose by describing $\eta$ using $\phi(\eta)$ or $\psi(\eta)$? We shall start with $\phi(\eta)$.

\begin{definition}
Let $\eta\in\Tpc_\Mpc$ and let $\{p_{m+1}=p_1,\dots,p_m\}$ be the cyclic sequence of crossing points along $\eta$. Also, let $\beta_i$ be the looping or pants changing segment associated to the pair $(p_i,p_{i+1})$ and let $\overrightarrow{Y}_i=\overrightarrow{Y}_{\beta_i}$. Then define \emph{$\phi(\eta)$} be the cyclic sequence $\{\phi_{m+1}=\phi_1,\dots,\phi_m\}$, where each $\phi_i$ is the tuple $(\uora_{\suc^{-1}(p_i)},\uora_{p_i},\uora_{\suc(p_i)},\overrightarrow{Y}_i)$, 
\end{definition}

The next proposition gives a positive answer to the question asked above, i.e. we can recover $\eta$ from $\phi(\eta)$.

\begin{prop}\label{phietaetaequivalence}
If $\eta,\eta'\in\Tpc_\Mpc$ are such that $\phi(\eta)=\phi(\eta')$, then $\eta=\eta'$.
\end{prop}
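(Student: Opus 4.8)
The plan is to reconstruct the closed geodesic $\eta$ from the combinatorial data $\phi(\eta)$ by lifting to the universal cover $\Omega_\Mpc$ and building a bi-infinite sequence of edges of $\widetilde\Umc_\Mpc$ together with a sequence of triangles of the ideal triangulation, and then showing that this sequence, being periodic, determines a unique closed line in $\Mpc$. The key observation is that knowing an oriented edge $\uora$ of $\widetilde\Umc_\Mpc$ together with the combinatorial datum recording which adjacent triangle (or which translate under a boundary-curve stabilizer) one passes into determines the next oriented edge uniquely: the ideal triangulation $\Umc_\Mpc$ and its lift have a rigid combinatorial structure, so a ``direction of travel'' plus local branching data propagates unambiguously.

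First I would set up the lift: given a parameterization of $\eta$ with crossing points $p_1,\dots,p_m$, choose a lift $\etd$ to $\Omega_\Mpc$, and observe that $\etd$ crosses a bi-infinite sequence of edges of $\widetilde\Umc_\Mpc$, which near each crossing point looks like a crossing triple $(\uora_{\suc^{-1}(p_i)},\uora_{p_i},\uora_{\suc(p_i)})$ and between consecutive crossing points is interpolated by the sequence $\overrightarrow Y_i$ of edges (all sharing a vertex, in the looping case, or the translates of $d_k',d_k''$ straddling $\gamma_k$, in the pants-changing case). The data $\phi(\eta)$ records exactly this bi-infinite (in fact $\Zbbb$-periodic, with period the element $g_\eta\in\pi_1(M)$ represented by $\eta$) sequence of oriented edges. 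The heart of the argument is a lemma asserting: if two lines $\etd,\etd'$ in $\Omega_\Mpc$ (lifts of closed geodesics) cross the same bi-infinite sequence of oriented edges of $\widetilde\Umc_\Mpc$, then $\etd=\etd'$. This follows because an oriented edge of $\widetilde\Umc_\Mpc$ is a line segment between two boundary points, and specifying that $\etd$ passes from the left to the right of each $\uora$ in an infinite forward sequence forces the forward endpoint of $\etd$ on $\partial\Omega_\Mpc$ to be the nested intersection of the ``right-hand side'' half-intervals cut off by those edges; since $\partial\Omega_\Mpc$ is compact and these half-intervals shrink (the edges separate $\partial\Omega_\Mpc$ into genuinely smaller arcs as one moves forward, using that $\eta$ is typical so it is not eventually trapped sharing a single vertex), the intersection is a single point. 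Symmetrically the backward sequence pins down the backward endpoint, and a line segment in a strictly convex (or $C^1$) domain is determined by its two endpoints; hence $\etd=\etd'$, and projecting down gives $\eta=\eta'$.

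Concretely, given $\phi(\eta)=\phi(\eta')$, I would pick compatible lifts: start from a crossing point $p_1$ of $\eta$ and a crossing point $p_1'$ of $\eta'$ with the same tuple $\phi_1$, choose lifts $\ptd_1,\ptd_1'$ and arrange (applying an element of $SL(3,\Rbbb)$) that the oriented edges $\uora_{\suc^{-1}(p_1)},\uora_{p_1},\uora_{\suc(p_1)}$ and their primed counterparts coincide as oriented line segments in $\Omega_\Mpc$. Then one proves inductively, using the matching of the $\phi_i$, that for every $i$ the next crossing triple of $\etd$ and of $\etd'$ are carried to the same oriented edges, and that the interpolating edges indexed by $\overrightarrow Y_i$ also agree; here I would invoke Remark \ref{gammaonomega} and Remark \ref{dkdk'dk''} to see that, within the appropriate $\Omega_q^{(i)}$ or $\Omega_{\widetilde\gamma_k}$, the oriented edge together with the recorded side of travel determines the successive translate under the stabilizer $\Gamma_q$ or $\Gamma_{\widetilde\gamma_k}$, so the whole interpolation is forced. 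This yields that $\etd$ and $\etd'$ cross the same bi-infinite sequence of oriented edges of $\widetilde\Umc_\Mpc$, and then the lemma above forces $\etd=\etd'$, hence $\eta=\eta'$.

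The main obstacle I expect is the nesting/shrinking argument: one must verify carefully that the ``right-hand'' arcs cut off by the forward sequence of oriented edges actually shrink to a point rather than stabilizing. The danger is a tail of the sequence that stays within a single $\Omega_q^{(i)}$ forever (only looping around one boundary curve) or alternates between two vertices without ever ``crossing'' — but this is precisely excluded by the definition of $I_\eta$ (removing $E_\eta^\pm$) and by typicality of $\eta$, which guarantees infinitely many genuine crossing points in each direction; between consecutive crossing points the edge sequence genuinely advances to a strictly smaller sub-arc of $\partial\Omega_\Mpc$. Making this quantitative enough to conclude the intersection is a single point (using that $\pi_1(M)$ acts cocompactly, so there is a uniform gap, or using strict convexity / $C^1$ boundary from the Kuiper--Benzécri result) is the technical core; once that is in place the rest is bookkeeping with the combinatorial data.
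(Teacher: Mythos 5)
Your proof proposal takes a genuinely different route from the paper's. The paper itself identifies your strategy -- reading off the endpoints of a lift $\widetilde{\eta}$ from the bi-infinite sequence of crossed edges of $\widetilde{\Umc}_\Mpc$ -- as the reason the proposition is ``morally true,'' but its formal proof stays entirely downstairs in $\Mpc$: for each $i$ it chooses connector arcs along the edges of $\Umc_\Mpc$ joining $\eta(p_i)$ to $\eta'(p_i')$, shows each crossing, pants-changing, and looping segment of $\eta$ is homotopic rel endpoints to the corresponding segment of $\eta'$ sandwiched by connectors, concatenates these into a free homotopy from $\eta$ to $\eta'$, and concludes $\eta=\eta'$ from uniqueness of closed geodesic representatives ((2) of Proposition~\ref{properties}). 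This sidesteps the universal cover entirely, and in particular sidesteps the endpoint-nesting argument at the center of your plan.

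That nesting argument contains a genuine gap which, to your credit, you flag. The step ``compactness plus the fact that the arcs shrink implies the intersection is a single point'' is false as stated: a nested decreasing family of closed arcs on $\partial\Omega_\Mpc$ can perfectly well intersect in a nondegenerate subarc. Neither cocompactness of $\pi_1(M)$ nor the $C^1$/strict convexity of $\partial\Omega_\Mpc$ repairs this on its own. The clean fix is periodicity: $\widetilde{\eta}$ is the axis of a positive hyperbolic $g_\eta\in\pi_1(M)$, so the crossed edge sequence is $g_\eta$-periodic, say with period $N$, and the nested forward arcs satisfy $A_{k+N}=g_\eta\cdot A_k$. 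Hence $\bigcap_k A_k$ is a nonempty, closed, proper, $g_\eta$-invariant subarc containing $g_\eta^+$, and by the north--south dynamics of $g_\eta$ on $\partial\Omega_\Mpc$ such a set must be either $\{g_\eta^+\}$ or one of the two closed arcs from $g_\eta^+$ to $g_\eta^-$. In the latter case $g_\eta^-\in A_k$ for all $k$; since $g_\eta^-$ lies on the backward side of each $E_k$, it would have to be a common endpoint of every $E_k$ -- precisely the ``trapped sharing a single vertex'' scenario -- and then $\eta$ would have no crossing points, contradicting typicality. So $\bigcap_k A_k=\{g_\eta^+\}$ (and symmetrically $\bigcap_k B_k=\{g_\eta^-\}$), and any line crossing the same oriented edge sequence shares $\widetilde{\eta}$'s endpoints and hence equals $\widetilde{\eta}$. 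With this supplied, together with a careful verification (via Remarks~\ref{dkdk'dk''} and~\ref{gammaonomega}) that $\phi(\eta)$ plus a choice of basepoint lift actually determines the entire lifted edge sequence, your argument closes.
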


Before we begin the formal proof which is rather technical, let us first see why this proposition is morally true. If we look in $\Omega_\Mpc$, the condition that $\phi(\eta)=\phi(\eta')$ should imply that the lifts of $\eta$ and $\eta'$ are two lines that pass through the same triangles of the ideal triangulation $\widetilde{\Umc}_\Mpc$. This means that their endpoints in $\partial\Omega_\Mpc$ are the same, so they have to be equal.

\begin{proof}
The general strategy is to show that if $\phi(\eta)=\phi(\eta')$, then $\eta$ is homotopic to $\eta'$. This allows us to conclude that $\eta=\eta'$ by (2) of Proposition \ref{properties}. 

Enumerate the crossing points of $\eta$ and $\eta'$ by $\{p_{m+1}=p_1,\dots,p_m\}$ and $\{p_{m+1}'=p_1',\dots,p_m'\}$ respectively, so that if $\{\phi_1,\dots,\phi_m\}$ and $\{\phi_1',\dots,\phi_m'\}$ are the induced enumeration of the terms in $\phi(\eta)$ and $\phi(\eta')$ respectively, then $\phi_i=\phi_i'$ for all $i\in\{1,\dots,m\}$. Let $\alpha_i$ and $\alpha_i'$ be the crossing segments corresponding to $p_i$ and $p_i'$ respectively. Also, let $\beta_i$ and $\beta_i'$ be the pants changing segment or looping segment corresponding to the pair $(p_i,p_{i+1})$ and $(p_i',p_{i+1}')$ respectively. 

Since $u_{\suc^{-1}(p_i)}=u_{\suc^{-1}(p_i')}$, $u_{p_i}=u_{p_i'}$ and $u_{\suc(p_i)}=u_{\suc(p_i')}$ for all $i\in\{1,\dots,m\}$, we can define the following. For any $i\in\{1,\dots,m\}$, let 
\[\epsilon_{\suc^{-1}(p_i)},\epsilon_{p_i},\epsilon_{\suc(p_i)}:[0,1]\to \Mpc\]
be the subsegments of $u_{\suc^{-1}(p_i)}$, $u_{p_i}$, $u_{\suc(p_i)}$ respectively such that 
\[\begin{array}{lll}
\epsilon_{\suc^{-1}(p_i)}(0)=\eta(\suc^{-1}(p_i)), &\epsilon_{p_i}(0)=\eta(p_i),&\epsilon_{\suc(p_i)}(0)=\eta(\suc(p_i)),\\
\epsilon_{\suc^{-1}(p_i)}(1)=\eta'(\suc^{-1}(p_i')), & \epsilon_{p_i}(1)=\eta'(p_i'),& \epsilon_{\suc(p_i)}(1)=\eta'(\suc(p_i')).
\end{array}\]

For all $i\in\{1,\dots,m\}$, let $\delta_{i,-}:=\eta|_{[\suc^{-1}(p_i),p_i]}$, $\delta_{i,+}:=\eta|_{[p_i,\suc(p_i)]}$, $\delta_{i,-}':=\eta'|_{[\suc^{-1}(p_i'),p_i']}$ and $\delta_{i,+}':=\eta'|_{[p_i',\suc(p_i')]}$. Note that the line segments $\delta_{i,-}$,$\delta_{i,-}'$, $\epsilon_{\suc^{-1}(p_i)}$ and $\epsilon_{p_i}$ all lie in a triangle that is the closure of a connected component of $\displaystyle\Mpc\setminus\Upc_\Mpc$. The same is also true for the line segments  $\delta_{i,+}$,$\delta_{i,+}'$, $\epsilon_{p_i}$ and $\epsilon_{\suc(p_i)}$. Thus, $\delta_{i,-}$ is homotopic relative endpoints to $\epsilon_{\suc^{-1}(p_i)}\cdot\delta_{i,-}'\cdot\epsilon_{p_i}^{-1}$ and $\delta_{i,+}$ is homotopic relative endpoints to $\epsilon_{p_i}\cdot\delta_{i,+}'\cdot\epsilon_{\suc(p_i)}^{-1}$. Since $\alpha_i=\delta_{i,-}\cdot\delta_{i,+}$ and $\alpha_i'=\delta_{i,-}'\cdot\delta_{i,+}'$, we have that $\alpha_i$ is homotopic relative endpoints to $\epsilon_{\suc^{-1}(p_i)}\cdot\alpha_i'\cdot\epsilon_{\suc(p_i)}^{-1}$. 

Also, since $Y_i=Y_i'$, we know that $\beta_i$ is a looping segment (resp. pants changing segment) if and only if $\beta_i'$ is a looping segment (resp. pants changing segment). Define $F_i$ as in (\ref{Fi1}) if $\beta_i$ is a pants changing segment or as in (\ref{Fi2}) if $\beta_i$ is a looping segment, and let $F_i'$ be the corresponding object for $\beta_i'$. Enumerate $F_i=\{q_1,\dots,q_{l_i}\}$ and $F_i'=\{q_1',\dots,q_{l_i}'\}$ according to the order induced on $F_i$ and $F_i'$ by the orientations of $\eta$ and $\eta'$ respectively. Define $q_0:=\suc^{-1}(p_i)$, $q_{l_i+1}:=\suc(p_{i+1})$, $q_0':=\suc^{-1}(p_i')$, $q_{l_i+1}':=\suc(p_{i+1}')$, and if $\beta_i$ is a pants changing segment, define $v_{q_0}:=u_{\suc^{-1}(p_i)}$, $v_{q_{l_i+1}}:=u_{\suc(p_{i+1})}$, $v_{q_0'}:=u_{\suc^{-1}(p_i')}$, $v_{q_{l_i+1}'}:=u_{\suc(p_{i+1}')}$. Note that for all $j\in\{0,\dots,l_i+1\}$, $u_{q_j}=u_{q_j'}$ if $\beta_i$ is a looping segment and $v_{q_j}=v_{q_j'}$ if $\beta_i$ is a pants changing segment. Thus, for $j\in\{0,\dots,l_i+1\}$, we can define $\epsilon_{q_j}:[0,1]\to N$ to be the subsegment of $u_{q_j}$ or $v_{q_j}$ such that $\epsilon_{q_j}(0)=\eta(q_j)$ and $\epsilon_{q_j}(1)=\eta'(q_j')$. Also, for $j\in\{0,\dots,l_i\}$, define $\delta_j:=\eta|_{[q_j,q_{j+1}]}$ and $\delta_j':=\eta'|_{[q_j',q_{j+1}']}$. 

If $\beta_i$ is a looping segment, then for the same reasons as above, $\delta_j$ is homotopic relative endpoints to $\epsilon_{q_j}\cdot\delta_j'\cdot\epsilon_{q_{j+1}}^{-1}$. In the case that $\beta_i$ is a pants changing segment, let $\gamma_k$ be the unique closed geodesic in $\Pmc$ that intersects $\beta_i$. Then $\gamma_k$ is also the unique closed geodesic that intersects $\beta_i'$. Let $\Ppc^{(1)},\Ppc^{(2)}\subset\Mpc$ be the two pairs of pants given by the pants decomposition $\Pmc$ that contain $\gamma_k$, and let $x^{(1)}$, $x^{(2)}$ be the leaves in $\Vmc^{(1)}$, $\Vmc^{(2)}$ respectively that do not accumulate to $\gamma_k$. For simplicity, we will assume that $\Ppc^{(1)}\neq \Ppc^{(2)}$; the proof in the case when $\Ppc^{(1)}=\Ppc^{(2)}$ is similar. For all $j\in\{0,\dots,l_i\}$, note that $\delta_j$,$\delta_j'$, $\epsilon_{q_j}$ and $\epsilon_{q_{j+1}}$ all lie in the closure of one of the two quadrilaterals in $\Mpc$ bounded by the lines $d_k'$, $d_k''$, $x^{(1)}$, $x^{(2)}$. This means that $\delta_j$ is homotopic relative endpoints to $\epsilon_{q_j}\cdot\delta_j'\cdot\epsilon_{q_{j+1}}^{-1}$. In either case, since $\beta_i$ is the concatenation of the $\delta_j$ and $\beta_i'$ is the concatenation of the $\delta_j'$ in the obvious order, we see that $\beta_i$ is homotopic relative endpoints to $\epsilon_{q_0}\cdot\beta_i'\cdot\epsilon_{q_{l_i+1}}^{-1}=\epsilon_{\suc^{-1}(p_i)}\cdot\beta_i'\cdot\epsilon_{\suc(p_{i+1})}^{-1}$.

Finally, note that $\eta$ is the cyclic concatenation $\alpha_1^{-1}\cdot\beta_1\cdot\alpha_2^{-1}\cdot\beta_2\cdot\dots\cdot\alpha_m^{-1}\cdot\beta_m$ and $\eta'$ is the cyclic concatenation $\alpha_1'^{-1}\cdot\beta_1'\cdot\alpha_2'^{-1}\cdot\beta_2'\cdot\dots\cdot\alpha_m'^{-1}\cdot\beta_m'$. By what we proved above, the cyclic concatenation $\alpha_1'^{-1}\cdot\beta_1'\cdot\alpha_2'^{-1}\cdot\beta_2'\cdot\dots\cdot\alpha_m'^{-1}\cdot\beta_m'$ is homotopic to the cyclic concatenation $\alpha_1^{-1}\cdot\beta_1\cdot\alpha_2^{-1}\cdot\beta_2\cdot\dots\cdot\alpha_m^{-1}\cdot\beta_m$, so $\eta$ is homotopic to $\eta'$. Since $\eta$ and $\eta'$ are closed geodesics, we deduce from (2) of Proposition \ref{properties} that $\eta=\eta'$.
\end{proof}

For any $\eta\in\Tpc_\Mpc$, let $\Xmf_\eta$, $\Ymf_\eta$ and $\Zmf_\eta$ be the set of all crossing segments, pants changing segments and looping segments of $\eta$ respectively. It is easy to see that the typicalness of $\eta$ implies $\Xmf_\eta$ and $\Ymf_\eta\cup\Zmf_\eta$ are nonempty and finite. From $\eta$, we can also obtain a second piece of combinatorial data $\psi(\eta)$, which we will now define. Let $\#:\Ymf\cup\Zmf\to\Zbbb_{\geq 0}$ be the function defined as follows. If $\beta$ is a pants changing segment of $\eta$ intersecting $\gamma_k$,
\[\#(\beta):=\left\{\begin{array}{ll}
0&\text{if }d_k\text{ is not transverse to }\beta\text{ or }|\beta\cap d_k|=0\\ 
|\beta^{-1}(\beta\cap d_k)|-1&\text{otherwise}\\
\end{array}\right.,\]
and if $\beta$ is a looping segment of $\eta$, 
\[\#(\beta):=\text{the number of self intersections of }\beta.\]

\begin{definition}
Let $\eta\in\Tpc_\Mpc$ and let $\{p_{m+1}=p_1,\dots,p_m\}$ be the cyclic sequence of crossing points along $\eta$. Also, let $\beta_i$ be the looping or pants changing segment associated to the pair $(p_i,p_{i+1})$.Then define \emph{$\psi(\eta)$} be the cyclic sequence $\{\chi_1,\dots,\chi_m\}$, where each $\chi_i$ is the tuple $(\uora_{\suc^{-1}(p_i)},\uora_{p_i},\uora_{\suc(p_i)},\#(\beta_i))$ and the cyclic order here is induced by the cyclic order on the set of crossing points $\{p_1,\dots,p_m\}$.
\end{definition}

Unlike $\phi(\eta)$, $\psi(\eta)$ does not give a complete combinatorial description of $\eta$, i.e. there are distinct typical closed geodesics $\eta$ and $\eta'$ in $\Mpc$ such that $\psi(\eta)=\psi(\eta')$. For example, if $\eta\in\Tpc_\Mpc$ has a pants changing segment $\beta$ intersecting $\gamma_k$ such that $|\beta\cap d_k|=1$, then we can do a Dehn twist about $\gamma_k$ to obtain a new curve $\eta'\in\Tpc_\Mpc$ such that the corresponding pants changing segment $\beta'$ of $\eta'$ does not intersect $d_k$. Note that in this case, $\#(\beta)=0=\#(\beta')$, so $\psi(\eta)=\psi(\eta')$, but $\eta\neq\eta'$.

However, $\psi(\eta)$ is useful because we can obtain a lower bound on the length of $\eta$ in terms of the Goldman parameters for $\Mpc$ and other data which depends only on $\psi(\eta)$. We will demonstrate how to do this in Section \ref{lowerboundlengths}. 

In the remainder of this subsection, we will show that even though we cannot recover $\eta$ from $\psi(\eta)$, describing $\eta$ using $\psi(\eta)$ only loses us a ``bounded amount of information" in the following sense. Let $\Fpc_\Mpc:=\{\phi(\eta):\eta\in\Tpc_\Mpc\}$ and $\Gpc_\Mpc:=\{\psi(\eta):\eta\in\Tpc_\Mpc\}$. Then we have the following two maps:
\[\phi:\Tpc_\Mpc\to\Fpc_\Mpc\text{ given by }\phi:\eta\mapsto\phi(\eta),\]
\[\psi:\Tpc_\Mpc\to\Gpc_\Mpc\text{ given by }\psi:\eta\mapsto\psi(\eta).\] 
As mentioned above, $\phi$ is a bijection but $\psi$ is not. However, we can obtain an upper bound on the size $\psi^{-1}(\psi(\eta))$ which depends only on the length of the cyclic sequence $\psi(\eta)$. We will now construct this upper bound.

\begin{lem}\label{pantschanging}
Let $\beta$ be a pants changing segment of $\eta\in\Tpc_\Mpc$, and let $\gamma_k$ be the unique closed geodesic in $\Pmc$ that intersects $\beta$. Then 
\[\Big|\big|\beta^{-1}(\beta\cap d'_k)\big|-\big|\beta^{-1}(\beta\cap d''_k)\big|\Big|\leq 1,\] 
and
\[\Big|\#(\beta)-\big|\beta^{-1}(\beta\cap d'_k)\big|\Big|,\Big|\#(\beta)-\big|\beta^{-1}(\beta\cap d''_k)\big|\Big|\leq 2.\]
\end{lem}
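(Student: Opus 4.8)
The plan is to analyze the pants changing segment $\beta$ by lifting it to $\Omega_\Mpc$ and working inside the convex region $\Omega_{\widetilde\gamma_k}$ associated to the lift $\widetilde\gamma_k$ of $\gamma_k$ that $\widetilde\beta$ crosses. Recall from Remark \ref{dkdk'dk''} that if $X$ generates $\Gamma_{\widetilde\gamma_k}$, then the region between consecutive translates $X^j\cdot d_k$ and $X^{j+1}\cdot d_k$ contains exactly one translate of $d_k'$ and one translate of $d_k''$, and similarly with the roles of $d_k, d_k', d_k''$ permuted. So the $\Gamma_{\widetilde\gamma_k}$-translates of $d_k$, $d_k'$, $d_k''$ occur in $\Omega_{\widetilde\gamma_k}$ in a strictly interleaved cyclic pattern as one sweeps across the region. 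Since the lift $\widetilde\beta$ of a pants changing segment enters $\Omega_{\widetilde\gamma_k}$ through one of the lines $\widetilde x^{(i)}$, crosses $\widetilde\gamma_k$ transversally, and exits through $\widetilde x^{(j)}$, it is a line segment that crosses each of these interleaved translates at most once, and the translates it crosses form a consecutive block in the interleaved order (this is where the convexity of $\Omega_{\widetilde\gamma_k}$ and the fact that $\widetilde\beta$ is a straight segment with endpoints on $\widetilde x^{(i)}, \widetilde x^{(j)}$ are used).

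First I would make precise the claim that the translates of $d_k$, $d_k'$, $d_k''$ that $\widetilde\beta$ meets form a consecutive run in a cyclically ordered list of the form $\dots, d_k, d_k', d_k'', d_k, d_k', d_k'', \dots$ (in some fixed cyclic order of the three symbols, reading across $\Omega_{\widetilde\gamma_k}$). Given such a consecutive run of length $N = |\beta^{-1}(\beta\cap d_k)| + |\beta^{-1}(\beta\cap d_k')| + |\beta^{-1}(\beta\cap d_k'')|$, the counts of how many times each of the three symbols appears in a window of $N$ consecutive entries of a periodic-with-period-$3$ sequence differ pairwise by at most $1$. That immediately gives $\big||\beta^{-1}(\beta\cap d_k')| - |\beta^{-1}(\beta\cap d_k'')|\big|\le 1$, and also $\big||\beta^{-1}(\beta\cap d_k)| - |\beta^{-1}(\beta\cap d_k')|\big|\le 1$ and the analogous bound with $d_k''$.

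Next I would relate $\#(\beta)$ to $|\beta^{-1}(\beta\cap d_k)|$. By definition $\#(\beta) = |\beta^{-1}(\beta\cap d_k)| - 1$ when $d_k$ is transverse to $\beta$ and $|\beta\cap d_k|\ge 1$, and $\#(\beta)=0$ otherwise; in the exceptional case one checks directly that $|\beta^{-1}(\beta\cap d_k)|$ is $0$ or $1$, so $|\#(\beta) - |\beta^{-1}(\beta\cap d_k)||\le 1$ always. Combining this with the previous paragraph, $|\#(\beta) - |\beta^{-1}(\beta\cap d_k')|| \le |\#(\beta) - |\beta^{-1}(\beta\cap d_k)|| + ||\beta^{-1}(\beta\cap d_k)| - |\beta^{-1}(\beta\cap d_k')|| \le 1 + 1 = 2$, and likewise for $d_k''$.

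The main obstacle I anticipate is the first step: rigorously establishing that $\widetilde\beta$ crosses the interleaved translates of $d_k, d_k', d_k''$ each at most once and in a consecutive block, and in particular ruling out that $\widetilde\beta$ could "double back" and cross some translate twice or skip one in the middle of its run. This should follow from the fact that $\widetilde\beta$ is a genuine line segment in the properly convex region $\Omega_{\widetilde\gamma_k}$ with endpoints on the two boundary lines $\widetilde x^{(i)}$ and $\widetilde x^{(j)}$, together with the observation that each translate of $d_k, d_k', d_k''$ is itself a line segment joining a translate of an endpoint of $\widetilde x^{(i)}$ to a translate of an endpoint of $\widetilde x^{(j)}$, so two such segments are disjoint and they separate $\Omega_{\widetilde\gamma_k}$ into consecutive strips — hence a line segment crosses a nested family of them in a connected range of indices and each at most once. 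One subtlety to handle carefully is the case $\Ppc^{(i)} = \Ppc^{(j)}$ (the two sides of $\gamma_k$ lying in the same pair of pants), but as in the proof of Proposition \ref{phietaetaequivalence} the argument is parallel, and I would simply remark that it is analogous rather than writing it out in full.
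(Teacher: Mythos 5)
Your proposal follows essentially the same route as the paper's proof: both rely on the interleaved ordering of the $\Gamma_{\widetilde{\gamma}_k}$-translates of $d_k$, $d_k'$, $d_k''$ recorded in Remark \ref{dkdk'dk''} to obtain the three pairwise bounds of at most $1$, and then pass to the bounds involving $\#(\beta)$ via its definition together with a triangle inequality. You spell out the consecutive-window and period-three counting argument that the paper leaves implicit in its appeal to that remark, which is a reasonable expansion.

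There is, however, a slip in your treatment of the exceptional case. You assert that ``in the exceptional case one checks directly that $|\beta^{-1}(\beta\cap d_k)|$ is $0$ or $1$,'' and hence that $\big|\#(\beta) - |\beta^{-1}(\beta\cap d_k)|\big|\le 1$ holds always. This is correct when the exceptional case occurs because $|\beta\cap d_k|=0$, but the exceptional case also covers $d_k$ failing to be transverse to $\beta$. Since both are geodesic segments, non-transversality forces $d_k\subset\beta$, and then $\beta^{-1}(\beta\cap d_k)$ is an entire subinterval of the domain of $\beta$, so $|\beta^{-1}(\beta\cap d_k)|$ is infinite and the blanket inequality fails. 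The paper handles that subcase separately: when $d_k\subset\beta$, one observes that $\beta$ meets neither $d_k'$ nor $d_k''$, so $|\beta\cap d'_k|=|\beta\cap d''_k|=\#(\beta)=0$ and all the stated inequalities hold trivially. Replacing your blanket inequality with this case split fixes the argument; the rest of your proof is sound.
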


\begin{proof}
Suppose first that $\beta$ intersects $d_k$ transversely. Then $|\beta\cap d_k|$ is finite, and by Remark \ref{dkdk'dk''}, the three quantities 
\[\Big|\big|\beta^{-1}(\beta\cap d'_k)\big|-\big|\beta^{-1}(\beta\cap d''_k)\big|\Big|, \Big|\big|\beta^{-1}(\beta\cap d_k)\big|-\big|\beta^{-1}(\beta\cap d'_k)\big|\Big|,\Big|\big|\beta^{-1}(\beta\cap d_k)\big|-\big|\beta^{-1}(\beta\cap d''_k)\big|\Big|\] 
are all at most $1$. The inequalities in the lemma then follow from the definition of $\#(\beta)$.
In the case when $\beta$ does not intersect $d_k$ transversely, we have that $d_k\subset\beta$, so $|\beta\cap d'_k|=|\beta\cap d''_k|=\#(\beta)=0$. It is then clear that the required inequalties hold in this case as well.
\end{proof}

\begin{lem}\label{looping}
Let $\beta$ be a looping segment of $\eta$. Then 
\[|F_\beta|-2\leq2\cdot\#(\beta)\leq|F_\beta|,\]
where $F_\beta$ is defined by (\ref{Fi2}).
\end{lem}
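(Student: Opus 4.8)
The plan is to analyze the combinatorial structure of a looping segment $\beta$ as it winds around a collar neighborhood of a boundary component $\gamma$ of the ambient pair of pants. Recall that $F_\beta = \beta^{-1}\big((\beta\setminus\partial\beta)\cap\Upc_\Mpc\big)$ records the interior intersections of $\beta$ with the ideal triangulation, and that by the discussion in Section~\ref{setup} all the edges $u_p$ for $p\in F_\beta$ accumulate to the same $\gamma$, with the orientations $\uora_p$ all pointing the same way (towards $\gamma$ or away from it). First I would lift $\beta$ to a line segment $\widetilde\beta$ in $\Omega_\Mpc$ and examine the region $\Omega_q^{(i)}$ (in the notation of Section~\ref{setup}) associated to the vertex $q$ corresponding to $\gamma$: by Remark~\ref{gammaonomega}, $\Delta_0\cup\Delta_1$ is a fundamental domain for the action of $\Gamma_q$ on $\Omega_q^{(i)}$, and the $\Gamma_q$-translates of the two edges of $\Delta_0$ emanating from $q$ cut $\Omega_q^{(i)}$ into a ``fan'' of triangles indexed by $\Zbbb$. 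The lifted segment $\widetilde\beta$ enters this fan, crosses consecutively through some of these fan-edges, and exits; the number of fan-edges it crosses is exactly $|F_\beta|$.

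The key step is then a planar/combinatorial count: a self-intersection of the looping segment $\beta$ downstairs corresponds, upstairs, to $\widetilde\beta$ crossing a $\Gamma_q$-translate $X\cdot\widetilde\beta$ of itself, where $X$ is the generator of $\Gamma_q$ (or a power of it); and because $\widetilde\beta$ is a straight line segment in a convex domain, $\widetilde\beta$ and $X^j\cdot\widetilde\beta$ meet in at most one point. Using Remark~\ref{gammaonomega} to pin down how $X$ acts on the fan (it shifts the fan-edges by one step, since $\{Y\cdot\Delta_{1-i},Y^{-1}\cdot\Delta_{1-i}\}=\{\Delta_{1-i}',\Delta_{1-i}''\}$), one sees that $\widetilde\beta$ crosses $X\cdot\widetilde\beta$ precisely when the range of fan-indices swept by $\widetilde\beta$ and the range swept by $X\cdot\widetilde\beta$ overlap by at least two indices, i.e. roughly when $|F_\beta|\geq 2$; more generally $\widetilde\beta$ crosses $X^j\cdot\widetilde\beta$ iff the shifted index-ranges still overlap in a suitable sense. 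Carrying out this bookkeeping, the number of self-intersections $\#(\beta)$ equals the number of $j\geq 1$ for which $X^j\cdot\widetilde\beta$ still straddles $\widetilde\beta$, and a direct count of how many such shifts fit inside a window of width $|F_\beta|$ gives $\#(\beta) = \lfloor |F_\beta|/2 \rfloor$ or $\lfloor (|F_\beta|-1)/2\rfloor$ depending on parity and on exactly which triangles the endpoints of $\beta$ sit in. Either way this yields $|F_\beta| - 2 \leq 2\cdot\#(\beta)\leq |F_\beta|$.

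Concretely, I would organize the write-up as follows: (i) set up the fan of triangles in $\Omega_q^{(i)}$ and observe that $\widetilde\beta$ meets a consecutive block of $|F_\beta|$ of the separating edges, so $\widetilde\beta$ passes through $|F_\beta|+1$ consecutive triangles of the fan; (ii) note that $\beta$ and its translate $X^j\cdot\beta$ descend to the same curve, so each transverse double point of $\beta$ comes from a unique pair $\{\widetilde\beta, X^j\cdot\widetilde\beta\}$ with $j\geq 1$, and conversely; (iii) since both are straight segments in the convex set $\Omega_q^{(i)}$, they cross at most once, and they cross iff the block of triangles swept by $\widetilde\beta$ and the block swept by $X^j\cdot\widetilde\beta$ (which is the original block shifted by $j$) overlap in at least two triangles; (iv) count: the shifts $j=1,\dots$ for which a width-$(|F_\beta|+1)$ block overlaps its own $j$-shift in $\geq 2$ triangles are exactly $j=1,\dots,\lfloor(|F_\beta|-1)/2\rfloor$ or so, giving the stated two-sided bound after accounting for the $\pm 1$ ambiguity at the two ends.

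The main obstacle I expect is step (iii)--(iv): correctly translating ``self-intersection of $\beta$'' into the overlap condition on shifted index-blocks, and then doing the parity/off-by-one count carefully enough to land exactly the inequalities $|F_\beta|-2\leq 2\cdot\#(\beta)\leq|F_\beta|$ rather than a weaker bound. In particular one must be careful that a putative intersection point of $\widetilde\beta$ with $X^j\cdot\widetilde\beta$ genuinely lies inside $\Omega_q^{(i)}$ (not in a translate outside the fan) and that no self-intersections of $\beta$ occur outside the collar region --- this last point follows because a looping segment, by definition, stays in the portion of the pair of pants accumulating to $\gamma$, but it should be stated. The convexity of $\Omega_q^{(i)}$ and the ``at most one intersection point'' fact are what make the count clean; the rest is careful combinatorics of a linearly ordered fan.
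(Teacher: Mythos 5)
Your proposal follows essentially the same route as the paper's proof: lift $\beta$ to $\widetilde\beta$ in the fan region $\Omega_q^{(i)}$, use Remark~\ref{gammaonomega} to see that every other lift of $\beta$ meeting $\widetilde\beta$ is $X^k\cdot\widetilde\beta$ for the generator $X$ of $\Gamma_q$, and then count how many of the shifts $X^k\cdot\widetilde\beta$ ($k\geq 1$) actually cross $\widetilde\beta$ by tracking the positions of the endpoints along $\partial\Omega_q^{(i)}$, splitting on the parity of $|F_\beta|$. The paper carries out the parity count exactly as you sketch (in the even case distinguishing the two subcases by whether $q_2$ or $X^k\cdot q_1$ lies between the others on the boundary arc), so your outline is correct and matches the paper's argument.
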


\begin{proof}
Consider any lift $\widetilde{\beta}$ of $\beta$ in $\Omega_\Mpc$. Since $\beta$ is a looping segment, $\widetilde{\beta}$ intersects the lines in $\widetilde{\Umc}_\Mpc$ finitely many times, and all the lines in $\widetilde{\Umc}_\Mpc$ that intersect $\widetilde{\beta}\setminus\partial\widetilde{\beta}$ share a common endpoint, $q$. This implies that $\beta$ lies in a single pair of pants $\Ppc^{(j)}$ given by $\Pmc$, and that $\widetilde{\beta}\setminus\partial\widetilde{\beta}$ lies in $\Omega_q^{(j)}$. Moreover, by Remark \ref{gammaonomega}, any other lift $\widetilde{\beta}'$ of $\beta$ to $\Ntd$ which intersects $\widetilde{\beta}$ must also lie in $\Omega_q^{(j)}$. In fact, $\widetilde{\beta}'=X^k\cdot\widetilde{\beta}$ for some $k\in\Zbbb$, where $X$ is a generator of $\Gamma_q$. Thus, the number of self intersections of $\beta$ is the number of positive integers $k$ such that $\widetilde{\beta}\cap X^k\cdot\widetilde{\beta}$ is nonempty.

It follows from the definition of a looping segment that $|F_\beta|\geq 2$. Also, by the description of the action of $\Gamma_q$ on $\Omega_q^{(j)}$ given in Remark \ref{gammaonomega}, it is clear that if $|F_\beta|=2k+1$ for $k\in\Zbbb_{\geq 1}$, then $X^j\cdot\widetilde{\beta}$ intersects $\widetilde{\beta}$ for $j=\{\pm1,\dots,\pm k\}$. This implies that $\#(\beta)=k$, so the inequalities in the lemma hold. 

On the other hand, if $|F_\beta|=2k$ for $k\in\Zbbb_{\geq 1}$, then there are two possible cases. Let $q_1$ and $q_2$ be the two endpoints of $\widetilde{\beta}$ so that $X^k\cdot q_1$ and $q_2$ lie on the same line in $\widetilde{\Upc}$. Let $\gamma$ be the axis of $X$ and let $\omega:=\partial\Omega_q^{(j)}\setminus\gamma$. It is clear that $q_1$, $q_2$ and $X^k\cdot q_1$ lie on $\omega$. If $q_2$ lies between $q_1$ and $X^k\cdot q_1$ on $\omega$, then $X^j\cdot\widetilde{\beta}$ intersects $\widetilde{\beta}$ for $j=\{\pm1,\dots,\pm (k-1)\}$. (See Figure \ref{keven}.) However, if $X^k\cdot q_1$ lies between $q_1$ and $q_2$ on $\omega$, then $X^j\cdot\widetilde{\beta}$ intersects $\widetilde{\beta}$ for $j=\{\pm1,\dots,\pm k\}$. In either case, $\#(\beta)$ is either $k$ or $k-1$, so the required inequalities still hold.
\end{proof}

\begin{figure}
\includegraphics[scale=0.6]{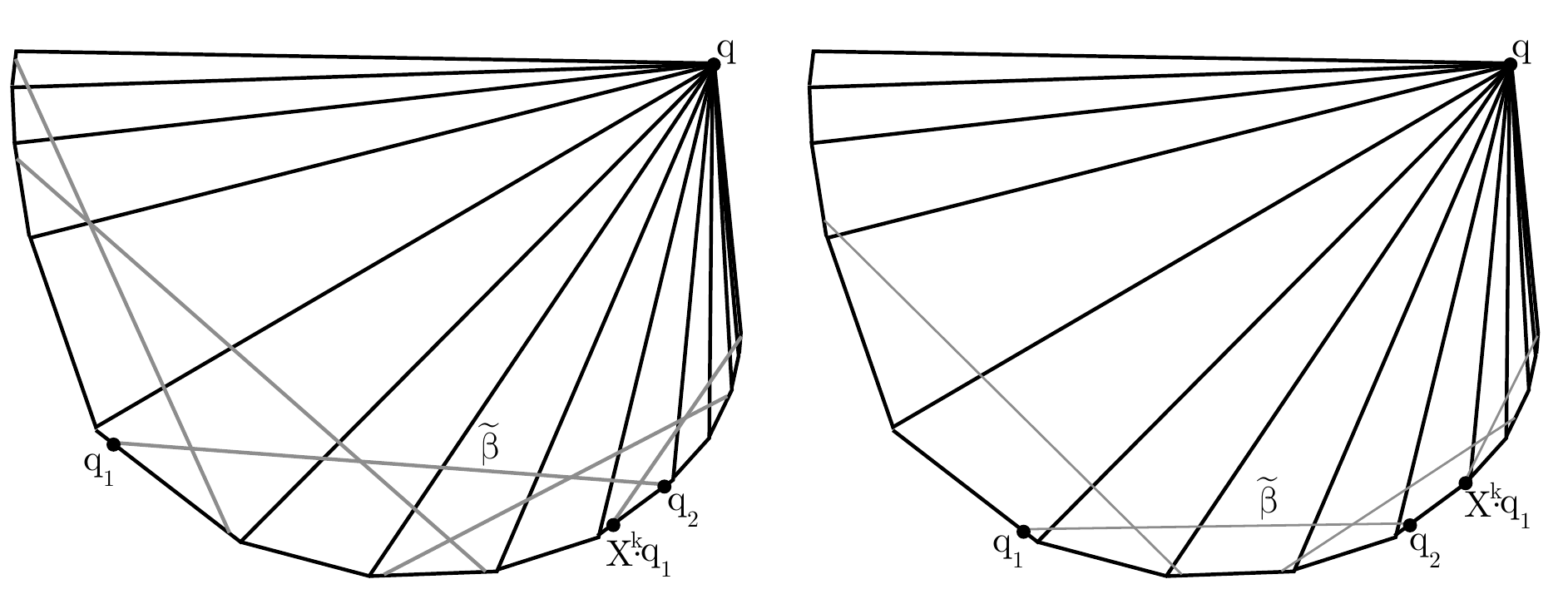}
\caption{$\#(\beta)$ and $|F_\beta|$.}
\label{keven}
\end{figure}

The above two lemmas describe the discrepancy between the information contained in $\phi(\eta)$ and $\psi(\eta)$. With these, we can prove the main proposition of this subsection.

\begin{prop}\label{phitopsi}
If $\eta\in\Tpc_\Mpc$ has $m$ crossing points, then any $\eta'\in\Tpc_\Mpc$ such that $\psi(\eta)=\psi(\eta')$ also has $m$ crossing points. Moreover, there are at most $18^m$ closed geodesics in $\Tpc_\Mpc$ that have the same image as $\eta$ under the map $\psi$.
\end{prop}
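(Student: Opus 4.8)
The plan is to count, for a fixed combinatorial datum $\psi(\eta)=\{\chi_1,\dots,\chi_m\}$, how many pieces of ``extra information'' one must specify to recover $\phi(\eta)=\{\phi_1,\dots,\phi_m\}$, since $\phi$ is injective (Proposition \ref{phietaetaequivalence}). First I would observe that $\phi(\eta)$ and $\psi(\eta)$ have the same number of entries, one per crossing point, and each $\phi_i=(\uora_{\suc^{-1}(p_i)},\uora_{p_i},\uora_{\suc(p_i)},\overrightarrow{Y}_i)$ and $\chi_i=(\uora_{\suc^{-1}(p_i)},\uora_{p_i},\uora_{\suc(p_i)},\#(\beta_i))$ share their first three coordinates. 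Thus $\psi(\eta)$ already pins down the cyclic sequence of crossing triples; in particular, from $\chi_i$ and $\chi_{i+1}$ one reads off whether $(p_i,p_{i+1})$ is a looping or a pants changing pair (this is determined by whether $\uora_{\suc(p_i)}$ and $\uora_{\suc^{-1}(p_{i+1})}$ share a common vertex, i.e. accumulate to a common boundary component), and which curve $\gamma_k\in\Pmc$ the segment crosses, and in the looping case which boundary component it winds around. Hence for $\eta'$ with $\psi(\eta')=\psi(\eta)$, the only freedom is in the sequences $\overrightarrow{Y}_i$, and the number of crossing points of $\eta'$ is forced to be $m$, giving the first assertion.

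Next I would bound, for each $i$, the number of possibilities for $\overrightarrow{Y}_i$ given $\chi_i$ and $\chi_{i+1}$. The key point is that the \emph{length} $|F_{\beta_i}|$ of the sequence $\overrightarrow{Y}_i$ is controlled by $\#(\beta_i)$ via Lemma \ref{looping} in the looping case and Lemma \ref{pantschanging} in the pants changing case, and that once the length is fixed the \emph{entries} of $\overrightarrow{Y}_i$ are nearly determined by the endpoints. Concretely: in the looping case all edges $\uora_p$, $p\in F_{\beta_i}$, lie in the single pair of pants and accumulate to one fixed boundary component $\gamma$, there are only two edges of $\Vmc^{(j)}$ accumulating to $\gamma$, and consecutive terms of $\overrightarrow{Y}_i$ must alternate between these two (with orientations either all towards or all away from $\gamma$); so $\overrightarrow{Y}_i$ is determined by its first term and its length, and Lemma \ref{looping} says the length is one of $\{2\#(\beta_i)+1,\,2\#(\beta_i)+2\}$ — at most $2$ choices for the length, times $2$ choices for the starting edge, times $2$ for the global orientation. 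In the pants changing case $\overrightarrow{Y}_i$ is a sequence in $\{d_k',d_k''\}$ recording how $\beta_i$ crosses these two lines inside $\Omega_{\widetilde\gamma_k}$; by Remark \ref{dkdk'dk''} consecutive crossings of $d_k'$ and $d_k''$ interleave with crossings of $d_k$, so again $\overrightarrow{Y}_i$ is determined by its first term, its length, and a binary orientation choice, and Lemma \ref{pantschanging} bounds the length $\big|\beta_i^{-1}(\beta_i\cap d_k')\big|+\big|\beta_i^{-1}(\beta_i\cap d_k'')\big|$ within a bounded window around $2\#(\beta_i)$. Counting generously, each $\overrightarrow{Y}_i$ has at most $18$ possibilities (a small window of lengths, times a bounded choice of first term, times orientation), so there are at most $18^m$ candidates for $\phi(\eta')$, hence at most $18^m$ elements of $\Tpc_\Mpc$ mapping to $\psi(\eta)$ by injectivity of $\phi$.

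The main obstacle — and the part requiring the most care — is pinning down the precise constant and verifying that the entries of $\overrightarrow{Y}_i$ really are determined by endpoints plus length, i.e. that no other combinatorial branching occurs inside the piece between two crossing points. For the looping case this rests on the explicit description of the $\Gamma_q$-action on $\Omega_q^{(i)}$ in Remark \ref{gammaonomega} (consecutive triangles around $q$ alternate in a predictable pattern), and for the pants changing case on the structure of $\Omega_{\widetilde\gamma_k}$ and the interleaving statement of Remark \ref{dkdk'dk''}; one must also check the orientations $\vora_q$ (resp.\ $\uora_p$) are forced once the underlying unoriented sequence and the direction of travel of $\eta$ are fixed. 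I would handle the looping and pants changing cases separately as two short sub-arguments, each reducing ``count of $\overrightarrow{Y}_i$'' to ``count of $(\text{first term},\text{length},\text{orientation})$'', invoke the two lemmas for the length bound, and then take $18$ as a safe uniform upper bound so that the product over the $m$ segments is $18^m$. Since $\phi$ is injective, this immediately yields $|\psi^{-1}(\psi(\eta))|\le 18^m$, completing the proof.
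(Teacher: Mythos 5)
Your proof follows essentially the same strategy as the paper: read off from the crossing triples whether a pair is looping or pants changing and which $\gamma_k$ or boundary component is involved, use Lemmas \ref{pantschanging} and \ref{looping} to confine the possible $\overrightarrow{Y}_i$'s, and then appeal to injectivity of $\phi$ (Proposition \ref{phietaetaequivalence}).

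A few small slips, none of which break the final bound. First, Lemma \ref{looping} gives $|F_\beta|-2\leq 2\#(\beta)\leq |F_\beta|$, so the possible lengths in the looping case are $\{2\#(\beta_i),\,2\#(\beta_i)+1,\,2\#(\beta_i)+2\}$, i.e.\ three values rather than your two. Second, in your parameterization by $(\text{first term},\text{length},\text{orientation})$, both the first term and the orientation are in fact forced: the first edge of $Y_i$ is $u_{p_i}=u_i^0$, which is already specified in $\chi_i$, and the orientation of $\overrightarrow{Y}_i$ is determined by the orientation of $\uora_{p_i}$ (resp.\ $\uora_{\suc^{-1}(p_i)}$), as the paper argues. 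So the genuine freedom in the looping case is just the length — at most three possibilities — and in the pants changing case one needs the alternating-with-$|n'-n''|\le 1$ count, which gives exactly $18$. Since you generously cap each factor at $18$, the bound $18^m$ still follows, but the paper's count is the sharper $18^{m_1}\cdot 3^{m_2}\leq 18^m$, where $m_1$ and $m_2$ are the numbers of pants changing and looping pairs respectively. Also be careful with your stated criterion for distinguishing looping from pants changing — the paper additionally requires that $u_i^0$ and $u_{i+1}^0$ lie in the same pair of pants, since edges on opposite sides of a curve in $\Pmc$ can still ``share a common vertex'' in the paper's sense.
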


\begin{proof}
The first statement follows immediately from the definition of $\psi(\eta)$. To prove the second statement, we will pick any sequence $\chi$ in $\Gpc_\Mpc$ and reconstruct $\phi(\eta)$ for all typical closed curves $\eta$ in $N$ such that $\psi(\eta)=\chi$. Then we show that for each $\chi$, the number of possible $\phi(\eta)$ that we can construct is at most $18^m$. The fact that $\phi$ is a bijection will then imply the proposition.

Choose any $\chi=(\chi_1,\dots,\chi_m)$ in $\Gpc_\Mpc$, where each $\chi_i$ is the tuple $(\uora^-_i,\uora^0_i,\uora^+_i,a_i)$. Here, $\uora_i^+$, $\uora_i^0$, $\uora_i^-$ are lines in $\Upc_\Mpc$ equipped with an orientation, and $a_i\in\Zbbb_{\geq 0}$. Suppose $\eta\in\Tpc_\Mpc$ such that $\psi(\eta)=\chi$. We can assume without loss of generality that $\phi(\eta)=\{\phi_{m+1}=\phi_1,\dots,\phi_m\}$, with $\phi_i=(\uora_i^+,\uora_i^0,\uora_i^-,\overrightarrow{Y}_i)$, i.e. for all $i\in\{1,\dots,m\}$, $\uora_{\suc^{-1}(p_i)}=\uora^-_i$, $\uora_{p_i}=\uora^0_i$ and $\uora_{\suc(p_i)}=\uora^+_i$. 

First, note that $\chi$ contains sufficient information to determine if a pair $(p_i,p_{i+1})$ is a looping pair or a pants changing pair: $(p_i,p_{i+1})$ is a looping pair if and only if there is some pair of pants $\Ppc^{(1)}$ for $\Mpc$ such that $u_i^0$ and $u_{i+1}^0$ are lines in $\Vmc^{(1)}$, and there are lifts of the lines $u_i^0$, $u_i^+$, $u_{i+1}^-$, $u_{i+1}^0$ to $\Umc_\Mpc$ that share a common vertex. Furthermore, if $(p_i,p_{i+1})$ is a pants changing pair and $\beta_i$ is the corresponding pants changing segment, then $\chi$ determines the unique $\gamma_k$ in $\Pmc$ that intersects $\beta_i$. Indeed, if $\Ppc^{(1)},\Ppc^{(2)}\subset\Mpc$ are the pairs of pants given by $\Pmc$ containing $u^0_i$ and $u^0_{i+1}$ respectively (it might be that $\Ppc^{(1)}=\Ppc^{(2)}$), then $\gamma_k$ is the common boundary component of $\Ppc^{(1)}$ and $\Ppc^{(2)}$ that is bounded away from both $u^-_i$ in $\Ppc^{(1)}$ and $u^+_{i+1}$ in $\Ppc^{(2)}$.

Suppose now that $(p_i,p_{i+1})$ is a pants changing pair and $\gamma_k$ is the unique curve in $\Pmc$ obtained in the previous paragraph. If $\beta_i$ is the pants changing segment of $\eta$ corresponding to the pants changing pair $(p_i,p_{i+1})$, then $\#(\beta_i)=a_i$. Hence, Lemma \ref{pantschanging} implies that $Y_i$ has to be a finite sequence that alternates between $d_k'$ and $d_k''$, so that if $n'$ and $n''$ are the number of times $d_k'$ and $d_k''$ respectively occur in $Y_i$, then $|n'-n''|\leq 1$, $|a_i-n'|\leq 2$ and $|a_i-n''|\leq 2$. One can easily verify that there are at most eighteen possibilities for $Y_i$. 

Moreover, the orientation of $\uora_{\suc^{-1}(p_i)}$ and the sequence $Y_i$ determines $\overrightarrow{Y}_i$ uniquely. Explicitly, let $\Ppc^{(1)}$ be the pair of pants for $\Mpc$ that contains $u_{\suc^{-1}(p_i)}$ and let $x'$ and $x''$ be the two boundary components of $\Ppc^{(1)}$ that $\uora_{\suc^{-1}(p_j)}$ accumulates to, such that $\uora_{\suc^{-1}(p_i)}$ points from $x'$ to $x''$. Also, let $\gamma_k$ be the third boundary component of $\Ppc^{(1)}$ that is not $x'$ or $x''$. Assume without loss of generality that $d_k'$ accumulates to $x'$ and $d_k''$ accumulates to $x''$. If the first term of $Y_i$ is $d_k''$, then we orientate $d_k'$ towards $x'$ and $d_k''$ towards $x''$ to obtain $\overrightarrow{Y}_i$. If the first term of $Y_i$ is $d_k'$, then we orientate $d_k'$ away from $x'$ and $d_k''$ away from $x''$ to obtain $\overrightarrow{Y}_i$. These orientations are forced onto us by the requirement that there is a geodesic that passes from the left to right across each oriented line in $\overrightarrow{Y}_i$. In particular, if we know $\psi(\eta)$, then there are only at most eighteen different possibilities for $\overrightarrow{Y}_i$ for each $i$.

On the other hand, if $(p_i,p_{i+1})$ is a looping pair, then $\#(\beta_i)=a_i$, so Lemma \ref{looping}, tells us that $Y_i$ has to be a sequence that alternates between $u^0_i$ and $u^+_i$, with either $2a_i+2$, $2a_i+1$ or $2a_i$ terms. As in the previous paragraph, given the orientation on $\uora_{p_i}$ and the sequence $Y_i$, there is a unique way to orient the terms of $Y_i$ to recover $\overrightarrow{Y}_i$. Explicitly, let $\Ppc^{(1)}$ be the pair of pants given by $\Pmc$ containing $\beta_i$, and let $\gamma_k$ be the boundary component of $\Ppc^{(1)}$ that every line in $Y_i$ accumulates to. If $\uora_{p_i}$ is oriented towards $\gamma_k$, then we orient every term in $Y_i$ towards $\gamma_k$, and if $\uora_{p_i}$ is oriented away from $\gamma_k$, then we orient every term in $Y_i$ away from $\gamma_k$. In particular, this tells us that there are at most three different possibilities for $\overrightarrow{Y}_i$.

Putting all of these together, we get that if $\chi$ has $m_1$ pants changing pairs and $m_2$ looping pairs, then
\[|\{\phi(\eta):\eta\in\Tpc_\Mpc,\psi(\eta)=\chi\}|\leq 18^{m_1}\cdot 3^{m_2}\leq 18^{m_1+m_2}=18^m.\]
Since $\phi$ is a bijection, we are done.
\end{proof}

\subsection{Lower bounds on lengths and proof of (1) of Theorem \ref{mainthm1}.}\label{lowerboundlengths}

In this subsection, we will show that we can obtain a lower bound for the length of any $\eta\in\Tpc_\Mpc$ in terms of the Goldman coordinates and combinatorial data encoded by $\psi(\eta)$. As a consequence, we prove (1) of Theorem \ref{mainthm1}. Note that to prove (1) of Theorem \ref{mainthm1}, we  actually only need to obtain a lower bound for crossing segments and show that this lower bound goes to $\infty$ as we deform along Goldman sequences. However, we need estimates of the lengths of the pants changing segment and looping segments to prove (2). 

First, we will give the length lower bound for pants changing segments and looping segments. Given a pair of boundary invariants $(\lambda,\tau)$ for a marked convex $\Rbbb\Pbbb^2$ surface $\Mpc\in\Cmf(M_{g,n})$, let $\gamma$ be the simple closed curve in $\Mpc$ corresponding to $(\lambda,\tau)$. We know that the length of $\gamma$ in the Hilbert metric is given by (see Equation \ref{lengthofc})
\[l_\Mpc(\gamma)=\log\Bigg(\frac{\tau+\sqrt{\tau^2-\frac{4}{\lambda}}}{2\lambda}\Bigg).\]
Thus, we can define the function 
\[L:\Cmf(M_{g,n})\to\Rbbb^+\]
given by $L(\Mpc)=\frac{1}{10}\min\{l_\Mpc(\gamma):\gamma\in\Pmc\}$.

We want to bound the lengths of the pants changing segments and looping segments of $\eta$ from below by a constant multiple of $L(\Mpc)$, where the constant depends only on $\psi(\eta)$. But first, we need the following lemma.

\begin{lem}\label{matchingloops}
Let $\beta$ be a looping segment of $\eta\in\Tpc_\Mpc$. Let $\widetilde{\beta}$ be a lift of $\beta$ to $\Omega_\Mpc$, and let $\{r_1,\dots,r_l\}$ be the points in $\widetilde{\beta}$ that also lie on other lifts of $\beta$, enumerated according to the orientation on $\beta$. Then $l$ is even and the covering map $\Pi:\Omega_\Mpc\to\Mpc$ satisfies $\Pi(r_j)=\Pi(r_{l+1-j})$ for any $j\in\{1,\dots,l\}$.  
\end{lem}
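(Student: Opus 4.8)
The plan is to exploit the description of looping segments from the proof of Lemma~\ref{looping}. Let $\Ppc^{(j)}$ be the pair of pants given by $\Pmc$ that contains $\beta$, let $q$ be the common vertex of the lines of $\widetilde{\Umc}_\Mpc$ crossed by $\widetilde{\beta}\setminus\partial\widetilde{\beta}$, and let $X$ generate $\Gamma_q$, chosen so that $q$ is its repelling fixed point; write $q^+$ for its attracting fixed point, so that the axis $\widetilde{\gamma}_k$ of $X$ joins $q$ and $q^+$ and $\widetilde{\beta}\setminus\partial\widetilde{\beta}\subset\Omega_q^{(j)}$. By Remark~\ref{gammaonomega} (and the proof of Lemma~\ref{looping}), every lift of $\beta$ meeting $\widetilde{\beta}$ is $X^k\cdot\widetilde{\beta}$ for a unique $k\in\Zbbb$, and for $k\neq0$ the segments $\widetilde{\beta}$ and $X^k\widetilde{\beta}$ meet in at most one point: were their intersection a non-degenerate segment $\sigma$, then, $\Pi|_{\widetilde{\beta}}$ being injective, we would have $\sigma=X^{-k}\sigma\subseteq\widetilde{\beta}$, so $\sigma$ would be an $X$-invariant segment of the line containing $\widetilde{\beta}$ and its endpoints would be fixed points of $X$, impossible since all fixed points of $X$ lie outside the open set $\Omega_\Mpc\supseteq\widetilde{\beta}$ (the action of $\pi_1(M)$ on $\Omega_\Mpc$ being free). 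Set $S:=\{k\in\Zbbb\setminus\{0\}:X^k\widetilde{\beta}\cap\widetilde{\beta}\neq\emptyset\}$ and, for $k\in S$, $r(k):=X^k\widetilde{\beta}\cap\widetilde{\beta}$; then $\{r_1,\dots,r_l\}=\{r(k):k\in S\}$. Since $X^k\widetilde{\beta}\cap\widetilde{\beta}\neq\emptyset$ if and only if $\widetilde{\beta}\cap X^{-k}\widetilde{\beta}\neq\emptyset$, we have $S=-S$ and $0\notin S$, so $l=|S|$ is even; and since $X^k\cdot r(-k)=X^k\widetilde{\beta}\cap\widetilde{\beta}=r(k)$, we get $\Pi(r(k))=\Pi(r(-k))$ for each $k\in S$.

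It thus suffices to prove that the points $r(k)$, $k\in S$, occur along $\widetilde{\beta}$ in the order of increasing $k$; this is the heart of the matter. Granting it, write $S=\{k_1<\dots<k_l\}$, so that $r_j=r(k_j)$; then $S=-S$ forces $k_{l+1-j}=-k_j$, whence $\Pi(r_j)=\Pi(r(k_j))=\Pi(r(-k_j))=\Pi(r_{l+1-j})$, which is the assertion. (Reversing the enumeration of the $r_j$'s changes nothing, so ``increasing'' may equally be read as ``decreasing''.) To prove this monotonicity I would pass to the chord $\hat{\beta}:=\ell\cap\Omega_q^{(j)}$ of the convex region $\Omega_q^{(j)}$, where $\ell$ is the line containing $\widetilde{\beta}$: this intersection is a single sub-segment of $\ell$ by convexity, it contains $\widetilde{\beta}$, and $r(k)=\hat{\beta}\cap X^k\hat{\beta}$ for $k\in S$. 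The element $X$ acts on the Jordan curve $\partial\Omega_q^{(j)}$ fixing exactly $q$ and $q^+$, its positive powers pushing every other point monotonically toward $q^+$; hence, as $k$ varies over $\Zbbb$, each of the two endpoints of $X^k\hat{\beta}$ moves monotonically along $\partial\Omega_q^{(j)}$.

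The real work --- and the main obstacle --- is to upgrade this monotone motion of endpoints to monotonicity of the crossing points $r(k)=\hat{\beta}\cap X^k\hat{\beta}$ along $\hat{\beta}$. For $0<k<k'$ in $S$, combining the monotone motion of endpoints with the standard fact that two chords of a disc cross precisely when their endpoints link on the bounding circle pins down the cyclic order on $\partial\Omega_q^{(j)}$ of the six endpoints of $\hat{\beta}$, $X^k\hat{\beta}$, $X^{k'}\hat{\beta}$: the two endpoints of $X^k\hat{\beta}$ separate those of $\hat{\beta}$, while those of $X^{k'}\hat{\beta}$ lie beyond those of $X^k\hat{\beta}$ in the direction of $q^+$. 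Reading off this order shows that, travelling along $\hat{\beta}$, one meets $X^k\hat{\beta}$ strictly before $X^{k'}\hat{\beta}$, i.e.\ $r(k)$ precedes $r(k')$. The cases $k<0<k'$ and $k<k'<0$ are handled identically (the last by reversing the orientation of $\hat{\beta}$), and this yields the monotonicity, completing the proof. The only delicate point is the cyclic-order bookkeeping on $\partial\Omega_q^{(j)}$; with that in hand the remainder is formal.
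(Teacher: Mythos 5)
Your proof takes a genuinely different route from the paper's. The paper normalizes $A^{(i)}=\mathrm{diag}(\lambda,\mu,\nu)$, invokes Goldman's explicit formula for the orbit curves $\omega_{[x_0,y_0,z_0]}$ that foliate the triangle $T$ (and the fact that any line meets each such curve at most twice), and then runs a pigeonhole argument on the induced pairing of the $r_j$. You instead work with the $\Gamma_q$-action on the convex region $\Omega_q^{(j)}$ and its Jordan boundary, reducing the lemma to the assertion that $k\mapsto r(k)$ is monotone along $\hat\beta$. The reduction itself (evenness of $l$, $\Pi(r(k))=\Pi(r(-k))$, and that monotonicity plus $S=-S$ gives the palindrome) is correct and clean. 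One small repair: $\Pi|_{\widetilde\beta}$ is \emph{not} injective here --- precisely because $\beta$ has self-intersections, which is the whole point --- so your justification that $\widetilde\beta\cap X^k\widetilde\beta$ cannot be a segment does not go through. The fix is immediate: a non-degenerate intersection would force the line carrying $\widetilde\beta$ to be $X^k$-invariant, hence one of the three axes of $X$; but all three axes lie on $\partial T$, disjoint from $\Omega_q^{(j)}\supset\widetilde\beta\setminus\partial\widetilde\beta$.

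The real problem is the monotonicity step, the one you yourself flag as the heart of the matter, and here the argument has a genuine gap. You propose to read the relative position of $r(k)=\hat\beta\cap X^k\hat\beta$ and $r(k')=\hat\beta\cap X^{k'}\hat\beta$ off the cyclic order, on $\partial\Omega_q^{(j)}$, of the six endpoints of $\hat\beta$, $X^k\hat\beta$, $X^{k'}\hat\beta$. This cyclic order is indeed pinned down by the monotone boundary action, but it does \emph{not} determine the order of the two crossing points along $\hat\beta$. The reason is that for $0<k<k'$ both in $S$ the chords $X^k\hat\beta$ and $X^{k'}\hat\beta$ cross each other (since $k'-k\in S$), and once two chords both transversal to $\hat\beta$ also cross one another, the cyclic order of the six boundary endpoints leaves the order of the two crossings on $\hat\beta$ undetermined: it depends additionally on which side of $\hat\beta$ the point $X^k\hat\beta\cap X^{k'}\hat\beta$ lies. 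One can exhibit two chord configurations in a topological disc with identical cyclic endpoint data $s_1,X^k(s_1),X^{k'}(s_1),s_2,X^k(s_2),X^{k'}(s_2)$ in which the order of $r(k)$ and $r(k')$ along $\hat\beta$ is reversed. So ``cyclic-order bookkeeping'' cannot be the only delicate point; some genuine projective input is required.

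Your monotonicity claim is nonetheless true, and one way to establish it is precisely the kind of computation underlying the paper's proof: normalize $A^{(i)}=\mathrm{diag}(\lambda,\mu,\nu)$ with $\lambda>\mu>\nu>0$, work in the affine chart $\{x_3=1\}$ where $X^k\cdot(x_1,x_2)=\bigl((\lambda/\nu)^k x_1,(\mu/\nu)^k x_2\bigr)$, write the line carrying $\widetilde\beta$ as $ax_1+bx_2+c=0$, solve for $\ell\cap X^k\ell$, and check directly that the resulting first coordinate is a strictly monotone function of $k\in\Rbbb$. That closes the gap, and together with your first half yields the lemma; but as written the monotonicity step is not justified.
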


\begin{proof}
Let $\Ppc^{(i)}$ be the pair of pants given by $\Pmc$ that contains $\beta$, let $\widetilde{\beta}$ be a lift of $\beta$ to $\Omega_\Mpc$ and let $\Omega_{\Ppc^{(i)}}$ be the lift of $\Ppc^{(i)}$ to $\Omega_\Mpc$ that contains $\widetilde{\beta}$. Since $\beta$ is a looping segment, all the lines in $\widetilde{\Umc}_\Mpc$ that intersect $\widetilde{\beta}\setminus\partial\widetilde{\beta}$ share a common endpoint in $\partial\Omega_\Mpc$. Let this common endpoint be $q$, and it is clear that $\widetilde{\beta}\setminus\partial\widetilde{\beta}$ lies in $\Omega_q^{(i)}\subset\Omega_{\Ppc^{(i)}}$. Since $q$ is the image of either $a^{(i)}$, $b^{(i)}$ or $c^{(i)}$ under a deck transformation, we can assume without loss of generality that $q=a^{(i)}$, $b^{(i)}$ or $c^{(i)}$. We will do the proof for $q=a^{(i)}$; the other two cases are similar.

Let $A^{(i)}=h_\Mpc\circ(f_i)_*(A_0)$ ($A_0$ and $f_i$ are defined at the start of Section \ref{maincontent}). The properness of $\Omega_\Mpc$ and the fact that it is invariant under $A^{(i)}$ implies that $\Omega_{\Ppc^{(i)}}$ is contained in a triangle $T$ whose vertices are the three fixed points of $A^{(i)}$. Via a projective transformation, we can assume that the attracting, repelling and third fixed point of $A^{(i)}$ are $[1,0,0]$, $[0,0,1]$ and $[0,1,0]$ respectively, and $T=\{[x,y,z]\in\Rbbb\Pbbb^2:x,y,z>0\}$. Goldman computed in Section 1.10 of \cite{Go1} that the $A^{(i)}$ orbit of any $[x_0,y_0,z_0]\in T$ lies on the curve 
\[\omega_{[x_0,y_0,z_0]}:=\bigg\{[x,y,z]\in T:\bigg(\frac{x}{x_0}\bigg)^{\log(\frac{\nu}{\mu})}\bigg(\frac{z}{z_0}\bigg)^{\log(\frac{\mu}{\lambda})}=\bigg(\frac{y}{y_0}\bigg)^{\log(\frac{\nu}{\lambda})}\bigg\},\]
where $\lambda$, $\mu$, $\nu$ are the eigenvalues of $A^{(i)}$ for $[1,0,0]$, $[0,1,0]$ and $[0,0,1]$ respectively. One can easily verify that the curves $\omega_{[x_0,y_0,z_0]}$ for $[x_0,y_0,z_0]\in T$ foliate $T$, and that any line intersects each of these curves at most twice. 

If $\Pi(r_{j_1})=\Pi(r_{j_2})$, then there is some deck transformation $X$ such that $X\cdot r_{j_1}=r_{j_2}$. However, since $r_{j_1}$ and $r_{j_2}$ lie in $\Omega_{a^{(i)}}$, we know $X$ is of the form $X=(A^{(i)})^k$ for some integer $k$. The previous paragraph then proves that for any self-intersection $p$ of $\beta$, there are exactly two points in $\{r_1,\dots,r_l\}$ that are mapped to $p$ by $\Pi$, so $l$ is even.

Now, suppose that there is some $j\in\{1,\dots l\}$ such that $\Pi(r_j)\neq\Pi(r_{l+1-j})$. By the pigeon hole principle, one of the following must hold:
\begin{enumerate}[(1)]
\item There is some $j'<j$ and some $j''<l+1-j$ such that $\Pi(r_{j'})=\Pi(r_{j''})$.
\item There is some $j'>j$ and some $j''>l+1-j$ such that $\Pi(r_{j'})=\Pi(r_{j''})$.
\end{enumerate}
Either way, this implies that the curves $\omega_{r_j}$ and $\omega_{r_{j'}}$ must intersect, contradiction.
\end{proof}

With this, we can obtain the required lower bound stated in the next proposition. In the case when $\Mpc$ lies in the Fuchsian locus, this proposition follows from the fact that the projection of $\Hbbb^2$ onto a geodesic is $1$-Lipschitz.

\begin{prop}\label{betalowerbound}
If $\beta$ is a pants changing segment or looping segment for a typical oriented closed geodesic $\eta$ in a marked convex $\Rbbb\Pbbb^2$ surface $\Mpc\in\Cmf(M_{g,n})$, then 
\[l_\Mpc(\beta)\geq 5L(\Mpc)\cdot\#(\beta).\]
\end{prop}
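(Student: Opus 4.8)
The plan is to treat looping segments and pants changing segments separately; in both cases we may assume $\#(\beta)\geq 1$, since otherwise the inequality is vacuous. The common idea is to compare a sub-arc of a lift $\widetilde\beta$ of $\beta$ in $\Omega_\Mpc$ with a power of the holonomy of the relevant curve in $\Pmc$, using that the Hilbert translation length of that holonomy equals the length of the corresponding closed geodesic; the factor $\tfrac1{10}$ in the definition of $L$ is built in precisely to absorb the loss of a constant in this comparison.

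For a looping segment $\beta$, let $\gamma\in\Pmc$ be the boundary component of the pair of pants containing $\beta$ around which $\beta$ winds, let $q\in\partial\Omega_\Mpc$ be the fixed point that $\beta$ spirals towards, and let $X$ generate $\Gamma_q$; since $\gamma$ is simple, $X$ is the holonomy of $\gamma$ and its translation length is $l_\Mpc(\gamma)$. As established in the proof of Lemma \ref{looping}, the lift $\widetilde\beta$ meets $X^{\#(\beta)}\cdot\widetilde\beta$; fix a point $w$ in this intersection. Then both $w$ and $X^{-\#(\beta)}\cdot w$ lie on the line segment $\widetilde\beta$, so by Proposition \ref{hilbertmetricprop}(1),
\[l_\Mpc(\beta)=l_{\Omega_\Mpc}(\widetilde\beta)\geq Hd_{\Omega_\Mpc}\big(w,\,X^{-\#(\beta)}\cdot w\big).\]
The translation length of $X^{\#(\beta)}$ equals $\#(\beta)\,l_\Mpc(\gamma)$, so the right-hand side is at least $\#(\beta)\,l_\Mpc(\gamma)\geq 10\,L(\Mpc)\,\#(\beta)$, which is more than enough.

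For a pants changing segment $\beta$, let $\gamma_k\in\Pmc$ be the curve it crosses and let $X$ generate $\Gamma_{\widetilde\gamma_k}$, so $X$ is the holonomy of $\gamma_k$ with translation length $l_\Mpc(\gamma_k)$. By the definition of $\#(\beta)$, the lift $\widetilde\beta$ crosses exactly $\#(\beta)+1$ of the translates $\{X^j\cdot\widetilde{d}_k\}_{j\in\Zbbb}$; by Remark \ref{dkdk'dk''} these translates are disjoint and stacked inside $\Omega_{\widetilde\gamma_k}$, so the line segment $\widetilde\beta$ meets consecutive powers in their natural order. Using additivity of the Hilbert length along the line $\widetilde\beta$ together with the fact that each $X^j$ acts on $\Omega_\Mpc$ as a Hilbert isometry, this yields
\[l_\Mpc(\beta)\geq\#(\beta)\cdot Hd_{\Omega_\Mpc}\big(\widetilde{d}_k,\,X\cdot\widetilde{d}_k\big).\]
Since $5\,L(\Mpc)\leq\tfrac12 l_\Mpc(\gamma_k)$, it remains only to show $Hd_{\Omega_\Mpc}(\widetilde{d}_k,X\cdot\widetilde{d}_k)\geq\tfrac12 l_\Mpc(\gamma_k)$. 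The crude estimate $Hd_{\Omega_\Mpc}(\widetilde{d}_k,X\cdot\widetilde{d}_k)\geq l_\Mpc(\gamma_k)-l_\Mpc(d_k)$ obtained from the triangle inequality is useless when $d_k$ is long, so instead one localizes near the axis: a pair of points of $\widetilde{d}_k$ and $X\cdot\widetilde{d}_k$ realizing (or nearly realizing) their distance lies in the region cut off by two consecutive $X$-translates of $\widetilde{x}^{(i)}$ or of $\widetilde{x}^{(j)}$, both $\widetilde{d}_k$ and $X\cdot\widetilde{d}_k$ cross the axis $\widetilde\gamma_k$ with the two crossing points at $Hd_{\Omega_\Mpc}$-distance $l_\Mpc(\gamma_k)$, and then the cross ratio comparison inequalities of Proposition \ref{crossratioinequality}, applied to $q^{(i)}$, $q^{(j)}$, the two fixed points of $X$ on $\partial\Omega_\Mpc$ and the endpoints of $\widetilde{d}_k$ and $X\cdot\widetilde{d}_k$, force the displayed lower bound.

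I expect this last step — the lower bound $Hd_{\Omega_\Mpc}(\widetilde{d}_k,X\cdot\widetilde{d}_k)\geq\tfrac12 l_\Mpc(\gamma_k)$ in the pants changing case — to be the main obstacle, precisely because $\widetilde{d}_k$ and $X\cdot\widetilde{d}_k$ can travel far from the axis into a part of $\Omega_\Mpc$ where the shortest arc crossing $\gamma_k$ need not be short, so the estimate cannot be read off from coarse translation–length bounds and genuinely requires the convex–geometric comparison; the looping case, by contrast, is essentially immediate once Lemma \ref{looping} is available.
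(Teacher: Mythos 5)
Your argument for the looping case is correct and is in fact a cleaner route than the paper's. Where the paper proves Lemma \ref{matchingloops}, pairs up sub-arcs $\delta_j$ and $\delta_{l-j}$ of $\beta$ to produce simple closed curves homotopic to a boundary component of the pair of pants, and sums the resulting length bounds, you instead apply the translation-length lower bound directly: picking $w\in\widetilde\beta\cap X^{\#(\beta)}\cdot\widetilde\beta$ (which exists by the case analysis in the proof of Lemma \ref{looping}) puts both $w$ and $X^{-\#(\beta)}\cdot w$ on the line segment $\widetilde\beta$, and the Hilbert displacement $Hd_{\Omega_\Mpc}(v,X^{\#(\beta)}v)$ is bounded below by the translation length $\#(\beta)\cdot l_\Mpc(\gamma)$ for every $v$ (since the translation length of a positive hyperbolic element is the infimum of its displacement function and scales linearly under powers). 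This gives $l_\Mpc(\beta)\geq 10L(\Mpc)\cdot\#(\beta)$, more than needed. The argument is correct and shorter than what the paper does; the only thing it quietly relies on from the paper is the intersection $\widetilde\beta\cap X^{\#(\beta)}\widetilde\beta\neq\emptyset$, which Lemma \ref{looping}'s proof supplies.

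The pants changing case, however, has a real gap — and you have correctly diagnosed where it is. Your reduction to $Hd_{\Omega_\Mpc}(\widetilde{d}_k, X\cdot\widetilde{d}_k)\geq\tfrac12 l_\Mpc(\gamma_k)$ is not established, and without further input it is far from clear that it holds: the two segments cross the axis $\widetilde\gamma_k$ at points whose separation along $\widetilde\gamma_k$ is $l_\Mpc(\gamma_k)$, but they may cross at a shallow angle and approach each other far from the axis, so the minimal distance between the segments could in principle be much smaller than the displacement along $\widetilde\gamma_k$ (think of two hyperbolic geodesics crossing a common axis at a small angle, which become nearly asymptotic). The ``cross-ratio comparison'' you gesture at is not carried out, and nothing in your sketch uses the one feature of $d_k$ that the paper's proof leans on crucially: $d_k$ is chosen to be the \emph{length-minimizing} segment from $x^{(1)}$ to $x^{(2)}$ crossing $\gamma_k$. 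The paper sidesteps your difficult distance estimate entirely. It looks at the sub-arc $\mu_j$ of $\beta$ between consecutive intersections $q_j,q_{j+1}$ with $d_k$ and the sub-arc $\nu_j$ of $d_k$ with the same endpoints, observes that $\mu_j\cdot\nu_j^{-1}$ is freely homotopic to $\gamma_k$ in the annulus $(\Ppc^{(1)}\cup\Ppc^{(2)})\setminus(x^{(1)}\cup x^{(2)})$ (so $l_\Mpc(\mu_j)+l_\Mpc(\nu_j)\geq l_\Mpc(\gamma_k)$), and then uses the minimality of $d_k$ to get $l_\Mpc(\mu_j)\geq l_\Mpc(\nu_j)$, hence $l_\Mpc(\mu_j)\geq\tfrac12 l_\Mpc(\gamma_k)\geq 5L(\Mpc)$. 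Summing over $j$ finishes. This bounds the length of a specific arc of $\beta$ directly, rather than bounding $Hd_{\Omega_\Mpc}(\widetilde{d}_k,X\widetilde{d}_k)$; the minimizing property of $d_k$ is precisely what replaces the convex-geometric comparison you found intractable, and is the missing idea in your proposal.
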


\begin{proof}
First, consider the case when $\beta$ is a pants changing segment. Let $\Ppc^{(1)}$ and $\Ppc^{(2)}$ be the two pairs of pants given by $\Pmc$ such that $\beta\subset \Ppc^{(1)}\cup \Ppc^{(2)}$ and let $\gamma_k$ be the unique element in $\Pmc$ that intersects $\beta$. Also, let $x_1$ and $x_2$ be the lines in the ideal triangulation of $\Ppc^{(1)}$ and $\Ppc^{(2)}$ respectively that do not accumulate to $\gamma_k$. If $\beta$ intersects $d_k$ non-transversely or $\beta$ intersects $d_k$ at most once, then $\#(\beta)=0$ and the required lower bound clearly holds. 

If $\beta$ intersects $d_k$ transversely and at least twice, then $\#(\beta)\geq 1$. Enumerate $\beta^{-1}(\beta\cap d_k)=\{q_1,\dots,q_{\#(\beta)+1}\}$ according to the total order induced by the orientation on $\beta$. Let $\mu_j:=\eta|_{[q_j,q_{j+1}]}$ and let $\nu_j:[q_j,q_{j+1}]\to\Mpc$ be the subsegment of $d_k$ with $\nu_j(q_j)=\eta(q_j)$ and $\nu_j(q_{j+1})=\eta(q_{j+1})$. Since $\mu_j$ and $\nu_j$ are distinct line segments sharing the same endpoints, they cannot be homotopic relative endpoints by (3) of Proposition \ref{properties}. Hence, $\mu_j\cdot\nu_j^{-1}$ is a non-contractible simple closed curve in the topological annulus $(S^{(1)}\cup S^{(2)})\setminus(x_1\cup x_2)$, so it is homotopic to $\gamma_k$. This implies that 
\[l_\Mpc(\mu_j)+l_\Mpc(\nu_j)=l_\Mpc(\mu_j\cdot\nu_j^{-1})\geq l_\Mpc(\gamma_k)\geq 10L(\Mpc).\]
On the other hand, since $d_k$ is a distance minimizing line segment between $x_1$ and $x_2$, we know that $l_\Mpc(\mu_j)\geq l_\Mpc(\nu_j)$. Thus, $l_\Mpc(\mu_j)\geq 5L(\Mpc)$ for all $j\in\{1,\dots,\#(\beta)\}$, so
\[l_\Mpc(\beta)\geq\sum_{j=1}^{\#(\beta)}l_\Mpc(\mu_j)\geq 5L(\Mpc)\cdot\#(\beta).\]

Next, consider the case when $\beta$ is a looping segment. Enumerate the set
\[\beta^{-1}(\{\text{self-intersections of }\beta\})=\{q_1,\dots,q_l\}\]
according to the orientation on $\eta$. The proof of Lemma \ref{matchingloops} gives us that $l=2\cdot\#(\beta)$. Define $\delta_j:=\beta|_{[q_j,q_{j+1}]}$ for $j\in\{1,\dots,l-1\}$

By Lemma \ref{matchingloops}, we know that $\beta(q_j)=\beta(q_{l+1-j})$ for all $j\in\{1,\dots,l\}$, so for $j\neq\frac{l}{2}$, $\delta_j$ and $\delta_{l-j}$ are two line segments that share their endpoints, but do not intersect anywhere else. In particular, $\delta_j\cdot\delta_{l-j}^{-1}$ is a non-contractible simple closed curve. Let $\Ppc^{(i)}$ be the pair of pants given by $\Pmc$ that contains $\beta$, and note that (2) of Proposition \ref{properties} implies that $\delta_j\cdot\delta_{l-j}^{-1}$ has to be homotopic to a boundary component of $\Ppc^{(i)}$. By using this same argument on the line segment $\delta_{\frac{l}{2}}$ and the trivial line segment that is just the endpoint of $\delta_{\frac{l}{2}}$, we see that $\delta_{\frac{l}{2}}$ is also homotopic to a boundary component of $\Ppc^{(i)}$. Thus, $l_\Mpc(\delta_j)+l_\Mpc(\delta_{l-j})\geq 10L(\Mpc)$ for $j\neq \frac{l}{2}$ and $l_\Mpc(\delta_{\frac{l}{2}})\geq 10L(\Mpc)$. This implies that 
\[l_\Mpc(\beta)\geq\sum_{j=1}^{l-1}l_\Mpc(\delta_j)\geq\frac{l}{2}\cdot 10L(\Mpc)\geq 5L(\Mpc)\cdot\#(\beta).\]
\end{proof}

Now, we want to find the corresponding length lower bound for the crossing segments of $\eta$. To do so, we first need to classify the crossing segments of $\eta$ into two types. Let $p_i$ be a crossing point of $\eta$ and let $\alpha_i$ be the corresponding crossing segment. Then let $T_i^-$ be the triangle in $\Mpc$ bounded by $u_{\suc^{-1}(p_i)}$, $u_{p_i}$ and $\alpha_i$, and let $T_i^+$ be the triangle in $\Mpc$ bounded by $u_{\suc(p_i)}$, $u_{p_i}$ and $\alpha_i$. 

\begin{definition}
Consider any crossing segment $\alpha_i$ of $\eta\in\Tpc_\Mpc$, oriented according to the cyclic orientation on $\eta$. Then $\alpha_i$ is said to be of \emph{Z-type} if $T_i^-$ is to the left of $\alpha_i$ and $T_i^+$ is to the right of $\alpha_i$. It is said to be of \emph{S-type} if $T_i^-$ is to the right of $\alpha_i$ and $T_i^+$ is to the left of $\alpha_i$. (See Figure \ref{SZ}.)
\end{definition}

\begin{figure}
\includegraphics[scale=0.6]{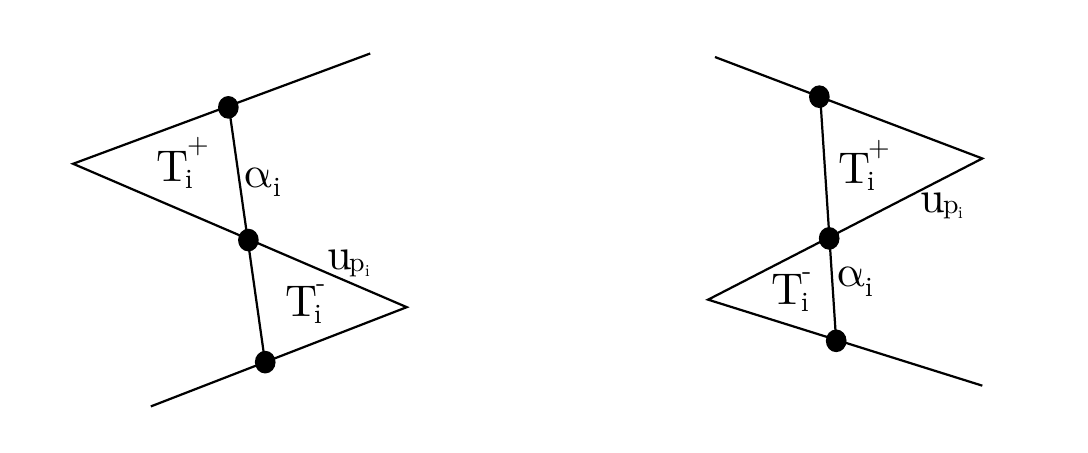}
\caption{S-type (on the left) and Z-type (on the right) crossing segments.}
\label{SZ}
\end{figure}

It is clear that every crossing segment is either of S-type or Z-type, and reversing orientation preserves the type. Observe also that if $(p_i,p_{i+1})$ is a looping pair, then the crossing segment $\alpha_i$ for $p_i$ is of S-type if and only if the crossing segment $\alpha_{i+1}$ for $p_{i+1}$ is of Z-type.

For the rest of the paper, we will simplify notation by writing $Cr_{x,y}(\Ppc^{(i)})$ as $Cr_{x,y}^{(i)}$. (See Section \ref{areparameterization}.) Define the function 
\[K:\Cmf(M_{g,n})\to\Rbbb^+\]
given by $K(\Mpc)=\frac{1}{48}\min\{\log(X^{(i)}\cdot Y^{(i)}):1\leq i\leq 2g-2+n\}$, where $X^{(i)}$ is the minimum of the following six numbers:
\begin{eqnarray}\label{list1}
Cr_{f,d}^{(i)}Cr_{c,f}^{(i)},\,\,\, Cr_{f,d}^{(i)}Cr_{b,f}^{(i)},\,\,\,Cr_{d,e}^{(i)}Cr_{a,d}^{(i)},\,\,\,Cr_{d,e}^{(i)}Cr_{c,d}^{(i)},\,\,\, Cr_{e,f}^{(i)}Cr_{b,e}^{(i)},\,\,\, Cr_{e,f}^{(i)}Cr_{a,e}^{(i)},
\end{eqnarray}
and $Y^{(i)}$ is the minimum of the following six numbers:
\begin{eqnarray}\label{list2}
Cr_{d,f}^{(i)}Cr_{b,d}^{(i)},\,\,\, Cr_{d,f}^{(i)}Cr_{a,d}^{(i)},\,\,\,Cr_{e,d}^{(i)}Cr_{c,e}^{(i)},\,\,\, Cr_{e,d}^{(i)}Cr_{b,e}^{(i)},\,\,\,Cr_{f,e}^{(i)}Cr_{a,f}^{(i)},\,\,\, Cr_{f,e}^{(i)}Cr_{c,f}^{(i)}.
\end{eqnarray}

The next proposition provide length lower bounds for the crossing segments of $\eta\in\Tpc_\Mpc$.

\begin{prop}\label{alphalowerbound}
Let $\eta\in\Tpc_\Mpc$ and let $\{p_{m+1}=p_1,p_2,\dots,p_m\}$ be the crossing points of $\eta$, ordered cyclically according to the orientation of $\eta$. Also, let $\alpha_j$ be the crossing segment corresponding to $p_j$ and let $\beta_j$ be the looping or pants changing segment corresponding to $(p_j,p_{j+1})$.
\begin{enumerate}[(1)]
\item If $(p_j,p_{j+1})$ is a looping pair, then $l_\Mpc(\alpha_j)+l_\Mpc(\alpha_{j+1})\geq 24K(\Mpc)$.
\item If $\eta$ is contained in some pair of pants $\Ppc^{(i)}$ given by $\Pmc$, then $l_\Mpc(\eta)\geq 12K(\Mpc)$.
\item Suppose that $(p_{j-1},p_j)$ and $(p_j,p_{j+1})$ are both pants changing pairs. Let $q_{j-1}$ and $q_j$ be the unique points in $D_\eta$ (see Section \ref{setup}) such that $q_{j-1}\in\eta^{-1}(\beta_{j-1})$ and $q_j\in\eta^{-1}(\beta_j)$. Define $\hat{\alpha}_j$ to be the subsegment $\eta|_{I}$, where $I\subset S^1$ is the closed subsegment containing $\eta^{-1}(\alpha_j)$, with endpoints $q_{j-1}$ and $q_j$. If $K(\Mpc)>l_\Mpc(\gamma)$ for all $\gamma\in\Pmc$, then $l_\Mpc(\hat{\alpha}_j)\geq 5K(\Mpc).$
\end{enumerate}
\end{prop}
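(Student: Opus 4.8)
All three parts rest on the following estimate, which I will call the \emph{Key Lemma}: if $\alpha$ is a crossing segment of $\eta$ lying in the pair of pants $\Ppc^{(i)}$, then $l_\Mpc(\alpha)\geq\tfrac12\log P$, where $P$ is one of the six cross-ratio products listed in (\ref{list1}) when $\alpha$ is of Z-type, and one of the six listed in (\ref{list2}) when $\alpha$ is of S-type. Granting this, (1) is immediate: if $(p_j,p_{j+1})$ is a looping pair then $\alpha_j$ and $\alpha_{j+1}$ both lie in one pair of pants $\Ppc^{(i)}$ and, as observed just after the definition of Z- and S-type crossing segments, one of the two is of Z-type and the other of S-type; hence
\[
l_\Mpc(\alpha_j)+l_\Mpc(\alpha_{j+1})\ \geq\ \tfrac12\log X^{(i)}+\tfrac12\log Y^{(i)}\ =\ \tfrac12\log\bigl(X^{(i)}Y^{(i)}\bigr)\ \geq\ \tfrac12\cdot 48\,K(\Mpc)\ =\ 24K(\Mpc).
\]
For (2), when $\eta\subset\Ppc^{(i)}$ one has $D_\eta=\emptyset$, so $\suc$ is a cyclic permutation of the finite set $I'_\eta$ and every pair of consecutive crossing points of $\eta$ is a looping pair (at least one crossing point exists since $\eta$ is typical); applying (1) to one such pair $(p_j,p_{j+1})$ and using that $\alpha_j$ and $\alpha_{j+1}$ are sub-arcs of $\eta$ gives $l_\Mpc(\eta)\geq\max\{l_\Mpc(\alpha_j),l_\Mpc(\alpha_{j+1})\}\geq\tfrac12\bigl(l_\Mpc(\alpha_j)+l_\Mpc(\alpha_{j+1})\bigr)\geq 12K(\Mpc)$.

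\textbf{Proof of the Key Lemma.} Lift $\alpha$ to a segment $\widetilde\alpha$ of the lifted geodesic $\widetilde\eta\subset\Omega_\Mpc$; since $\eta$ is a closed line, $\widetilde\alpha$ is a line segment, so by Proposition \ref{hilbertmetricprop} we have $l_\Mpc(\alpha)=Hd_{\Omega_\Mpc}(\widetilde\alpha(0),\widetilde\alpha(1))$. By definition of a crossing segment, $\widetilde\alpha$ runs from a point on an edge $\widetilde E^-$ of $\widetilde{\Vmc}^{(i)}$, across a second edge $\widetilde E^0$, to a point on a third edge $\widetilde E^+$, where $\widetilde E^-,\widetilde E^0,\widetilde E^+$ have no common vertex. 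Acting by $\pi_1(P)$ one may bring $\widetilde E^0$ onto an edge of the central triangle $\Delta_0$ of the hexagon $H_{\Ppc^{(i)}}$, so that the two triangles flanking $\widetilde\alpha$ are $\Delta_0$ and one of $\Delta_1,\Delta_2,\Delta_3$; using in addition the order-$3$ relabeling symmetry of $H_{\Ppc^{(i)}}$ — which by Proposition \ref{symmetrysr} fixes $s$ and $r$, hence fixes $H_{\Ppc^{(i)}}$ itself, and permutes the six products in each of (\ref{list1}) and (\ref{list2}) among themselves — together with reversal of the orientation of $\eta$ (which preserves the type of $\alpha$), one reduces to finitely many configurations. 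Enumerating them recovers the twelve crossing triples of $\Ppc^{(i)}$ from Lemma \ref{crossingtriple}, six of Z-type and six of S-type, and in each case $\{\widetilde E^-,\widetilde E^+\}$ is an explicit pair of chords of $H_{\Ppc^{(i)}}$ with endpoints among $a,b,c,d,e,f$. It then remains to show, in each of the twelve cases, that for arbitrary $x\in\widetilde E^-$ and $y\in\widetilde E^+$ one has $(w,x,y,z)\geq P$, where $w,z\in\partial\Omega_\Mpc$ are the endpoints of the line through $x$ and $y$ and $P$ is the claimed product; this follows from the monotonicity inequalities and the chain rule of Proposition \ref{crossratioinequality}, which also identify the resulting product of two of the thirty hexagon cross ratios of Section \ref{areparameterization} with the relevant entry of (\ref{list1}) or (\ref{list2}). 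Since $l_\Mpc(\alpha)=Hd_{\Omega_\Mpc}(\widetilde\alpha(0),\widetilde\alpha(1))=\tfrac12\log(w,\widetilde\alpha(0),\widetilde\alpha(1),z)$ for the appropriate $w,z$, the lemma follows.

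\textbf{Part (3).} The arc $\hat\alpha_j$ contains the crossing segment $\alpha_j$ and prolongs it, in each direction, to the first point where $\eta$ meets $\Pmc$; since $\alpha_j$ meets no curve of $\Pmc$ and $q_{j-1},q_j$ are the only points of $D_\eta$ on $\beta_{j-1}$ and $\beta_j$, the arc $\hat\alpha_j$ lies in the single pair of pants $\Ppc^{(c)}$ containing $p_j$ and meets $\Pmc$ only at its endpoints $q_{j-1}\in\gamma_k$ and $q_j\in\gamma_{k'}$, where $\gamma_k$ and $\gamma_{k'}$ are two (possibly equal) cuffs of $\Ppc^{(c)}$. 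Because $\hat\alpha_j$ runs all the way across $\Ppc^{(c)}$ between cuffs, its lift traverses a stretch of $\widetilde{\Vmc}^{(c)}$ containing both a Z-type and an S-type crossing configuration, so the argument of the Key Lemma gives a lower bound for $l_\Mpc(\hat\alpha_j)$ by the sum of the two corresponding Hilbert distances between edges of $\widetilde{\Vmc}^{(c)}$ (and, when $\gamma_k=\gamma_{k'}$, between an edge of $\widetilde{\Vmc}^{(c)}$ and a lift of $\gamma_k$), from which one must subtract the length of the portions of $\hat\alpha_j$ that merely wind inside the collars of $\gamma_k$ and $\gamma_{k'}$. This correction is bounded by a fixed multiple of $\max_{\gamma\in\Pmc}l_\Mpc(\gamma)$ (by the same circle of ideas as in Proposition \ref{betalowerbound}), so under the hypothesis $K(\Mpc)>l_\Mpc(\gamma)$ for all $\gamma\in\Pmc$ it is absorbed and one is left with $l_\Mpc(\hat\alpha_j)\geq 5K(\Mpc)$.

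\textbf{Main obstacle.} The technical core is the case-by-case cross-ratio computation inside the Key Lemma — proving $(w,x,y,z)\geq P$ in each of the twelve crossing configurations purely from Proposition \ref{crossratioinequality} — together with, for part (3), the bookkeeping that cleanly isolates the ``crossing'' contribution of $l_\Mpc(\hat\alpha_j)$ from its ``collar-winding'' contribution so that the hypothesis on $K(\Mpc)$ can be invoked.
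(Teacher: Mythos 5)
Your proofs of parts (1) and (2) follow the paper's approach, only packaged as a ``Key Lemma'': normalize the middle edge of a crossing triple onto an edge of $\Delta_0$, and in each of the finitely many resulting configurations use Proposition~\ref{crossratioinequality} to bound the Hilbert length of a single crossing segment below by half the logarithm of one of the cross-ratio products in (\ref{list1}) or (\ref{list2}); then (1) follows by pairing an S-type with a Z-type segment and (2) follows from (1) via the factor-of-two step you give. Your labelling (Z-type $\leftrightarrow$ (\ref{list1}), S-type $\leftrightarrow$ (\ref{list2})) is swapped relative to the paper's, but since only the product $X^{(i)}Y^{(i)}$ enters, this is harmless.

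Part (3), however, has a genuine gap. You assert that $\hat{\alpha}_j$ ``traverses a stretch of $\widetilde{\Vmc}^{(c)}$ containing both a Z-type and an S-type crossing configuration,'' but this is false: since $(p_{j-1},p_j)$ and $(p_j,p_{j+1})$ are both pants changing pairs, $p_j$ is the \emph{only} crossing point on $\hat{\alpha}_j$, and all triangulation edges that $\hat{\alpha}_j$ crosses on either side of $p_j$ accumulate to the cuff it is heading toward, so they share a common vertex and produce no further crossing configuration. A single crossing yields only $l_\Mpc(\alpha_j)\geq\tfrac12\log X^{(i)}$ or $l_\Mpc(\alpha_j)\geq\tfrac12\log Y^{(i)}$, and the individual factors $X^{(i)}$, $Y^{(i)}$ are \emph{not} controlled by $K(\Mpc)$ — only their product is. Concretely, from the formulas in Appendix~\ref{crossratiocomputationsandformulas}, along a Goldman sequence with $r\to\infty$ and $s$ bounded, $Cr_{d,f}Cr_{b,d}\to\frac{\rho_3}{\rho_3-1}$ stays bounded, so $Y^{(i)}$ stays bounded while $K(\Mpc)\to\infty$; hence a Z-type single-crossing bound can be uniformly small. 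Your ``subtract a collar-winding correction'' step is therefore not a minor adjustment but is the entire content, and it is left unquantified. The paper proves (3) by a different, shorter route: form the closed curve $\nu=\gamma_{j-1}\cdot\hat{\alpha}_j\cdot\gamma_j^{\pm 1}\cdot\hat{\alpha}_j^{-1}$ (where $\gamma_{j-1},\gamma_j\in\Pmc$ are the cuffs through $q_{j-1},q_j$, based at those points), observe that for one sign choice $\nu$ is typical and its geodesic representative lies in a single pair of pants, apply part (2) to get $l_\Mpc(\gamma_{j-1})+l_\Mpc(\gamma_j)+2l_\Mpc(\hat{\alpha}_j)\geq 12K(\Mpc)$, and then use $K(\Mpc)>l_\Mpc(\gamma)$ to conclude $l_\Mpc(\hat{\alpha}_j)\geq 6K(\Mpc)-\tfrac12\bigl(l_\Mpc(\gamma_{j-1})+l_\Mpc(\gamma_j)\bigr)\geq 5K(\Mpc)$. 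This doubling trick, not a direct cross-ratio estimate for $\hat{\alpha}_j$, is the step your argument is missing.
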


\begin{proof}
In this proof, we will use the following notation. Let $\Omega$ be a properly convex subset of $\Rbbb\Pbbb^2$ and let $a,b$ be any two distinct points in the closure $\overline{\Omega}$ of $\Omega$ in $\Rbbb\Pbbb^2$. Let $[a,b]_\Omega$ denote the closed line segment in $\overline{\Omega}$ with endpoints $a$, $b$. If $a,b$ are two distinct points in $\partial\Omega$, choose an oriented affine chart $U$ containing the closure of $\Omega$, and define $(a,b)_{\partial\Omega}$ to be the oriented open subsegment of $\partial\Omega$ that goes from $a$ to $b$ in the clockwise direction in $U$ along $\partial\Omega$.

Proof of (1). Assume without loss of generality that $\alpha_j$ is of S-type and $\alpha_{j+1}$ is of Z-type. Let $\Ppc^{(i)}$ be the pair of pants given by $\Pmc$ containing $\alpha_j$ and $\alpha_{j+1}$, and consider the lamination $\widetilde{\Vmc}^{(i)}$ of $\Omega_{\Ppc^{(i)}}$ (drawn partially in Figure \ref{goldman}). There is a deck transformation that sends $u_{p_j}$ to either $[c,a]_{\Omega_\Mpc}$, $[a,b]_{\Omega_\Mpc}$ or $[b,c]_{\Omega_\Mpc}$, so one of the following must hold because $\alpha_j$ is of S-type:
\begin{enumerate}[(i)]
\item There is a lift $\widetilde{\alpha}_j$ of $\alpha_j$ with endpoints in $[a,e]_{\Omega_\Mpc}$ and $[b,c]_{\Omega_\Mpc}$. 
\item There is a lift $\widetilde{\alpha}_j$ of $\alpha_j$ with endpoints in $[b,f]_{\Omega_\Mpc}$ and $[c,a]_{\Omega_\Mpc}$. 
\item There is a lift $\widetilde{\alpha}_j$ of $\alpha_j$ with endpoints in $[c,d]_{\Omega_\Mpc}$ and $[a,b]_{\Omega_\Mpc}$. 
\end{enumerate}

\begin{figure}
\includegraphics[scale=0.5]{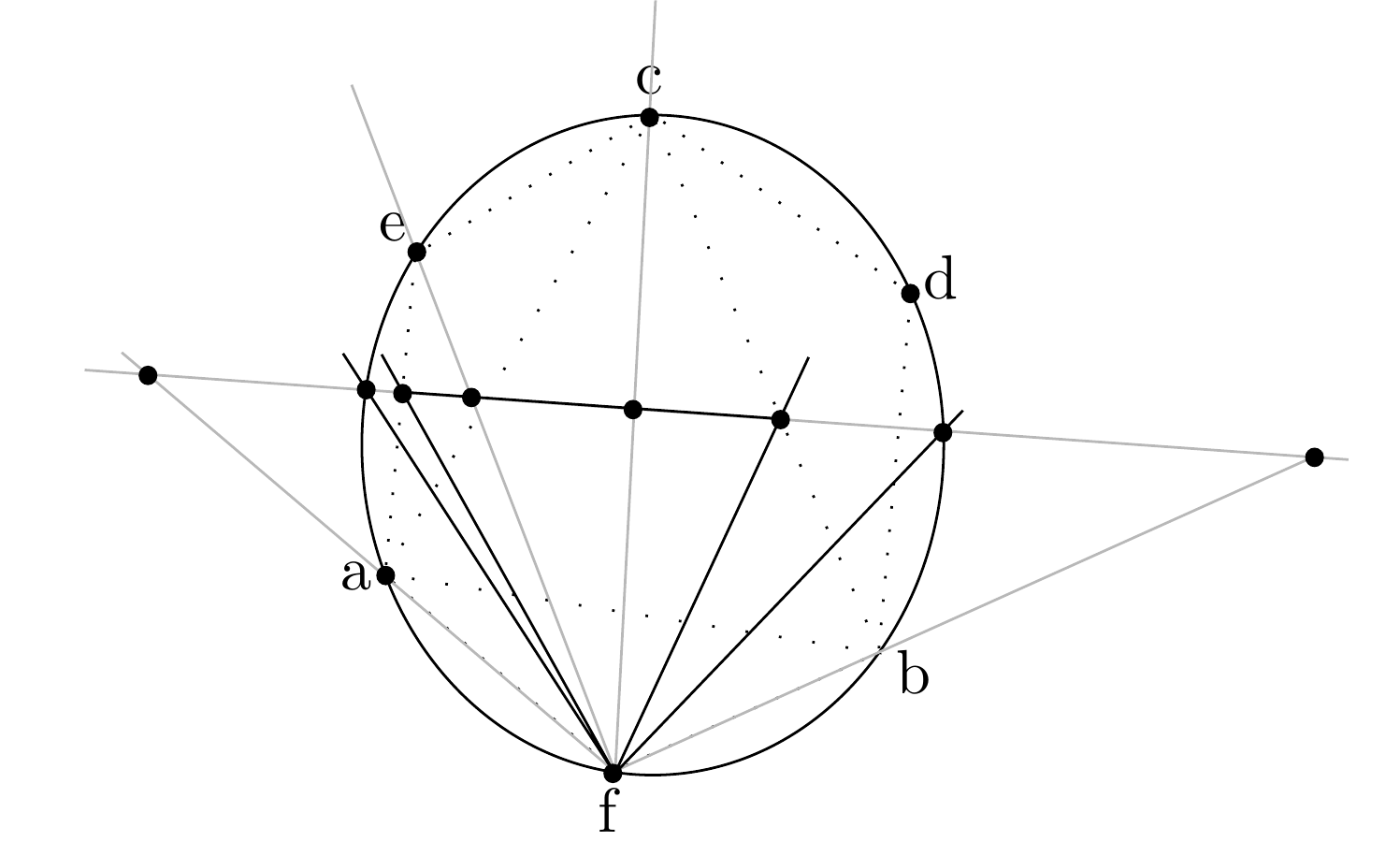}
\caption{$l_{\Omega_\Mpc}(\widetilde{\alpha}_j)\geq\log(Cr_{f,d}^{(i)})$.}
\label{estimate1}
\end{figure}

Suppose first that (i) holds. Then by applying Proposition \ref{crossratioinequality}, we see that $l_{\Mpc}(\alpha_j)=l_{\Omega_\Mpc}(\widetilde{\alpha}_j)\geq\log(Cr_{f,d}^{(i)})$. (See Figure \ref{estimate1}.) Also, it is clear that the line segment in $\Omega_\Mpc$ containing the segment $\widetilde{\alpha}_j$ and with endpoints in $\partial\Omega_\Mpc$ has one endpoint $x$ in $(a,e)_{\partial\Omega_\Mpc}$ and one endpoint $y$ in $(c,d)_{\partial\Omega_\Mpc}\cup(d,b)_{\partial\Omega_\Mpc}$. Using Proposition \ref{crossratioinequality} again, we see that if $y$ lies in $(d,b)_{\partial\Omega_\Mpc}$, then $l_\Mpc(\alpha_j)\geq\log(Cr_{c,f}^{(i)})$, and if $y$ lies in $(c,d)_{\partial\Omega_\Mpc}$, then $l_\Mpc(\alpha_j)\geq\log(Cr_{b,f}^{(i)})$. (See Figure \ref{estimate2} and Figure \ref{estimate3}.) Thus, 
\begin{eqnarray*}
l_\Mpc(\alpha_j)&\geq&\min\{\max\{\log(Cr_{f,d}^{(i)}),\log(Cr_{c,f}^{(i)})\},\max\{\log(Cr_{f,d}^{(i)}),\log(Cr_{b,f}^{(i)})\}\}\\
&\geq&\min\bigg\{\frac{\log(Cr_{f,d}^{(i)})+\log(Cr_{c,f}^{(i)})}{2},\frac{\log(Cr_{f,d}^{(i)})+\log(Cr_{b,f}^{(i)})}{2}\bigg\}\\
&\geq&\frac{1}{2}\log(\min\{Cr_{f,d}^{(i)}Cr_{c,f}^{(i)}\,,\,Cr_{f,d}^{(i)}Cr_{b,f}^{(i)}\})
\end{eqnarray*}
The same argument shows that if (ii) holds, then 
\begin{equation*}
l_\Mpc(\alpha_j)\geq\frac{1}{2}\log(\min\{Cr_{d,e}^{(i)}Cr_{a,d}^{(i)}\,,\,Cr_{d,e}^{(i)}Cr_{c,d}^{(i)}\}).
\end{equation*}
and if (iii) holds, then 
\begin{equation*}
l_\Mpc(\alpha_j)\geq\frac{1}{2}\log(\min\{Cr_{e,f}^{(i)}Cr_{b,e}^{(i)}\,,\,Cr_{e,f}^{(i)}Cr_{a,e}^{(i)}\}),
\end{equation*}
Hence, $l_\Mpc(\alpha_j)\geq\frac{1}{2}\log(X^{(i)})$, where $X^{(i)}$ is the minimum of the six numbers listed in (\ref{list1}).

\begin{figure}
\includegraphics[scale=0.5]{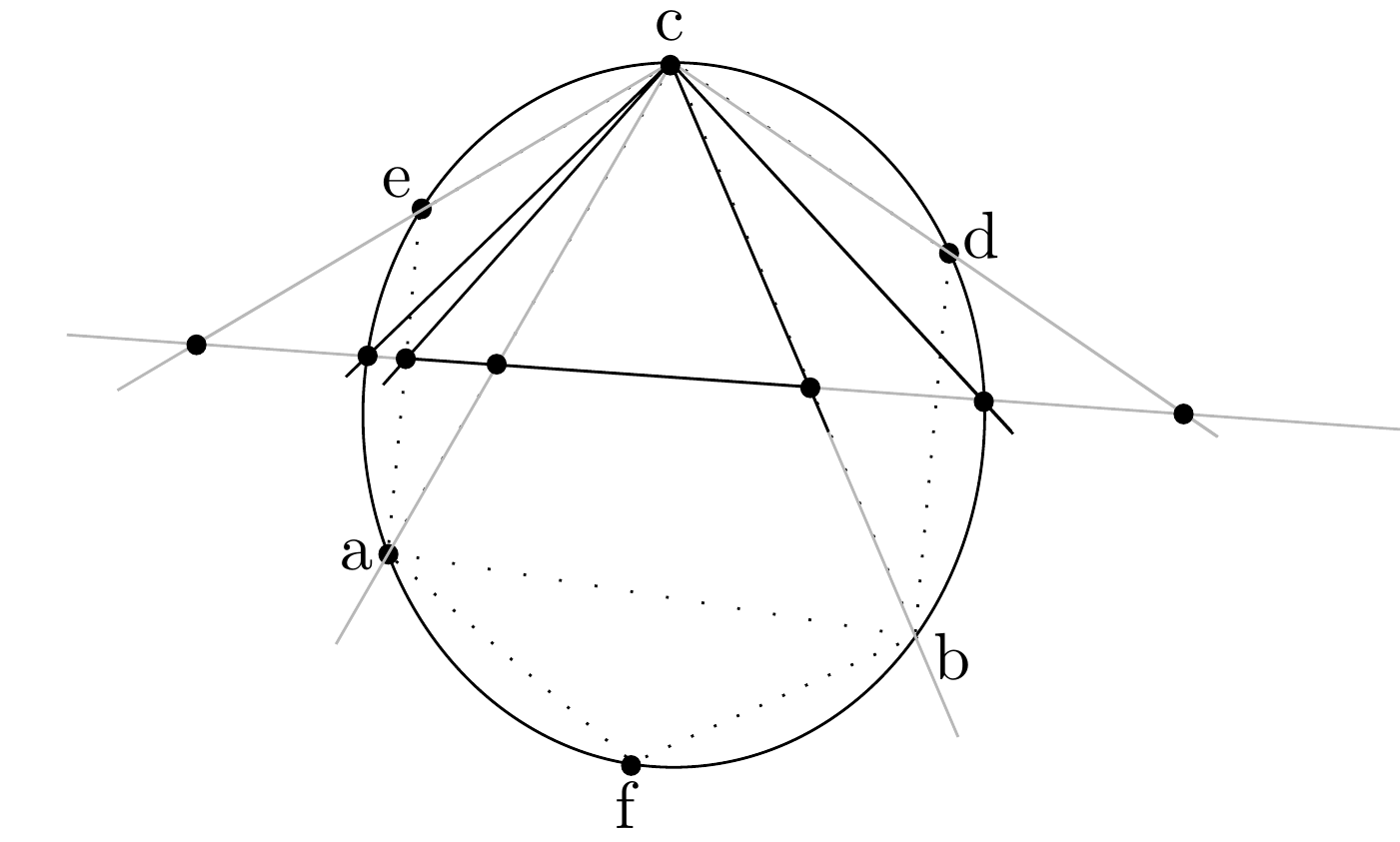}
\caption{$l_{\Omega_\Mpc}(\widetilde{\alpha}_j)\geq\log(Cr_{c,f}^{(i)})$.}
\label{estimate2}
\end{figure}

\begin{figure}
\includegraphics[scale=0.5]{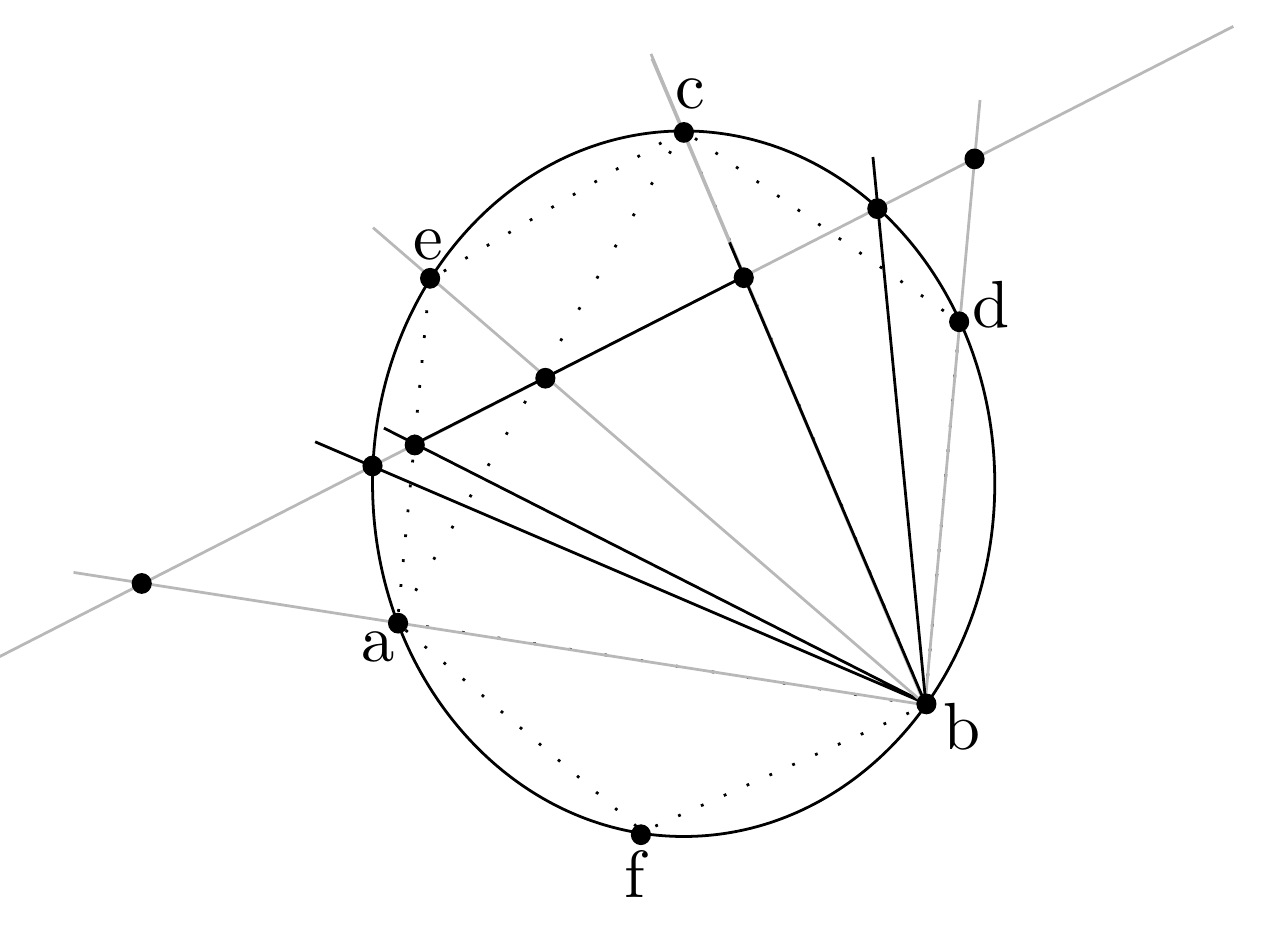}
\caption{$l_{\Omega_\Mpc}(\widetilde{\alpha}_j)\geq\log(Cr_{b,f}^{(i)})$.}
\label{estimate3}
\end{figure}

Similarly, since there is a deck transformation that sends $u_{p_{j+1}}$ to either $[c,a]_{\Omega_\Mpc}$, $[a,b]_{\Omega_\Mpc}$ or $[b,c]_{\Omega_\Mpc}$ and $\alpha_{j+1}$ is of Z-type, one of the following must hold:
\begin{enumerate}[(I)]
\item There is a lift $\widetilde{\alpha}_{j+1}$ of $\alpha_{j+1}$ with endpoints in $[c,e]_{\Omega_\Mpc}$ and $[b,a]_{\Omega_\Mpc}$. 
\item There is a lift $\widetilde{\alpha}_{j+1}$ of $\alpha_{j+1}$ with endpoints in $[a,f]_{\Omega_\Mpc}$ and $[c,b]_{\Omega_\Mpc}$. 
\item There is a lift $\widetilde{\alpha}_{j+1}$ of $\alpha_{j+1}$ with endpoints in $[b,d]_{\Omega_\Mpc}$ and $[a,c]_{\Omega_\Mpc}$. 
\end{enumerate}
Using the same arguments as in the previous paragraph, we can show that if (I), (II) or (III) holds, then
\begin{equation*}
l_\Mpc(\alpha_{j+1})\geq\frac{1}{2}\log(\min\{Cr_{d,f}^{(i)}Cr_{b,d}^{(i)},Cr_{d,f}^{(i)}Cr_{a,d}^{(i)}\}),
\end{equation*} 
\begin{equation*}
l_\Mpc(\alpha_{j+1})\geq\frac{1}{2}\log(\min\{Cr_{e,d}^{(i)}Cr_{c,e}^{(i)},Cr_{e,d}^{(i)}Cr_{b,e}^{(i)}\})
\end{equation*} 
or
\begin{equation*}
l_\Mpc(\alpha_{j+1})\geq\frac{1}{2}\log(\min\{Cr_{f,e}^{(i)}Cr_{a,f}^{(i)},Cr_{f,e}^{(i)}Cr_{c,f}^{(i)}\})
\end{equation*} 
respectively. Thus,
$l_\Mpc(\alpha_{j+1})\geq\frac{1}{2}\log(Y^{(i)})$, where $Y^{(i)}$ is the minimum of the six numbers listed in (\ref{list2}).

Putting our lower bounds for the lengths of $\alpha_j$ and $\alpha_{j+1}$ together, we get
\[l_\Mpc(\alpha_j)+l_\Mpc(\alpha_{j+1})\geq\frac{1}{2}\log(X^{(i)}\cdot Y^{(i)})\geq24K(\Mpc).\]

Proof of (2). Since $\eta$ is contained in $\Ppc^{(i)}$, we know that $\eta$ has at least two crossing points. Thus, 
\[l_\Mpc(\eta)\geq\frac{1}{2}(l_\Mpc(\alpha_1)+l_\Mpc(\alpha_2))\geq 12K(\Mpc).\]

Proof of (3). Let $\gamma_{j-1}$ and $\gamma_j$ be the two curves in $\Pmc$ that contain $q_{j-1}$ and $q_j$ respectively, parameterized by $\gamma_k:[0,1]\to N$, $\gamma_k(0)=\gamma_k(1)=q_k$ for $k=j-1,j$. Note that either $\gamma_{j-1}\cdot\hat{\alpha}_j\cdot\gamma_j\cdot(\hat{\alpha}_j)^{-1}$ or $\gamma_{j-1}\cdot\hat{\alpha}_j\cdot\gamma_j^{-1}\cdot(\hat{\alpha}_j)^{-1}$ is typical and their geodesic representatives both lie in some pair of pants given by $\Pmc$. Either way, we can apply (2) to get 
\[l_\Mpc(\gamma_{j-1})+l_\Mpc(\gamma_j)+2l_\Mpc(\hat{\alpha}_j)\geq 12K(\Mpc),\]
which implies that $l_\Mpc(\hat{\alpha}_j)\geq 6K(\Mpc)-\frac{1}{2}(l_\Mpc(\gamma_{j-1})+l_\Mpc(\gamma_j))\geq 5K(\Mpc)$.
\end{proof}

We can finally give the lower bound on the length of any $\eta\in\Tpc_\Mpc$, as promised at the start of this subsection.

\begin{thm}\label{mainestimate}
Let $\eta\in\Tpc_\Mpc$. If $\Mpc$ is such that $K(\Mpc)>l_\Mpc(\gamma)$ for any $\gamma\in\Pmc$, then
\[l_\Mpc(\eta)\geq B(\psi(\eta)):=m\cdot K(\Mpc)+\sum_{\beta\in\Ymf_\eta\cup\Zmf_\eta}\#(\beta)\cdot L(\Mpc)\]
where $m$ is the number of crossing points for $\eta$. 
\end{thm}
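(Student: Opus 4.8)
The strategy is to take the cyclic decomposition of $\eta$ into crossing segments $\alpha_1,\dots,\alpha_m$ and looping/pants changing segments $\beta_1,\dots,\beta_m$ produced in Section~\ref{setup}, and to add up the lower bounds from Proposition~\ref{betalowerbound} and Proposition~\ref{alphalowerbound}. Since $l_\Mpc(\eta)=\sum_{i=1}^m l_\Mpc(\alpha_i)+\sum_{i=1}^m l_\Mpc(\beta_i)$ (the crossing segments and the looping/pants changing segments tile $\eta$, overlapping only at the endpoints $\suc^{\pm1}(p_i)$ which have measure zero), it suffices to show the two sums are bounded below by $m\cdot K(\Mpc)$ and $\sum_{\beta}\#(\beta)\cdot L(\Mpc)$ respectively. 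The second of these is immediate from Proposition~\ref{betalowerbound}: summing $l_\Mpc(\beta_i)\geq 5L(\Mpc)\cdot\#(\beta_i)\geq L(\Mpc)\cdot\#(\beta_i)$ over all $i$ gives exactly the required term (with room to spare). So the real content is the crossing-segment sum.

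For $\sum_i l_\Mpc(\alpha_i)\geq m\cdot K(\Mpc)$, I would partition the set of pairs $(p_i,p_{i+1})$ into looping pairs and pants changing pairs and handle the blocks of consecutive crossing points accordingly. If $(p_i,p_{i+1})$ is a looping pair, part~(1) of Proposition~\ref{alphalowerbound} gives $l_\Mpc(\alpha_i)+l_\Mpc(\alpha_{i+1})\geq 24K(\Mpc)$, so each $\alpha$ adjacent to a looping pair contributes on average $12K(\Mpc)\geq K(\Mpc)$. For the crossing segments sitting between two consecutive pants changing pairs, I would use part~(3): with $\hat\alpha_j$ the enlarged segment (extending $\alpha_j$ out to the points $q_{j-1},q_j$ on the pinching curves $\gamma_{j-1},\gamma_j$), the hypothesis $K(\Mpc)>l_\Mpc(\gamma)$ for all $\gamma\in\Pmc$ makes part~(3) applicable and gives $l_\Mpc(\hat\alpha_j)\geq 5K(\Mpc)$. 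Since $\hat\alpha_j$ contains $\alpha_j$ but also contains pieces of the adjacent $\beta$'s, I need to be careful not to double-count: the portions of $\hat\alpha_j$ beyond $\alpha_j$ lie inside $\beta_{j-1}$ and $\beta_j$. The cleanest route is to split each pants changing segment $\beta_j$ at the point $q_j\in D_\eta$ where it crosses $\gamma$, assigning the two halves to the neighbouring enlarged crossing segments $\hat\alpha_j$ and $\hat\alpha_{j+1}$; then the $\hat\alpha_j$ (for $j$ between two pants changing pairs) together with the $\alpha_i$ (adjacent to looping pairs) and the looping segments $\beta_i$ form a decomposition of $\eta$ with no overlaps, and every $\hat\alpha_j$ has length $\geq 5K(\Mpc)\geq K(\Mpc)$ while every pair of $\alpha$'s flanking a looping pair has total length $\geq 24K(\Mpc)$.

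The main obstacle is the bookkeeping of the mixed case: a crossing point $p_j$ may have a looping pair on one side and a pants changing pair on the other, or $\eta$ may consist entirely of looping pairs (so no part~(3) estimate is available — but then part~(1) covers everything, chaining $l_\Mpc(\alpha_i)+l_\Mpc(\alpha_{i+1})\geq 24K$ cyclically around all $m$ segments gives $\sum_i l_\Mpc(\alpha_i)\geq 12mK(\Mpc)$), or $\eta$ may be contained in a single pair of pants (the degenerate case $m$ small, handled directly by part~(2)). I would organize the argument by: first dispose of the case $\eta\subset\Ppc^{(i)}$ using part~(2); then, assuming $\eta$ leaves its pair of pants, observe that each crossing segment $\alpha_i$ can be charged either to a looping-pair estimate (part~(1), shared with a neighbour) or absorbed into an enlarged segment $\hat\alpha_i$ via part~(3), and in both situations the per-crossing-point contribution is at least $K(\Mpc)$; summing over $i=1,\dots,m$ and adding the $\beta$-contributions yields the claimed bound $B(\psi(\eta))$. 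The fact that $B$ depends only on $\psi(\eta)$ is then clear, since $m$ and the multiset $\{\#(\beta):\beta\in\Ymf_\eta\cup\Zmf_\eta\}$ are both read off from $\psi(\eta)$ by definition.
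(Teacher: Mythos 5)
Your opening reduction is the problem: the claim that $l_\Mpc(\eta)=\sum_i l_\Mpc(\alpha_i)+\sum_i l_\Mpc(\beta_i)$ with the segments ``overlapping only at the endpoints'' is false. By definition $\alpha_i=\eta|_{[\suc^{-1}(p_i),\suc(p_i)]}$ while $\beta_i=\eta|_{[\suc^{-1}(p_i),\suc(p_{i+1})]}$ and $\beta_{i-1}=\eta|_{[\suc^{-1}(p_{i-1}),\suc(p_i)]}$, so each crossing segment $\alpha_i$ is entirely contained in both $\beta_{i-1}$ and $\beta_i$; the $\beta_i$'s alone already cover $S^1$ (with multiplicity two precisely along the $\alpha_i$'s), and the $\alpha_i$'s are wholly redundant, not complementary. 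Summing lengths of $\alpha$'s and $\beta$'s therefore overcounts $l_\Mpc(\eta)$, and your proposed ``no overlap'' reassignment of halves of the $\beta_j$'s to the $\hat\alpha_j$'s does not repair this: the $\alpha_i$'s adjacent to looping pairs still sit entirely inside the looping segment $\beta_i$, and $\hat\alpha_j$ itself eats into portions of $\beta_{j-1}$ and $\beta_j$ that you have not removed from the sum.

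The actual proof accepts the overlap rather than trying to eliminate it: it collects the (interiors of the) looping segments, pants changing segments, $\hat\alpha_j$ for $j$ between two pants changing pairs, and $\alpha_j$ for $j$ adjacent to a looping pair into an open cover $J$ of $S^1$, checks that every point of $S^1$ lies in at most \emph{five} members of $J$, and so obtains
\[
5\,l_\Mpc(\eta)\;\geq\;\sum_{\beta}l_\Mpc(\beta)+\sum_{j\in\Amf}l_\Mpc(\hat\alpha_j)+\sum_{j\in\Bmf}l_\Mpc(\alpha_j).
\]
The factors $5$, $12$, $24$ appearing in Propositions \ref{betalowerbound} and \ref{alphalowerbound} are engineered precisely to absorb this multiplicity: after applying those bounds one gets $5\,l_\Mpc(\eta)\geq 5m\,K(\Mpc)+5\sum_\beta\#(\beta)\,L(\Mpc)$, and dividing by $5$ gives the theorem. (The paper also needs the observation $|\Bmf'|\geq\frac12|\Bmf|$, where $\Bmf'=\{j\in\Bmf:j+1\in\Bmf\}$, to chain the looping-pair estimate around $\Bmf$.) Your plan has the right ingredients — charging crossing points via parts (1), (2), (3) of Proposition \ref{alphalowerbound} and the $\beta$'s via Proposition \ref{betalowerbound}, partitioning $\{1,\dots,m\}$ according to the type of adjacent pairs — but without the bounded-multiplicity cover the lengths you sum do not lower-bound $l_\Mpc(\eta)$; you would need either that cover argument or a genuine tiling of $S^1$, and the latter is not straightforward to arrange here.
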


\begin{proof}
Let $\{p_{m+1}=p_1,\dots,p_m\}$ be the crossing points for $\eta$, ordered according to the orientation on $\eta$. Define
\begin{eqnarray*}
\Amf&:=&\{j:(p_{j-1},p_j)\text{ and }(p_j,p_{j+1})\text{ are both pants changing pairs}\},\\
\Bmf&:=&\{j:\text{either }(p_{j-1},p_j)\text{ or }(p_j,p_{j+1})\text{ is a looping pair}\},
\end{eqnarray*}
and note that $\Amf$ and $\Bmf$ are disjoint and $\Amf\cup\Bmf=\{1,\dots,m\}$.

Let $J$ be the set of interiors of all domains for the pants changing segments of $\eta$, the looping segments of $\eta$, the subsegments $\hat{\alpha}_j$ of $\eta$ for $j\in\Amf$, and the subsegments of $\alpha_j$ for $\eta$ if $j\in\Bmf$. One can easily verify that $J$ is an open cover for $S^1$ and that every point in $S^1$ is contained in at most five different elements of $J$. This implies that 
\begin{equation}\label{lowerbound1}
5l_\Mpc(\eta)\geq\sum_{\beta\in\Ymf_\eta\cup\Zmf_\eta}l_\Mpc(\beta)+\sum_{j\in\Amf} l_\Mpc(\hat{\alpha}_j)+\sum_{j\in\Bmf} l_\Mpc(\alpha_j).
\end{equation}

By Proposition \ref{betalowerbound}, we know
\begin{equation}\label{lowerbound2}
\sum_{\beta\in\Ymf_\eta\cup\Zmf_\eta}l_\Mpc(\beta)\geq 5\sum_{\beta\in\Ymf_\eta\cup\Zmf_\eta}\#(\beta)\cdot L(\Mpc),
\end{equation} 
and (3) of Proposition \ref{alphalowerbound} implies 
\begin{equation}\label{lowerbound3}
\sum_{j\in\Amf}l_\Mpc(\hat{\alpha}_j)\geq 5|\Amf|K(\Mpc).
\end{equation}
Also, if we define $\Bmf':=\{j\in\Bmf:j+1\text{ is also in }\Bmf\}$, observe that $\displaystyle|\Bmf'|\geq\frac{1}{2}|\Bmf|$. Hence, (1) of Proposition \ref{alphalowerbound} implies
\begin{eqnarray}\label{lowerbound4}
\sum_{j\in\Bmf}l_\Mpc(\alpha_j)&\geq&\frac{1}{2}\sum_{j\in\Bmf'}(l_\Mpc(\alpha_j)+l_\Mpc(\alpha_{j+1}))\nonumber\\
&\geq&12\sum_{j\in\Bmf'}K(\Mpc)\\
&\geq &5|\Bmf| K(\Mpc).\nonumber
\end{eqnarray}
Combining Equations (\ref{lowerbound3}) and (\ref{lowerbound4}) gives  
\begin{equation}\label{lowerbound5}
\sum_{j\in\Amf}l_\Mpc(\hat{\alpha}_j)+\sum_{j\in\Bmf}l_\Mpc(\alpha_j)\geq 5|\Amf|K(\Mpc)+5|\Bmf| K(\Mpc)= 5mK(\Mpc).
\end{equation}
The theorem then follows from the inequalities (\ref{lowerbound1}), (\ref{lowerbound2}) and (\ref{lowerbound5}).
\end{proof}

The next lemma is the main computation in this subsection. This lemma, together with Theorem \ref{mainestimate}, implies (1) of Theorem \ref{mainthm1}.

\begin{lem}\label{Kgoestoinfty}
For any Goldman sequence $\{\Mpc_j\}_{j=1}^\infty$ in $\Cmf(M)$, we have
\[\lim_{j\to\infty}K(\Mpc_j)=\infty.\]
\end{lem}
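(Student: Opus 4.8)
The plan is to unwind the definition of $K(\Mpc_j)$ and show that each of the twelve cross ratios appearing in the lists (\ref{list1}) and (\ref{list2}) becomes arbitrarily large along a Goldman sequence. Since $K(\Mpc)=\frac{1}{48}\min_i\log(X^{(i)}Y^{(i)})$ where $X^{(i)}$ and $Y^{(i)}$ are each a minimum of six products of cross ratios $Cr_{x,y}^{(i)}$, and since $M$ is decomposed into finitely many pairs of pants $P_1,\dots,P_{2g-2+n}$, it suffices to show that for each fixed $i$ and each of the relevant pairs $(x,y)$, we have $Cr_{x,y}(\Ppc_j^{(i)})\to\infty$ as $j\to\infty$, where $\Ppc_j^{(i)}=f_i^\#(\Mpc_j)$ is a Goldman sequence in $\Cmf(P)$.

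First I would reduce to a single pair of pants: fix $i$, write $\Ppc_j=\Ppc_j^{(i)}=(R_j,s_j,r_j)\in\Rmf^3\times(\Rbbb^+)^2$, and recall from Definition \ref{goldmandef} that the boundary invariants stay in a compact subset of $\Rmf^3$ (their lengths are pinned between $C_1$ and $C_2$, and the length formula (\ref{lengthofc}) together with the description of $\Rmf$ as a $2$-cell shows this forces $R_j$ into a compact set), while $(s_j,r_j)$ eventually leaves every compact subset of $(\Rbbb^+)^2$. The key structural input is Proposition \ref{crossratioinequality}(2), which guarantees every one of the thirty cross ratios $Cr_{x,y}(\Ppc)$ lies in $(1,\infty)$, and the explicit formulas: by (\ref{importantcrossratio}) we have $Cr_{a,d}=\rho_1^R(s)$, $Cr_{b,e}=\rho_2^R(s)$, $Cr_{c,f}=\rho_3^R(s)$, each a strictly increasing quadratic in $s$ with image $(1,\infty)$, uniformly over the compact range of $R$. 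Similarly from Lemma \ref{symmetryr} and its proof, the quantities $Cr_{a,e}-1$, $Cr_{c,d}-1$, $Cr_{b,f}-1$ are each $t\rho_2^R(s)/\rho_\bullet^R(s)$, i.e. essentially the new parameter $r=t\rho_2^R(s)$ divided by a bounded-below quadratic in $s$. So I would split into cases according to whether $s_j\to\infty$ (along a subsequence) or $s_j$ stays bounded: in the first case the three "diagonal" cross ratios $Cr_{a,d},Cr_{b,e},Cr_{c,f}$ blow up; in the second case, since $(s_j,r_j)$ leaves every compact set and $s_j$ is bounded (and bounded away from $0$? — if not, a separate subcase $s_j\to 0$ needs the $r$-type cross ratios), we must have $r_j\to\infty$, so the $r$-type cross ratios $Cr_{a,e},Cr_{c,d},Cr_{b,f}$ blow up.

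The remaining work is to propagate "some cross ratios blow up" to "all twelve cross ratios in the two lists blow up," which is where Proposition \ref{crossratioinequality}, parts (1) and (3)--(6), does the heavy lifting: these are monotonicity statements saying that sliding a point of a cross-ratio quadruple further along $\partial\Omega$ (away from the apex $o$) only increases the cross ratio, and part (1) is a multiplicativity/cocycle identity. Using the cyclic order of the hexagon vertices $a,f,b,d,c,e$ on $\partial\Omega_\Ppc$, each of the twelve products $Cr_{x,y}^{(i)}Cr_{z,w}^{(i)}$ in (\ref{list1})--(\ref{list2}) should be bounded below, via these inequalities, by one of the three diagonal cross ratios $\rho_k^R(s)$ or by one of the $r$-type expressions, possibly up to a multiplicative constant depending only on the compact range of $R$. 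Concretely I would, for each of the twelve products, identify a common-apex configuration of four boundary points among $\{a,f,b,d,c,e\}$ whose cross ratio is one of the six "named" ones, and chase Proposition \ref{crossratioinequality}(1) plus the monotonicity bounds to get the product $\geq$ (named cross ratio) $\cdot$ (something $>1$, or at worst bounded below). Then $X^{(i)}Y^{(i)}\to\infty$ for every $i$, hence $K(\Mpc_j)\to\infty$.

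The main obstacle I anticipate is the last step: verifying that each specific product in lists (\ref{list1}) and (\ref{list2}) is genuinely controlled below by one of the six cross ratios with a known quadratic/linear formula. This is a finite but delicate case analysis in the combinatorial geometry of the convex hexagon — one has to correctly track which vertices are the apex and which four points form the quadruple, confirm the required cyclic ordering hypothesis of Proposition \ref{crossratioinequality}, and apply parts (3)--(6) in the correct direction (some applications will need to move a point toward a vertex to get a lower bound, rather than away). A secondary subtlety is handling the case $s_j\to 0$: there $\rho_k^R(s_j)\to 1$ so the diagonal cross ratios do not help, and one must check that $(s_j,r_j)$ leaving every compact set then forces $r_j\to\infty$ fast enough that the $r$-type cross ratios $Cr_{a,e},Cr_{c,d},Cr_{b,f}\to\infty$ — which follows since $Cr_{a,e}-1 = t_j\rho_2^{R_j}(s_j)/\rho_3^{R_j}(s_j) = r_j/\rho_3^{R_j}(s_j)$ and $\rho_3^{R_j}(s_j)\to 1$, so in fact $Cr_{a,e}\sim r_j\to\infty$; I would make this quantitative uniformly in the compact $R$-range.
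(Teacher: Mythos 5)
There is a genuine gap, on two fronts. First, a missing case: when $s_j$ stays bounded away from $0$ and $\infty$, the condition that $(s_j,r_j)$ leaves every compact subset of $(\Rbbb^+)^2$ allows $r_j\to 0$ as well as $r_j\to\infty$, but you assert ``we must have $r_j\to\infty$.'' Second, and more seriously, your core strategy---bound each product in (\ref{list1})--(\ref{list2}) from below by one of the six named cross ratios $\rho_1,\rho_2,\rho_3,Cr_{a,e},Cr_{c,d},Cr_{b,f}$---cannot work. In the regimes $r_j\to 0$ with $s_j$ bounded, or $s_j\to 0$ with $r_j$ bounded, all six named cross ratios stay bounded: from Appendix~\ref{crossratiocomputationsandformulas}, $Cr_{a,e}=1+r/\rho_3$, $Cr_{c,d}=1+r/\rho_2$, $Cr_{b,f}=1+r/\rho_1$, which tend to $1$ as $r\to 0$; and when $s\to 0$ one has $\rho_i\to 1$ and $Cr_{a,e}\to 1+r$, etc. So any lower bound of the form ``named cross ratio times bounded factor'' stays bounded and cannot show divergence. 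Moreover, the premise that each of the twelve products diverges is already false in the one case you do handle: as $r_j\to\infty$ with $s_j$ bounded,
\[
Cr_{d,f}\,Cr_{b,d}=\frac{\rho_3(\rho_1+r)(r+\rho_1\rho_2)}{r^2(\rho_3-1)}\longrightarrow\frac{\rho_3}{\rho_3-1},
\]
which is finite; only the minimum $X^{(i)}Y^{(i)}$ diverges, not every product in the two lists.

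What actually drives divergence in the $s\to 0$ and $r\to 0$ regimes is that cross ratios such as $Cr_{f,d}=(r+\rho_1\rho_2)/(\rho_2(\rho_1-1))$ and $Cr_{d,f}=\rho_3(\rho_1+r)/(r(\rho_3-1))$ carry a factor $(\rho_i-1)$ or $r$ in the denominator and blow up precisely when $\rho_i\to 1$ or $r\to 0$; Proposition~\ref{crossratioinequality}, which only replaces a cross ratio by another cross ratio of the same hexagon (equally tame in these regimes), provides no mechanism for extracting the reciprocal factors $1/(\rho_i-1)$ or $1/r$. Accordingly, the paper's proof is not geometric but purely algebraic: it substitutes the explicit Appendix~\ref{crossratiocomputationsandformulas} formulas into $X^{(i)}Y^{(i)}$, reduces to a minimum of explicit rational functions of $\rho_1,\rho_2,\rho_3,r$, and verifies each tends to $\infty$ along all four subsequences $s\to\infty$, $s\to 0$, $r\to\infty$, $r\to 0$ via bounds such as $n_1\geq\max\{\rho_{i-1},1/(\rho_i-1)\}$, $n_1\geq\rho_{i-1}r/(\rho_{i+1}(\rho_i-1))$, and $n_1\geq 1/r$. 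To repair your proposal you would need to replace the monotonicity step with this kind of explicit case-by-case computation, and add the $r_j\to 0$ case to your case analysis.
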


\begin{proof}
To prove the lemma, it is sufficient to show that for any Goldman sequence $\Ppc_j\in\Cmf(P)$, we have that 
\[\lim_{j\to\infty}X_j\cdot Y_j=\infty,\]
where $X_j$ is the minimum of the following six numbers 
\begin{eqnarray*}
Cr_{f,d}(\Ppc_j)Cr_{c,f}(\Ppc_j), Cr_{f,d}(\Ppc_j)Cr_{b,f}(\Ppc_j),\\
Cr_{d,e}(\Ppc_j)Cr_{a,d}(\Ppc_j), Cr_{d,e}(\Ppc_j)Cr_{c,d}(\Ppc_j),\\
Cr_{e,f}(\Ppc_j)Cr_{b,e}(\Ppc_j), Cr_{e,f}(\Ppc_j)Cr_{a,e}(\Ppc_j),
\end{eqnarray*}
and $Y_j$ is the minimum of the following six numbers
\begin{eqnarray*}
Cr_{d,f}(\Ppc_j)Cr_{b,d}(\Ppc_j),Cr_{d,f}(\Ppc_j)Cr_{a,d}(\Ppc_j),\\
Cr_{e,d}(\Ppc_j)Cr_{c,e}(\Ppc_j),Cr_{e,d}(\Ppc_j)Cr_{b,e}(\Ppc_j),\\
Cr_{f,e}(\Ppc_j)Cr_{a,f}(\Ppc_j),Cr_{f,e}(\Ppc_j)Cr_{c,f}(\Ppc_j).
\end{eqnarray*}

We can compute (see Appendix \ref{crossratiocomputationsandformulas}) each $Cr_{x,y}(\Ppc_j)$ explicitly in terms of 
\[\rho_1:=Cr_{a,d}(\Ppc_j),\ \rho_2:=Cr_{b,e}(\Ppc_j),\ \rho_3:=Cr_{c,f}(\Ppc_j)\]
and the internal parameter $r$ for $\Ppc_j$ (see Section \ref{areparameterization}). This gives us
\begin{eqnarray*}
X_j&=&\min(\{\frac{\rho_{i-1}(r+\rho_i\rho_{i+1})}{\rho_{i+1}(\rho_i-1)}:i=1,2,3\}\cup\{\frac{(\rho_i+r)(r+\rho_i\rho_{i+1})}{\rho_i\rho_{i+1}(\rho_i-1)}:i=1,2,3\})\\
Y_j&=&\min(\{\frac{\rho_{k-1}(\rho_k+r)(r+\rho_k\rho_{k+1})}{r^2(\rho_{k-1}-1)}:k=1,2,3\}\cup\{\frac{\rho_k\rho_{k-1}(\rho_k+r)}{r(\rho_{k-1}-1)}:k=1,2,3\}),
\end{eqnarray*}
where arithmetic in the subscripts is done in $\Zbbb_3$.
Thus, 
\[X_j\cdot Y_j=\min\{\frac{\rho_{k-1}(\rho_k+r)(r+\rho_i\rho_{i+1})}{r\rho_{i+1}(\rho_{k-1}-1)(\rho_i-1)}\cdot Z_{i,k}:i,k=1,2,3\}\]
where 
\begin{eqnarray*}
Z_{i,k}&:=&\min\{\frac{\rho_{i-1}(r+\rho_k\rho_{k+1})}{r},\frac{(\rho_i+r)(r+\rho_k\rho_{k+1})}{r\rho_i},\rho_{i-1}\rho_k,\frac{\rho_k(\rho_i+r)}{\rho_i}\}\\
&\geq&\min\{\rho_{i-1},\rho_k,\frac{\rho_k\rho_{k+1}}{\rho_i}\}.
\end{eqnarray*}

Let
\begin{eqnarray}\label{bignumbers}
n_1&:=&\frac{\rho_{k-1}\rho_{i-1}(\rho_k+r)(r+\rho_i\rho_{i+1})}{r\rho_{i+1}(\rho_{k-1}-1)(\rho_i-1)},\nonumber\\
n_2&:=&\frac{\rho_{k-1}\rho_k(\rho_k+r)(r+\rho_i\rho_{i+1})}{r\rho_{i+1}(\rho_{k-1}-1)(\rho_i-1)},\\
n_3&:=&\frac{\rho_{k-1}\rho_k\rho_{k+1}(\rho_k+r)(r+\rho_i\rho_{i+1})}{r\rho_i\rho_{i+1}(\rho_{k-1}-1)(\rho_i-1)}.\nonumber
\end{eqnarray}
It is now sufficient to show that for all $i,k=1,2,3$, the limits as $j$ approaches infinity of $n_1$, $n_2$ and $n_3$ are infinity. (Note that $r$ and the $\rho_i$ depend on $j$.)

Let 
\[((\lambda_A,\tau_A),(\lambda_B,\tau_B),(\lambda_C,\tau_C),s,r)\]
be the Goldman parameters for $\Ppc_j$. Since $\{\Ppc_j\}_{j=1}^\infty$ is a Goldman sequence, every subsequence of $\{\Ppc_j\}_{j=1}^\infty$ has a further subsequence, also denoted $\{\Ppc_j\}_{j=1}^\infty$, such that either $\displaystyle\lim_{j\to\infty}s=\infty$, $\displaystyle\lim_{j\to\infty}s=0$, $\displaystyle\lim_{j\to \infty}r=\infty$ or $\displaystyle\lim_{j\to \infty}r=0$. We need to show that along each of these subsequences, $n_1$, $n_2$ and $n_3$ grow to infinity.

Suppose first that $\displaystyle\lim_{j\to\infty}s=\infty$ or $\displaystyle\lim_{j\to\infty}s=0$. The condition that the lengths of the boundaries curves of $\Ppc_j$ are bounded away from $0$ and $\infty$ imply that $\displaystyle\lim_{j\to\infty}\rho_i=\infty$ or $\displaystyle\lim_{j\to\infty}\rho_i=1$ respectively for $i=1,2,3$. Since
\[n_1\geq \frac{\rho_{k-1}\rho_{i-1}\rho_i\rho_{i+1}r}{r\rho_{i+1}(\rho_{k-1}-1)(\rho_i-1)}\geq\max\{\rho_{i-1},\frac{1}{\rho_i-1}\},\]
it is clear that if $\displaystyle\lim_{j\to\infty}s=\infty$ or $\displaystyle\lim_{j\to\infty}s=0$, then $\displaystyle\lim_{j\to\infty} n_1=\infty$.

In the case when $\displaystyle\lim_{j\to\infty}s$ is neither infinite nor zero, the condition that the lengths of the boundaries curves of $\Ppc_j$ are bounded away from $0$ and $\infty$ implies that $\rho_1$, $\rho_2$ and $\rho_3$ are bounded away from $1$ and $\infty$. Thus, if $\displaystyle\lim_{j\to \infty}r=\infty$, the inequality
\[n_1\geq \frac{\rho_{k-1}\rho_{i-1}r^2}{r\rho_{i+1}(\rho_{k-1}-1)(\rho_i-1)}\geq\frac{\rho_{i-1}r}{\rho_{i+1}(\rho_i-1)}\]
implies that $\displaystyle\lim_{j\to\infty}n_1=\infty$. On the other hand, if $\displaystyle\lim_{j\to \infty}r=0$, the inequality 
\[n_1\geq \frac{\rho_{k-1}\rho_{i-1}\rho_k\rho_i\rho_{i+1}}{r\rho_{i+1}(\rho_{k-1}-1)(\rho_i-1)}\geq\frac{1}{r}\]
implies that $\displaystyle\lim_{j\to\infty}n_1=\infty$.

We have thus shown that for any Goldman sequence $\{\Ppc_j\}_{j=1}^\infty$, $\displaystyle\lim_{j\to\infty}n_1=\infty$. Using similar arguments, we can likewise show that $\displaystyle\lim_{j\to\infty}n_2=\infty$ and $\displaystyle\lim_{j\to\infty}n_3=\infty$.
\end{proof}

\subsection{Bounding the entropy and proof of (2) of Theorem \ref{mainthm1}}\label{boundingentropy}
The work in Section \ref{lowerboundlengths} gives us a lower bound on the length of all typical closed geodesics $\eta\in\Tpc_\Mpc$, which depends only on the data $\psi(\eta)$ and grows to infinity along Goldman sequences. Moreover, by Section \ref{combinatorialdescriptionsclosedgeodesics}, we also have an upper bound on the number of typical closed geodesics with the same $\psi(\eta)$.  We will now show that these bounds are strong enough to show that the topological entropy converges to zero along Goldman sequences, thereby proving (2) of Theorem \ref{mainthm1}. 

Suppose that $\Mpc\in\Cmf(M)$ is such that $K(\Mpc)>l_\Mpc(\gamma)$ for all $\gamma\in\Pmc$. By Theorem \ref{mainestimate}, we know that if $l_\Mpc(\eta)\leq T$, then the number of crossing points of $\eta$ is at most $\displaystyle\bigg\lfloor\frac{T}{K(\Mpc)}\bigg\rfloor$. Applying Proposition \ref{phitopsi}, we have that for $T\gg K(\Mpc)$, 
\begin{equation}\label{inequal1}
|\{\eta\in\Tpc_\Mpc:l_\Mpc(\eta)\leq T\}|\leq 18^{\lfloor\frac{T}{K(\Mpc)}\rfloor}\cdot|\{\psi(\eta)\in\Gpc_\Mpc:B(\psi(\eta))\leq T\}|.
\end{equation} 
(Recall that $\displaystyle B(\psi(\eta)):=m\cdot K(\Mpc)+\sum_{\beta\in\Ymf_\eta\cup\Zmf_\eta}\#(\beta)\cdot L(\Mpc)$.)

If $\eta\in\Tpc_\Mpc$ has $m$ crossing points, then Lemma \ref{crossingtriple} implies that there are at most $(24g-24+12n)^m$ possibilities for what the cyclic sequence of crossing triples of $\eta$ can be. Also, if $B(\psi(\eta))\leq T$, and $\eta$ has $m$ crossing points, then Theorem \ref{mainestimate} implies that 
\[\sum_{\beta\in\Ymf_\eta\cup\Zmf_\eta}\#(\beta)\leq \frac{T-m\cdot K(\Mpc)}{L(\Mpc)}.\] 
Thus, we have the inequality
\begin{align}\label{inequal2}
|\{\psi(\eta)\in\Gpc_\Mpc:B(\psi(\eta))\leq T\}|&\leq\sum_{m=1}^{\lfloor\frac{T}{K(\Mpc)}\rfloor}(24g-24+12n)^m\cdot {\lfloor\frac{T-mK(\Mpc)}{L(\Mpc)}\rfloor+m\choose m}\nonumber\\
&\leq(24g-24+12n)^{\lfloor \frac{T}{K(\Mpc)}\rfloor}\sum_{m=1}^{\lfloor \frac{T}{K(\Mpc)}\rfloor}{\lfloor\frac{T-mK(\Mpc)}{L(\Mpc)}\rfloor+m\choose m}.
\end{align}

Now, let $\Cpc_\Mpc$ be the set of oriented closed geodesics in $\Mpc$. If we assume that $T\gg K(\Mpc)$, then  (\ref{inequal1}) and (\ref{inequal2}) imply that
\begin{align*}
|\{\eta\in\Cpc_\Mpc:l_\Mpc(\eta)\leq T\}|&\leq|\{\eta\in\Tpc_\Mpc:l_\Mpc(\eta)\leq T\}|+2\cdot\Big\lfloor\frac{T}{L(\Mpc)}\Big\rfloor\cdot(3g-3+2n)\\
&\leq (432g-432+216n)^{\lfloor \frac{T}{K(\Mpc)}\rfloor}\sum_{m=1}^{\lfloor \frac{T}{K(\Mpc)}\rfloor}{\lfloor\frac{T-mK(\Mpc)}{L(\Mpc)}\rfloor+m\choose m}\\
&\hspace{0.5cm}+2\cdot\Big\lfloor\frac{T}{L(\Mpc)}\Big\rfloor\cdot(3g-3+2n)\\
&\leq 2\cdot(432g-432+216n)^{\lfloor \frac{T}{K(\Mpc)}\rfloor}\sum_{m=1}^{\lfloor \frac{T}{K(\Mpc)}\rfloor}{\lfloor\frac{T-mK(\Mpc)}{L(\Mpc)}\rfloor+m\choose m}\\
&\leq 2\cdot(432g-432+216n)^{\lfloor \frac{T}{K(\Mpc)}\rfloor}\cdot\Big\lfloor \frac{T}{K(\Mpc)}\Big\rfloor\cdot{\lfloor\frac{T-QK(\Mpc)}{L(\Mpc)}\rfloor+Q\choose Q},
\end{align*}
where $\displaystyle Q\in\bigg\{1,\dots,\bigg\lfloor \frac{T}{K(\Mpc)}\bigg\rfloor\bigg\}$ is an integer such that $\displaystyle{\lfloor\frac{T-QK(\Mpc)}{L(\Mpc)}\rfloor+Q\choose Q}\geq {\lfloor\frac{T-mK(\Mpc)}{L(\Mpc)}\rfloor+m\choose m}$ for all $\displaystyle m\in\bigg\{1,\dots,\bigg\lfloor \frac{T}{K(\Mpc)}\bigg\rfloor\bigg\}$. Thus,
\begin{align*}
\frac{\log|\{\eta\in\Cpc_\Mpc:l_\Mpc(\eta)\leq T\}|}{T}&\leq\frac{\log(2)+\lfloor\frac{T}{K(\Mpc)}\rfloor\log(432g-432+216n)+\log(\lfloor \frac{T}{K(\Mpc)}\rfloor)}{T}\\
&\hspace{0.5cm}+\frac{1}{T}\log{\lfloor\frac{T-QK(\Mpc)}{L(\Mpc)}\rfloor+Q\choose Q},
\end{align*}
so by taking the limit supremum as $T$ approaches infinity on both sides, we get that 
\begin{equation*}
h_{top}(\Mpc)\leq\frac{\log(432g-432+216n)}{K(\Mpc)}+\limsup_{T\to\infty}\frac{1}{T}\log{\lfloor\frac{T-QK(\Mpc)}{L(\Mpc)}\rfloor+Q\choose Q}
\end{equation*}
for any $\Mpc\in\Cmf(M)$ such that $K(\Mpc)>l_\Mpc(\gamma)$ for all $\gamma\in\Pmc$. 

Consider a Goldman sequence $\{\Mpc_i\}_{i=1}^\infty$ in $\Cmf(M)$.  By Lemma \ref{Kgoestoinfty}, which tells us $\displaystyle\lim_{i\to\infty}K(\Mpc_i)=\infty$, we have that 
\[\limsup_{i\to\infty}h_{top}(\Mpc_i)\leq\limsup_{i\to\infty}\limsup_{T\to\infty}\frac{1}{T}\log{\lfloor\frac{T-QK(\Mpc_i)}{L(\Mpc_i)}\rfloor+Q\choose Q}.\]

Recall that there are constants $0<C_1<C_2<\infty$ such that $C_1<L(\Mpc_i)<C_2$. Hence, to prove (2) of Theorem \ref{mainthm1}, it is thus sufficient to show the following technical proposition, which we prove in Appendix \ref{proof of proposition technical}.

\begin{prop} \label{technical}
For any constant $L$ and any increasing sequences $\{K_i\}_{i=1}^\infty$ and $\{T_j\}_{j=1}^\infty$ such that $\displaystyle\lim_{i\to\infty}K_i=\infty=\displaystyle\lim_{j\to\infty}T_j$, we have that 
\begin{equation*}
\lim_{i\to\infty}\limsup_{j\to\infty}\frac{1}{T_j}\log{\lfloor\frac{T_j-Q_{i,j}K_i}{L}\rfloor+Q_{i,j}\choose Q_{i,j}}=0,
\end{equation*}
where $Q_{i,j}$ is a number in $\displaystyle\bigg\{1,\dots,\bigg\lfloor\frac{T_j}{K_i}\bigg\rfloor\bigg\}$ such that $\displaystyle{\lfloor\frac{T_j-Q_{i,j}K_i}{L}\rfloor+Q_{i,j}\choose Q_{i,j}}\geq{\lfloor\frac{T_j-mK_i}{L}\rfloor+m\choose m}$ for all $\displaystyle m\in\bigg\{1,\dots,\bigg\lfloor\frac{T_j}{K_i}\bigg\rfloor\bigg\}$.
\end{prop}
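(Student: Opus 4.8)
## Proof proposal for Proposition \ref{technical}

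The plan is to reduce the double limit to an estimate on a single binomial coefficient and then bound $\log\binom{a+Q}{Q}$ using the entropy-type inequality $\binom{a+Q}{Q}\le 2^{a+Q}$, or more precisely the sharper bound $\log\binom{N}{k}\le N H(k/N)$ where $H$ is the binary entropy function. The key structural observation is that for fixed $i$ (hence fixed $K_i$) and large $T_j$, the quantity $Q_{i,j}$ ranges in $\{1,\dots,\lfloor T_j/K_i\rfloor\}$, so writing $Q=Q_{i,j}$ and $a=\lfloor(T_j-QK_i)/L\rfloor$ we always have $a+Q \le T_j/L + T_j/K_i \le T_j(1/L+1/K_i)$, i.e. $N:=a+Q = O(T_j)$ with a constant depending only on $L$ and $K_i$. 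The inner $\limsup_{j\to\infty}\frac1{T_j}\log\binom{a+Q}{Q}$ is therefore bounded by $\limsup_{j\to\infty}\frac{N}{T_j}H(Q/N)$, and since $N/T_j \le 1/L + 1/K_i$ and $H\le\log 2$, the inner limit is at most $(1/L+1/K_i)\log 2$ — but this crude bound does not go to $0$ as $i\to\infty$ unless we are more careful about how small $Q/N$ is forced to be.

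First I would make the dependence on $K_i$ explicit. Since $Q K_i \le T_j$ we get $Q \le T_j/K_i$, and since $a \le T_j/L$ we get $N = a+Q \le T_j/L + T_j/K_i$. Hence
\[
\frac{Q}{N} \le \frac{T_j/K_i}{T_j/L} = \frac{L}{K_i},
\]
so $Q/N \le L/K_i$, which tends to $0$ as $i\to\infty$ uniformly in $j$. Now apply $\log\binom{N}{Q} \le N H(Q/N)$ together with monotonicity of $H$ on $[0,1/2]$ (for $i$ large enough that $L/K_i < 1/2$) to get
\[
\frac{1}{T_j}\log\binom{a+Q}{Q} \le \frac{N}{T_j}\,H\!\left(\frac{Q}{N}\right) \le \left(\frac{1}{L}+\frac{1}{K_i}\right) H\!\left(\frac{L}{K_i}\right).
\]
The right-hand side no longer depends on $j$, so the inner $\limsup_{j\to\infty}$ is bounded by it, and then $\lim_{i\to\infty}$ of the right-hand side is $\left(\tfrac1L\right)\cdot H(0) = 0$ since $H$ is continuous at $0$ with $H(0)=0$ and $K_i\to\infty$. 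This gives the upper bound; the lower bound is trivial since all the quantities involved are nonnegative (for $T_j$ large the binomial coefficient is at least $1$, so its logarithm is $\ge 0$), hence the limit is exactly $0$.

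The main obstacle is purely bookkeeping with the floor functions and the edge case where $Q$ is close to its maximum $\lfloor T_j/K_i\rfloor$ (so that $a$ is small, possibly $0$): there I should check that the inequality $\log\binom{a+Q}{Q}\le (a+Q)H(Q/(a+Q))$ still applies, which it does since it holds for all integers $0\le Q\le a+Q$ (with the convention $0\cdot H(\text{anything})=0$ when $a+Q=0$, though $Q\ge 1$ forces $a+Q\ge 1$). One should also confirm that for $i$ large, $L/K_i < 1/2$ so that we may replace $H(Q/N)$ by $H(L/K_i)$ via monotonicity; for the finitely many small $i$ this does not matter since we take $\lim_{i\to\infty}$. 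I would also note at the outset that the existence of $Q_{i,j}$ as a maximizer is automatic since the index set $\{1,\dots,\lfloor T_j/K_i\rfloor\}$ is finite and nonempty once $T_j \ge K_i$. No deeper idea is needed beyond the observation that the ratio $Q/N$ is squeezed to $0$ by the constraint $QK_i\le T_j\le a L + Q K_i$, which is where the hypothesis $K_i\to\infty$ does its work.
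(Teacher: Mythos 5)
The step that fails is the inequality $Q/N \le L/K_i$. From $Q\le T_j/K_i$ and $a \le T_j/L$ you derive $N = a+Q \le T_j/L + T_j/K_i$, but then you silently treat $T_j/L$ as a \emph{lower} bound for $N$ in order to conclude $Q/N \le \frac{T_j/K_i}{T_j/L}$. That lower bound is simply false: when $Q$ is near its maximal value $\lfloor T_j/K_i\rfloor$, the floor $a = \lfloor(T_j-QK_i)/L\rfloor$ can be close to $0$, in which case $N\approx Q$ and $Q/N$ is close to $1$, not to $L/K_i$. (Concretely, $T_j=1000$, $K_i=10$, $L=1$, $Q=90$ gives $a=100$, $N=190$, $Q/N\approx 0.47$ while $L/K_i=0.1$.) Without that bound, your chain of estimates only yields $\frac{1}{T_j}\log\binom{N}{Q}\le (1/L+1/K_i)\log 2$, which you correctly observed at the outset does not tend to $0$; so the proof as written does not reach the conclusion.

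The overall strategy (entropy bound, uniformity in $Q$, never invoking the maximization property of $Q_{i,j}$) \emph{can} be made to work, but the correct use of the constraint is $aL+QK_i\le T_j$: writing $p=Q/N$ this gives
\[
\frac{1}{T_j}\log\binom{N}{Q}\;\le\;\frac{\log\binom{N}{Q}}{aL+QK_i}\;\le\;\frac{N\,H(p)}{N\bigl((1-p)L+pK_i\bigr)}\;=\;\frac{H(p)}{(1-p)L+pK_i},
\]
and one checks directly that $\sup_{p\in(0,1)}\frac{H(p)}{(1-p)L+pK_i}\to 0$ as $K_i\to\infty$ (split at a small threshold $\epsilon$: for $p\le\epsilon$ the ratio is at most $H(\epsilon)/((1-\epsilon)L)$, for $p>\epsilon$ it is at most $(\log 2)/(\epsilon K_i)$, then let $\epsilon\to 0$). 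That is the ``squeeze'' you were after, but it comes from bounding $T_j$ below by $aL+QK_i$, not from bounding $Q/N$ above by $L/K_i$. Note this corrected argument is genuinely different in spirit from the paper's proof, which leans on the maximization property of $Q_{i,j}$ to pin down the asymptotics of $Q_j/T_j$ and $Q_j/F_j$ via Lemma \ref{stirlingconditions}, and then evaluates the limit exactly with Stirling's formula in Proposition \ref{computation0}; the uniform entropy bound sidesteps all of that structure and would shorten the appendix considerably, so it is worth repairing.
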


\appendix

\section{Cross ratio computations and formulas}\label{crossratiocomputationsandformulas}

Here, we will list the formulas for some of the thirty cross ratios $Cr_{x,y}=Cr_{x,y}(\Ppc)$ as defined at the start of Section \ref{areparameterization}, namely those we used in the proof of our result. The formulas will be listed in both the Goldman parameters $(R,s,t)$ defined in Section \ref{goldmanparameters} and the new parameters $(R,s,r)$ defined in Section \ref{areparameterization}. Also, we will demonstrate the computation to obtain one of these, namely $Cr_{d,e}$. The rest of the cross ratios are computed using the same algorithm. To simplify the formulas, we will write $\rho_i^R(S)$ as $\rho_i$ for $i=1,2,3$.

We compute $Cr_{d,e}$ using Definition \ref{cross ratio def}. Since the cross ratio is invariant under the action of $SL(3,\Rbbb)$ we can choose a normalization so that 
\[a=\left[\begin{array}{c} 1\\ 0\\ 0 \end{array} \right], 
b=\left[ \begin{array}{c} 0\\ 1\\ 0 \end{array} \right], 
c=\left[ \begin{array}{c} 0\\ 0\\ 1 \end{array} \right], 
f=\left[ \begin{array}{c} 2\\ 2\\ -1\end{array} \right].\]
Then in the Goldman parameters, we have 
\[d=\left[ \begin{array}{c} -1\\ \frac{\rho_3}{t}\\ \frac{\rho_2}{2} \end{array} \right],
e=\left[ \begin{array}{c} t\\ -1\\ \frac{\rho_1}{2} \end{array} \right].\]
(This was computed in Section 4 of Goldman \cite{Go1}). By the definition of the cross ratio, we can then compute
\begin{eqnarray*}Cr_{d,e}&=&\frac{\left[ \begin{array}{c} -1\\ \frac{\rho_3}{t}\\ \frac{\rho_2}{2} \end{array} \right]\wedge\left[ \begin{array}{c} 0\\ 1\\ 0 \end{array} \right]\wedge\left[\begin{array}{c} 1\\ 0\\ 0 \end{array} \right]}{\left[ \begin{array}{c} -1\\ \frac{\rho_3}{t}\\ \frac{\rho_2}{2} \end{array} \right]\wedge\left[ \begin{array}{c} 0\\ 1\\ 0 \end{array} \right]\wedge\left[ \begin{array}{c} 2\\ 2\\ -1\end{array} \right]}\cdot\frac{\left[ \begin{array}{c} -1\\ \frac{\rho_3}{t}\\ \frac{\rho_2}{2} \end{array} \right]\wedge\left[ \begin{array}{c} 0\\ 0\\ 1 \end{array} \right]\wedge\left[ \begin{array}{c} 2\\ 2\\ -1\end{array} \right]}{\left[ \begin{array}{c} -1\\ \frac{\rho_3}{t}\\ \frac{\rho_2}{2} \end{array} \right]\wedge\left[ \begin{array}{c} 0\\ 0\\ 1 \end{array} \right]\wedge\left[\begin{array}{c} 1\\ 0\\ 0 \end{array} \right]}\\
&=&\frac{\det\left[ \begin{array}{ccc} -1&0&1\\ \frac{\rho_3}{t}&1&0\\ \frac{\rho_2}{2}&0&0 \end{array} \right]}{\det\left[ \begin{array}{ccc} -1&0&2\\ \frac{\rho_3}{t}&1&2\\ \frac{\rho_2}{2}&0&-1\end{array} \right]}\cdot\frac{\det\left[ \begin{array}{ccc} -1&0&2\\ \frac{\rho_3}{t}&0&2\\ \frac{\rho_2}{2}&1&-1 \end{array} \right]}{\det\left[ \begin{array}{ccc} -1&0&1\\ \frac{\rho_3}{t}&0&0\\ \frac{\rho_2}{2}&1&0 \end{array} \right]}\\
&=&\frac{-\frac{\rho_2}{2}}{1-\rho_2}\cdot\frac{2+\frac{2\rho_3}{t}}{\frac{\rho_3}{t}}\\
&=&\frac{\rho_2(t+\rho_3)}{\rho_3(\rho_2-1)}.
\end{eqnarray*}

Now, we will give the list of formulas for the cross ratios we used in the paper.
\begin{equation*}
\begin{array}{rllcrll}
Cr_{a,f}&=1+\frac{\rho_3\rho_1}{t\rho_2}&=1+\frac{\rho_3\rho_1}{r}&&Cr_{d,e}&=\frac{\rho_2(t+\rho_3)}{\rho_3(\rho_2-1)}&=\frac{r+\rho_2\rho_3}{\rho_3(\rho_2-1)}\\
Cr_{a,d}&=\rho_1&=\rho_1&&Cr_{d,f}&=\frac{\rho_3(\rho_2t+\rho_1)}{t\rho_2(\rho_3-1)}&=\frac{\rho_3\rho_1+r\rho_3}{r(\rho_3-1)}\\
Cr_{a,e}&=1+\frac{t\rho_2}{\rho_3}&=1+\frac{r}{\rho_3}&&Cr_{c,e}&=1+\frac{\rho_3}{t}&=1+\frac{\rho_2\rho_3}{r}\\
Cr_{f,d}&=\frac{t+\rho_1}{-1+\rho_1}&=\frac{r+\rho_1\rho_2}{\rho_2(\rho_1-1)}&&Cr_{c,f}&=\rho_3&=\rho_3\\
Cr_{f,e}&=\frac{\rho_3+t\rho_2}{t(\rho_2-1)}&=\frac{\rho_2\rho_3+r\rho_2}{r(\rho_2-1)}&&Cr_{c,d}&=1+t&=1+\frac{r}{\rho_2}\\
Cr_{b,d}&=1+\frac{\rho_1}{t}&=1+\frac{\rho_1\rho_2}{r}&&Cr_{e,f}&=\frac{\rho_2t+\rho_3\rho_1}{\rho_1(\rho_3-1)}&=\frac{r+\rho_3\rho_1}{\rho_1(\rho_3-1)}\\
Cr_{b,e}&=\rho_2&=\rho_2&&Cr_{e,d}&=\frac{\rho_1(1+t)}{t(\rho_1-1)}&=\frac{\rho_1(\rho_2+r)}{r(\rho_1-1)}\\
Cr_{b,f}&=1+\frac{t\rho_2}{\rho_1}&=1+\frac{r}{\rho_1}&&&&
\end{array}
\end{equation*}

\section{Proof of Proposition \ref{technical}}\label{proof of proposition technical}
In this appendix, we will prove Proposition \ref{technical}, which we restate here for the reader's convenience.

\begin{prop}[(Proposition \ref{technical})]\label{technical2}
For any constant $L$ and any increasing sequences $\{K_i\}_{i=1}^\infty$ and $\{T_j\}_{j=1}^\infty$ such that $\displaystyle\lim_{i\to\infty}K_i=\infty=\displaystyle\lim_{j\to\infty}T_j$, we have that 
\begin{equation*}
\lim_{i\to\infty}\limsup_{j\to\infty}\frac{1}{T_j}\log{\lfloor\frac{T_j-Q_{i,j}K_i}{L}\rfloor+Q_{i,j}\choose Q_{i,j}}=0,
\end{equation*}
where $Q_{i,j}$ is a number in $\displaystyle\bigg\{1,\dots,\bigg\lfloor\frac{T_j}{K_i}\bigg\rfloor\bigg\}$ such that $\displaystyle{\lfloor\frac{T_j-Q_{i,j}K_i}{L}\rfloor+Q_{i,j}\choose Q_{i,j}}\geq{\lfloor\frac{T_j-mK_i}{L}\rfloor+m\choose m}$ for all $\displaystyle m\in\bigg\{1,\dots,\bigg\lfloor\frac{T_j}{K_i}\bigg\rfloor\bigg\}$.
\end{prop}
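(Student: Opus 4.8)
The plan is to estimate the binomial coefficient ${N+Q \choose Q}$ where $N=\lfloor\frac{T_j-QK_i}{L}\rfloor$, using the fact that $QK_i \leq T_j$, so that $Q \leq T_j/K_i$, and also $N \leq T_j/L$. The key observation is that because $K_i\to\infty$, the admissible range for $Q$ shrinks relative to $T_j$: for each fixed $i$, once $j$ is large we have $Q \leq T_j/K_i$, and $K_i$ can be taken arbitrarily large first. So morally the binomial coefficient is ${\text{(at most } T_j/L) + (\text{at most }T_j/K_i) \choose \text{at most }T_j/K_i}$, and the upper index being small compared to $T_j$ should force the $\frac{1}{T_j}\log$ of it to $0$ as $i\to\infty$.

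Concretely, I would first recall the elementary bound ${a+b \choose b} \leq \left(\frac{e(a+b)}{b}\right)^{b}$ (or use ${a+b\choose b}\le 2^{a+b}$ as a cruder alternative, but the first is sharper and is what makes the argument work). Apply it with $b = Q_{i,j}$ and $a = \lfloor\frac{T_j-Q_{i,j}K_i}{L}\rfloor$. Then $a+b \leq \frac{T_j}{L} + \frac{T_j}{K_i} =: M_{i,j}$ and $b = Q_{i,j} \geq 1$, giving
\[
\log{\textstyle \lfloor\frac{T_j-Q_{i,j}K_i}{L}\rfloor+Q_{i,j}\choose Q_{i,j}} \leq Q_{i,j}\log\!\left(\frac{e\,M_{i,j}}{Q_{i,j}}\right).
\]
Next I would show that the function $x\mapsto x\log(eM/x)$ on $[1, M]$ is increasing (its derivative is $\log(M/x)\ge 0$ there), so since $Q_{i,j}\leq \lfloor T_j/K_i\rfloor \leq T_j/K_i$, we may replace $Q_{i,j}$ by $T_j/K_i$ to get
\[
\frac{1}{T_j}\log{\textstyle \lfloor\frac{T_j-Q_{i,j}K_i}{L}\rfloor+Q_{i,j}\choose Q_{i,j}} \leq \frac{1}{K_i}\log\!\left(e\Big(\frac{K_i}{L}+1\Big)\right).
\]
The right-hand side no longer depends on $j$, so the $\limsup_{j\to\infty}$ is just this quantity, and then $\lim_{i\to\infty}\frac{1}{K_i}\log\!\left(e(\frac{K_i}{L}+1)\right) = 0$ since $\log K_i = o(K_i)$. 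That completes the proof.

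The main obstacle is getting the monotonicity step right: one needs to be careful that $Q_{i,j}\ge 1$ so that the bound ${a+b\choose b}\le (e(a+b)/b)^b$ is legitimately applicable and the substitution $Q_{i,j}\rightsquigarrow T_j/K_i$ is genuinely in the increasing regime — this requires $T_j/K_i \leq M_{i,j} = T_j/L + T_j/K_i$, which is automatic, but one should also handle the edge case where $\lfloor T_j/K_i\rfloor = 0$ (i.e. $T_j < K_i$), in which case the set $\{1,\dots,\lfloor T_j/K_i\rfloor\}$ is empty and there is no $Q_{i,j}$; here one interprets the binomial sum as empty / the quantity as $0$, or simply notes that for each fixed $i$ this only happens for finitely many $j$ and is irrelevant to the $\limsup_{j\to\infty}$. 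Everything else is routine real analysis; I do not anticipate a genuine difficulty beyond bookkeeping.
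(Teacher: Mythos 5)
Your proof is correct, and it takes a genuinely different and more elementary route than the paper's. The paper's Appendix B is built around Stirling's formula: Lemma B.2 first establishes (via delicate ratio comparisons of consecutive binomial coefficients) that $F_j\to\infty$, $Q_j\to\infty$, $Q_j/F_j$ stays bounded, and $Q_j/T_j$ stays bounded away from $0$; Proposition B.3 then uses Stirling to obtain an \emph{exact} formula for $\lim_j\frac{1}{T_j}\log\binom{F_j+Q_j}{Q_j}$ in terms of $H=\lim_j Q_j/T_j$; and the final proof passes to subsequences where $H_i$ and $\lim_i H_iK_i$ exist and shows the resulting closed-form expression tends to $0$. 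You instead observe that one never needs the exact asymptotic: the one-sided bound $\binom{a+b}{b}\le\bigl(e(a+b)/b\bigr)^{b}$, combined with the crude facts $a\le T_j/L$ and $1\le b=Q_{i,j}\le T_j/K_i$ and the monotonicity of $x\mapsto x\log(eM/x)$ on $(0,M]$, immediately yields a bound $\frac{1}{K_i}\log\bigl(e(K_i/L+1)\bigr)$ that is uniform in $j$ and in the choice of $Q_{i,j}$, and tends to $0$ as $i\to\infty$. This bypasses Stirling, bypasses the subsequence arguments, does not require the extremal characterization of $Q_{i,j}$ (it works for any admissible $Q$), and does not need the hypothesis $K_i\gg L$ that the paper imposes while setting up Lemma B.2. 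What the paper's approach buys is the precise asymptotic growth rate of the binomial coefficient (Proposition B.3), which could be of independent interest; for the stated proposition your bound is sufficient and considerably shorter. The one bookkeeping point — that $\lfloor T_j/K_i\rfloor=0$ only for finitely many $j$ at fixed $i$ and so does not affect $\limsup_j$, and that otherwise $T_j/K_i\ge 1$ so the monotonicity substitution is legitimate — you have already handled.
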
 

First, we will fix $K_i$ to be $K\gg L$ and compute 
\begin{equation}\label{limitbound1}
\lim_{j\to\infty}\frac{1}{T_j}\log{\lfloor\frac{T_j-Q_jK}{L}\rfloor+Q_j\choose Q_j}, 
\end{equation}
where $Q_j$ is a number in $\displaystyle\bigg\{1,\dots,\bigg\lfloor\frac{T_j}{K}\bigg\rfloor\bigg\}$ such that $\displaystyle{\lfloor\frac{T_j-Q_jK}{L}\rfloor+Q_j\choose Q_j}\geq{\lfloor\frac{T_j-mK}{L}\rfloor+m\choose m}$ for all $m\in\displaystyle\bigg\{1,\dots,\bigg\lfloor\frac{T_j}{K}\bigg\rfloor\bigg\}$.
The main tool to compute (\ref{limitbound1}) is the asymptotic equality commonly known as Stirling's Formula, which we state here.

\begin{thm}[(Stirling's Formula)]
$\displaystyle n!\sim\bigg(\frac{n}{e}\bigg)^n\sqrt{2\pi n}$, i.e. $\displaystyle\lim_{n\to\infty}\frac{n!}{(\frac{n}{e})^n\sqrt{2\pi n}}=1$.
\end{thm}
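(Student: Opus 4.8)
The plan is to prove Stirling's formula by the classical two-step route: first show that the sequence $a_n := n!/\big((n/e)^n\sqrt{n}\big)$ converges to a positive limit $C$, and then identify $C = \sqrt{2\pi}$ via Wallis' product. (An alternative is Laplace's method applied to $n! = \int_0^\infty x^n e^{-x}\,dx$, but the route below is entirely elementary.)

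For the convergence I would work with $d_n := \log a_n = \log n! - \big(n+\tfrac12\big)\log n + n$ and examine the consecutive difference
\[
d_n - d_{n+1} = \big(n+\tfrac12\big)\log\!\big(1+\tfrac1n\big) - 1.
\]
Putting $t := \tfrac1{2n+1}$, so that $\tfrac{n+1}{n} = \tfrac{1+t}{1-t}$ and $n+\tfrac12 = \tfrac1{2t}$, and inserting the expansion $\log\tfrac{1+t}{1-t} = 2\sum_{k\geq0}\tfrac{t^{2k+1}}{2k+1}$, this becomes
\[
d_n - d_{n+1} = \sum_{k\geq1}\frac{t^{2k}}{2k+1},
\]
which is positive, so $(d_n)$ is strictly decreasing; and it is bounded above by $\tfrac13\sum_{k\geq1}t^{2k} = \tfrac{t^2}{3(1-t^2)} = \tfrac1{12}\big(\tfrac1n - \tfrac1{n+1}\big)$, so $d_n - \tfrac1{12n}$ is strictly increasing. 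Hence $(d_n)$ is decreasing and bounded below (by $d_1 - \tfrac1{12}$), so it converges to some $\log C$ with $0 < C < \infty$; equivalently, $n! \sim C\,(n/e)^n\sqrt{n}$.

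To pin down the constant I would invoke Wallis' product,
\[
\frac{\pi}{2} = \lim_{n\to\infty}\prod_{k=1}^{n}\frac{2k}{2k-1}\cdot\frac{2k}{2k+1} = \lim_{n\to\infty}\frac{2^{4n}(n!)^4}{\big((2n)!\big)^2(2n+1)},
\]
where the second equality is the rewriting of the partial product using $2\cdot4\cdots2n = 2^n n!$ and $1\cdot3\cdots(2n-1) = (2n)!/(2^n n!)$. (Wallis' product is itself elementary, via the recursion $W_m = \tfrac{m-1}{m}W_{m-2}$ for $W_m := \int_0^{\pi/2}\sin^m\theta\,d\theta$ together with the monotonicity $W_{2n+1}\leq W_{2n}\leq W_{2n-1}$.) Substituting $n! \sim C(n/e)^n\sqrt n$ and $(2n)! \sim C(2n/e)^{2n}\sqrt{2n}$ into the right-hand side, every power of $2$, of $n$, and of $e$ cancels and one is left with $\tfrac{C^2 n^2}{2n(2n+1)} \to \tfrac{C^2}{4}$. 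Comparison with $\pi/2$ forces $C^2 = 2\pi$, hence $C = \sqrt{2\pi}$, which is Stirling's formula.

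The only step needing care is the estimate $\sum_{k\geq1}\tfrac{t^{2k}}{2k+1} < \tfrac1{12}\big(\tfrac1n - \tfrac1{n+1}\big)$: it is precisely what turns the (easy) monotonicity of $(d_n)$ into an actual proof of convergence, and it relies on the telescoping of the right-hand side. Everything after that is routine — the constant is extracted by a single substitution, and the sole external ingredient, Wallis' product, is standard.
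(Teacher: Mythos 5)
The paper states Stirling's Formula as a classical fact and gives no proof of it, so there is nothing to compare against; your argument stands on its own. It is correct and complete: the telescoping bound $d_n - d_{n+1} = \sum_{k\geq1}\frac{t^{2k}}{2k+1} < \frac{1}{12}\big(\frac{1}{n}-\frac{1}{n+1}\big)$ with $t=\frac{1}{2n+1}$ is computed correctly (using $1-t^2 = \frac{4n(n+1)}{(2n+1)^2}$), it does give that $(d_n)$ is decreasing while $d_n - \frac{1}{12n}$ is increasing, hence convergence to a finite positive limit $C$, and the Wallis-product substitution correctly yields $C^2/4 = \pi/2$, i.e.\ $C=\sqrt{2\pi}$.
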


For fixed $K$ and $L$, we will at times denote $\displaystyle\bigg\lfloor\frac{T_j-Q_jK}{L}\bigg\rfloor$ by $F_j$ to simplify notation. To use Stirling's formula, we need to know how $F_j$ and $Q_j$ vary with $j$. Hence the following lemma.

\begin{lem}\label{stirlingconditions}
Let $K,L$ be fixed, with $K\gg L$. Then the following hold:
\begin{enumerate}[(1)]
\item $\displaystyle\lim_{j\to\infty}F_j=\infty$.
\item $\displaystyle\lim_{j\to\infty}Q_j=\infty$.
\item $\displaystyle0\leq\liminf_{j\to\infty}\frac{Q_j}{F_j}\leq\limsup_{j\to\infty}\frac{Q_j}{F_j}\leq1$. 
\item There exists $\alpha>0$ such that $\displaystyle\alpha\leq\liminf_{j\to\infty}\frac{Q_j}{T_j}\leq\limsup_{j\to\infty}\frac{Q_j}{T_j}\leq\frac{1}{L+K}$. 
\end{enumerate}
\end{lem}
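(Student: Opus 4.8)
\textbf{Proof proposal for Lemma \ref{stirlingconditions}.}

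The plan is to verify each item by tracking how the maximizing index $Q_j$ interacts with the quantity $F_j = \lfloor (T_j - Q_j K)/L\rfloor$. Throughout, write $N_j := \lfloor T_j/K\rfloor$ for the upper bound on the allowed range of $m$, so that $1\le Q_j\le N_j$ and $N_j\to\infty$ since $T_j\to\infty$.

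\emph{Proof of (1) and (2).} These are coupled. First I would establish a crude comparison: since $Q_j$ maximizes $\binom{\lfloor (T_j-mK)/L\rfloor + m}{m}$ over $m\in\{1,\dots,N_j\}$, its value is at least the value at a conveniently chosen test index, e.g. $m = \lfloor N_j/2\rfloor$ or $m = \lceil \sqrt{N_j}\rceil$. For such a test index $m^\ast$, both $m^\ast\to\infty$ and $\lfloor (T_j - m^\ast K)/L\rfloor\to\infty$ (the latter because $m^\ast K \le N_j K \approx T_j$, but with a definite gap once $m^\ast = o(N_j)$), so the binomial coefficient at $m^\ast$ tends to infinity. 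If $Q_j$ stayed bounded along a subsequence, then $\binom{F_j + Q_j}{Q_j} \le (F_j+Q_j)^{Q_j}$ grows only polynomially in $T_j$ while $F_j\le T_j/L$; but one checks this polynomial bound is eventually beaten by the value at $m^\ast$ (which, by Stirling applied at $m^\ast$, grows faster than any fixed power of $T_j$ when $m^\ast\to\infty$), a contradiction. This gives (2). Likewise, if $F_j$ stayed bounded along a subsequence, then $Q_j K \ge T_j - L(F_j+1)$ forces $Q_j \ge (T_j - O(1))/K$, so $Q_j \sim N_j$; then $\binom{F_j+Q_j}{Q_j} = \binom{F_j + Q_j}{F_j} \le (F_j + Q_j)^{F_j}$ is again only polynomial in $T_j$, contradicting the test-index lower bound. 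This gives (1).

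\emph{Proof of (3).} The lower bound $\liminf Q_j/F_j \ge 0$ is trivial since both are nonnegative. For the upper bound, suppose for contradiction that $Q_j/F_j \to c > 1$ along a subsequence (or $\to\infty$). I would compare the value at $Q_j$ with the value at a slightly smaller index $Q_j' := \lfloor Q_j/2\rfloor$, say. Moving from $Q_j$ down to $Q_j'$ decreases the lower entry of the binomial coefficient but increases the upper entry (since $F_j$ grows as $m$ shrinks, at rate $K/L$ per unit of $m$). Using the symmetry $\binom{F+Q}{Q} = \binom{F+Q}{F}$ and monotonicity/Stirling estimates, when $Q_j$ is large relative to $F_j$ the coefficient is increasing under such a decrease of $m$, contradicting maximality of $Q_j$. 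Concretely, one shows $\binom{F_j + Q_j}{Q_j} < \binom{F_j' + Q_j'}{Q_j'}$ for $j$ large, where $F_j' = \lfloor(T_j - Q_j'K)/L\rfloor$; the inequality follows from the fact that the ratio of consecutive-in-$m$ binomial coefficients exceeds $1$ once $m$ is in the regime $m \gtrsim F_m$, because increasing $F$ by $K/L$ while decreasing $Q$ by $1$ multiplies the coefficient by roughly $\bigl(\tfrac{F+Q}{F}\bigr)^{K/L}\cdot\tfrac{Q}{F+Q}$, which is $>1$ when $Q/F$ is large (as $K/L$ is large). This is the step I expect to be the main obstacle: getting the direction of monotonicity right and making the Stirling comparison clean requires care with the floor functions and with the regime boundaries, and it is the only genuinely non-formal estimate among the four.

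\emph{Proof of (4).} Combine (1)–(3) with the linear relation $T_j - L \le Q_j K + L F_j \le T_j$, i.e. $Q_j K + L F_j = T_j + O(1)$. Dividing by $T_j$ gives $\frac{Q_j K}{T_j} + \frac{L F_j}{T_j} = 1 + o(1)$. Since by (3) we have $F_j \ge Q_j$ eventually (up to $o(F_j)$; more precisely $\limsup Q_j/F_j \le 1$), the term $\frac{L F_j}{T_j}$ is bounded, and we get $\limsup \frac{Q_j}{T_j} \le \frac{1}{L+K}$ by noting $Q_j(L+K) \le Q_j K + L F_j + O(1) = T_j + O(1)$ once $F_j \ge Q_j - o(Q_j)$. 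For the positive lower bound: the test index $m^\ast$ used in (1)–(2) can be taken proportional to $N_j \approx T_j/K$ — say $m^\ast = \lfloor N_j/2\rfloor$ — and one checks, again via Stirling, that the maximizer $Q_j$ cannot lie below $\epsilon T_j$ for a fixed small $\epsilon>0$, since the binomial coefficient at index $\epsilon T_j$ would be dominated by the one at $m^\ast$; hence $\liminf Q_j/T_j \ge \alpha$ for some $\alpha>0$. All four statements then hold, with the deferred Stirling estimates carried out in the subsequent computation.
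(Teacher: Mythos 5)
Your proposal is essentially correct, but it takes a genuinely different route from the paper. The paper's entire proof of this lemma rests on a single idea deployed locally: since $Q_j$ maximizes the binomial coefficient, the value at $Q_j$ is at least the value at $Q_j+1$ and at $Q_j-1$. Expanding these two ratios of binomial coefficients into explicit products of consecutive integers yields two clean inequalities — roughly, $\frac{Q_j+1}{F_j}$ is bounded below by a product of terms of the form $\frac{F_j-i}{F_j+Q_j-i}$, and $\frac{Q_j}{F_j+1}$ is bounded above by a similar product (hence by $1$). Parts (1), (2) are read off by showing each of "$F_j$ bounded" and "$Q_j$ bounded" forces one of these two ratios to violate its bound; (3) follows instantly from $\frac{Q_j}{F_j+1}\le 1$; and (4) follows from (3) together with $Q_jK+LF_j=T_j+O(1)$ for the upper bound, and from sandwiching $Q_j/F_j$ between the two product expressions (which both tend to $1$ if $Q_j/F_j\to 0$) for the lower bound. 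No Stirling is used anywhere in this lemma. By contrast, you compare $Q_j$ against \emph{distant} test indices $m^\ast\sim\sqrt{N_j}$ or $m^\ast\sim N_j/2$, and argue that bounded $Q_j$ or $F_j$ gives only polynomial growth of the binomial coefficient while the test index gives super-polynomial growth, and for (4) that the Stirling exponent at $Q_j/T_j\to 0$ vanishes while at $m^\ast/T_j\to 1/(2K)$ it is strictly positive. These global arguments do work (the order (1),(2) then (3),(4) avoids the potential circularity with Proposition \ref{computation0}), but they import the asymptotic machinery one lemma too early and require more delicate bookkeeping — your own flag that the floor-function arithmetic in (3) is the main obstacle is precisely where the paper's exact product expansion is designed to sidestep the issue, and your step from $Q_j$ to $Q_j'=\lfloor Q_j/2\rfloor$ is unnecessary since the single-step comparison at $Q_j-1$ already suffices (and is what you ultimately invoke to justify the chain). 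Interestingly, your (3) is morally the same as the paper's, just phrased via a heuristic factor $\bigl(\tfrac{F+Q}{F}\bigr)^{K/L}\tfrac{Q}{F+Q}$ rather than an exact product. The paper's local approach is the tighter and more self-contained one, and it also yields explicit inequalities that are reused in the proof of (4); your global approach is a valid, somewhat more "first-principles" alternative.
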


\begin{proof}
Observe that for sufficiently large $j$ (so that $T_j\gg K$),
\begin{align*}
1&\leq{\lfloor\frac{T_j-Q_jK}{L}\rfloor+Q_j\choose Q_j}\Bigg/{\lfloor\frac{T_j-(Q_j+1)K}{L}\rfloor+Q_j+1\choose Q_j+1}\\
&=\frac{(\lfloor\frac{T_j-Q_jK}{L}\rfloor+Q_j)(\lfloor\frac{T_j-Q_jK}{L}\rfloor+Q_j-1)\dots(\lfloor\frac{T_j-(Q_j+1)K}{L}\rfloor+Q_j+2)(Q_j+1)}{(\lfloor\frac{T_j-Q_jK}{L}\rfloor)(\lfloor\frac{T_j-Q_jK}{L}\rfloor-1)\cdots(\lfloor\frac{T_j-(Q_j+1)K}{L}\rfloor+1)}
\end{align*}
which implies 
\begin{align}\label{Qbound1}
\frac{Q_j+1}{F_j}=\frac{Q_j+1}{\lfloor\frac{T_j-Q_jK}{L}\rfloor}&\geq\frac{(\lfloor\frac{T_j-Q_jK}{L}\rfloor-1)\cdots(\lfloor\frac{T_j-(Q_j+1)K}{L}\rfloor+1)}{(\lfloor\frac{T_j-Q_jK}{L}\rfloor+Q_j)\dots(\lfloor\frac{T_j-(Q_j+1)K}{L}\rfloor+Q_j+2)}.
\end{align}

Similarly, for sufficiently large $j$,
\begin{align*}
1&\geq{\lfloor\frac{T_j-(Q_j-1)K}{L}\rfloor+Q_j-1\choose Q_j-1}\Bigg/{\lfloor\frac{T_j-Q_jK}{L}\rfloor+Q_j\choose Q_j}&\\
&=\frac{(\lfloor\frac{T_j-(Q_j-1)K}{L}\rfloor+Q_j-1)(\lfloor\frac{T_j-(Q_j-1)K}{L}\rfloor+Q_j-2)\dots(\lfloor\frac{T_j-Q_jK}{L}\rfloor+Q_j+1)(Q_j)}{(\lfloor\frac{T_j-(Q_j-1)K}{L}\rfloor)(\lfloor\frac{T_j-(Q_j-1)K}{L}\rfloor-1)\cdots(\lfloor\frac{T_j-Q_jK}{L}\rfloor+1)}&\nonumber
\end{align*}
which implies 
\begin{align}\label{Qbound2}
\frac{Q_j}{F_j+1}=\frac{Q_j}{\lfloor\frac{T_j-Q_jK}{L}\rfloor+1}&\leq\frac{(\lfloor\frac{T_j-(Q_j-1)K}{L}\rfloor)\cdots(\lfloor\frac{T_j-Q_jK}{L}\rfloor+2)}{(\lfloor\frac{T_j-(Q_j-1)K}{L}\rfloor+Q_j-1)\dots(\lfloor\frac{T_j-Q_jK}{L}\rfloor+Q_j+1)}\leq 1.
\end{align}

Proof of (1). Suppose for contradiction that $\displaystyle\liminf_{j\to\infty}F_j<\infty$. This implies that $\displaystyle\limsup_{j\to\infty}Q_j=\infty$, so we have
\begin{equation*}
\limsup_{j\to\infty}\frac{Q_j}{F_j+1}=\infty.
\end{equation*}
However, that contradicts (\ref{Qbound2}).  

Proof of (2). Suppose for contradiction that $\displaystyle\liminf_{j\to\infty}Q_j<\infty$. By choosing a subsequence, we can assume that $\displaystyle\lim_{j\to\infty}Q_j<\infty$, then $\displaystyle\lim_{j\to\infty} F_j=\infty$, so we have
\begin{equation*}
\lim_{j\to\infty}\frac{F_j}{Q_j+1}=\infty.
\end{equation*}
However, if $\displaystyle\lim_{j\to\infty}Q_j<\infty$ and $\displaystyle\lim_{j\to\infty} F_j=\infty$, then the right hand side of the inequality (\ref{Qbound1}) converges to $1$ as $j\to\infty$. This then implies that 
\begin{equation*}
\lim_{j\to\infty}\frac{F_j}{Q_j+1}\leq 1
\end{equation*}
which is a contradiction. 

Proof of (3). This follows immediately from (1), (2) and the inequality (\ref{Qbound2}).

Proof of (4). By (\ref{Qbound2}), we know that $\displaystyle\frac{Q_j}{F_j+1}\leq 1$, so $\displaystyle\frac{Q_j}{T_j}\leq\frac{1}{L+K}\bigg(1+\frac{L}{T_j}\bigg)$. Since $\displaystyle\lim_{j\to\infty}T_j=\infty$, this proves $\displaystyle\limsup_{j\to\infty}\frac{Q_j}{T_j}\leq\frac{1}{L+K}$. It is clear that $\displaystyle\liminf_{j\to\infty}\frac{Q_j}{T_j}\geq 0$, so suppose for contradiction that $\displaystyle\liminf_{j\to\infty}\frac{Q_j}{T_j}=0$. By taking a subsequence, we can assume that $\displaystyle\lim_{j\to\infty}\frac{Q_j}{T_j}=0$, which implies that $\displaystyle\lim_{j\to\infty}\frac{Q_j}{F_j}=0$.

Since we know (1) and (2) hold, (\ref{Qbound1}) and (\ref{Qbound2}) imply that
\begin{align*}
&\hspace{.5cm}\limsup_{j\to\infty}\frac{(\lfloor\frac{T_j-Q_jK}{L}\rfloor-1)\cdots(\lfloor\frac{T_j-(Q_j+1)K}{L}\rfloor+1)}{(\lfloor\frac{T_j-Q_jK}{L}\rfloor+Q_j)(\lfloor\frac{T_j-Q_jK}{L}\rfloor+Q_j-1)\dots(\lfloor\frac{T_j-(Q_j+1)K}{L}\rfloor+Q_j+2)}\\
&\leq\limsup_{j\to\infty}\frac{Q_j+1}{F_j}\\
&=\limsup_{j\to\infty}\frac{Q_j}{F_j}\\
&=\limsup_{j\to\infty}\frac{Q_j}{F_j+1}\\
&\leq\limsup_{j\to\infty}\frac{(\lfloor\frac{T_j-(Q_j-1)K}{L}\rfloor)(\lfloor\frac{T_j-(Q_j-1)K}{L}\rfloor-1)\cdots(\lfloor\frac{T_j-Q_jK}{L}\rfloor+2)}{(\lfloor\frac{T_j-(Q_j-1)K}{L}\rfloor+Q_j-1)(\lfloor\frac{T_j-(Q_j-1)K}{L}\rfloor+Q_j-2)\dots(\lfloor\frac{T_j-Q_jK}{L}\rfloor+Q_j+1)}\\
&=\limsup_{j\to\infty}\frac{(\lfloor\frac{T_j-Q_jK}{L}\rfloor-1)\cdots(\lfloor\frac{T_j-(Q_j+1)K}{L}\rfloor+1)}{(\lfloor\frac{T_j-Q_jK}{L}\rfloor+Q_j)(\lfloor\frac{T_j-Q_jK}{L}\rfloor+Q_j-1)\dots(\lfloor\frac{T_j-(Q_j+1)K}{L}\rfloor+Q_j+2)}
\end{align*}
so in particular, 
\begin{eqnarray*}
\lim_{j\to\infty}\frac{Q_j}{F_j}&=&\limsup_{j\to\infty}\frac{(\lfloor\frac{T_j-Q_jK}{L}\rfloor-1)\cdots(\lfloor\frac{T_j-(Q_j+1)K}{L}\rfloor+1)}{(\lfloor\frac{T_j-Q_jK}{L}\rfloor+Q_j)\dots(\lfloor\frac{T_j-(Q_j+1)K}{L}\rfloor+Q_j+2)}.
\end{eqnarray*}

However, this is not possible because $\displaystyle\lim_{j\to\infty}\frac{Q_j}{\lfloor\frac{T_j-Q_jK}{L}\rfloor}=\lim_{j\to\infty}\frac{Q_j}{F_j}=0$ implies that the left hand side of the above equation is $0$ and the right hand side is $1$.
\end{proof}

With Lemma \ref{stirlingconditions}, we can now explicitly compute (\ref{limitbound1}).

\begin{prop}\label{computation0}
Let $K,L$ be fixed, with $K\gg L$. Let $\{T_j\}_{j=1}^\infty$ be a sequence of positive numbers such that $\displaystyle\lim_{j\to\infty}T_j=\infty$ and $\displaystyle H:=\lim_{j\to\infty}\frac{Q_j}{T_j}$ exists. Then
\[\lim_{j\to\infty}\frac{1}{T_j}\log{F_j+Q_j\choose Q_j}=H\log\bigg(1-\frac{K}{L}+\frac{1}{HL}\bigg)+\frac{1-HK}{L}\log\bigg(1+\frac{HL}{1-HK}\bigg).\]
\end{prop}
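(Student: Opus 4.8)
The plan is to apply Stirling's Formula directly to the binomial coefficient $\binom{F_j+Q_j}{Q_j}=\frac{(F_j+Q_j)!}{Q_j!\,F_j!}$, after first establishing that all three quantities $F_j+Q_j$, $Q_j$, and $F_j$ tend to infinity. The divergence of $Q_j$ and $F_j$ is exactly Lemma \ref{stirlingconditions}(1), (2), and $F_j+Q_j\to\infty$ is then immediate. Writing $n!\sim(n/e)^n\sqrt{2\pi n}$ for each factorial, the $e$-powers cancel (since $(F_j+Q_j)-Q_j-F_j=0$), the polynomial $\sqrt{2\pi\cdot}$ factors contribute only $O(\log T_j)$ which vanishes after dividing by $T_j$, and we are left with
\[
\frac{1}{T_j}\log\binom{F_j+Q_j}{Q_j}=\frac{(F_j+Q_j)\log(F_j+Q_j)-Q_j\log Q_j-F_j\log F_j}{T_j}+o(1).
\]

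Next I would rewrite the main term in terms of the ratios $q_j:=Q_j/T_j$ and $f_j:=F_j/T_j$. Factoring $T_j$ out of each logarithm, the $T_j\log T_j$ contributions cancel (again because $(F_j+Q_j)-Q_j-F_j=0$), leaving
\[
\frac{1}{T_j}\log\binom{F_j+Q_j}{Q_j}=(q_j+f_j)\log(q_j+f_j)-q_j\log q_j-f_j\log f_j+o(1),
\]
i.e.\ essentially the binary entropy function evaluated at $q_j/(q_j+f_j)$ and scaled by $q_j+f_j$. By hypothesis $q_j\to H$, and from the definition $F_j=\lfloor(T_j-Q_jK)/L\rfloor$ we get $f_j\to (1-HK)/L=:G$. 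Here I need $H>0$, which is Lemma \ref{stirlingconditions}(4) (so that $q_j\log q_j$ behaves well and the limit is not a spurious $0\cdot\infty$); one should also note $G\geq 0$ since $Q_j\leq\lfloor T_j/K\rfloor$ forces $HK\leq 1$, and in fact $G>0$ because $K\gg L$ keeps $1-HK$ bounded below — alternatively $f_j\to G$ with $G=0$ would only make the relevant terms vanish, so it is harmless, but invoking Lemma \ref{stirlingconditions} gives $H\le 1/(L+K)$ hence $HK<1$ and $G>0$. Passing to the limit by continuity of $x\mapsto x\log x$ on $(0,\infty)$ (and $x\log x\to 0$ as $x\to 0^+$) yields
\[
\lim_{j\to\infty}\frac{1}{T_j}\log\binom{F_j+Q_j}{Q_j}=(H+G)\log(H+G)-H\log H-G\log G.
\]

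Finally I would do the routine algebraic manipulation to match the stated closed form: substitute $G=(1-HK)/L$, so $H+G=H(1-K/L)+1/L=H\bigl(1-K/L+1/(HL)\bigr)$ and $H+G=G\bigl(1+HL/(1-HK)\bigr)$; then
\[
(H+G)\log(H+G)-H\log H-G\log G=H\log\frac{H+G}{H}+G\log\frac{H+G}{G},
\]
and plugging in the two expressions for $(H+G)/H$ and $(H+G)/G$ gives exactly
\[
H\log\Bigl(1-\frac{K}{L}+\frac{1}{HL}\Bigr)+\frac{1-HK}{L}\log\Bigl(1+\frac{HL}{1-HK}\Bigr),
\]
which is the claim. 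I expect the only genuine subtlety to be the careful bookkeeping of which quantities diverge and at what rate, so that Stirling's Formula applies to every factorial and the error terms are genuinely $o(T_j)$ — this is precisely what Lemma \ref{stirlingconditions} is set up to supply, so with that lemma in hand the proof is essentially a computation. A minor point to handle cleanly is the floor function in $F_j$: since $F_j=(T_j-Q_jK)/L+O(1)$, dividing by $T_j$ absorbs the $O(1)$, so $f_j\to G$ as claimed without further fuss.
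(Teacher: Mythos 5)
Your proof is correct and takes essentially the same route as the paper: apply Stirling's formula to all three factorials using Lemma \ref{stirlingconditions} parts (1) and (2) for divergence, show the $\sqrt{2\pi\cdot}$ factors contribute $o(T_j)$, normalize by $T_j$ so the $T_j\log T_j$ contributions cancel, pass to the limits $Q_j/T_j\to H$ and $F_j/T_j\to(1-HK)/L$ justified by part (4), and simplify. The only cosmetic difference is that you express the limit as $(H+G)\log(H+G)-H\log H-G\log G$ with $G:=(1-HK)/L$ and then rearrange, whereas the paper writes the main term directly as $Q_j\log(1+F_j/Q_j)+F_j\log(1+Q_j/F_j)$ and isolates the lower-order correction $\tfrac12\log\bigl(\tfrac{F_j+Q_j}{F_jQ_j}\bigr)$ into a separate lemma; these are algebraically identical.
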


\begin{proof}
By (3) and (4) of  Lemma \ref{stirlingconditions}, we know that $0\leq\displaystyle\limsup_{j\to\infty}\frac{Q_j}{\lfloor\frac{T_j-Q_jK}{L}\rfloor}\leq1$ and $\displaystyle 0<H\leq\frac{1}{L+K}$. Also, (1) and (2) of Lemma \ref{stirlingconditions} allow us to apply Stirling's formula to obtain 
\begin{align*}
{F_j+Q_j\choose Q_j}&=\frac{(F_j+Q_j)!}{F_j!Q_j!}\\
&\sim\frac{((F_j+Q_j)\cdot\frac{1}{e})^{F_j+Q_j}\sqrt{2\pi(F_j+Q_j)}}{(F_j\cdot\frac{1}{e})^{F_j}\sqrt{2\pi F_j}(Q_j\cdot\frac{1}{e})^{Q_j}\sqrt{2\pi Q_j}}\\
&=\frac{1}{\sqrt{2\pi}}\cdot\frac{(F_j+Q_j)^{F_j+Q_j}}{Q_j^{Q_j}\cdot F_j^{F_j}}\cdot\sqrt{\frac{F_j+Q_j}{Q_j\cdot F_j}}, 
\end{align*}
i.e.
\begin{equation*}
\lim_{j\to\infty}\left({F_j+Q_j\choose Q_j}\cdot\sqrt{2\pi}\cdot\frac{Q_j^{Q_j}\cdot F_j^{F_j}}{(F_j+Q_j)^{F_j+Q_j}}\cdot\sqrt{\frac{Q_j\cdot F_j}{F_j+Q_j}}\right)=1
\end{equation*}

By taking the logarithm, we get
\begin{align}
&\lim_{j\to\infty}\left(\log{F_j+Q_j\choose Q_j}+\frac{1}{2}\log(2\pi)-\frac{1}{2}\log\bigg(\frac{F_j+Q_j}{F_j\cdot Q_j}\bigg)\right.\\
&\left.-Q_j\cdot\log\bigg(1+\frac{F_j}{Q_j}\bigg)-F_j\cdot\log\bigg(1+\frac{Q_j}{F_j}\bigg)\right)=0\nonumber.
\end{align}

To compute $\displaystyle\underset{j\to\infty}{\lim}\frac{1}{T_j}\log{F_j+Q_j\choose Q_j}$, it is now sufficient to compute 
\begin{equation*}
\lim_{j\to\infty}\frac{1}{2T_j}\left(\log\bigg(\frac{F_j+Q_j}{F_j\cdot Q_j}\bigg)\right)
\end{equation*}
and 
\begin{equation*}
\lim_{j\to\infty}\frac{1}{T_j}\left(Q_j\cdot\log\bigg(1+\frac{F_j}{Q_j}\bigg)+F_j\cdot\log\bigg(1+\frac{Q_j}{F_j}\bigg)\right).
\end{equation*}
The proposition thus follows from Lemma \ref{computation1} and \ref{computation2}.
\end{proof}

\begin{lem}\label{computation1}
\begin{equation*}
\lim_{j\to\infty}\frac{1}{2T_j}\Bigg(\log\bigg(\frac{F_j+Q_j}{F_j\cdot Q_j}\bigg)\Bigg)=0.
\end{equation*}
\end{lem}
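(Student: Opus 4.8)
The plan is to bound the argument of the logarithm from above and below by quantities that are polynomial in $T_j$, so that after dividing by $T_j$ the whole thing vanishes. Recall that $F_j = \lfloor (T_j - Q_j K)/L\rfloor$ and that $Q_j \in \{1,\dots,\lfloor T_j/K\rfloor\}$, so in particular $1 \le Q_j \le T_j/K$ and $0 \le F_j \le T_j/L$. Also, by parts (1) and (2) of Lemma \ref{stirlingconditions}, both $F_j \to \infty$ and $Q_j \to \infty$, so for $j$ large we have $F_j \ge 1$ and $Q_j \ge 1$.

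First I would write
\[
\frac{F_j + Q_j}{F_j \cdot Q_j} = \frac{1}{F_j} + \frac{1}{Q_j} \le 1 + 1 = 2
\]
for $j$ large (using $F_j, Q_j \ge 1$), which gives the upper bound $\log\big((F_j+Q_j)/(F_j Q_j)\big) \le \log 2$. For the lower bound, since $F_j + Q_j \ge 1$ and $F_j \cdot Q_j \le (T_j/L)(T_j/K) = T_j^2/(KL)$, we get
\[
\frac{F_j+Q_j}{F_j \cdot Q_j} \ge \frac{1}{F_j \cdot Q_j} \ge \frac{KL}{T_j^2},
\]
so $\log\big((F_j+Q_j)/(F_j Q_j)\big) \ge \log(KL) - 2\log T_j$. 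Combining, for all sufficiently large $j$,
\[
\frac{\log(KL) - 2\log T_j}{2T_j} \le \frac{1}{2T_j}\log\!\bigg(\frac{F_j+Q_j}{F_j \cdot Q_j}\bigg) \le \frac{\log 2}{2T_j}.
\]
Since $\lim_{j\to\infty} T_j = \infty$, both the left and right sides tend to $0$ (the left side because $(\log T_j)/T_j \to 0$), so the squeeze theorem gives the claim.

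There is no real obstacle here; the only mild care needed is to ensure $F_j \ge 1$ and $Q_j \ge 1$ before taking reciprocals and logarithms, which is exactly what parts (1) and (2) of Lemma \ref{stirlingconditions} provide, together with the defining constraint $Q_j \ge 1$. Everything else is elementary estimation and the fact that $\log T_j = o(T_j)$.
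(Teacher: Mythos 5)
Your proof is correct. It takes a mildly different, more self-contained route than the paper. The paper decomposes
\[
\log\Big(\frac{F_j+Q_j}{F_j Q_j}\Big)=\log\Big(\frac{1}{Q_j}\Big)+\log\Big(1+\frac{Q_j}{F_j}\Big),
\]
bounds the first term using $1\le Q_j\le T_j/K$, and bounds the second term by $\log 2$ via part (3) of Lemma \ref{stirlingconditions} (which asserts $\limsup_j Q_j/F_j\le 1$). You instead squeeze the whole argument of the logarithm between $KL/T_j^2$ and $2$ directly, which avoids invoking part (3) entirely and relies only on $F_j\ge 1$ for large $j$ (from part (1)) together with the a priori bounds $1\le Q_j\le T_j/K$ and $F_j\le T_j/L$. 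What the paper's decomposition buys is re-use of the same structural facts about $Q_j/F_j$ that it established for the companion Lemma \ref{computation2}; what your approach buys is slightly less machinery — you never need the comparison between $Q_j$ and $F_j$, only the crude polynomial bounds in $T_j$. Either way the conclusion rests on $\log T_j=o(T_j)$, and your estimates are all valid.
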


\begin{proof}
Note that
\begin{equation*}
\log\bigg(\frac{F_j+Q_j}{F_j\cdot Q_j}\bigg)=\log\bigg(\frac{1}{Q_j}\bigg)+\log\bigg(1+\frac{Q_j}{F_j}\bigg).
\end{equation*}

Since $Q_j\geq 1$, we know that $\displaystyle\frac{\log(\frac{1}{Q_j})}{T_j}\leq 0$. Also, since $\displaystyle Q_j\leq\frac{T_j}{K}$, we have
\begin{equation*}
\liminf_{j\to\infty}\frac{\log(\frac{1}{Q_j})}{T_j}\geq\lim_{j\to\infty}\frac{-\log(\frac{T_j}{K})}{T_j}=0,
\end{equation*}
which implies
\begin{equation*}
\lim_{j\to\infty}\frac{\log(\frac{1}{Q_j})}{T_j}=0.
\end{equation*}

Also, it is clear that $\displaystyle\frac{1}{T_j}\log\bigg(1+\frac{Q_j}{F_j}\bigg)\geq 0$ for sufficiently large $j$, and (3) of Lemma \ref{stirlingconditions} implies 
\begin{equation*}
\limsup_{i\to\infty}\frac{1}{T_j}\log\bigg(1+\frac{Q_j}{F_j}\bigg)\leq\lim_{j\to\infty}\frac{\log(2)}{T_j}=0.
\end{equation*}
Thus,
\begin{equation*}
\lim_{j\to\infty}\frac{1}{T_j}\log\bigg(1+\frac{Q_j}{F_j}\bigg)=0.
\end{equation*}

Putting all these together, we get the equality in the lemma.
\end{proof}

\begin{lem}\label{computation2}
\begin{align*}
&\hspace{0.4cm}\lim_{j\to\infty}\frac{1}{T_j}\Bigg(Q_j\cdot\log\bigg(1+\frac{F_j}{Q_j}\bigg)+F_j\cdot\log\bigg(1+\frac{Q_j}{F_j}\bigg)\Bigg)\\
&=H\log\bigg(1-\frac{K}{L}+\frac{1}{HL}\bigg)+\frac{1-HK}{L}\log\bigg(1+\frac{HL}{1-HK}\bigg)
\end{align*}
\end{lem}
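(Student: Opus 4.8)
The plan is to reduce everything to the single asymptotic parameter $H = \lim_{j\to\infty} Q_j/T_j$ and treat the two summands separately. First I would record the exact relation $L F_j = T_j - Q_j K + O(1)$ coming from the definition $F_j = \lfloor (T_j - Q_j K)/L\rfloor$, so that $F_j/T_j \to (1-HK)/L$ and hence $F_j/Q_j \to (1-HK)/(HL) = 1/(HL) - K/L$ and $Q_j/F_j \to HL/(1-HK)$; note that $1 - HK > 0$ because $H \le 1/(L+K)$ by part (4) of Lemma \ref{stirlingconditions}, and $H > 0$ by the same part, so all these limits are finite positive numbers and the logarithms below are of quantities bounded away from $0$ and $\infty$. (I should be a little careful that the argument $1 + F_j/Q_j$ stays in a compact subinterval of $(1,\infty)$: this is exactly part (3) of Lemma \ref{stirlingconditions}, which gives $0 \le \liminf Q_j/F_j \le \limsup Q_j/F_j \le 1$; combined with $Q_j/F_j \to HL/(1-HK)$ along the chosen subsequence this pins the limit down.)

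With these limits in hand the computation is routine. For the first term, write
\[
\frac{1}{T_j} Q_j \log\!\Big(1 + \frac{F_j}{Q_j}\Big) = \frac{Q_j}{T_j}\,\log\!\Big(1 + \frac{F_j}{Q_j}\Big),
\]
and since $Q_j/T_j \to H$ and $\log(1 + F_j/Q_j) \to \log\!\big(1 - \tfrac{K}{L} + \tfrac{1}{HL}\big)$ (a finite limit by the previous paragraph, as the argument is $> 1$), the product converges to $H\log\!\big(1 - \tfrac{K}{L} + \tfrac{1}{HL}\big)$. For the second term, write
\[
\frac{1}{T_j} F_j \log\!\Big(1 + \frac{Q_j}{F_j}\Big) = \frac{F_j}{T_j}\,\log\!\Big(1 + \frac{Q_j}{F_j}\Big),
\]
and since $F_j/T_j \to (1-HK)/L$ and $\log(1 + Q_j/F_j) \to \log\!\big(1 + \tfrac{HL}{1-HK}\big)$, the product converges to $\tfrac{1-HK}{L}\log\!\big(1 + \tfrac{HL}{1-HK}\big)$. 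Adding the two limits gives precisely the right-hand side claimed in the lemma.

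The only genuine subtlety — the ``main obstacle'', such as it is — is justifying that the relevant ratios actually converge rather than merely having controlled limsup/liminf, since $Q_j$ is only defined as an argmax and need not vary continuously. This is handled entirely by Lemma \ref{stirlingconditions}: parts (1) and (2) guarantee $F_j, Q_j \to \infty$ so the floor functions contribute only $O(1)$ corrections that vanish after dividing by $T_j$, part (3) confines $Q_j/F_j$ to $[0,1]$ in the limit, and part (4) confines $Q_j/T_j$ to $(0, 1/(L+K)]$; together with the standing hypothesis that $H = \lim Q_j/T_j$ exists (which is legitimate here because Proposition \ref{computation0}, where this lemma is applied, passes to a subsequence on which this limit exists), all the limits above are forced. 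No appeal to Stirling's formula is needed for this lemma itself — Stirling enters only in Proposition \ref{computation0}, which consumes Lemma \ref{computation2} as an input. Hence once the bookkeeping of the floor functions and the convergence of $Q_j/T_j$ and $F_j/T_j$ is set up, the proof is a two-line limit computation.
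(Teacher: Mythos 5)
Your proposal is correct and follows essentially the same route as the paper: substitute the relation $L F_j = T_j - Q_j K + O(1)$, observe that $Q_j/T_j \to H$ forces $F_j/T_j \to (1-HK)/L$ and $F_j/Q_j \to (1-HK)/(HL)$, and then take the limit of the two products separately. If anything you are slightly more careful than the paper, which performs the substitution $F_j \mapsto (T_j - Q_j K)/L$ as if it were an equality and asserts the result; you explicitly flag that the $O(1)$ floor error vanishes after dividing by $T_j$. One small remark: the invocation of part (3) of Lemma \ref{stirlingconditions} is superfluous here, since you have already derived the convergence of $Q_j/F_j$ to a finite positive value directly from the floor relation and the convergence of $Q_j/T_j$; but this redundancy does no harm.
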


\begin{proof}
Observe that
\begin{align*}
&\hspace{0.4cm}\lim_{j\to\infty}\frac{1}{T_j}\left(Q_j\cdot\log\bigg(1+\frac{F_j}{Q_j}\bigg)+F_j\cdot\log\bigg(1+\frac{Q_j}{F_j}\bigg)\right)\\
&=\lim_{j\to\infty}\left(\frac{Q_j}{T_j}\cdot\log\bigg(1-\frac{K}{L}+\frac{T_j}{LQ_j}\bigg)+\frac{1}{L}\bigg(1-K\frac{Q_j}{T_j}\bigg)\cdot\log\bigg(1+\frac{L}{\frac{T_j}{Q_j}-K}\bigg)\right).
\end{align*}
By (4) of Lemma \ref{stirlingconditions}, we have $\displaystyle 0<H=\lim_{i\to\infty}\frac{Q_j}{T_j}<\frac{1}{K+L}$ so the required equality follows.
\end{proof}

We are now ready to prove Proposition \ref{technical2}.

\begin{proof}[of Proposition \ref{technical2}]
For each $i$ such that $K_i\gg L$, choose a subsequence $\{T_{j_k}\}_{k=1}^\infty$ of $\{T_j\}_{j=1}^\infty$ such that $\displaystyle H_i:=\lim_{k\to\infty}\frac{Q_{i,j_k}}{T_{j_k}}$ exists and 
\[\limsup_{j\to\infty}\frac{1}{T_j}\log{\lfloor\frac{T_j-Q_{i,j}K_i}{L}\rfloor+Q_{i,j}\choose Q_{i,j}}=\lim_{k\to\infty}\frac{1}{T_{j_k}}\log{\lfloor\frac{T_{j_k}-Q_{i,j_k}K_i}{L}\rfloor+Q_{i,j_k}\choose Q_{i,j_k}}\]
(we can do this by (4) of Lemma \ref{stirlingconditions}). Proposition \ref{computation0} then tells us that 
\begin{align}\label{computation3}
&\hspace{0.4cm}\limsup_{j\to\infty}\frac{1}{T_j}\log{\lfloor\frac{T_j-Q_{i,j}K_i}{L}\rfloor+Q_{i,j}\choose Q_{i,j}}\\
&=H_i\log\bigg(1+\frac{1-H_iK_i}{H_iL}\bigg)+\frac{1-H_iK_i}{L}\log\bigg(1+\frac{H_iL}{1-H_iK_i}\bigg).\nonumber 
\end{align}
Since $\displaystyle 0<Q_{i,j}\leq\bigg\lfloor\frac{T_j}{K_i}\bigg\rfloor\leq\frac{T_j}{K_i}$, we know that $\displaystyle 0<\frac{Q_{i,j}}{T_j}K_i\leq 1$. Thus, by choosing a subsequence of $\{K_i\}_{i=1}^\infty$, we can assume that 
\begin{equation*}
0\leq\lim_{i\to\infty}(H_iK_i)=:V\leq 1.
\end{equation*}
Since $\underset{i\to\infty}{\lim}K_i=\infty$, this implies that $\underset{i\to\infty}{\lim}H_i=0$. This means
\begin{equation*}
\limsup_{i\to\infty}H_i\log\bigg(1+\frac{1-H_iK_i}{H_iL}\bigg)\leq\lim_{i\to\infty} H_i\log\bigg(1+\frac{1}{H_iL}\bigg)=0
\end{equation*}
Also, $\displaystyle H_i\log\bigg(1+\frac{1-H_iK_i}{H_iL}\bigg)\geq 0$ for all $i$, so 
\begin{equation}\label{computation4}
\lim_{i\to\infty}H_i\log\bigg(1+\frac{1-H_iK_i}{H_iL}\bigg)=0
\end{equation}

Next, we show that $\displaystyle\lim_{i\to\infty}\frac{1-H_iK_i}{L}\log\bigg(1+\frac{H_iL}{1-H_iK_i}\bigg)=0$. Since $\underset{i\to\infty}{\lim}H_i=0$, this is clear in the case when $V<1$. In the case when $V=1$,\begin{equation*}
\frac{1-H_iK_i}{L}\log\bigg(1+\frac{H_iL}{1-H_iK_i}\bigg)\leq\frac{1-H_iK_i}{L}\log\bigg(1+\frac{L}{1-H_iK_i}\bigg)
\end{equation*}
for large enough $i$. By taking limit supremum,
\begin{equation*}
\limsup_{i\to\infty}\frac{1-H_iK_i}{L}\log\bigg(1+\frac{H_iL}{1-H_iK_i}\bigg)\leq\lim_{j\to\infty}\frac{1-H_iK_i}{L}\log\bigg(1+\frac{L}{1-H_iK_i}\bigg)=0.
\end{equation*}
Since $\displaystyle\frac{1-H_iK_i}{L}\log\bigg(1+\frac{H_iL}{1-H_iK_i}\bigg)\geq 0$ for all $i$, we have that
\begin{equation}\label{computation5}
\lim_{i\to\infty}\frac{1-H_iK_i}{L}\log\bigg(1+\frac{H_iL}{1-H_iK_i}\bigg)=0.
\end{equation}

Together, (\ref{computation3}), (\ref{computation4}) and (\ref{computation5}) imply that 
\begin{equation*}
\lim_{i\to\infty}\limsup_{j\to\infty}\frac{1}{T_j}\log{\lfloor\frac{T_j-Q_{i,j}K_i}{L}\rfloor+Q_{i,j}\choose Q_{i,j}}=0.
\end{equation*}
\end{proof}

\affiliationone{
  Tengren Zhang\\
2074 East Hall (EH 4828)\\
530 Church Street\\
Ann Arbor, MI 48109-1043\\
   USA
   \email{tengren@umich.edu}}
\end{document}